\DeclareMathAlphabet{\mathcal}{OMS}{cmsy}{m}{n}
\colorlet{darkblue}{blue!90!black}
\colorlet{darkred}{red!90!black}
\def\CT{\mathcal{T}}
\newcommand{\fraks}{\mathfrak{s}}
\DeclareMathOperator*{\argmin}{arg\,min}
\DeclareMathOperator{\Span}{span}
\DeclareMathOperator*{\esssup}{ess\,sup}
\DeclareMathOperator*{\supp}{\text{supp}}
\newcommand{\eps}{\varepsilon}
\newcommand{\tzero}{|_{t=0}}
		\pgfmathsetlength{\pgf@xb}{\pgfkeysvalueof{/pgf/outer xsep}}%
		\pgfmathsetlength{\pgf@yb}{\pgfkeysvalueof{/pgf/outer ysep}}%
\colorlet{symbols}{blue!90!black}
\colorlet{testcolor}{green!60!black}
\def\symbol#1{\textcolor{symbols}{#1}}
\def\1{\mathbf{\symbol{1}}}
\def\drawx{\draw[-,solid] (-3pt,-3pt) -- (3pt,3pt);\draw[-,solid] (-3pt,3pt) -- (3pt,-3pt);}
\tikzset{
	root/.style={circle,fill=testcolor,inner sep=0pt, minimum size=2mm},
	dot/.style={circle,fill=black,inner sep=0pt, minimum size=1mm},
	var/.style={circle,fill=black!10,draw=black,inner sep=0pt, minimum size=2mm},
	dotred/.style={circle,fill=black!50,inner sep=0pt, minimum size=2mm},
	generic/.style={semithick,shorten >=1pt,shorten <=1pt},
	dist/.style={ultra thick,draw=testcolor,shorten >=1pt,shorten <=1pt},
	testfcn/.style={ultra thick,testcolor,shorten >=1pt,shorten <=1pt,<-},
	testfcnx/.style={ultra thick,testcolor,shorten >=1pt,shorten <=1pt,<-,
		postaction={decorate,decoration={markings,mark=at position 0.6 with {\drawx}}}},
	kprime/.style={semithick,shorten >=1pt,shorten <=1pt,densely dashed,->},
	kprimex/.style={semithick,shorten >=1pt,shorten <=1pt,densely dashed,->,
		postaction={decorate,decoration={markings,mark=at position 0.4 with {\drawx}}}},
	kernel/.style={semithick,shorten >=1pt,shorten <=1pt,->},
	multx/.style={shorten >=1pt,shorten <=1pt,
		postaction={decorate,decoration={markings,mark=at position 0.5 with {\drawx}}}},
	kernelx/.style={semithick,shorten >=1pt,shorten <=1pt,->,
		postaction={decorate,decoration={markings,mark=at position 0.4 with {\drawx}}}},
	kernel1/.style={->,semithick,shorten >=1pt,shorten <=1pt,postaction={decorate,decoration={markings,mark=at position 0.45 with {\draw[-] (0,-0.1) -- (0,0.1);}}}},
	kernel2/.style={->,semithick,shorten >=1pt,shorten <=1pt,postaction={decorate,decoration={markings,mark=at position 0.45 with {\draw[-] (0.05,-0.1) -- (0.05,0.1);\draw[-] (-0.05,-0.1) -- (-0.05,0.1);}}}},
	kernelBig/.style={semithick,shorten >=1pt,shorten <=1pt,decorate, decoration={zigzag,amplitude=1.5pt,segment length = 3pt,pre length=2pt,post length=2pt}},
	rho/.style={dotted,semithick,shorten >=1pt,shorten <=1pt},
	renorm/.style={shape=circle,fill=white,inner sep=1pt},
	labl/.style={shape=rectangle,fill=white,inner sep=1pt},
	xi/.style={circle,fill=symbols!10,draw=symbols,inner sep=0pt,minimum size=1.2mm},
	xix/.style={crosscircle,fill=symbols!10,draw=symbols,inner sep=0pt,minimum size=1.2mm},
	xib/.style={circle,fill=symbols!10,draw=symbols,inner sep=0pt,minimum size=1.6mm},
	xibx/.style={crosscircle,fill=symbols!10,draw=symbols,inner sep=0pt,minimum size=1.6mm},
	not/.style={circle,fill=symbols,draw=symbols,inner sep=0pt,minimum size=0.5mm},
	>=stealth,
}
\def\DeclareSymbol#1#2#3{\expandafter\gdef\csname MH@symb@#1\endcsname{\tikz[baseline=#2,scale=0.15,draw=symbols]{#3}}\expandafter\gdef\csname MH@symb@#1s\endcsname{\scalebox{0.7}{\tikz[baseline=#2,scale=0.15,draw=symbols]{#3}}}}
\def\<#1>{\csname MH@symb@#1\endcsname}
\newcommand{\mbC}{\mathbb{C}}
\newcommand{\mbE}{\mathbb{E}}
\newcommand{\mbG}{\mathbb{G}}
\newcommand{\mbH}{\mathbb{H}}
\newcommand{\mbI}{\mathbb{I}}
\newcommand{\mbN}{\mathbb{N}}
\newcommand{\mbR}{\mathbb{R}}
\newcommand{\mbZ}{\mathbb{Z}}
\newcommand{\RR}{\mathbb{R}}
\newcommand{\NN}{\mathbb{N}}
\newcommand{\frakg}{\mathfrak{g}}
\newcommand{\bfeta}{\pmb{\eta}}
\newcommand{\bfPi}{\pmb{\Pi}}
\newcommand{\dd}{\mathop{}\!\mathrm{d}}
\newcommand{\msD}{\mathscr{D}}
\newcommand{\msR}{\mathscr{R}}
\newcommand{\mcC}{\mathcal{C}}
\newcommand{\mcD}{\mathcal{D}}
\newcommand{\mcG}{\mathcal{G}}
\newcommand{\mcI}{\mathcal{I}}
\newcommand{\mcJ}{\mathcal{J}}
\newcommand{\mcK}{\mathcal{K}}
\newcommand{\mcM}{\mathcal{M}}
\newcommand{\mcN}{\mathcal{N}}
\newcommand{\mcP}{\mathcal{P}}
\newcommand{\mcQ}{\mathcal{Q}}
\newcommand{\mcR}{\mathcal{R}}
\newcommand{\mcS}{\mathcal{S}}
\newcommand{\mcT}{\mathcal{T}}
\newcommand{\mcV}{\mathcal{V}}
\newcommand{\mcW}{\mathcal{W}}
\newcommand{\mfB}{\mathfrak{B}}
\newcommand{\mfD}{\mathfrak{D}}
\newcommand{\mfG}{\mathfrak{G}}
\newcommand{\mfH}{\mathfrak{H}}
\newcommand{\mfK}{\mathfrak{K}}
\newcommand{\mfL}{\mathfrak{L}}
\newcommand{\mfZ}{\mathfrak{Z}}
\newcommand{\mfc}{\mathfrak{c}}
\newcommand{\mfg}{\mathfrak{g}}
\newcommand{\mfh}{\mathfrak{h}}
\newcommand{\mfm}{\mathfrak{m}}
\newcommand{\mfp}{\mathfrak{p}}
\newcommand{\mfr}{\mathfrak{r}}
\newcommand{\mfs}{\mathfrak{s}}
\newcommand{\bP}{\mathbf{P}}
\newcommand{\bR}{\mathbf{R}}
\newcommand{\dil}{\operatorname{D}}
\newcommand{\opP}{\operatorname{P}}
\newcommand{\opbfP}{\pmb{\operatorname{{P}}}}
\newcommand{\opG}{\operatorname{G}}
\newcommand{\opT}{\operatorname{T}}
\newcommand{\loc}{\text{loc}}
\newcommand{\ex}{\text{ex}}
\newcommand{\opH}{\operatorname{H}}
\newcommand{\vertiii}[1]{{\left\vert\kern-0.25ex\left\vert\kern-0.25ex\left\vert #1 
		\right\vert\kern-0.25ex\right\vert\kern-0.25ex\right\vert}}
\newtheorem{theorem}{Theorem}[section]
\newtheorem{claim}[theorem]{Claim}
\newtheorem{lemma}[theorem]{Lemma}
\newtheorem{definition}{Definition}
\newtheorem{prop}[theorem]{Proposition}
\newtheorem{assumption}[theorem]{Assumption}
\theoremstyle{remark}
\newtheorem{remark}[theorem]{Remark}
\newcommand{\id}{\mathrm{id}}
\numberwithin{equation}{section} 
\numberwithin{figure}{section} 
\numberwithin{table}{section} 
\numberwithin{theorem}{section}
\numberwithin{definition}{section}
\newcommand{\horrule}[1]{\rule{\linewidth}{#1}} 
\title{	
	\horrule{0.5pt} \\[0.4cm] 
	\large Singular SPDEs on Homogeneous Lie Groups
	\\ 
	\horrule{0.8pt} \\[0.5cm] 
}
\author[1]{\normalsize Avi Mayorcas}
\author[2]{\normalsize Harprit Singh}
\affil[1]{Institute of Mathematics, Technische Universität Berlin, Str. des 17. Juni 136, 10587 Berlin, Germany. \newline Email: avimayorcas@gmail.com; ORCID iD: 0000-0003-4133-9740}
\affil[2]{Department of Mathematics, Imperial College London, South Kensington Campus, SW7 2AZ, UK.\newline Email: h.singh19@imperial.ac.uk; ORCID iD: 0000-0002-9991-8393}
\date{\normalsize \today}                     
\begin{document}
	\maketitle 
\begin{abstract}
\noindent	The aim of this article is to extend the scope of the theory of regularity structures in order to deal with a large class of singular SPDEs of the form
	\begin{equation*}
		\partial_t u = \mfL  u+ F(u, \xi)\ ,
	\end{equation*}
	where the differential operator $\mfL$ fails to be elliptic. 
	This is achieved by interpreting the base space $\mathbb{R}^{d}$ as a non-trivial homogeneous Lie group $\mbG$ such that the differential operator $\partial_t -\mathfrak{L}$ becomes a translation invariant hypoelliptic operator on $\mbG$.  Prime examples 
	are the kinetic Fokker-Planck operator $\partial_t  -\Delta_v  - v\cdot \nabla_x$ and heat-type operators associated to sub-Laplacians.
	As an application of the developed framework, we solve a class of parabolic Anderson type equations 
	$$\partial_t u = \sum_{i} X^2_i u + u (\xi-c)$$
	on the compact quotient of an arbitrary Carnot group.
\end{abstract}

\noindent \textit{Keywords:} Regularity structures, homogeneous Lie groups, hypoelliptic operators, stochastic partial differential equations.

\noindent \textit{2020 MSC:} 60L30 (Primary); 60H17, 35H10, 35K70 (Secondary)
	
	\tableofcontents
	%
	
\section{Introduction}\label{sec1}

The theory of regularity structures, \cite{hairer_14_RegStruct}, provides a framework for the study of subcritical stochastic partial differential equations (SPDEs) of the form
\begin{equation}\label{eq:gen_SPDE}
	\partial_t u-{\mfL} u=F(u, \xi)\ ,
\end{equation}
when the operator ${\mfL} $ is uniformly elliptic. This article extends the theory of regularity structures in order to solve equations where the differential operator stems from a large class of hypoelliptic operators.
This is achieved by building on the fundamental idea of Folland \cite{folland_75_subelliptic} to reinterpret the differential operator in question as a differential operator on a homogeneous Lie group and extending the theory of regularity structures to these non-commutative spaces. 

The treatment of singular SPDE via the theory of regularity structures can be divided into two parts, an analytic step and an algebraic and stochastic step.
\begin{enumerate}
	\item \hspace{0.2em} The first, analytic step, is to introduce the notions of a regularity structure, models and modelled distributions; generalised Taylor expansions of both functions and distributions. Given this set-up, sufficient analytic tools are then developed to allow for the study of abstract, fixed-point equations in the spaces of modelled distributions. Crucially, a reconstruction operator maps modelled distributions to genuine distributions (or functions) on the underlying domain. For the case of constant coefficient, hypoelliptic equations on compact quotients of Euclidean domains, this aspect of the theory was already worked out in full generality in \cite{hairer_14_RegStruct}.
	\item \hspace{0.2em} The second step, which was carried out in a case by case basis in \cite{hairer_14_RegStruct}, was then fully automated in the subsequent works \cite{chandra_hairer_16,bruned_hairer_zambotti_19_algebraic,bruned_chandra_chevyrev_hairer_21_renormalising}. 
	In \cite{bruned_hairer_zambotti_19_algebraic} the authors construct concrete (equation-dependent) regularity structures and models together with a large enough renormalisation group. 
	Then, in \cite{bruned_chandra_chevyrev_hairer_21_renormalising}, the question of how this renormalisation group acts on the SPDE in question was answered. Convergence of renormalised models for an extremely large class of noises,  known as the BPHZ theorem, was proved in \cite{chandra_hairer_16}.
\end{enumerate}
In much of the above work translation invariance on $\mbR^d$, of both the equation and the driving noise, plays a major role. In this paper we will instead work with equations that are translation invariant with respect to a homogeneous Lie group $\mbG$ (Euclidean spaces being special case).

{The bulk of this paper is dedicated to implementing the analytic aspects of the theory in the case when the underlying space is a general (non-Abelian) homogeneous Lie group. The second step is then carried out for a specific class of equations. While the theory developed herein remains, somewhat surprisingly, close to that of \cite{hairer_14_RegStruct}, there are a number of key conceptual and technical obstacles to overcome. The main cause of these deviations are the non-commutative structure of the base space and the fact that the notion of Taylor expansions, a crucial element of the theory, heavily depends on the underlying group structure. We discuss these issues in more detail in Section~\ref{sec:structure_and_contribution} along with some wider context in Section~\ref{sec:open_problems}.  For the readers convenience the article aims at being self-contained and thus arguments that adapt directly from \cite{hairer_14_RegStruct} are clearly marked as such.}

The central motivation for this extension to homogeneous Lie groups is to allow for the study of singular SPDE with linear part given by a  hypoelliptic operator, which may fail to be uniformly parabolic. Two motivating examples are the heat operator associated to the sub-Laplacian on the Heisenberg group and the kinetic Fokker-Planck operator on $\mbR \times \mbR^{2n}$,
\begin{equation}\label{eq:intro_example}
	u(t,x,y,z)\mapsto \partial_t u - (\Delta_x +\Delta_y + (x^2+y^2)\Delta_z) u  \text{ and }  u(t,x,v)\mapsto \partial_t u - \Delta_v u - v\cdot \nabla_x u\ .
\end{equation}
We will carry these two operators and their associated homogeneous Lie groups, throughout the paper as working examples of the theory. In the final sections we develop a full solution theory for Anderson type equations associated to sub-Laplacians on general stratified Lie groups (Carnot groups), of which the Heisenberg group is a well-studied example.

More generally, however, the analytic results of this paper apply to any differential operator, $\bar{\mfL}$ on $\mbR^{d-1}$, such that the following combination of results by Folland applies to $\mfL:=\partial_t- \bar{\mfL}$, with respect to some homogeneous Lie group structure on $\mbR^{d}$.
\begin{theorem}[{\cite[Thm.~2.1 \& Cor.~2.8]{folland_75_subelliptic}}]\label{th:folland_translation}
	Let $\mbG$ be a homogeneous Lie group of homogeneous dimension $|\mfs|$ and $\mfL$ be a left-translation invariant (with respect to $\mbG$), homogeneous differential operator of degree $\beta\in (0,|\mfs|)$ on $\mbG$, such that $\mfL$ and its adjoint $\mfL^\ast$ are both hypoelliptic. Then there exists a unique homogeneous distribution $K$, of order $\beta-|\mfs|$ such that for any distribution, $\varphi\in \mcD'(\mbG)$
	\begin{equation*}
		\mfL (\varphi \ast K) = (\mfL \varphi)\ast K = \varphi.
	\end{equation*}
	where the convolution is with respect to the given Lie structure.
\end{theorem}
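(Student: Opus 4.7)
The plan is to (i) construct a homogeneous fundamental solution $K$ with $\mfL K = \delta_0$, (ii) show uniqueness, and (iii) deduce the two convolution identities. For step (i), hypoellipticity of the formal adjoint $\mfL^*$, combined with a standard duality / a priori estimate argument, produces some (not necessarily homogeneous) fundamental solution $E \in \mcD'(\mbG)$ satisfying $\mfL E = \delta_0$; hypoellipticity of $\mfL$ then upgrades this to $E \in C^\infty(\mbG \setminus \{0\})$. The critical homogenisation step exploits the scaling identity $\mfL \circ (\dil_\lambda)^* = \lambda^\beta (\dil_\lambda)^* \circ \mfL$, which implies that each rescaling $E_\lambda := \lambda^{\beta-|\mfs|}(\dil_\lambda)^* E$ is again a fundamental solution, and hence any two such rescalings differ by a smooth ($\mfL$-harmonic) function. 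A logarithmic average
\[
K \;:=\; \int_0^\infty E_\lambda \, \chi(\log \lambda) \, \frac{d\lambda}{\lambda},
\]
for a smooth cutoff $\chi$ with $\int \chi = 1$, then produces a distribution that is homogeneous of degree $\beta - |\mfs|$ by construction and still satisfies $\mfL K = \delta_0$ once the smooth correction is absorbed. Convergence of this integral at both small and large scales is controlled by the smoothness of $E$ away from the origin together with the strict inequality $0 < \beta < |\mfs|$.

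For uniqueness in (ii), if $K_1, K_2$ both satisfy the conclusion, their difference $D := K_1 - K_2$ is homogeneous of strictly negative degree $\beta - |\mfs|$ and satisfies $\mfL D = 0$. Hypoellipticity of $\mfL$ promotes $D$ to a globally smooth function on $\mbG$; but any smooth function homogeneous of strictly negative degree must vanish identically, since otherwise $D(\dil_\lambda x) = \lambda^{\beta - |\mfs|} D(x) \to \infty$ as $\lambda \to 0^+$.

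For (iii), both identities express that convolution with $K$ inverts $\mfL$. The first, $\mfL(\varphi * K) = \varphi * (\mfL K) = \varphi * \delta_0 = \varphi$, follows because left-invariance of $\mfL$ allows it to be moved past the convolution integral onto the $K$-factor. The second, $(\mfL\varphi)*K = \varphi$, is the dual statement: testing against $\psi \in \mcD(\mbG)$, integrating by parts with respect to the Haar measure (bi-invariant since $\mbG$ is nilpotent, hence unimodular), and invoking the corresponding fundamental-solution identity for $\mfL^*$, to which the whole construction applies symmetrically by the hypothesis of mutual hypoellipticity, gives the identity. The main technical obstacle is the homogenisation step in (i): preserving $\mfL K = \delta_0$ while enforcing homogeneity requires carefully controlling convergence of the averaging integral at both endpoints, and is the place where the strict inequality $0 < \beta < |\mfs|$ is indispensable.
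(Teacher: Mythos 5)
This theorem is cited directly from Folland \cite{folland_75_subelliptic} and the paper does not reprove it, so the comparison is against the known argument rather than an in-paper proof. Your overall plan (construct some fundamental solution via hypoellipticity, homogenise it, deduce uniqueness by a Liouville-type argument, and obtain the two convolution identities from left invariance of $\mfL$ and of Haar measure) is the right skeleton, and the uniqueness and convolution steps are essentially correct as stated. However, the homogenisation step contains a genuine gap.

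The logarithmic average
\[
K := \int_0^\infty E_\lambda\,\chi(\log\lambda)\,\frac{d\lambda}{\lambda}, \qquad E_\lambda := \lambda^{|\mfs|-\beta} E\circ\dil_\lambda,
\]
is \emph{not} homogeneous ``by construction'' for a generic cutoff with $\int\chi=1$ (note also the sign of the exponent: one needs $|\mfs|-\beta$, not $\beta-|\mfs|$, for $\mfL E_\lambda=\delta$ to persist). Writing $h_\lambda := E - E_\lambda$, which is $\mfL$-harmonic hence smooth, and using the cocycle identity $h_{\lambda\mu}=h_\mu + (h_\lambda)_\mu$, one finds that if $h_\lambda$ decomposes as $h_\lambda=\sum_a C_a(1-\lambda^{c_a})Q_a$ with $c_a:=|\mfs|-\beta+a>0$ over homogeneous degrees $a$, then
\[
K_\mu - K = -\sum_a C_a\,M_a\,(1-\mu^{c_a})\,Q_a, \qquad M_a:=\int\chi(t)\,e^{-c_a t}\,dt.
\]
For a positive bump $\chi$ each $M_a>0$, so $K$ fails to scale whenever any $C_a\ne 0$. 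Two ingredients are therefore missing. First, and this is where Folland does real work: one must show that the $\mfL$-harmonic correction $E-E_\lambda$ is not merely smooth but a \emph{polynomial} of bounded homogeneous degree — a Liouville-type theorem that requires the parametrix bound $|E(x)|\lesssim |x|^{\beta-|\mfs|}$ near the origin and does not follow from hypoellipticity alone. Second, given this polynomiality, the coboundary equation $P-P_\lambda=h_\lambda$ must be solved degree-by-degree, which is possible precisely because $c_a\ne 0$ for $a\ge 0$ (here $0<\beta<|\mfs|$ enters), and then $K:=E-P$ is the homogeneous fundamental solution. Your averaging idea can be salvaged by choosing $\chi$ to have Laplace-transform zeros at the relevant $c_a$, but that presupposes knowledge of which degrees occur and is no shortcut over solving for $P$ directly. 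As written, the claim that the integral yields a homogeneous $K$ ``once the smooth correction is absorbed'' glosses over the entire substance of the proof.
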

Referring to Section~\ref{sec:analysis_background} for a more thorough discussion of homogeneous Lie groups and their properties, we recall here that a differential operator $\mfD$ on $\mathbb{R}^d$ is called \textit{hypoelliptic}, if for every open subset $\Omega\subset \mathbb{R}^d$ one has that,
\begin{equation*}
\mfD u\in C^\infty(\Omega) \Rightarrow u\in C^\infty (\Omega) \ .
\end{equation*}
The following celebrated result of H\"ormander (almost) entirely classifies the family of second order hypoelliptic operators.
\begin{theorem}[{\cite[Thm.~1.1]{hormander_67}}]\label{th:hormander}
Let $r\leq d$ and $\{X_i\}_{i=0}^r$ be a collection of first order differential operators (i.e. vector fields) on $\mbR^d$ and recursively define $\mcV_1 = \text{span}\{X_i\,:\, i=0,\ldots,r\}$, $\mcV_{n+1} = \mcV_n \cup \left\{[V,W]\,: V\in \mcV_1,\, W\in \mcV_n\right\}$. If a differential operator can be written in the form,
\begin{equation}\label{eq:vec_field_operator}
	\mfD = \sum_{i=1}^r X_i^2 + X_0 +c,
\end{equation}
for some $c\in \mbR$, and there exists an $N\geq 1$ such that $\dim(\mcV_N)=d$ at every point in $\Omega\subseteq \mbR^d$ then $\mfD$ is hypoelliptic on $\Omega$. 
\end{theorem}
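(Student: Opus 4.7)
The plan is to establish Hörmander's theorem via the classical route of local subelliptic energy estimates, followed by a bootstrap on the Sobolev scale and Sobolev embedding. The target is an a priori estimate of the form
\[
\|\varphi u\|_{H^{\eps}} \leq C\bigl(\|\mfD u\|_{L^2(\Omega)} + \|u\|_{L^2(\Omega)}\bigr)
\]
valid for $u\in C_c^\infty(\Omega)$ and any smooth cut-off $\varphi$ compactly supported in $\Omega$, with some $\eps = 2^{-N}>0$ depending on the bracket step $N$ at which $\mcV_N$ spans the tangent space. Once this is in hand, commuting $\mfD$ with a tangential pseudodifferential operator of positive order and iterating upgrades it to $\|\varphi u\|_{H^{k+\eps}} \leq C_k(\|\mfD u\|_{H^k(\Omega)} + \|u\|_{L^2(\Omega)})$ for every $k\geq 0$, so that $\mfD u\in C^\infty(\Omega)$ forces $u\in C^\infty(\Omega)$ by Sobolev embedding.

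First I would set up the fundamental $L^2$ energy identity. Since each $X_i$ is a first-order differential operator with smooth coefficients, $X_i^* = -X_i + a_i$ for a smooth $a_i$; pairing $\mfD u$ with $\varphi^2 u$ and integrating by parts yields
\[
\sum_{i=1}^r \|\varphi X_i u\|_{L^2}^2 \leq C\bigl(|\langle \mfD u, \varphi^2 u\rangle| + \|u\|_{L^2(\supp \varphi)}^2\bigr),
\]
the drift $X_0$ contributing only a zeroth-order term by virtue of $\int X_0(u^2) = -\int (\mathrm{div}\,X_0)u^2$. This gives $H^1$-type control along every direction in $\mcV_1$, together with commutator errors which are harmless at this stage.

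The core analytic step is to bootstrap from control along $\mcV_1$ to fractional control along all of $\mcV_N$ through commutator estimates in the tangential pseudodifferential calculus. The key lemma, in the form due to Kohn, asserts that for any smooth vector fields $V,W$ and any $s\in(0,1)$ one has a microlocal estimate of the shape
\[
\|\Lambda^{s/2}([V,W]u)\|_{L^2} \leq C\bigl(\|V u\|_{H^{s}} + \|W u\|_{H^{s}} + \|u\|_{H^{s}}\bigr),
\]
where $\Lambda^\sigma$ denotes a properly supported pseudodifferential operator of order $\sigma$. Taking $V,W\in\mcV_1$ gives $H^{1/2}$-type control along every direction in $\mcV_2$; iterating halves the exponent at each bracket step and, after $N$ iterations, delivers $H^{2^{-N}}$ control along every direction of $\mcV_N$. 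The Hörmander condition $\dim\mcV_N=d$ then converts this directional control into the desired isotropic subelliptic estimate.

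The main obstacle, and the reason for the depth of the theorem, is the commutator bookkeeping required in the iteration: every application of the key lemma introduces error terms of the form $[\mfD,\Lambda^{s}]$, $[X_i,\Lambda^s]$ and $[X_i,\varphi]$ that must be absorbed without consuming the fractional gain at that step. Careful use of properly supported operators, a sharp ordering of the induction on bracket depth, and repeated use of the interpolation inequality $\|u\|_{H^s}^2 \leq \eta \|u\|_{H^1}^2 + C_\eta \|u\|_{L^2}^2$ resolve this. A secondary technicality is the first-order drift $X_0$: in the natural parabolic homogeneity it scales as heavily as each $X_i^2$, but its contribution to the basic energy identity vanishes to leading order, so it is carried through the argument as a skew-symmetric perturbation. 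Both issues are recast transparently in the Rothschild--Stein lifting programme, which models $\mfD$ locally by a left-invariant homogeneous operator on a free nilpotent group and transfers the problem to a setting close to the one studied throughout the present paper.
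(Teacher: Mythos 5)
The paper does not prove this theorem; it is quoted verbatim from H\"ormander's 1967 paper and used as background motivation, so there is no proof in the source to compare against. Taken on its own terms, your sketch is a recognisable outline of the classical Kohn $L^2$ route --- basic energy identity giving control along $\mcV_1$, microlocal commutator lemma to propagate fractional Sobolev control down the bracket hierarchy, bootstrap to all orders, and Sobolev embedding --- and you correctly flag the commutator bookkeeping as the hard part.

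There is, however, one genuine under-addressed point: the drift $X_0$ enters the bracket filtration $\mcV_1$ in the statement, yet the basic energy identity you write only yields $L^2$ control on $X_1u,\dots,X_ru$, not on $X_0u$. The iteration you describe starts from ``$H^1$-type control along every direction in $\mcV_1$,'' but that is not what the energy estimate delivers; $X_0$ contributes only a zeroth-order error there, and saying it ``is carried through as a skew-symmetric perturbation'' does not explain how to launch the bracket induction on commutators involving $X_0$. In the actual proof one has to extract negative-Sobolev control on $X_0u$ from the equation itself, via $X_0u=\mfD u-\sum_i X_i^2u-cu$ together with the already-established bounds on $X_iu$, and only then does the commutator iteration apply uniformly across $\mcV_1$. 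Without this step the argument as written would fail for operators such as the Kolmogorov operator $\partial_t-\partial_v^2-v\partial_x$, where the Lie bracket with $X_0$ is essential to span the tangent space. Beyond this, your exponent $\eps=2^{-N}$ is consistent with the H\"ormander/Kohn bookkeeping (the sharp $1/N$ exponent is a later Rothschild--Stein refinement), and the closing remark about lifting to a free nilpotent group is a genuinely different strategy from the $L^2$ route you develop, so it should not be presented as a transparent ``recast'' of the same argument but rather as an alternative.
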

\begin{remark}\label{rem:hormander_sharp}
By Frobenius's theorem, \cite{frobenius_77_uber}, if the condition of Theorem~\ref{th:hormander} fails in some open set then $\mfD$ is not hypoelliptic on that set. However, the statement is not truly sharp; for example the Grushin operator $\partial^{2}_{vv} - v \partial_x$ is hypoelliptic, while the conditions of Theorem~\ref{th:hormander} are violated on sets intersecting $\{v=0\}$. On the other hand this type of exception cannot occur if the sections $\mcV_i$ are continuous, since in this case it can be shown that the map \mbox{$x\mapsto \mathbb{1}_{\{\dim \mcV_i(x)=d\}}$} is upper semi-continuous. %
\end{remark}

While H\"ormander's theorem gives an almost sharp
characterisation of second order, hypoelliptic operators it does not say much about fine properties of the fundamental solution. For example, it gives no direct route to a refined regularity theory for hypoelliptic equations. This observation highlights the contribution of Folland's theorem (Theorem~\ref{th:folland_translation}); since translation invariance allows access to many additional tools in the Euclidean setting, such as harmonic analysis and singular integral methods. Viewing the class of translation invariant operators satisfying Folland's theorem as analogous to constant coefficient, elliptic operators on $\mbR^d$ a programme was successfully carried out in the works \cite{folland_stein_74_kohn,folland_stein_74_parametrices,folland_75_subelliptic, rothschild_stein_76}, establishing a full $L^p$ regularity theory for general, smooth coefficient, hypoelliptic operators. We refer to \cite[Ch.~3]{bramanti_14_invitation} for a concise introduction to this programme. 
\subsection{Related Literature}\label{subsec:related_works}
Non-translation invariant and non-uniformly elliptic SPDEs have been well-studied in the classical,  i.e.\ non-singular, regime and we do not attempt to present this large literature here. We refer to standard texts such as \cite{daprato_zabczyk_14,lototsky_rozovsky_17,dalang_koshnevisan_mueller_nualart_xiao_09,prevot_rockner_07} for a general overview and references to more specific works contained therein. However, in the more specialised setting of semi-linear SPDEs on homogeneous Lie groups, we mention the works \cite{ tindel_viens_99_regularity,tindel_viens_02_regularity,peszat_tindel_10_heat_wave} which treat both hypoelliptic and parabolic SPDEs on some classes of Lie groups and sub-Riemannian manifolds. A solution theory for conservative SPDEs based on the kinetic Fokker--Planck equation (see Section~\ref{subsec:kernel_examples})  in the It\^o case was developed in \cite{fedrizzi_flandoli_priola_vovelle_17}. Using the theory of paracontrolled calculus this was extended to the singular regime in \cite{hao_zhang_xicheng_zhu_zhu_21_kinetic}. The kinetic Fokker--Plank operator and its associated homogeneous Lie group fall into the analytic framework developed in this paper, however, we postpone a concrete application of this theory towards a kinetic Anderson type equation to a future work. Recently, in \cite{baudoin_ouyang_tindel_wang_22_hPAM_ito}, an Anderson type equation on the Heisenberg group with white in time and coloured in space noise was studied using It\^o calculus techniques, up the to the full sub-critical regime of the noise regularity. We treat a closely related problem in the final sections of this paper. A fuller discussion of the similarities and differences between the results of \cite{baudoin_ouyang_tindel_wang_22_hPAM_ito} and those of our approach is postponed to Remark~\ref{rem:other_paper}.

Singular SPDEs with non-translation invariant, but uniformly parabolic or elliptic linear part, have also been considered, especially using the recently developed pathwise techniques of \cite{hairer_14_RegStruct,gubinelli_imkeller_perkowski_15, otto_sauer_smith_weber_21_quasilinear, Sin23}. Some of these works are discussed in more detail in Section~\ref{sec:open_problems} below. Quasilinear SPDEs have been studied using regularity structures, rough path based methods and paracontrolled distribution theory, see \cite{gerencser_hairer_19_quasilinear,otto_weber_19_quasilinear, bailleul_mouzard_22_quasilinear, Ismael}. Recently, an approach inspired by the theory of regularity structures, but technically distinct, has been developed in \cite{otto_sauer_smith_weber_21_quasilinear,linares_otto_tempelmayr_21_structure,linares_otto_22_tree_free}. SPDEs on domains with boundaries have been treated using both regularity structures and paracontrolled distribution based methods, \cite{labbe_19_anderson_3d,gerencser_hairer_19_domains,chouk_vanZuijlen_21_asymptotics,gerencser_hairer_21_boundary_renorm}. Finally, a number of works have considered parabolic, singular SPDEs on Riemannian manifolds; a paracontrolled approach using the spectral decomposition associated to the Laplace--Beltrami operator has been developed in \cite{bailleul_bernicot_16_heat,bailleul_bernicot_19_high,mouzard_22_weyl}. An approach via regularity structures has been applied to the $2$d parabolic Anderson equation on a Riemannian manifold in \cite{dahlqvist_diel_driver_19_riemannian}, while \cite{bailleul_bruned_21_locallity, Sin23, SinHomogenize, WeijunHomogenize} develop some aspects required to treat non-translation invariant, uniformly parabolic equations. Finally, the work \cite{hairer_singh_22_manifolds} gives a comprehensive extension of regularity structures to singular SPDEs on Riemannian manifolds, with only the renormalisation of suitable stochastic objects left to a case by case treatment.
\subsection{A Motivating Class of Examples}
In this paper we restrict ourselves to linear operators satisfying the criteria of Theorem~\ref{th:folland_translation} and use the Anderson equation as a main motivating example, although we stress that our main analytic results apply in the full generality treated by \cite{hairer_14_RegStruct}. The parabolic Anderson model 
\begin{equation}\label{eq:PAM}
\partial_t u= \Delta u- u\xi,\quad u\tzero=u_0,
\end{equation}
on $\mbR_+\times \mbR^d$ describes the conditional, expected density of particles, where each particle moves according to an independent Brownian motion and branches at a rate proportional to the random environment $\xi$, see \cite{konig_16_PAM_book,carmona_molchanov_94_PAM_book}. Rigorously, this description is derived by discrete approximations and it is well known that in the case $\xi$ is a spatial white noise, when $d=2$ and $d=3$ one needs to recentre the potential in order to obtain a non-trivial limit as the discretisation is removed, \cite{hairer_14_RegStruct,hairer_labbe_15_simple,hairer_labbe_18_multiplicative,gubinelli_imkeller_perkowski_15}. We point out that if the environment is also allowed to depend on time and is for example, white in time, then martingale methods can be used instead to develop a probabilistic solution theory, see \cite{walsh_86_introduction,dalang_99_extending,dalang_01_extending_corrections,chen_15_precise}.

If one replaces the Brownian motions with a general diffusion,
\begin{equation*}
\dd x _t =  \sqrt{2}\sum_{i=1}^r X_i(x_t)\dd W^i_t,
\end{equation*}
where the vector fields satisfy H\"ormander's rank condition (Theorem~\ref{th:hormander}), then, formally the conditional expected density of particles is described by the hypoelliptic Anderson equation,
\begin{equation}\label{eq:hypoelliptic_anderson}
\partial_t u -\bar{\mfL}u := \partial_t u- \sum_{i=1}^r X^2_i u  = u \xi\ .
\end{equation}
When realisations of the environment are sufficiently singular then a pathwise solution theory for \eqref{eq:hypoelliptic_anderson} is expected to require renormalisation. Since the vector fields $\{X_i\}_{i=1}^r$ satisfy H\"ormander's condition, the operator $\bar{\mfL}$ is smoothing and therefore in principle, an extension of the theory of regularity structures should be applicable to find a renormalised, pathwise, solution theory for \eqref{eq:hypoelliptic_anderson}. In Section~\ref{sec:worked_application} we apply the analytic tools developed in the paper to demonstrate such an extension to the case where $\bar{\mfL}$ is the sub-Laplacian on a compact quotient of stratified Lie group (Carnot group).

We show a result analogous to those of \cite{hairer_14_RegStruct,hairer_labbe_18_multiplicative}, finding a notion of renormalised solution to \eqref{eq:hypoelliptic_anderson}, when $\xi$ is a coloured periodic noise on a stratified Lie group. More precisely, we show that when $\xi$ is replaced with a mollified, recentred noise $\xi_{\eps}-c_\eps$, for (specific) diverging constants $\{c_\eps\}_{\eps\in (0,1)}$, solutions $u_\eps$ to \eqref{eq:hypoelliptic_anderson} converge in probability to a unique limit independent of the specific choice of mollification-scheme. We stress that this result does not cover the full subcritical regime of \eqref{eq:hypoelliptic_anderson}. The treatment of this full regime would require an analogue of the BPHZ theorem on homogeneous Lie groups, see \cite{chandra_hairer_16,hairer_steele_23_spectral}.

A notable example of a stratified Lie group is the Heisenberg group, $\mbH^n \cong \mbR^{2n}\times \mbR$, see Section~\ref{subsec:heisenberg_group} for a description. In this case the collection of left-translation invariant vector fields, which generate the associated Lie algebra are,
\begin{equation*}
A_i(x,y,z) = \partial_{x_i} + y_i \partial_z,\quad B_i(x,y,z) = \partial_{y_i} - x_i \partial_z, \quad \text{for }\,i=1,\ldots,n.
\end{equation*}
It is readily checked that the collections $\{A_i,B_i\}_{i=1}^n$ are left-translation invariant with respect to the group action described in Section~\ref{subsec:heisenberg_group} and that one has $C(x,y,z):= [A_i,B_i] = -2\partial_z$ for all $i=1,\ldots,n$. The associated sub-Laplacian is the linear differential operator,
\begin{equation*}
\bar{\mfL}u = \sum_{i=1}^n (A_i^2 +B_i^2) u,
\end{equation*}
naturally extended to a heat type operator as in \eqref{eq:intro_example} and \eqref{eq:hypoelliptic_anderson}, c.f. Section~\ref{subsec:heisenberg_heat}
for further discussion.
A phenomenon of interest for Anderson equations is that of localisation, the concentration of the solution at large times, taking arbitrarily large values on islands of arbitrarily small size, \cite{carmona_molchanov_94_PAM_book,konig_16_PAM_book}. Since one expects the geometry of the underlying domain to have effect on the emergence of this phenomenon, as also noted in \cite{baudoin_ouyang_tindel_wang_22_hPAM_ito} and since pathwise approaches to the parabolic Anderson model have proved fruitful in studying finer properties of solutions, see e.g. \cite{allez_chouk_15,hairer_labbe_15_simple,hairer_labbe_18_multiplicative,gubinelli_ugurcan_zachhuber_18,labbe_19_anderson_3d,bailleul_dang_mouzard_22_analysis_anderson}, it is our hope that the tools developed herein may prove useful in analysing similar equations on more complex domains. The solution theory exposited in Section~\ref{sec:worked_application} applies to precisely this example on a compact quotient of the Heisenberg group.
\subsection{Structure and Contributions of the Paper}\label{sec:structure_and_contribution}
While it is possible to develop a bespoke solution theory for specific hypoelliptic SPDEs, this rapidly becomes technically challenging and computationally heavy, c.f. \cite{hao_zhang_xicheng_zhu_zhu_21_kinetic}. Furthermore, for more complicated equations this bespoke approach quickly becomes impossible. Thus,  in our view, the overarching contribution of this article is to show that by taking a more abstract and general point of view one can treat a large class of such equations at once in a framework (surprisingly) close to that of \cite{hairer_14_RegStruct}. In addition, this work highlights the dependence on the algebraic structure of the underlying space and thus contributes to the understanding of solution theories in more general settings, see Section~\ref{sec:open_problems}.

We begin, in Section~\ref{sec:analysis_background}, with a recap of analysis on homogeneous Lie groups that is sufficient for our purpose. The most important result in this section is Theorem~\ref{th:taylor} which provides a precise intrinsic version of Taylor's theorem on homogeneous Lie groups and is a sharper version of existing results, see Remark~\ref{rem:taytay}.

Section~\ref{sec:r_structures} contains the bulk of the paper and establishes an extension of the analytic aspects of regularity structures to the setting of homogeneous Lie groups. In  Section~\ref{sec:polynomial_r_struct} we introduce the appropriate notion of abstract polynomials on homogeneous Lie groups which are the backbone of much of our analysis. 
In Section~\ref{sec:modelled_distributions} we discuss modelled distributions. In order to prove reconstruction we extend ideas from \cite{friz_hairer_20_introduction}, constructing a suitable replacement for the test functions, $\rho $ and $ \varphi$ employed therein. This construction avoids the need for a general theory of wavelets or Fourier analysis on homogeneous Lie groups. In a similar vein, the set-up of singular modelled distributions in Section~\ref{subsec:singular_modelled}, while conceptually similar to that of \cite[Sec.~6]{hairer_14_RegStruct} requires a number technical novelties. In particular, the singular hyperplane of \cite{hairer_14_RegStruct} is replaced with a homogeneous Lie-subgroup and thus statements and proofs are adapted to the underlying (non-commutative) structure. 
In Section~\ref{subsec:schauder} we provide a surprisingly compact proof of the Schauder estimates, building on Theorem~\ref{th:taylor} in combination with the setup of Sections~\ref{sec:polynomial_r_struct} \& \ref{sec:modelled_distributions} in order to reduce to estimates already obtained in \cite[Sec.~5]{hairer_14_RegStruct}.
The final three sections, Section~\ref{subsec:local_ops}, \ref{subsec:symmetries} \& \ref{subsec:bounds_on_models}, carry over almost mutatis mutandis from \cite{hairer_14_RegStruct} and are included for the reader's convenience.

In Section~\ref{sec:evolution} we demonstrate the application of this general theory to semi-linear evolution equations. We again take an abstract point of view and start with a general homogeneous Lie group and only postulate a ``time'' direction as a one dimensional subspace of its Lie algebra. 
Then, we provide an example fixed point theorem for multiplicative stochastic heat equations in this setting. We note that there is no difficulty in extending the general fixed point theorem of \cite[Thm.~7.8]{hairer_14_RegStruct} to our setting, we only specialise for the sake of brevity. In Section~\ref{subsec:kernel_examples} we present examples of singular kernels on homogeneous Lie groups for which our theory applies, such as the Green's function of heat operators on the Heisenberg group and Kolmogorov type operators associated to inertial systems.

In Section~\ref{sec:PAM_structure} we explicitly describe a regularity structure for treating linear, Anderson-type equations on homogeneous Lie groups. For this specific class of equations it is notable that one may reuse the algebraic structures developed in \cite{bruned_hairer_zambotti_19_algebraic}. However, an algebraic question left open by this work is the extension of the results in \cite{bruned_hairer_zambotti_19_algebraic} to homogeneous Lie groups in full generality.

{Finally, in Section~\ref{sec:worked_application}, we employ the theory developed in the rest of the paper to demonstrate a solution theory for Anderson-type equations on a compact quotient $S$ of a stratified Lie group $\mathbb{G}$. Since we do not limit ourselves to quotients by normal subgroups, the space $S$ may fail be a group and therefore it is not possible to directly regularise the noise in the usual manner. This last obstacle is circumvented by making use of the fact that $S$ is a $\mathbb{G}$-homogeneous space allowing us to define an intrinsic regularisation of the noise.}
\subsection{Open Problems and Wider Context}\label{sec:open_problems}
As discussed above, translation invariant (and for example) parabolic operators on Euclidean domains serve as a starting point from which
non-translation invariant parabolic problems on Euclidean domains as well as parabolic problems on Riemannian manifolds can be studied.
A somewhat parallel progression holds starting from translation invariant operators on homogeneous Lie groups,  moving to general Hörmander operators as well as heat type equations on sub-Riemannian manifolds, see \cite{bramanti_14_invitation}. This parallel progression in PDE analysis leads one to ask, how far the theory of regularity structures can be extended in these directions. The table below gives a schematic presentation of the progress so far, presents some open problems and places this work within the context of the study of subcritical parabolic-type SPDEs. The two rows describe the two parallel progressions outlined above in the context of regularity structures.
\vspace{0.5em}
\small \begin{center}
\begin{tabularx}{\textwidth} { 
		| >{\raggedright\arraybackslash}X 
		| >{\raggedright\arraybackslash}X 
		| >{\raggedright\arraybackslash}X | }
	\hline
	\textbf{Translation invariant operators on $\mathbb{R}^d$}& \textbf{Non-translation invariant operators on $\mathbb{R}^d$} & \textbf{On Riemannian manifolds}\\
	\hline
	The works  \cite{hairer_14_RegStruct,bruned_hairer_zambotti_19_algebraic,bruned_chandra_chevyrev_hairer_21_renormalising,chandra_hairer_16} present a general and complete picture. & Analytic step in \cite{hairer_14_RegStruct}; aspects of renormalisation addressed in \cite{bailleul_bruned_21_locallity, Sin23}.  & In \cite{dahlqvist_diel_driver_19_riemannian} 2d-PAM is treated. A general account in \cite{hairer_singh_22_manifolds}.\\ 
	\hline  \hline
	\textbf{Translation invariant operators on homogeneous Lie groups} & \textbf{General H\"ormander operators} & \textbf{On sub-Riemannian manifolds} \\ 
	\hline
	Analytic theory covered in this work. Renormalisation and convergence by hand. & Open problem A & Open problem B \\ 
	\hline
\end{tabularx}
\end{center}
\vspace{1em}
\normalsize

Each problem in the second row is closely related to its equivalent problem in the first row, we discuss the posed open problems in a little more detail.
\begin{itemize}
\item \hspace{0.2em} Open Problem A is expected to be more involved than its counterpart in the Euclidean setting; going from translation invariant operators to non-translation invariant operators by local approximations. In the case of general H\"ormander operators, the underlying Lie group structure would in general vary from point to point. An interesting application would be the study of general hypoelliptic Anderson models \eqref{eq:hypoelliptic_anderson} where the underling particles perform a general diffusion with generator of the form \eqref{eq:vec_field_operator}, c.f. Theorem~\ref{th:hormander}.
\item \hspace{0.2em} Open Problem B is motivated by the fact that analogous to the tangent space being an appropriate local approximations of a Riemannian manifold, the non-holonomic tangent space is an appropriate local approximations of a sub-Riemannian manifold and each fibre of the non-holonomic tangent space is a stratified Lie group, c.f.  \cite{agrachev_barilari_boscain_20_subRiemannian}. Note that in view of Remark \ref{rem:hormander_sharp} the difficulties in Problem B  are expected to be somewhat complementary to those of Problem A. 
\end{itemize}
\paragraph{Notation:} Given $\alpha \in \mbR$ we let $[\alpha]:= \max\{r\in \mbZ\,:\, r\leq \alpha\}$ denote the integer part of $\alpha$. We often write $\lesssim$ to mean that an inequality holds up to multiplication by a constant which may change from line to line but is uniform over any stated quantities. In general we reserve the notation $D$ for the usual derivative on Euclidean space, and $X,\,Y$ for elements of a Lie algebra thought of as vector fields on the associated Lie group, see Section~\ref{sec:analysis_background}. We will use $|\,\cdot\,|$ to denote the size of various quantities; elements of a Lie group, the Haar measure of subsets of the group, the homogeneity of members of the structure space in a regularity structure, traces of linear maps and the absolute value function on $\mbR$ etc. The meaning will usually be clear from the context but in cases where it is not we will make sure to specify in the text. For integrals we will use the standard notation, $\dd x$, for integration against the Haar measure on a given homogeneous, Lie group, see Proposition~\ref{prop:polynomial_and_haar}. Throughout most of the article we take an intrinsic point of view and do not equip the homogeneous Lie group with an explicit chart. 
\paragraph{Acknowledgements:}
The authors wish to thank Martin Hairer, Rhys Steele, Ilya Chevyrev and Ajay Chandra for helpful and insightful conversations during the preparation of this manuscript.

AM wishes to thank the INI and DPMMS for their support and hospitality which was supported by Simons Foundation (award ID 316017) and by Engineering and Physical Sciences Research Council (EPSRC) grant number EP/R014604/1 as well as DFG Research Unit FOR2402 for ongoing support. 

HS acknowledges funding by the Imperial College London President's PhD Scholarship. He wishes to thank Josef Teichmann and M\'at\'e Gerencs\'er for their hospitality during his visit to ETH Z\"urich and TU Wien.
\section{Analysis on Homogeneous Lie Groups}\label{sec:analysis_background}
We collect some basic facts on homogeneous Lie groups and smooth function on them. Let $\mathfrak{g}$ be a Lie algebra and $\mbG$ be the unique, up to Lie algebra isomorphism, corresponding simply connected Lie group. 
We write $[\cdot, \cdot ]$ for its Lie bracket and inductively set, $\mathfrak{g}_{(1)}=\frakg$ and $\frakg_{(n)}=[\frakg_{(n-1)}, \frakg]$. Recalling the exponential map $\exp:\mfg \rightarrow \mbG$, we have the Baker--Campbell--Hausdorff formula 
\begin{equation}\label{eq:BCH}
\exp(X)\exp(Y)= \exp(\opH (X,Y))\ ,
\end{equation}
where $\opH$ is given by $\opH (X,Y)=X+Y+\frac{1}{2}[X,Y]+...$ with the remaining terms consisting of higher order iterated commutators of $X$ and $Y$; crucially $\opH$ is universal, i.e.\ it does not depend on the underlying Lie algebra.
\begin{prop}[{\cite[Prop.~1.2]{folland_stein_82_hardy}}]\label{prop:polynomial_and_haar}
Assume that $\frakg$ is nilpotent, i.e. $\frakg_{(n)}={0}$ for some $n\in \NN$. Then, the exponential map is a diffeomorphism and
\begin{itemize}
	\item through this identification of $\frakg$ with $\mbG$, the map
	\begin{equation*}
		\mbG\times \mbG \ni (x,y) \mapsto xy \in \mbG
	\end{equation*}
	becomes a polynomial map (between vector spaces).
	\item the pull-back to $\mbG$ of the Lebesgue measure on $\mfg$ is a bi-invariant Haar measure on $\mbG$.\footnote{Recall that a bi-invariant Haar measure on a locally compact Hausdorff group is any Borel measure which is invariant under push-forward by both left and right translates of the underlying group. See the proof of \cite[Prop.~1.2 (c)]{folland_stein_82_hardy} for further details.}
\end{itemize}
\end{prop}
\begin{definition}
A dilation on $\frakg$ is a group of algebra automorphisms $\{\dil_r\}_{r>0}$ of the form \mbox{$\dil_r X= \exp (\log r \cdot \fraks) X $} with $\fraks : \frakg\to \frakg$ being a diagonalisable linear operator with $1$ as its smallest eigenvalue. 
\end{definition}
\begin{remark}
The requirement that the smallest eigenvalue of $\mfs$ be $1$ is purely cosmetic. Otherwise, denoting the smallest eigenvalue by $\fraks_1$, one can work with the new operator $\tilde{\fraks}=\frac{1}{\fraks_1}\fraks$.
\end{remark}
\begin{remark}\label{rem:eigenspaces}
For $a\in \RR$, denote by $W_a\subset \frakg$ the eigenspace of $\fraks$ with eigenvalue $a$. Thus for $X\in W_a$, $Y\in W_b$ one has the condition
\begin{equation*}
	\dil_r [X,Y] =  [\dil_r X, \dil_r Y] = r^{a+b} [X,Y]
\end{equation*}
which in particular implies $[W_a, W_b]\subset W_{a+b}$ and since $W_a=\{0\}$ for $a<1$ 
\begin{equation*}
	\frakg_{(j)}\subset \bigoplus_{a \geq j} W_a\ .
\end{equation*}
In particular, if $\mfg$ admits a family of dilations, it is nilpotent. The converse is not necessarily true. 
\end{remark}
\begin{definition}
A homogeneous Lie group $\mbG$ is a simply connected Lie group where its Lie algebra $\frakg$ is nilpotent and endowed with a family of dilations $\{\dil_r\}_{r>0}$. For $r>0$, we define the group automorphism $$x\mapsto r\cdot x:= \exp \circ \dil_r\circ \exp^{-1} x \ .$$ 

A homogeneous norm on $\mbG$ is a continuous function $|\cdot|: \mbG\to [0,\infty)$ satisfying the following properties for all $x\in \mbG$, $r\geq 0$
\begin{enumerate}
	\item $|x|=0$ if and only if $x=e$, the neutral element, 
	\item $|x|=|x^{-1}| \ ,$
	\item $|r\cdot x|= r|x| \ . $
\end{enumerate}
\end{definition}
The homogeneous norm naturally induces a topology generated by the open sets and in turn a Borel $\sigma$-algebra. From now on, we will always assume that $\mbG$ is equipped with this topology and $\sigma$-algebra. Furthermore, given a homogeneous group $\mbG$ we denote by $\{X_{j}\}_{j=1}^d\in \mathfrak{g}$ a basis of eigenvectors of $\fraks$ with eigenvalues $1=\fraks_1\leq \fraks_2 \leq ... \leq \fraks_d$ and such that 
\begin{equation}\label{eq:eigenvectors}
\fraks X_j = \fraks_j X_j \ .
\end{equation}
Given a measurable subset $E\subset \mbG$ we write $|E|$ for its Haar measure which we assume to be normalized such that the set $B_1=:\{x\in \mbG\ : \  | x| \leq 1\}$ has measure $1$, c.f.\ Proposition~\ref{prop:polynomial_and_haar}. In integrals we use the standard notation $\dd x$. 
We define $|\fraks|:=\text{trace} (\fraks)$ as the homogeneous dimension of $\mbG$, since  for any measurable subset $E\subset \mbG$ and $r> 0$, one has
\begin{equation*}
|r\cdot E|= r^{|\fraks|} |E| \ . 
\end{equation*}
We also define balls of radius $r> 0$,
\begin{equation*}\label{eq:ball_def}
B_r(x):= \{y\in \mbG\ : \  | x^{-1}y| < r\} \ .
\end{equation*}
The topology induced by these balls agrees with the topology of $\mbG$ as a Lie group, see \cite[Sec.~3.1.6]{fischer_ruzhansky_18_quantisation}. Note that due to the non-commutativity of $\mbG$, in general $|x^{-1}y|\neq |yx^{-1}|$. We consistently work with the following choice of a semi-metric on the group, 
$$\dd_{\mbG}(x,y):= |x^{-1}y|=|y^{-1}x|\ .$$
For $\mfK\subset \mbG$ we write $\bar{\mfK}:= \{z \in \mbG\,:\, \dd_{\mbG}(z,\mfK) := \inf_{y\in \mfK} |y^{-1}z| \leq 1\}$ for the $1$-fattening of $\mfK$.
\begin{remark}\label{rem:triangle_inequality}
A function $f:\mbG\rightarrow \mbR$ is called homogeneous of degree $\lambda\in \mbR$, if $f(r\cdot x)=r^\lambda f(x)$ for all $x\in \mbG$. One can show, c.f. \cite[Prop.~ 1.5 \& Prop.~ 1.6]{folland_stein_82_hardy}, that for any homogeneous norm on $\mbG$ there exists $\mu>0$ such that 
\begin{itemize}
	\item $|xy|\leq \mu(|x|+|y|)$ for any $x,y \in \mbG$
	\item $\left| |xy|-|x|\right| \leq \mu |y|$ for any $x,y \in \mbG$ such that $|y|\leq \frac{1}{2} |x|$ \ .
\end{itemize}
Furthermore all homogeneous norms are mutually equivalent and we may always choose a homogeneous norm that is smooth away from $e\in \mbG$, \cite[Sec.~3.1.6]{fischer_ruzhansky_18_quantisation}.
\end{remark}
\subsection{Derivatives and Polynomials}
We identify $\frakg$ with the left invariant vector fields $\frakg_L$ on $\mbG$ and write $\frakg_R$ for the right invariant vector fields. We write $X_i$ for the the basis elements as in \eqref{eq:eigenvectors} seen as elements of $\frakg_L$ and $Y_i$ for the basis of $\frakg_R$ satisfying $Y_i |_e= X_i |_e$ . Thus we can write 
$$X_jf(y)= \partial_t f(y\exp(tX_j))|_{t=0}\quad \text{ and } \quad  Y_jf(y)= \partial_t f(\exp(tY_j)y)|_{t=0} \ $$
for any smooth function $f\in C^\infty(\mbG)$.

A map $P: \mbG \to \mathbb{R}$ is called a polynomial if $P\circ \exp : \mfg \rightarrow \mbR$ is a polynomial on $\mfg$.\footnote{Recall that the space of polynomial functions on $\frakg$ is canonically isomorphic to $\bigoplus_{n} (\frakg^\ast)^{\otimes_s n}$ where $\otimes_s$ denotes the symmetric tensor product.} Let $\zeta_i$ be the basis dual to the basis $X_i$ of $\frakg$. We set 
\begin{equation}\label{eq:coordinate_def}
\eta_j \coloneqq \zeta_j\circ \exp^{-1}:\mbG\to \mbR,
\end{equation}
Note that $\eta= (\eta_1,...,\eta_d)$ forms a global coordinate system \footnote{Occasionally we use the corresponding notation $\frac{\partial}{\partial\eta_i}$.} and furthermore any polynomial map on $\mbG$ can be written in terms of coefficients $a_I \in \mbR$ as 
$$P= \sum_I a_I \eta^I$$
with the sum running over a finite subset of $\NN^d$ and where for a multi-index $I=(i_1,...,i_d)\in \NN^d$ we write
$\eta^I= \eta_1^{i_1}\cdot...\cdot \eta_d^{i_d}$. 
Define $d(I)=\sum_j \fraks_j i_j$ and $|I| = \sum_j i_j$, we call $\max \{ d(I)\  : \ a_I \neq 0\}$ the homogeneous degree and $\max\{ |I| \  : \ a_I \neq 0\}$ the isotropic degree of $P$. For $a> 0$, we denote by $\mathcal{P}_a$ the space of polynomials of homogeneous degree strictly less than $a$ and define $\triangle= \{d(I)\in \mathbb{R} \ : \ I\in \mathbb{N}^d \}$. \footnote{We point out a possibly counter-intuitive quirk of our definition; for $k\in \triangle$  the set $\mcP_k$ does \textit{not} contain polynomials of degree $k$ but only those of degree less than $k$. 
}
We can rewrite the group law on $\mbG$ explicitly in terms of $\eta= (\eta_1,...,\eta_d)$.

\begin{prop}\label{prop:factorisation_formula}
For $j\in \{1,\ldots,d\}$and multi-indices $I,\,J$ s.t. $d(I)+d(J) = \mfs_j$, there exist constants $C^{I,J}_j\in \mbR$ such that the following formula holds,
$$\eta_j (xy)=\eta_j (x)+\eta_j (y) + \sum_{I,J\neq 0,\ d(I)+d(J)=\fraks_j} C_j^{I,J}\eta^I(x)\eta^J(y)\ ,$$
for all $x,\,y \in \mbG$, where $\eta_j$ is given by the expression \eqref{eq:coordinate_def}.
\end{prop}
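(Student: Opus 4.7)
The plan is to combine the Baker--Campbell--Hausdorff formula with the homogeneity forced by the dilation structure; the argument is purely algebraic. By Remark~\ref{rem:eigenspaces} the Lie algebra $\mfg$ is nilpotent, so \eqref{eq:BCH} collapses to a finite Lie polynomial: writing $x = \exp(X)$ and $y = \exp(Y)$, one has $xy = \exp\bigl(\opH(X,Y)\bigr)$ with $\opH(X,Y) = X + Y + \tfrac{1}{2}[X,Y] + \cdots$. Expanding $X = \sum_i \eta_i(x) X_i$ and $Y = \sum_i \eta_i(y) X_i$ and reading off the coefficient of $X_j$ in $\opH(X,Y)$ immediately expresses $\eta_j(xy)$ as a polynomial $P_j(\eta(x), \eta(y))$ whose degree-one part is precisely $\eta_j(x) + \eta_j(y)$.

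The next step is to identify which monomials $\eta^I(x)\eta^J(y)$ can occur beyond this linear part. Every term of $\opH(X,Y)$ of degree $\geq 2$ is an iterated commutator involving at least one $X$ and at least one $Y$, since any nested bracket built only from $X$'s (or only from $Y$'s) of length $\geq 2$ vanishes. Hence each monomial contributed to $P_j$ beyond the linear part carries positive powers of both $\eta(x)$ and $\eta(y)$, giving $I \neq 0$ and $J \neq 0$. Equivalently, substituting $x = e$ (resp.\ $y = e$) into the expansion forces $C_j^{0,J} = 0$ (resp.\ $C_j^{I,0} = 0$) for all nonzero $J$ (resp.\ $I$).

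To pin down the homogeneity constraint $d(I) + d(J) = \mfs_j$, I would exploit the fact that the dilations $\dil_r$ are Lie algebra automorphisms, so $\opH(\dil_r X, \dil_r Y) = \dil_r \opH(X,Y)$ and hence $(r\cdot x)(r\cdot y) = r\cdot(xy)$. Combined with $\eta_j(r\cdot z) = r^{\mfs_j}\eta_j(z)$ this yields the identity
$$P_j\bigl(\eta(r\cdot x), \eta(r\cdot y)\bigr) = r^{\mfs_j}\, P_j\bigl(\eta(x), \eta(y)\bigr) \qquad \text{for all } r > 0, \ x,y \in \mbG.$$
Since $\eta^I(r\cdot x)\eta^J(r\cdot y) = r^{d(I) + d(J)}\eta^I(x)\eta^J(y)$, comparing coefficients of the independent monomials $\eta^I(x)\eta^J(y)$ and varying $r$ forces $C_j^{I,J} = 0$ unless $d(I) + d(J) = \mfs_j$. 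The only mild obstacle is bookkeeping: making sure that the coordinate expression of $\opH$ really does give a finite polynomial identity with no pure-$\eta(x)$ or pure-$\eta(y)$ monomials beyond the linear part. Once this is secured, the homogeneity argument is a direct scaling comparison and the proposition follows.
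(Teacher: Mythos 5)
Your proof is correct and follows essentially the same route as the paper: BCH gives a polynomial expansion of $\eta_j(xy)$, substituting $x=e$ or $y=e$ (or, equivalently, the structural observation about iterated commutators) kills the pure-$\eta(x)$ and pure-$\eta(y)$ terms of degree $\geq 2$, and the scaling identity $\eta_j((r\cdot x)(r\cdot y)) = r^{\mfs_j}\eta_j(xy)$ forces $d(I)+d(J)=\mfs_j$ by comparing homogeneous components. The only difference is that you spell out the commutator-structure reason alongside the substitution argument, whereas the paper uses only the latter; both are fine.
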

\begin{proof} \, Applying the Baker--Campbell--Hausdorff formula \eqref{eq:BCH} there exist constants $C^{I,J}_j$ which depend only on the choice of basis for $\mfg$ made in \eqref{eq:eigenvectors} but are independent of $x,\, \& y \in \mbG$ such that
$$\eta_j (xy)=\eta_j (x)+\eta_j (y) + \sum_{|I|+|J|\geq 2} C^{I,J}_j \eta^I(x)\eta^J(y)\ , \qquad \text{for all  } x,\, y\in \mbG.$$
By setting either $x=e$ or $y=e$, we find that $C^{I,J}_j=0$ if $I=0$ or $J=0$.
Since furthermore,
$\eta_j((rx)(ry))= r^{\fraks_j}\eta_j(xy)$ the claim follows.
\end{proof}
\begin{remark}
Proposition~\ref{prop:factorisation_formula} implies that for $\fraks_j<2$ one has $\eta_j (xy)=\eta_j (x)+\eta_j (y)$ while for $\fraks_j=2$ one has $\eta_j (xy)=\eta_j (x)+\eta_j (y) + \sum_{\fraks_k=\fraks_l=1} C_j^{k,l}\eta_k(x)\eta_l(y) $ .
\end{remark}
\begin{remark}
It is also noteworthy to realise that Proposition~\ref{prop:factorisation_formula} implies that $\mathcal{P}_a$ is invariant under right and left-translations. (This is not true if one replaces homogeneous degree by isotropic degree, except if $\mbG$ is abelian or $a=0$).
\end{remark}
\begin{remark}\label{rem:factorising_constants}
If follows from Proposition~\ref{prop:factorisation_formula} that one can write
$$\eta^K(xy)=\eta^K (x)+\eta^K (y) + \sum_{I,J\neq 0,\ d(I)+d(J)=d(K)} C_K^{I,J}\eta^I(x)\eta^J(y)\ ,$$
where the constants $C_K^{I,J}$ can be written in terms of the constants $C_j^{I,J}$.
\end{remark}
\begin{lemma}
For $i,\,j \in \{1,\ldots,d\}$,
\begin{equation}\label{eq:derivative_of_polynomial}
	X_i \eta_j = \delta_{i,j} + \sum_{I\neq 0,\  d(I)= \fraks_j-\fraks_i} C_j^{I, e_i} \eta^I \ ,
\end{equation}
where $e_i$ denotes the multi-index $(0,\ldots,0,1,0,\ldots,0)$ with the $1$ being in the $i$-th slot.
\end{lemma}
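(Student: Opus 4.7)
The plan is to apply the definition of the left-invariant vector field together with the explicit factorisation from Proposition~\ref{prop:factorisation_formula}. Recall that $X_i \eta_j(y) = \partial_t \eta_j(y\exp(tX_i))\big|_{t=0}$, so the strategy is to expand $\eta_j(y\exp(tX_i))$ as a polynomial in $t$ (with coefficients depending on $y$) and read off the linear term.

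First, I would use Proposition~\ref{prop:factorisation_formula} applied to the product $y \cdot \exp(tX_i)$ to write
\begin{equation*}
\eta_j(y\exp(tX_i)) = \eta_j(y) + \eta_j(\exp(tX_i)) + \sum_{I,J\neq 0,\ d(I)+d(J)=\fraks_j} C_j^{I,J}\,\eta^I(y)\,\eta^J(\exp(tX_i))\ .
\end{equation*}
Next, I would compute $\eta^J(\exp(tX_i))$. Under the identification $\exp: \mfg\to \mbG$, the coordinates satisfy $\eta_k(\exp(tX_i)) = \zeta_k(tX_i) = t\,\delta_{k,i}$, since $\zeta_k$ is dual to the basis $X_k$. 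Hence for a multi-index $J=(j_1,\ldots,j_d)$,
\begin{equation*}
\eta^J(\exp(tX_i)) = \prod_{k} (t\,\delta_{k,i})^{j_k} = t^{j_i}\,\mathbb{1}_{\{J = j_i e_i\}}\ ,
\end{equation*}
i.e.\ the expression vanishes unless $J$ is supported only on the $i$-th coordinate.

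Differentiating at $t=0$ then kills all terms for which $\eta^J(\exp(tX_i))$ contributes a factor $t^m$ with $m\neq 1$. The constant term $\eta_j(y)$ drops out, the term $\eta_j(\exp(tX_i)) = t\,\delta_{i,j}$ contributes $\delta_{i,j}$, and from the sum only the multi-indices $J=e_i$ survive (these have $j_i=1$, so they produce $t^1$). The constraint $d(I)+d(J)=\fraks_j$ together with $J=e_i$ forces $d(I)=\fraks_j - \fraks_i$, giving
\begin{equation*}
X_i\eta_j(y) = \delta_{i,j} + \sum_{I\neq 0,\ d(I)=\fraks_j - \fraks_i} C_j^{I,e_i}\,\eta^I(y)\ ,
\end{equation*}
as claimed. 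There is no serious obstacle: the proof is essentially bookkeeping, and the only point requiring a little care is the elementary identity $\eta_k(\exp(tX_i)) = t\delta_{k,i}$, which relies on the fact that $\eta = \zeta \circ \exp^{-1}$ and the $\zeta_k$ are dual to the $X_k$.
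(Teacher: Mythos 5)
Your proof is correct and takes essentially the same approach as the paper: both apply Proposition~\ref{prop:factorisation_formula} to a product and differentiate the second argument at the identity, the only (cosmetic) difference being that you unwind the definition $X_i f(y)=\partial_t f(y\exp(tX_i))\big|_{t=0}$ and compute $\eta^J(\exp(tX_i))=t^{j_i}\mathbb{1}_{\{J=j_i e_i\}}$ explicitly, whereas the paper invokes $X_i|_e=\tfrac{\partial}{\partial\eta_i}\big|_e$ and differentiates in coordinates at the origin.
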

\begin{proof}\,
This follows directly from Proposition~\ref{prop:factorisation_formula}, applying $X_i$ to the function
$$y\mapsto \eta_j(xy) \ , $$ 
evaluating at $y=0$ and using the fact that $X_i|_0= \frac{\partial}{\partial\eta_i}|_0$.
\end{proof}
\begin{prop}[{\cite[Prop.~1.26]{folland_stein_82_hardy}}]
One has $X_j= \sum P_{j,k} \left(\frac{\partial}{\partial \eta_k}\right)$ where
$$P_{j,k}=\left\{
\begin{array}{ll}
	1  & \mbox{if } k=j \\
	0 & \mbox{ if  } \fraks_k\leq \fraks_j, k\neq j \ 
\end{array}
\right. $$
and $P_{j,k}$ is a homogeneous polynomial of degree $\fraks_k-\fraks_j$ if $\fraks_k>\fraks_j$. The analogous statement holds for the vector fields $Y_j$,
\end{prop}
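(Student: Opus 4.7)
The plan is to read the coefficients $P_{j,k}$ off the identity $P_{j,k} = X_j \eta_k$, which holds because $(\eta_1,\dots,\eta_d)$ is a global coordinate system on $\mbG$. Since any smooth vector field $V$ on $\mbG$ admits the unique expression $V = \sum_k (V\eta_k) \frac{\partial}{\partial \eta_k}$, it suffices to apply $X_j$ to each $\eta_k$ and check the claimed properties of the resulting polynomial.

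Substituting $i \to j$ and $j \to k$ in the previously established formula \eqref{eq:derivative_of_polynomial} gives
\begin{equation*}
P_{j,k} \;=\; X_j \eta_k \;=\; \delta_{j,k} \;+\; \sum_{\substack{I \neq 0 \\ d(I)=\fraks_k-\fraks_j}} C_k^{I,e_j}\, \eta^I \,.
\end{equation*}
The four claims now follow from a case analysis on $(\fraks_j,\fraks_k)$. Recalling that every $I \in \NN^d$ with $I \neq 0$ satisfies $d(I) \geq \fraks_1 = 1 > 0$: \emph{(i)} when $k=j$ we have $\fraks_k - \fraks_j = 0$, so the sum is empty and $P_{j,j}=1$; \emph{(ii)} when $\fraks_k < \fraks_j$ no $I$ contributes (since $d(I)$ cannot be negative) and $\delta_{j,k}=0$, so $P_{j,k}=0$; \emph{(iii)} when $\fraks_k = \fraks_j$ but $k \neq j$, the sum is again empty and $\delta_{j,k}=0$, so $P_{j,k}=0$; \emph{(iv)} when $\fraks_k > \fraks_j$, the right-hand side is a (possibly empty, hence zero) sum of monomials $\eta^I$ of homogeneous degree exactly $\fraks_k-\fraks_j$, which is a homogeneous polynomial of that degree.

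For the statement about the right-invariant vector fields $Y_j$, the same strategy applies with a minor adjustment at the base step: using $Y_j f(y) = \partial_t f(\exp(tY_j)y)\big|_{t=0}$ together with $Y_j|_e = X_j|_e$ (so that $\eta_k(\exp(tY_j))$ has $t$-derivative $\delta_{j,k}$ at $t=0$ and $\eta^I(\exp(tY_j))$ has vanishing $t$-derivative for $|I| \geq 2$), Proposition~\ref{prop:factorisation_formula} yields
\begin{equation*}
Y_j \eta_k(y) \;=\; \delta_{j,k} \;+\; \sum_{\substack{J \neq 0 \\ d(J) = \fraks_k - \fraks_j}} C_k^{e_j, J}\, \eta^J(y),
\end{equation*}
so the same case analysis concludes the proof. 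No real obstacle is expected; the only point requiring some care is verifying that the sum genuinely vanishes in the degenerate case $\fraks_k = \fraks_j$ with $k \neq j$, which relies crucially on the assumption that the smallest eigenvalue of $\fraks$ is $1$ so that no nonzero multi-index has $d(I)=0$.
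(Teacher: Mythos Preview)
Your proof is correct. Note, however, that the paper does not supply its own proof of this proposition; it simply cites \cite[Prop.~1.26]{folland_stein_82_hardy}. Your argument derives the result directly from the immediately preceding lemma \eqref{eq:derivative_of_polynomial}, which is a clean and self-contained route: once $X_j\eta_k$ is known explicitly, the case analysis is automatic. The treatment of $Y_j$ via Proposition~\ref{prop:factorisation_formula} is likewise correct; one small clarification is that $\exp(tY_j)=\exp(tX_j)$ as group elements since $Y_j|_e=X_j|_e$, so $\eta^I(\exp(tY_j))=t^{|I|}\delta_{I,|I|e_j}$ and only $I=e_j$ contributes a nonzero $t$-derivative at $t=0$, exactly as you use.
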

For a multi-index $I=(i_1,...,i_d)\in \mbN^d$ we introduce the notation $X^I = X_1^{i_1}... X_d^{i_d}$. Note that the order of the composition matters since $\mfg$ is not in general Abelian. It is a well known fact that any left invariant differential operator on $\mbG$ can (uniquely) be written as a linear combination of $\{X^I \}_{I \in \mathbb{N}^d}$. The next proposition follows as a direct consequence.
\begin{prop}[{\cite[Prop.~1.30]{folland_stein_82_hardy}}]\label{prop:uniqueness_taylor}
The following maps from $\mathcal{P}_a\to \mathbb{R}^{\dim \mathcal{P}_a}$ are linear isomorphisms.
\begin{enumerate}
	\item $\,\, P \mapsto \left\{ \left(\frac{\partial}{ \partial_\eta}\right)^I P(e)\right\}_{d(I)< a}$ ,
	\item $\,\, P \mapsto \left\{ X^I P(e)\right\}_{d(I)< a}$ ,
	\item $\,\, P \mapsto \left\{ Y^I P(e)\right\}_{d(I)< a}$ ,
\end{enumerate}
The same holds replacing $e\in \mbG$ with any other point $x\in \mbG$.
\end{prop}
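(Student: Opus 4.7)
The plan is to deduce (2) and (3) from (1), which is essentially tautological. Since all three maps go between finite-dimensional vector spaces of equal dimension $\dim \mathcal{P}_a$, it suffices to prove injectivity in each case.

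For (1), recall that $\eta = (\eta_1, \ldots, \eta_d)$ gives a global coordinate system on $\mathbb{G}$ identifying $e$ with the origin. A polynomial $P = \sum_I a_I \eta^I \in \mathcal{P}_a$ therefore satisfies $(\partial/\partial \eta)^I P(e) = I!\, a_I$, making the map trivially injective.

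For (2), I exploit the fact that the map is block-diagonal with respect to the decomposition $\mathcal{P}_a = \bigoplus_{k \in \triangle,\, k < a} H_k$, where $H_k$ denotes the subspace of polynomials that are homogeneous of degree $k$ under the dilation. Indeed, each $X_j$ sends $H_m$ to $H_{m - \fraks_j}$ (with the convention $H_m = 0$ for $m < 0$), so that $X^I$ maps $H_m$ to $H_{m - d(I)}$. For $P \in H_m$ the value $X^I P(e)$ therefore vanishes unless $m = d(I)$: if $m < d(I)$ the output is identically zero, while if $m > d(I)$ it is a homogeneous polynomial of strictly positive degree, vanishing at $e \equiv 0$. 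Consequently injectivity reduces to injectivity of each block $\phi_k \colon H_k \to \mathbb{R}^{\{I \,:\, d(I) = k\}}$.

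To prove block injectivity I combine this with (1) via the explicit expansion $X_j = \partial/\partial \eta_j + \sum_{k \,:\, \fraks_k > \fraks_j} P_{j,k}\, \partial/\partial \eta_k$ from the preceding result \cite[Prop.~1.26]{folland_stein_82_hardy}. Ordering multi-indices $I$ with $d(I) = k$ by reverse lexicographic order on $(I_d, I_{d-1}, \ldots, I_1)$, so that indices with more weight on the ``high-$\fraks$'' components come later, a direct computation, using that each $P_{j,k}$ is a homogeneous polynomial of strictly positive degree (hence vanishes at $e$) and that $X_j$ differs from $\partial/\partial \eta_j$ only via derivatives in strictly higher-$\fraks$ variables, shows that the matrix $M_k := (X^I \eta^L(e))_{d(I) = d(L) = k}$ is upper triangular with diagonal entries $I!$. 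Invertibility of $M_k$, combined with (1), yields (2).

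Part (3) follows by the identical argument applied to the analogous expansion of $Y_j$. Finally, the statement at arbitrary $x \in \mathbb{G}$ follows from left (resp.\ right) invariance of $X^I$ (resp.\ $Y^I$) together with Remark~\ref{rem:factorising_constants}, which implies that $\mathcal{P}_a$ is invariant under translations: one simply applies the result at $e$ to the translated polynomial $y \mapsto P(xy)$ (resp.\ $y \mapsto P(yx)$), which again lies in $\mathcal{P}_a$. The main obstacle is establishing the upper-triangular structure within each block $M_k$; once the correct ordering of multi-indices has been identified, this reduces to bookkeeping driven by the BCH formula, with the non-commutativity of $\mathbb{G}$ precisely responsible for the non-trivial off-diagonal entries that the grading by homogeneous degree then constrains to be triangular.
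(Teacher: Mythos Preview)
Your argument is correct, and it differs from the paper's. The paper does not give a detailed proof at all: it cites \cite[Prop.~1.30]{folland_stein_82_hardy} and merely remarks beforehand that ``any left invariant differential operator on $\mbG$ can (uniquely) be written as a linear combination of $\{X^I\}_{I\in\mathbb{N}^d}$'', i.e.\ invokes the Poincar\'e--Birkhoff--Witt theorem. From PBW one deduces that $\operatorname{span}\{X^I : d(I)<a\}$ and $\operatorname{span}\{(\partial/\partial\eta)^I : d(I)<a\}$ coincide as spaces of differential operators (both have dimension $\dim\mathcal{P}_a$ and the change of basis respects homogeneous degree), so the evaluation functionals $P\mapsto X^IP(e)$ span the same subspace of $\mathcal{P}_a^*$ as the coordinate ones, and (2) follows from (1).

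Your route is instead a direct, self-contained computation: you first observe the block-diagonality by homogeneous degree, then argue that each block $M_k=(X^I\eta^L(e))_{d(I)=d(L)=k}$ is upper triangular in a suitable ordering, with diagonal entries $I!$. This avoids invoking PBW and is more elementary, at the cost of the bookkeeping you allude to. The triangularity claim is correct, though your justification is terse; the key mechanism is that each substitution $\partial_j \rightsquigarrow P_{j,k}\partial_k$ with $\mfs_k>\mfs_j$ either vanishes at $e$ (if $P_{j,k}$ is not subsequently differentiated to a constant) or shifts one unit of multi-index weight from position $j$ to the strictly higher-$\mfs$ position $k$, which indeed moves $L$ later in your ordering. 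One small caution: when several $\mfs_j$ coincide the phrase ``higher-$\mfs$ variables'' must be read as ``strictly higher $\mfs$-weight'', which is what \cite[Prop.~1.26]{folland_stein_82_hardy} actually gives; with that reading your ordering works.
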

\begin{definition}\label{def:taylor}
For a smooth function $f:\mbG\to \mathbb{R}$, a point $x\in \mbG$ and $a\in (0,\infty)$, we define the left Taylor polynomial of homogeneous degree (less than) $a$ of $f$ at $x$ to be the unique polynomial $\opP^a_x[f]\in \mathcal{P}_a$ such that $X^I \opP^a_x[f](e)= X^I f(x)$ for all $I$ such that $d(I)<a$. The right Taylor Polynomial can be defined similarly, replacing $X^I$ by $Y^I$.
\end{definition}
\begin{remark}\label{rem:derivative_of_taylor}
One observes that for $I\in \mathbb{N}^d$ and $a>d(I)$ one has that
$$ X^I \opP^a_x[f]= \opP^{a-d(I)}_x[X^If] $$
for all $f\in C^\infty(\mbG)$. Indeed, this follows from the fact that for any $d(J)< a-d(I)$ one has $ X^J (X^I \opP^a_x[f])(e)=  X^J(X^I f)(x)$, since $X^J X^I$ can be written as a linear combination of $\{X^K\}_{d(K)< a}$.
\end{remark}
\begin{theorem}[Taylor's Theorem]\label{th:taylor}
For each $a> 0$ and every $f\in C^\infty(\mbG)$ it holds that
$$ f(xy)-\opP^a_x[f](y)= \sum_{|I|\leq [a]+1, d(I)\geq a} \int_{\mbG} X^{I}f(xz) Q^I(y, \dd z)  \ ,$$
where for each multi-index $I$ and $y\in \mbG$ the measure $Q^I(y, \,\cdot\,)$ is supported on $B_{ \beta^{[a]+1} |y|}(e)$ for some $\beta>0$ depending only on $\mbG$ and satisfies $\int_{\mbG} |Q^I(y, \dd z)| \lesssim |y|^{d(I)}$.
\end{theorem}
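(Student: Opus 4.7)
My plan is to set $R(y) := f(xy) - \opP^a_x[f](y)$ and reduce the statement to producing a decomposition $R(y) = \sum_{|I|\leq [a]+1,\ d(I)\geq a} \int_{\mbG} X^I R(z)\, Q^I(y, dz)$ with the stated support and total-variation bounds on the $Q^I$. Two elementary observations enable this reduction. First, by the defining property of the left Taylor polynomial one has $X^I R(e) = 0$ for every multi-index $I$ with $d(I) < a$. Second, since $\opP^a_x[f] \in \mcP_a$, Remark~\ref{rem:derivative_of_taylor} combined with the fact that $\mcP_b = \{0\}$ for $b \leq 0$ gives $X^I \opP^a_x[f] \equiv 0$ whenever $d(I) \geq a$, so that $(X^I R)(z) = (X^I f)(xz)$ for all such $I$. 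Hence the right-hand side in the target identity is exactly what the statement claims.

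First I would apply the classical $1$D Taylor formula with integral remainder to the smooth function $g(t) := R(\exp(tY))$, $t \in [0,1]$, where $Y := \log y \in \mfg$, truncated at order $N := [a]+1$. By left-invariance, $g^{(k)}(t) = Y^k R(\exp(tY))$, with $Y^k$ viewed as an element of the universal enveloping algebra $U(\mfg)$. Since $Y = \sum_j \eta_j(y) X_j$, Poincar\'e--Birkhoff--Witt reordering yields
\begin{equation*}
Y^k = \sum_{I}\, P_I^{(k)}(y)\, X^I \quad\text{in } U(\mfg),
\end{equation*}
where each coefficient $P_I^{(k)}$ is a polynomial in $\eta(y)$ and the sum runs over ordered multi-indices $I$ with $|I| \leq k$.

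The main obstacle, and the technical heart of the argument, is controlling the homogeneous degrees of the monomials that survive this reordering. Because the grading of $\mfg$ satisfies $[W_a, W_b]\subset W_{a+b}$ (Remark~\ref{rem:eigenspaces}), every commutator move of the form $X_iX_j = X_jX_i + [X_i,X_j]$ used to reorder a product $X_{j_1}\cdots X_{j_k}$ preserves the total homogeneous weight $\mfs_{j_1}+\cdots+\mfs_{j_k}$. Consequently only ordered monomials $X^I$ with $d(I) = \mfs_{j_1}+\cdots+\mfs_{j_k}$ appear, and matching dilation weights across the identity above forces $P_I^{(k)}$ to be a homogeneous polynomial of degree $d(I)$ in $\eta(y)$; in particular $|P_I^{(k)}(y)| \lesssim |y|^{d(I)}$. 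A key corollary is that every $I$ arising in $Y^N$ automatically satisfies $d(I) \geq N = [a]+1 \geq a$, so the integral remainder will land entirely within the allowed index range.

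With this algebraic input the rest is bookkeeping. For each $k \leq N-1$ the finite-sum contribution $g^{(k)}(0) = \sum_I P_I^{(k)}(y) X^I R(e)$ collapses, by the first paragraph, to a sum over $d(I)\geq a$ and $|I|\leq k \leq [a]$, which I record as Dirac point masses at $e$ of weight $\tfrac{1}{k!}P_I^{(k)}(y)$ inside $Q^I$. The integral remainder $\tfrac{1}{(N-1)!}\int_0^1 (1-s)^{N-1} g^{(N)}(s)\, ds$ contributes, for each $I$ with $d(I) \geq N \geq a$ and $|I|\leq N = [a]+1$, a signed measure supported on the one-parameter subgroup $\{\exp(sY) : s \in [0,1]\}$ with density $\tfrac{(1-s)^{N-1}}{(N-1)!}P_I^{(N)}(y)$. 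Using the identity $\eta_j(\exp(sY)) = s\eta_j(y)$ together with the equivalence of homogeneous norms from Remark~\ref{rem:triangle_inequality}, one verifies that $|\exp(sY)| \lesssim |y|$ uniformly in $s \in [0,1]$, so the support of $Q^I(y,\cdot)$ lies in $B_{C|y|}(e)$ for a universal constant $C$ depending only on $\mbG$; choosing $\beta$ with $\beta > C$ then yields the (generous) inclusion in $B_{\beta^{[a]+1}|y|}(e)$, while the bound $\int |Q^I(y,dz)| \lesssim |y|^{d(I)}$ follows immediately from the homogeneity of $P_I^{(k)}$ and $P_I^{(N)}$.
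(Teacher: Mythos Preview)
Your argument is correct and takes a genuinely different route from the paper's. The paper does not Taylor-expand along the single one-parameter group $t\mapsto\exp(t\log y)$; instead it factorises $y=\exp(t_1X_1)\cdots\exp(t_dX_d)$ via the global chart $\Phi$ of Remark~\ref{rem:decompose_exp_map}, first proves a mean-value theorem (the case $a\in(0,1]$) by integrating along each one-parameter factor separately, and then upgrades to general $a$ by an induction on $m=1,\ldots,\lceil a\rceil$, repeatedly feeding the mean-value formula back into $X^Jg$ for $a-m\leq d(J)<a$ and using the reordering identity \eqref{eq:Vector_Field_Composition}. Your single-curve approach with the one-dimensional integral remainder plus PBW reordering bypasses this induction entirely, and since all your measures sit either at $e$ or on the arc $\{\exp(sY):s\in[0,1]\}\subset B_{C|y|}(e)$ you in fact obtain a support radius independent of $a$, which is sharper than the $\beta^{[a]+1}|y|$ stated in the theorem; the paper's $\beta^{[a]+1}$ arises precisely because each inductive step enlarges the support by a factor $\beta$. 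The trade-off is that the paper's argument never needs to invoke PBW or the graded-commutator observation explicitly, working instead only with the elementary reordering \eqref{eq:Vector_Field_Composition}.
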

\begin{remark}\label{rem:taytay} Note that there is a more common version of Taylor's Theorem on homogeneous Lie groups, c.f. \cite[Thm.~1.37]{folland_stein_82_hardy}, which states that the remainder satisfies the estimate
\begin{equation}\label{eq:common_form_taylor}
	|f(xy)-\opP^a_x[f](y)|\lesssim_a \sum_{|I|\leq [a]+1, d(I)\geq a} |y|^{d(I)} \sup_{|z|\leq \beta^{[a]+1} |y|} |X^{I}f(xz) | \ 
\end{equation}
and which follows as a trivial consequence of Theorem~\ref{th:taylor}. For us the explicit formula of the reminder is needed in order establish Schauder estimates in Section~\ref{subsec:schauder}.
\end{remark}

\begin{remark}
The analogous claim for the right Taylor polynomial, $(\opP_R)^a_x[f]$, holds. In this case the analogue of \eqref{eq:common_form_taylor} reads 
\begin{equation}\label{eq:right_taylor}
	|f(yx)-(\opP_R)^a_x[f](y)|\lesssim_a \sum_{|I|\leq [a]+1, d(I)\geq a} |y|^{d(I)} \sup_{|z|\leq \beta^{[a]+1} |y|} |Y^{I}f(zx) | \ .
\end{equation}
\end{remark}
\begin{remark}\label{rem:rescaled_taylor}
\, 	If we define $\tilde{\opP}^a_x[f](y):= \opP^a_x[f](x^{-1}y)$, then it follows that 
\begin{equation*}
	f(y) - \tilde{\opP}^a_x[f](y)= \sum_{|I|\leq [a]+1, d(I)\geq a} \int_{\mbG} X^{I}f(xz) Q^I(x^{-1}y, dz) 
\end{equation*}
and in particular that
\begin{equation*}
	|f(y) - \tilde{\opP}^a_x[f](y)| \lesssim_{a}\sum_{\substack{|I|\leq [ a ]+1\\d(I)\geq a }} |x^{-1} y|^{d(I)} \sup_{|z| \leq C|x^{-1} y|} |X^I f(xz)|  \ .  
\end{equation*}
In the particular case when $f$ is compactly supported, one can rewrite this for the rescaled function $f^{\lambda}(x)= \frac{1}{\lambda^{|\fraks|}} f (\lambda\cdot x)$ as
\begin{align}\label{eq:rescaled_taylor}
	|f^\lambda (y) - \tilde{\opP}^a_x[f^\lambda](y)| & \lesssim_{a} \sum_{\substack{|I|\leq [ a ]+1\\d(I)\geq  a }}\lambda^{-d(I)-|\fraks | }|x^{-1} y|^{d(I)} \sup_{z\in \mbG} |X^I f(z)| \\	
	& \lesssim_{a,f } \sum_{\delta\geq 0}\lambda^{-(a+\delta)-|\fraks|}|x^{-1} y|^{a+\delta} \notag  \ ,
\end{align}
where the sum over $\delta$ runs over some finite subset of $[0,\infty)$.
\end{remark}
\begin{remark}
Given $f\in C^\infty(\mbG)$, if we define $F\in C^\infty(\mbG\times \mbG)$ by setting $F(y,z)=f(z^{-1}y)$, we find that 
$$\tilde{\opP}^a_x[F(\,\cdot\,,z)](y)=\tilde{\opP}^a_{z^{-1}x}[f](z^{-1}y) \ .$$
\end{remark}
\begin{remark}\label{rem:expanding_polynomials}
For any $p \in \mcP_{\gamma}$ one has
$$\tilde{\opP}_x^{\gamma} [p](y)= p(y) \ .$$
\end{remark}
\begin{remark}\label{rem:decompose_exp_map}
In the proof of Theorem~\ref{th:taylor}, given below, we use the following elementary observations
\begin{itemize}
	\item The map
	\begin{align*}
		\Phi: \mathbb{R}^d &\to \mbG, \\
		(t_1,...,t_d)&\mapsto \exp(t_1 X_1)\cdot ...\cdot \exp(t_d X_d)
	\end{align*}
	is a diffeomorphism. To see this note that it is clearly a local diffeomorphism since one has
	\begin{align*}
		D\Phi|_0: T\RR^d|_0\sim \RR^d &\to T\mbG|_0\sim \mathfrak{g}\\
		(t_1,...,t_d) &\mapsto \sum_i t_i X_i.
	\end{align*}
	Then since, 
	$$\Phi(r^{\fraks_1} t_1,...,r^{\fraks_d}t_d)=  r\cdot \Phi(t_1,...,t_d)$$ 
	it is seen to be a global diffeomorphism.
	\item Setting for $t\in \mathbb{R}^d$,
	$$|t|_\fraks:= \sum_{i=1}^d |t_i|^{1/\fraks_i} \ ,$$
	one finds that there exists some $\beta=\beta(\mbG)>0$ such that
	\begin{equation}\label{eq:Phi_Two_Sided_Scaling}
		\frac{1}{\beta} |t|_\fraks  \lesssim |\Phi(t)|\lesssim \beta |t|_\fraks
	\end{equation}
	uniformly over $t \in \mathbb{R}^d$.
	\item By repeated use of the commutator, for any $I, J\in \mathbb{N}^d$ and $i\in \NN$ there exist coefficients $\lambda_{i,J, I}$ such that
	\begin{equation}\label{eq:Vector_Field_Composition}
		X_i X^J= \sum_{I} \lambda_{i,J, I} X^{I}
	\end{equation}
	and one has $\lambda_{i,J, I}=0$ whenever $d(J)+\fraks_i\neq d(I)$ .
	\item If $a\in \triangle$, then $m_a:=\max\{|I| \ : \ d(I)\leq a\}= [a]$ where $[a]$ denotes the integer part of $a$. This follows from the fact that $d(I) \leq a$ implies $m_a\leq [a]$ and since $([a],0,...,0)$ is in the set over which the maximum is taken.
\end{itemize}
\end{remark}
\begin{proof}[Proof of Theorem~\ref{th:taylor}]\,
Define for $i\in \{1,...,d\}$ the following measure on $\RR^d$ with support on $B^\fraks_{|t|_\fraks}(0)$
$$\tilde{Q}^{e_i}(t, ds)=\prod_{j<i} \delta_{t_j} (ds_j) \cdot \mathbf{1}_{[0,t_i]} (ds_i) \prod_{j>i} \delta_{0} (ds_j)$$
and define
${Q}^{e_i}(y, \,\cdot\,)= \Phi_* \tilde{Q}^{e_i}(\Phi^{-1}(y), \,\cdot\,) $ to be the push-forward measure. First we show that,
\begin{equation} \label{eq:mean_value}
	f(xy)- f(x)=\sum_{i=1}^n \int_{\mbG} (X_i f)(xz) {Q}^{e_i}(y, dz) \ .
\end{equation}
Indeed one can write for $y=y_1y_2\ldots y_d$, where $y_i= \exp(t_i X_i)$ and $t=(t_1,...,t_d)= \Phi^{-1}(y)$
\begin{align*}
	f(xy)- f(x)&= \sum_{i=1}^d f(xy_1\ldots y_{i-1} y_{i})- f(xy_1...y_{i-1})\\
	&= \sum_{i=1}^d \int_0^{t_i} \partial_s f(xy_1 \ldots y_{i-1} \exp(s X_i))|_{s=s'} ds'\\
	&= \sum_{i=1}^d \int_0^{t_i} (X_i f)(xy_1\ldots y_{i-1} \exp(s X_i)) ds\\
	&= \sum_{i=1}^d \int_{\mathbb{R}^d} (X_i f)(x\Phi(s)) \tilde{Q}^{e_i}(t, ds) \\
	&= \sum_{i=1}^d \int_{\mbG} (X_i f)(xz) {Q}^{e_i}(y, dz) \ .
\end{align*}
Using the fact that $\Phi$ is a diffeomorphism one easily checks that ${Q}^{e_i}(y, dz)$ is supported on $B_{ \beta |y|}(0)$ where $\beta$ is as in \eqref{eq:Phi_Two_Sided_Scaling}, and satisfies $\int_\mbG |Q^{e_i}|(y, dz) \lesssim |y|^{d_i}$, thus we have proved the theorem in the special case $a\in (0,1]$, also known as mean-value theorem.
We turn to the proof in the general case. 
\begin{claim}
	Set $g(y)= f (xy)- \opP^a_x[f](y)$, we note that $X^Jg(e)=0$ for $d(J)< a$ while $X^Jg(y)=X^Jf(xy)$ for $d(J) \geq a$. 
	We shall prove that for any multi-index $J$ it holds that one can write
	$$X^Jg(y)= \sum_{|I|\leq [a]+1, d(I)\geq a} \int_{\mbG} X^{I}f(xz) Q^{I,J}(y, dz) \ ,$$
	where the measures $Q^{I,J}(x,\cdot)$ satisfy the following properties
	\begin{itemize}
		\item $Q^{I,J}(x,\cdot)$ is supported on $B_{ \beta^{m(J)} |y|}(e)\subset \mathbb{G}$ where $m(J)= \min\{ m\in \mathbb{N}:\  a-m<d(J) \}\ ,$
		\item $\int_{\mbG}|Q^{I,J}|(x,dy)\lesssim |x|^{d(I)-d(J)}$ .
	\end{itemize}
\end{claim}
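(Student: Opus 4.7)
The plan is to prove the claim by strong induction on $m(J) = \min\{m \in \mathbb{N} : a - m < d(J)\}$, which measures how far $d(J)$ lies below the threshold $a$; the theorem then follows by taking $J = 0$.

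For the base case $m(J) = 0$ (i.e.\ $d(J) \geq a$), the observation recorded just before the claim gives $X^J g(y) = X^J f(xy)$ directly. One sets $Q^{J,J}(y, \,\cdot\,) := \delta_y$ and $Q^{I,J} \equiv 0$ for $I \neq J$; the support $\{y\} \subset B_{\beta|y|}(e)$ and the mass identity $\int |Q^{J,J}|(y,dz) = 1 = |y|^{d(J)-d(J)}$ are immediate. For the inductive step ($d(J) < a$, so in particular $X^J g(e) = 0$), applying the mean value identity \eqref{eq:mean_value} to the smooth function $X^J g$ gives
$$X^J g(y) = \sum_{i=1}^d \int_{\mbG} (X_i X^J g)(z)\, Q^{e_i}(y, dz),$$
and then \eqref{eq:Vector_Field_Composition} lets us expand $X_i X^J = \sum_K \lambda_{i,J,K} X^K$ with $d(K) = d(J) + \fraks_i$, so that $m(K) \leq m(J) - 1$ and the inductive hypothesis applies to each $X^K g$. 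Substituting and swapping the order of integration, one defines
$$Q^{I,J}(y, dw) := \sum_{i, K} \lambda_{i,J,K} \int_{z\in \mbG} Q^{I,K}(z, dw)\, Q^{e_i}(y, dz).$$
The support inclusion then follows by composing the two nested constraints using $\beta^{m(K)+1} \leq \beta^{m(J)}$, and the total variation bound combines $\int |Q^{e_i}(y, dz)| \lesssim |y|^{\fraks_i}$ with $\int |Q^{I,K}(z, dw)| \lesssim |z|^{d(I)-d(K)} \lesssim |y|^{d(I)-d(K)}$ on the support of $Q^{e_i}(y, \cdot)$, yielding $|y|^{d(I)-d(K)+\fraks_i} = |y|^{d(I)-d(J)}$ as required.

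The main subtlety I expect is keeping the sum in the representation restricted to $|I| \leq [a]+1$, since this is not automatic from the inductive structure. It follows by tracking the full iteration from $J = 0$: at iteration depth $n$ a composition $X_{i_n}\cdots X_{i_1}$ is present, and when expressed in the ordered basis $\{X^K\}$ each commutator merges two adjacent factors into one, so every $K$ that appears satisfies $|K| \leq n$. Since $\fraks_i \geq 1$ and the iteration halts as soon as $d(K) \geq a$, one has $n \leq [a]+1$ at termination (at the previous level $\sum_l \fraks_{i_l} < a$ together with $\fraks_{i_l} \geq 1$ forces $n-1 < a$). This is essentially the content of the observation on $m_a$ in Remark~\ref{rem:decompose_exp_map}, and it ensures the $|I| \leq [a]+1$ constraint is preserved throughout the recursion.
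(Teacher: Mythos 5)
Your argument is the same iterated mean-value/commutator induction as the paper's: apply \eqref{eq:mean_value} to $X^J g$, reorder via \eqref{eq:Vector_Field_Composition}, and induct on how far $d(J)$ sits below $a$. The only (cosmetic) difference is where the base case sits: you start at $m(J)=0$ with $Q^{J,J}=\delta_y$, whereas the paper takes $m=1$ (i.e.\ $a-1\leq d(J)<a$, one mean-value step after which every $K$ already has $d(K)\geq a$) as its base case and absorbs the $d(K)\geq a$ terms inline in the inductive step; the recursion and the resulting measures coincide. Two small cautions on your base case: by the stated definition $m(J)=0$ means $d(J)>a$ (for $d(J)=a$ one has $m(J)=1$), and for $m(J)=0$ the required support is the open ball $B_{\beta^{0}|y|}(e)=B_{|y|}(e)$, which does not contain $y$ --- so the Dirac base case does not literally satisfy the support clause. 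This is harmless, because in the recursion it only ever occurs post-composed with a $Q^{e_i}$, but it is worth noting. Your closing paragraph on $|I|\leq[a]+1$ is the right observation; the cleanest way to phrase it is that $d(J)<a$ together with $\mfs_i\geq 1$ forces $|J|\leq[a]$, and taking one further derivative gives $|K|\leq|J|+1\leq[a]+1$.
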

\noindent We shall prove by induction on $m\in \{1,..., \lceil a \rceil \}$, that for multi-indices $J$ satisfying $a-m\leq d(J)< a$ the claim holds. 
\begin{itemize}
	\item The case $m=1$ follows from \eqref{eq:Vector_Field_Composition} and \eqref{eq:mean_value},
	\begin{align*}
		X^Jg(y)= X^J g(y)-X^J g(e) &= \sum_{i=1}^d \int_{\mbG} (X_i X^Jg)(z) {Q}^{e_i}(y, dz)  \\
		&= \sum_{i=1}^d \int_{\mbG} \sum_{I \ : \ d(I)
			= d(J)+\fraks_i} \lambda_{i,J, I} (X^{I} g)(z) {Q}^{e_i}(y, dz) \\
		&= \sum_{i=1}^d \int_{\mbG} \sum_{I \ : \ d(I)= d(J)+\fraks_i} \lambda_{i,J, I} (X^{I} f)(xz) {Q}^{e_i}(y, dz) \\
		&=\sum_{|I|\leq [a]+1, d(I)\geq a} \int_{\mbG} X^{I}f(xz) Q^{I,J}(y, dz)
	\end{align*}
	The properties of $Q^{I,J}(y, dz)$ follow from the corresponding properties of \linebreak ${Q}^{e_i}(y, dz)$, completing the case $m=1$.
	\item Suppose the claim holds for $m-1$. Then by the same argument as above 
	\begin{align*}
		X^Jg(y) =& \sum_{i=1}^d \sum_{  d(K)= d(J)+\fraks_i} \lambda_{i,J, K} \int_{\mbG}  (X^{K} g)(xz) {Q}^{e_i}(y, dz)\\
		= & \sum_{i, K  :  d(K)= d(J)+\fraks_i< a} \lambda_{i,J, K} \int_{\mbG}  (X^{K} g)(xz) {Q}^{e_i}(y, dz) \\
		&\quad +\sum_{i, K  :  d(K)= d(J)+\fraks_i\geq a} \lambda_{i,J, K} \int_{\mbG}  (X^{K} g)(xz) {Q}^{e_i}(y, dz). 
	\end{align*}
	We can apply the induction hypothesis to the terms in the first sum, since $d(K)= d(J)+\fraks_i\geq d(J)+1\geq a-(m-1)$  and find
	$$(X^{K} g)(xz)= \sum_{|I|\leq [a]+1, d(I)\geq a} \int_{\mbG} X^{I}f(x\tilde{z}) Q^{I,K}(z, d\tilde{z})$$
	and therefore 
	\begin{align*}
		X^Jg(y) =&\sum_{i, K  :  d(K)= d(J)+\fraks_i< a} \int_{\mbG}  (X^{K} g)(xz) {Q}^{e_i}(y, dz)\\
		&+\sum_{i, K  :  d(K)= d(J)+\fraks_i\geq a} \lambda_{i,J, K} \int_{\mbG}  (X^{K} g)(xz) {Q}^{e_i}(y, dz) \\
		=&\sum_{i, K  :  d(K)= d(J)+\fraks_i< a} \lambda_{i,J, K} \sum_{|I|\leq k+1, d(I)\geq a}\int_{\mbG}   \int_{\mbG} X^{I}f(x\tilde{z}) Q^{I,K}(z, d\tilde{z}) {Q}^{e_i}(y, dz) \\
		&+ \sum_{i, K  :  d(K)= d(J)+\fraks_i\geq a} \lambda_{i,J, K} \int_{\mbG}  (X^{K} g)(xz) {Q}^{e_i}(y, dz). \\ 
		=&\sum_{i, K  :  d(K)= d(J)+\fraks_i< a} \lambda_{i,J, K} \sum_{|I|\leq k+1, d(I)\geq a}   \int_{G} X^{I}f(x\tilde{z}) \big(Q^{I,K} *{Q}^{e_i}\big)(y, d\tilde{z}) \\
		&+\sum_{i, K  :  d(K)= d(J)+\fraks_i\geq a} \lambda_{i,J, K} \int_{\mbG}  (X^{K} g)(xz) {Q}^{e_i}(y, dz) \\
		=& \sum_{|I|\leq [a]+1, d(I)\geq a}   \int_{\mbG} X^{I}f(x\tilde{z}) Q^{I,J}(y, d\tilde{z})\,
	\end{align*}
	where $Q^{I,J}$ is defined such that the last line holds true. Concerning the measures $Q^{I,J}(y, \cdot)$, we note that
	\begin{enumerate}
		\item $\supp Q^{I,J}(y, \cdot)\subset B_{ \beta^{m(J)} |y|} $ since
		$$\supp \big( Q^{I,K} *{Q}^{e_i}(y, \cdot) \big)\subset B_{ \beta^{m(K)+1} |y|}  $$ 
		as $\supp  Q^{I,K}(y, \cdot)\subset B_{ \beta^{m(K)} |y|}(e)$ and $\supp {Q}^{e_i}(y, dz)\subset B_{ \beta |y|}(e)$.
		\item $\int_{\mbG} |Q^{I,J}(y, \cdot)| \lesssim |x|^{d(I)-d(J)}$ since
		$$\int |Q^{I,K} *{Q}^{e_i}|(y, dz)\leq \int |Q^{I,K}|(y, dz) \, \cdot \, \int |{Q}^{e_i}|(y, dz)\lesssim |y|^{d(I)-d(K)}\cdot |y|^{\fraks_i}\ .$$
	\end{enumerate}
\end{itemize}
This completes the proof.
\end{proof}
\subsection{Distributions and Convolution}\label{subsec:distributions_convolutions}
We define $\mcD(\mbG):= C^\infty_c(\mbG)$ to be the space of compactly supported smooth functions on $\mbG$ equipped with the canonical LF-topology. The space of distributions on $\mbG$ is given by the dual $\mcD'(\mbG)$ of $\mcD(\mbG)$ and for $\xi \in \mcD'(\mbG)$ and $\phi \in \mcD(\mbG)$ we either write $\langle \phi,\xi\rangle$ or $\xi(\phi)$ for the canonical pairing.
Given $r \in \mbR_{\geq 0}$ we introduce the following useful space of test functions
$$\mfB^{\lambda}_{ r}(x):= \left\{ \phi\in C^\infty_c (B_\lambda(x))  \ : \ |X^I \phi | \leq \frac{1}{\lambda^{|\mfs|+d(I)}} \  \forall I : d(I)\leq r \right\} \ . $$
We will often use the shorthand $ \mfB_{r}:= \mfB^{1}_{r}(e)$ .
Given $\phi \in \mcD(\mbG)$ we extensively use the following notations,
$$\phi^\lambda_x(z):=\frac{1}{\lambda^{|\mfs|}}\phi \left(\frac{1}{\lambda}\cdot  (x^{-1}z)\right).$$
Recalling that we use the notation $\dd x$ for the Haar measure on $\mbG$ we define the norms,
\begin{equation*}
\|f\|_{L^p} := \begin{cases}
	\left(\int_{\mbG} |f(x)|^p\dd x\right)^{1/p}, & \text{ for } 1\leq p<\infty,\\
	\esssup_{x \in \mbG} |f(x)|, & p=\infty.
\end{cases}
\end{equation*}
By left and right translation invariance of the Haar measure, for $f\in L^1(\mbG)$ it holds that
$$\int_{\mbG} f(yx)\dd y = \int_{\mbG} f(xy)\dd y = \int_{\mbG}f(y)\dd y = \int_{\mbG}f(y^{-1})\dd y.$$
See the discussion after \cite[Thm.~1.1.1]{fischer_ruzhansky_18_quantisation}. 
\begin{remark}\label{rem:test_functions}
From the scaling properties of $\dil$ it follows that 
$$ \mfB^{\lambda}_{r}(x) = \left\{ \phi^{\lambda}_x \ : \phi\in \mfB^{1}_{r}(e)  \right\}.$$
\end{remark}
One defines the convolution of two functions $\phi, \psi: \mbG\to \mathbb{R}$ as
\begin{equation}\label{eq:def_convolution}
\psi*\phi(x)= \int \psi(y) \phi(y^{-1}x) dy = \int \psi(xy^{-1}) \phi(y) dy\ .
\end{equation}
Note that in general $\psi* \phi(x)\neq \phi * \psi(x)$ but still many of the usual inequalities hold, in particular the Young inequality, c.f. \cite[Prop.~1.18]{folland_stein_82_hardy}. One can write
$$\psi*\phi(x)= \int \psi(y) \phi_y(x) dy\ . $$
We extend convolution to generalised function, i.e. elements of $\mcD'(\mbG)$, by duality in the usual manner.
From now on we will use the notation $\tilde{\phi}(z):= \phi(z^{-1})$, one notes that the following identities hold
\begin{align}
\langle f, g*\phi \rangle = \langle \tilde{g} * &f, \phi \rangle = \langle   f * \tilde{\phi}, g \rangle \label{eq:throwing_convolution} \\
\widetilde{f*g} &=  \tilde{g}*\tilde{f} \label{eq:convolution_reorder}\\
\psi^\lambda*\phi^\lambda = (\psi*\phi)^\lambda\quad &\text{and}\quad (\psi \ast \phi)_x = \psi_x \ast \phi. \notag
\end{align}
Recalling the notation $X^I = X_1^{i_1}... X_d^{i_d}$ for any multi-index $I=(i_1,...,i_d)\in \mbN^d$, we introduce the analogous notation
$Y^I = Y_d^{i_d}....Y_1^{i_1}$
for the right invariant basis vectors. Note the reversal in the composition. Since for any right invariant vector field $Y$ and left invariant vector field $X$ such that $X|_e = Y|_e$ it holds that $\langle X f,g\rangle = \langle f, Y g\rangle $ for $f,\,g \in C^\infty_c(\mbG)$, it follows from the definition of the convolution that
\begin{itemize}
\item $X^I(\psi*\phi) = \psi*( X^I \phi) $,
\item $Y^I(\psi*\phi) = (Y^I\psi)* \phi $,
\item $(X^I\psi)*\phi= \psi*Y^I \phi \ $.
\end{itemize}

\begin{definition}\label{def:Holder}
For $\alpha \in \mbR$ we define the space $\mcC^{\alpha}(\mbG)$ of $\alpha$-Holder continuous distributions to be those $f\in \mcD'(\mbG)$ such that for every compact set $\mfK\subset \mbG$,
\begin{itemize}
	\item If $\alpha \leq 0$
	\begin{equation*}\label{eq:Negative_Holder_Def}
		\|f\|_{\mcC^\alpha( \mfK )}=	\sup_{x\in \mfK} \sup_{\phi \in \mfB_{ \lceil{-\alpha \rceil }}}\sup_{\lambda\in (0,1)} \frac{|\langle f,\phi^\lambda_x\rangle|}{\lambda^\alpha} < +\infty \ .
	\end{equation*}
	\item If $\alpha >0$, there exists a continuous map $\mbG\to \mathcal{P}_\alpha,\ x\mapsto \tilde{P}_x$ such that
	\begin{equation}\label{eq:Positive_Holder_Def}
		\|f\|_{\mcC^\alpha(\mfK )}=	\sup_{x\in \mfK}\sup_{\phi \in \mfB_{0}}\sup_{\lambda\in (0,1)} \frac{ |\langle f- \tilde{P}_x,\phi_x^\lambda\rangle|}{\lambda^{\alpha}} + \sup_{x\in \mfK} |\tilde{P}_x|_{\mathcal{P}_\alpha} < +\infty \ ,
	\end{equation}	
	where $|\, \cdot\, |_{\mathcal{P}_\alpha}$ denotes any norm on the finite dimensional vector space $\mathcal{P}_\alpha$.
\end{itemize}
Furthermore, $C(\mbG)$ denotes the space of continuous functions (note that only the strict inclusion $C(\mbG)\subsetneq \mcC^0(\mbG)$ holds). 
\end{definition}
\begin{remark}
We note that as in \cite{hairer_14_RegStruct}, for $\alpha \in \triangle$ the $\mcC^\alpha(\mbG)$ spaces do not agree with the usual notion of continuously $\alpha$-differentiable functions. This definition, however, is more canonical in the context of regularity structures, c.f. Theorem~\ref{th:hoelder_characterisation}. 
\end{remark}
The following proposition gives some useful properties that also mirror those of Euclidean H\"older spaces.
\begin{prop}\label{prop:holder_inclusion_compact}
Given any real number $\alpha\in \mathbb{R}$ the following holds.
\begin{enumerate}
	\item \label{it:holder_inclusion} $\mcC^\alpha(\mbG)\subset \mcC^{\beta}(\mbG)$ for every $\beta<\alpha$.
	\item\label{it:derivative} For any multiindex $I$ the map $X^I:\  \mcC^\alpha(\mbG)\to \mcC^{\alpha-d(I)} (\mbG), \quad  f\mapsto X^I f$ is well defined.
	\item \label{it:holder_compact} If $\alpha>0$, then any distribution $f \in \mcC^{\alpha}(\mbG)$ agrees\footnote{As usual, we say a distribution agrees with a continuous functions, if it lies in the image of the canonical embedding $C(\mbG)\to \mcD '(\mbG)$.} with a continuous function. 
\end{enumerate}
\end{prop}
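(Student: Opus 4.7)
For part~(\ref{it:holder_inclusion}), the strategy is direct and uses only the scaling of $\lambda^\alpha$ together with the inclusion of test function spaces. Since $\mfB_{\lceil-\beta\rceil}\subseteq \mfB_{\lceil-\alpha\rceil}$ for $\beta<\alpha\leq 0$ and $\lambda^\alpha\leq\lambda^\beta$ for $\lambda\in(0,1)$, the negative regularity case is immediate. For $0<\beta<\alpha$, I would define the polynomial witnessing $f\in\mcC^\beta$ as the projection of $\tilde P^\alpha_x$ onto $\mcP_\beta$ (dropping monomials of homogeneous degree $\geq\beta$); the difference $\tilde P^\alpha_x-\tilde P^\beta_x$, paired with $\phi_x^\lambda$ and evaluated via the change of variables $y=x(\lambda\cdot z)$, turns each monomial $\eta^I_{(x)}$ into $\lambda^{d(I)}\eta^I(z)$ with $\beta\leq d(I)<\alpha$, yielding the target $O(\lambda^\beta)$ error. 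The mixed case $\alpha>0$, $\beta\leq 0$ is handled by testing $f$ directly and bounding the polynomial contribution uniformly in $\lambda$ by $|\tilde P^\alpha_x|_{\mcP_\alpha}$.

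For part~(\ref{it:derivative}), the plan is integration by parts, exploiting that $\mbG$ is unimodular. The adjoint $(X^I)^*$ is a left-invariant differential operator of the same homogeneous degree $d(I)$, and the key scaling identity is $X_j(\phi_x^\lambda)=\lambda^{-\mfs_j}(X_j\phi)_x^\lambda$, which iterates to
\[
(X^I)^*(\phi_x^\lambda)=\lambda^{-d(I)}((X^I)^*\phi)_x^\lambda.
\]
Since $(X^I)^*\phi\in C\cdot\mfB_{r-d(I)}$ whenever $\phi\in\mfB_r$, the $\mcC^\alpha$ bound on $f$ applied to the rescaled test function directly gives $|\langle X^I f,\phi_x^\lambda\rangle|\lesssim\lambda^{\alpha-d(I)}$. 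When $\alpha-d(I)>0$, the candidate polynomial for $X^If\in\mcC^{\alpha-d(I)}$ is $X^I\tilde P_x\in \mcP_{\alpha-d(I)}$, after whose subtraction the same argument applies. The only minor subtlety, in the mixed case $\alpha>0$ but $\alpha-d(I)\leq 0$, is to see that the polynomial term $\langle\tilde P_x,(X^I)^*\phi_x^\lambda\rangle$ has the right scaling; this follows from $\int \eta^J\cdot (X^I)^*\phi\,dz=\pm\int X^I\eta^J\cdot \phi\,dz=0$ whenever $d(J)<d(I)$.

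For part~(\ref{it:holder_compact}), my plan is mollification by right convolution. Fix $\rho\in C^\infty_c(B_1(e))$ with $\int\rho=1$ and set $\rho^\eps(z)=\eps^{-|\mfs|}\rho(\eps^{-1}\cdot z)$; then $(f*\rho^\eps)(x)=\langle f,\tilde\rho^\eps_x\rangle$ with $\tilde\rho(z)=\rho(z^{-1})$. Applying the $\mcC^\alpha$ bound at scale $\lambda=\eps$ yields $(f*\rho^\eps)(x)=\langle\tilde P_x,\tilde\rho^\eps_x\rangle+O(\eps^\alpha)$, and the main term expands via $y=x(\eps\cdot z)$ as
\[
\int P_x(\eps\cdot z)\tilde\rho(z)\,dz = P_x(e) + \sum_{0<d(I)<\alpha}\eps^{d(I)}a^{(x)}_I\int\eta^I(z)\tilde\rho(z)\,dz,
\]
where $P_x(w):=\tilde P_x(xw)=\sum a^{(x)}_I\eta^I$ lies in $\mcP_\alpha$. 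Since $x\mapsto|\tilde P_x|_{\mcP_\alpha}$ is bounded on compacts by the hypothesis on the polynomial map, $f*\rho^\eps$ converges uniformly on compact sets to the continuous function $x\mapsto\tilde P_x(x)=P_x(e)$ at rate $\eps^{\min(\alpha,1)}$. On the other hand $f*\rho^\eps\to f$ in $\mcD'(\mbG)$ via \eqref{eq:throwing_convolution}, so the two limits coincide and $f$ is a continuous function. The principal (though mild) obstacle is maintaining uniformity in $x$ throughout, which reduces precisely to the continuity/boundedness of $x\mapsto\tilde P_x$ built into Definition~\ref{def:Holder}.
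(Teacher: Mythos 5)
Your proposal is correct, and for the substantive Point~\ref{it:holder_compact} it follows essentially the paper's mollification strategy: the paper tests $F := f - \tilde P_\cdot(\cdot)$ against an arbitrary $g$ via $\langle F, g\rangle = c\lim_\lambda\int\langle F,\psi^\lambda_y\rangle g(y)\,dy$, while you mollify $f$ directly and pass to the limit in $f*\rho^\eps = \langle f,\tilde\rho^\eps_{(\cdot)}\rangle$, but both arguments reduce to the same pointwise estimate that $\langle f,\psi^\lambda_x\rangle\to\tilde P_x(x)$ locally uniformly together with the continuity of $x\mapsto\tilde P_x$. For Points~\ref{it:holder_inclusion} and~\ref{it:derivative} the paper gives no details ("follow directly"), and your degree-counting for the polynomial remainder and the scaling identity $(X^I)^*(\phi^\lambda_x)=\lambda^{-d(I)}((X^I)^*\phi)^\lambda_x$ are exactly the expected direct arguments.
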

Note that this proposition in particular implies that $C^\infty(\mbG)= \cap_{\alpha>0} \mcC^\alpha(\mbG)$.
\begin{proof}\,
Points~\ref{it:holder_inclusion} and \ref{it:derivative} follow directly. For Point \ref{it:holder_compact}, denote by $\tilde{P}_x$ the polynomial in Definition~\ref{def:Holder}, we set $F=f- \tilde{P}_{\cdot}(\cdot)$, then for any $\psi\in \mfB_{0}$ such that $\int_{\mbG} \psi = c^{-1}>0$ and smooth compactly supported function $g$ we find
\begin{align*}
	\langle F,  g\rangle = c\lim_{\lambda\to 0}  \langle F, g* \psi^\lambda \rangle = c\lim_{\lambda\to 0} \left \langle F, \int_{\mbG}  g(y)  \psi_y^\lambda dy \right\rangle = c\lim_{\lambda\to 0}  \int_{\mbG}\langle F,     \psi_y^\lambda  \rangle g(y) dy =0 \ ,
\end{align*}
where in the last step we used that since $y\to \tilde{P}_y$ is continuous we have
\begin{equation*}
	\langle F,     \psi_y^\lambda \rangle = \langle f- \tilde{P}_y(\,\cdot\,) ,     \psi_y^\lambda \rangle+\int \big(\tilde{P}_y(z) - \tilde{P}_z (z)\big)    \psi_y^\lambda(z) \, dz \to 0\ ,\end{equation*}
as $\lambda\to 0$.
Thus we find that the distribution $f$ agrees with the continuous function $x\mapsto \tilde{P}_x(x)$, concluding the proof.
\end{proof}
\begin{remark}\label{rem:taylor_extension0}
Note that by Proposition~\ref{prop:holder_inclusion_compact} for $\alpha>0$ and any $f\in \mcC^\alpha(\mbG)$ we know that $X^If$ agrees with a continuous function whenever $d(I)<\alpha$. Thus, in particular Definition~\ref{def:taylor} of Taylor polynomials $\opP^\alpha_x[f]$ extends to $\mcC^\alpha(\mbG)\supset C^\infty(\mbG)$.
\end{remark}

\subsection{Discrete Subgroups}\label{sec:discrete_subgroups}
Recall that, given a discrete subgroup $\mfG\subset \mbG$ acting on $\mbG$ (say) on the left, then the quotient space $ S:=\mbG / \mfG=\{\mfG x \ : \ x\in \mathbb{G}\}$  is a smooth manifold. Furthermore, the quotient map $\pi: \mbG\to S$ is a smooth normal covering map  , c.f. \cite[Thm.~21.29]{lee_13_smooth_manifolds} and the canonical $\mbG$ right action 
$$S\times \mbG\to S, \qquad (\mfG x, x') \mapsto \mfG x x' \ $$
makes it a homogeneous space.
We call $\mfG$ a \textit{lattice} if  $ S= \mbG / \mfG  $ is a compact space. This is equivalent to $S$ carrying a finite $\mbG$ invariant measure, which we shall denote by $d(\mfG x)$, c.f. \cite[Thm.~2.1]{raghunathan_07_discrete}. Throughout this article we assume that every homogeneous Lie group we work with carries a lattice, the following theorem, \cite[Thm.~2.12]{raghunathan_07_discrete}, gives sufficient conditions for this to be the case, which all our examples satisfy. 
\begin{theorem}
Let $\mbG$ be a simply connected nilpotent Lie group and $\mfg$ its Lie algebra. Then, $\mbG$ admits a lattice if and only if $\mfg$ admits a basis with respect to which the structure constants of $\mfg$ are rational.
\end{theorem}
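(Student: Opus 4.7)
The plan is to prove both directions by exploiting the fact that for a simply connected nilpotent Lie group $\mbG$ the exponential map is a global diffeomorphism and the Baker--Campbell--Hausdorff formula $\opH$ reduces to a finite polynomial in its arguments (Proposition~\ref{prop:polynomial_and_haar} and equation~\eqref{eq:BCH}), so the group law on $\mbG$ is polynomial in the coordinates induced by any basis of $\mfg$.

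For the ``if'' direction, suppose $\{X_i\}$ is a basis of $\mfg$ with rational structure constants. The plan is to exhibit a lattice of the form $\exp(L_N)$, where $L_N$ is the $\ZZ$-module spanned by $\{N X_1, \ldots, N X_d\}$ for a sufficiently divisible positive integer $N$. Since $\mfg$ is nilpotent, the series $\opH(X,Y) = X + Y + \tfrac{1}{2}[X,Y] + \cdots$ terminates, and expressed in the basis $\{X_i\}$ its coefficients are rationals determined by the structure constants and by the universal rationals from the BCH expansion. One may therefore choose $N$ large enough to clear every denominator that appears in $\opH(u,v)$ for $u,v \in L_N$. With this choice $L_N$ is closed under $\opH$, which makes $\exp(L_N)$ a subgroup of $\mbG$; it is discrete because $\exp$ is a diffeomorphism, and cocompact because a fundamental domain for the translation action of $L_N$ on $\mfg$ is the compact parallelotope with sides $N X_i$, which under $\exp$ and the Haar measure identification of Proposition~\ref{prop:polynomial_and_haar} gives a compact fundamental domain for the action on $\mbG$.

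For the ``only if'' direction, given a lattice $\mfG \subset \mbG$ the plan is to build a strong Malcev basis adapted to the descending central series $\mfg = \mfg_{(1)} \supset \mfg_{(2)} \supset \cdots \supset \mfg_{(\kappa+1)} = \{0\}$. The key inductive claim, proved by downward induction on $i$, is that $\mfG_i := \mfG \cap \exp(\mfg_{(i)})$ is a cocompact lattice in $\exp(\mfg_{(i)})$. The base step $i = \kappa$ uses that $\exp(\mfg_{(\kappa)})$ is central and abelian, so the lattice property follows from elementary closed-subgroup arguments in $\RR^{\dim \mfg_{(\kappa)}}$. The inductive step combines a lattice in the quotient $\mbG / \exp(\mfg_{(i+1)})$, which has smaller nilpotency class and contains the image of $\mfG$, with $\mfG_{i+1}$ in the kernel. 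A basis $\{Y_j\}$ of $\mfg$ adapted to this filtration is chosen so that $\log$ of a generating set of $\mfG_i / \mfG_{i+1}$ spans a full-rank sublattice at each level. Rationality of the structure constants is then extracted by computing the polynomial inverse of $\opH$ applied to commutators $\exp(Y_i)\exp(Y_j)\exp(-Y_i)\exp(-Y_j) \in \mfG$, which expresses $[Y_i,Y_j]$ modulo lower-filtration terms as a $\QQ$-combination of basis elements; the induction proceeds downward through the central series.

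The main obstacle will be the inductive step of the ``only if'' direction: one must verify that the image of $\mfG$ remains discrete and cocompact in $\mbG / \exp(\mfg_{(i+1)})$. Discreteness is delicate because the quotient of a discrete subgroup by a discrete normal subgroup is not automatically discrete, but here it is rescued by the closedness of $\exp(\mfg_{(i+1)})$ in $\mbG$ (its image under $\log$ is the linear subspace $\mfg_{(i+1)}$) together with the base-case cocompactness of $\mfG_{i+1}$ inside $\exp(\mfg_{(i+1)})$. Once this filtration compatibility is established, the transfer of rationality through the polynomial inverse of the BCH formula is bookkeeping, but verifying that $\mfG$ interacts well with the central series is where the real content of the Malcev criterion lies.
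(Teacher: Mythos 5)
The paper does not prove this theorem: it is quoted as \cite[Thm.~2.12]{raghunathan_07_discrete} (Malcev's rationality criterion), so there is no in-paper proof to compare against. Your sketch correctly reproduces the standard Malcev argument in outline, and the algebraic core of the ``if'' direction (since nilpotency truncates BCH at a fixed degree, a degree-$k$ term in $\opH(u,v)$ for $u,v\in L_N$ carries a factor $N^{k}$ which for $k\geq 2$ can be made to clear the finitely many denominators arising from the BCH coefficients and the structure constants) is sound. One point to tighten: your cocompactness argument conflates the additive translation action of $L_N$ on $\mfg$ with the left-multiplication action of $\exp(L_N)$ on $\mbG$. The Haar--Lebesgue identification of Proposition~\ref{prop:polynomial_and_haar} gives equality of covolumes, but it does not by itself show that $\exp$ of the parallelotope is a set whose $\exp(L_N)$-translates cover $\mbG$; that requires either passing to a Malcev basis and coordinates of the second kind (as in Remark~\ref{rem:decompose_exp_map}) and solving coordinate-by-coordinate, or an induction along the lower central series. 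For the ``only if'' direction you correctly single out the genuinely hard point, namely that $\mfG\cap\exp(\mfg_{(i)})$ is a lattice in $\exp(\mfg_{(i)})$ and that the image of $\mfG$ stays a lattice in the quotient; a complete proof still needs the Mostow--Malcev fact that a lattice in a simply connected nilpotent Lie group meets the centre in a lattice, but your proposed use of closedness of $\exp(\mfg_{(i+1)})$ together with cocompactness of $\mfG_{i+1}$ therein is the right mechanism to feed the induction.
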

Let us point out that there do exist nilpotent Lie groups that do not admit a basis with respect to which the structure constants  are rational, see \cite[Rem.~2.14]{raghunathan_07_discrete} for an example.\\

Denote by $\pi^\ast: C(S) \to C(\mbG)$ the pull-back under $\pi$. We define a convolution map
$$*_S : C^\infty(S)\times C_{c}^\infty(\mbG) \to C^\infty(S),\quad (f, \phi) \mapsto f *_S \phi$$
as the unique map, such that the following diagram commutes
\[ \begin{tikzcd}
C^\infty(S)\times C_{c}^\infty(\mbG)  \arrow{r}{ *_S } \arrow[swap]{d}{\pi^\ast \times \id} & C^\infty(S)\arrow{d}{\pi^\ast }  \\%
C^\infty(\mbG)\times C_{c}^\infty(\mbG) \arrow{r}{*} & C^\infty(\mbG)\ ,
\end{tikzcd}
\]
where convolution on the bottom row is convolution on the group as defined in \ref{eq:def_convolution}.
In order to check that this map is well defined, we use the following terminology. A function $f\in C^\infty (\mbG)$ is called left $\mfG$ periodic, if for every $x\in \mbG$ and $n\in \mfG$ it holds that 
$$f(x)= f(nx) \ .$$
Thus, we only need to check that for any $\phi \in C_{c}^\infty(\mbG)$ and any left $\mfG$ periodic function $f\in C^\infty (\mbG)$, the function  
$f* \phi$ is left $\mfG$ periodic. Indeed for any $n\in \mfG$ and $x\in \mbG$, it holds that
\begin{align*}
f* \phi (nx)&= \int_{\mbG} f(y) \phi(y^{-1}nx) dy \\
&= \int_{\mbG} f(y) \phi((n^{-1}y)^{-1}x) dy \\
&= \int_{\mbG} f(ny) \phi(y^{-1}x) dy \\
&= \int_{\mbG} f(y) \phi(y^{-1}x) dy \ .
\end{align*}	
By duality, one naturally extends the notion of convolution on $S$ to pairs, $(\xi,\zeta)\in \mcD'(S)\times \mcD'_c(\mbG)$, where $\mcD'_c(\mbG)$ denotes distributions on $\mbG$ with compact support.
\subsection{Concrete Examples}\label{subsec:group_examples}
In order to cement ideas we present two concrete examples of non-abelian, homogeneous Lie groups, both of which are identified with fixed global charts. These will be revisited in Section~\ref{subsec:kernel_examples} when we discuss the heat operator on the Heisenberg group and Kolmogorov type operators.
\subsubsection{The Heisenberg group}\label{subsec:heisenberg_group} Given $n\geq 1$ we define the Heisenberg group $\mbH^n$ as the set $\mbR^{2n}\times \mbR$ with the group law,
\begin{equation*}
(x,y,z) (x',y',z') = \left(x+x',y+y', z+z' + \sum_{i=1}^n \left(x'_iy_i - x_iy'_i\right)  \right).
\end{equation*}
\begin{remark}
Note that one may equally define the complex Heisenberg group on $\mbC^{n}\times \mbR$ equipped with the group law, 
\begin{equation*}
	(u,z)(u',z') = \left(u+u',z+z' + \Im \sum_{i=1}^{n} u_i \bar{u}'_i\right),
\end{equation*}
where $\Im  z$ denotes the imaginary part. One sees that these definitions are equivalent after identifying $\mbC^n$ with $\mbR^{2n}$.
\end{remark}
The origin $e=(0,0,0)$ is clearly the identity and for $(x,y,z)\in \mbR^{2n}\times \mbR$ one has $(x,y,z)^{-1}= (-x,-y,-z)$. The Lie algebra, $\mfh$ of $\mbH^n$ is identified with $\mbR^{2n+1}$ and spanned by the basis of left-invariant vector fields,
\begin{equation*}
A_i(x,y,z) = \partial_{x_i} + y_i \partial_z,\quad B_i(x,y,z) = \partial_{y_i} - x_i \partial_z,\quad C(x,y,z) = \partial_z
\end{equation*}
and equipped with the Lie bracket $[A,B]= AB-BA$. We observe that for any $(x,y,z)\in \mbH^n$ and $i =1,\ldots,n$,
\begin{equation}\label{eq:Heisenberg_commute}
[A_i,B_i](x,y,z) =- 2C .
\end{equation}
We say that a Lie algebra is graded if  there exist vector spaces $\{W_k\}_{k \geq 1}$ where only finitely many $W_k$ are non-zero, $\mfg = \bigoplus_{k=1}^\infty W_k$ and $[W_i,W_j]\subset W_{i+j}$. 
\begin{definition}\label{def:stratified}
A homogeneous Lie group $\mbG$ is called stratified (or a Carnot group) if its Lie algebra is graded and generated by $W_1$.
\end{definition}
Due to \eqref{eq:Heisenberg_commute}, we see that if we equip $\mbH^n$ with the dilation map,
\begin{equation*}
\lambda \cdot (x,y,z) := (\lambda x, \lambda y, \lambda^2z),
\end{equation*}
then $\mbH^n$ becomes a stratified group. Refer to \cite[Sec.~3.3.6]{bramanti_14_invitation} for a discussion of generalisations of this structure.

A simple example of a lattice on the Heisenberg group is the set of integer vectors $(a,b,c)\in \mfH^n:= \mbZ^{2n}\times \mbZ$ equipped with the group law as defined above. Note that it is not a normal subgroup and thus the quotient space $\mbH^n/\mfH^n$ is not a group. However, since it is a lattice the homogeneous space $\mbH^n/\mfH^n$ is compact, c.f. Subsection~\ref{sec:discrete_subgroups}. 
\subsubsection{Matrix Exponential Groups} \label{subsec:matrix_groups}
For $n\geq 1$, let $B$ be a rational $n\times n$ block matrix of the form,
\begin{equation*}
B = \begin{pmatrix}
	0 & B_1 & 0& \cdots &0\\
	0& 0 & B_2 &\cdots &0\\
	\vdots & \vdots & \ddots & \ddots & \vdots\\
	0 & 0&\cdots & 0 & B_{k}\\
	0 & 0 &\cdots  &0 &0
\end{pmatrix}
\end{equation*}
with each $B_i$ a $p_{i-1}\times p_{i}$ block matrix of rank $p_i$, where $n\geq p_0 \geq p_1\geq \cdots \geq p_{k}$ and $\sum_{i=0}^{k}p_i=n$. Note that this implies the zero blocks on the diagonal are all $p_i\times p_i$ square matrices. We can equip $\mbR\times \mbR^{n}$ with an associated Lie structure by defining the group law
\begin{equation*}
(t,z)(s,z') := \left(t+s,z'+\exp\left(sB^\top\right)z\right).
\end{equation*}
To define the dilation we begin by decomposing according to the structure of the block matrix $B$, writing
\begin{equation*}
\mbR^n  =  \mbR^{p_0} \times \cdots \times  \mbR^{p_{n}}.
\end{equation*}
Thus the action of $B^\top$ on $z=(z_0,..., z_k)\in  \mbR^{p_0} \times \cdots \times  \mbR^{p_{k}}$ is written $B^\top z=(B^{\top}_1 z_0,\,B^{\top}_2z_{1},...B^{\top}_kz_{k-1})$ etc. Then we set
\begin{equation*}\label{eq:matrix_exp_dilation}
\lambda\cdot  (t,z_0,\ldots,z_k):= (\lambda^2 t, \lambda z_0, \ldots,\lambda^{2k+1} z_k).
\end{equation*}
The origin $e=(0,0)$ is again the identity element and $(t,z)^{-1} = (-t,-\exp(-tB^\top)z)$. The Lie algebra is spanned by the translation invariant vector fields,
\begin{equation*}
X_i(t,z) = \partial_{z_i}\quad \text{for } i=1,\ldots,p_0 \quad \text{and}\quad Y(t,z) = \partial_t - (B z) \cdot \nabla = \partial_t - \sum_{i,j = p_0+1}^n b_{ij}z_i \partial_{z_i}.
\end{equation*}
This defines the matrix exponential group associated to $B$ and one sees that it is not stratified.

The simplest non-trivial example is to set $n=2$ and
\begin{equation*}
B=	\begin{pmatrix}
	0 & 1\\
	0&0
\end{pmatrix},
\end{equation*} 
so that using the suggestive notation $(t,v,x)\in \mbR\times \mbR\times \mbR$, the group law becomes,
\begin{equation*}
(t,v,x)(s,w,y) = (t+s,v+w,x+y+sv).
\end{equation*}
The equivalent scaling as above is to set $\lambda \cdot (t,x,v) = (\lambda^2t,\lambda v, \lambda^3 x)$. We refer to \cite{manfredini_97} for more details and Section \ref{subsec:kernel_examples} below for a discussion of natural second order hypoelliptic linear operators associated to these groups. 
\section{Regularity Structures and Models}\label{sec:r_structures}
\begin{definition}\label{def:reg_structure}
A regularity structure is a pair $\mcT=(\opT,\opG)$ consisting of the following elements:
\begin{enumerate}
	\item A graded vector space $\opT=\bigoplus_{\alpha\in A} \opT_\alpha$ where
	\begin{itemize}
		\item the index set $A\subset \mbR$ is discrete, bounded from below and contains zero,
		\item each $\opT_\alpha$ is finite dimensional with a fixed norm $|\cdot|_\alpha$. We write $\mcQ_\alpha: \opT \to \opT_\alpha$ for the canonical projection,
		\item $\opT_0$ is isomorphic to $\mathbb{R}$ with a distinguished element $\mathbf{1}\in \opT_0$, such that $|\mathbf{1}|_0=1$.
	\end{itemize}
	The space $\opT$ is called the structure space.
	\item A group $\opG$ of linear operators acting on $\opT$, such that for every $\Gamma\in \opG$ it holds that  $\Gamma|_{\opT_0}$ is the identity map and for all $\tau \in \opT_\alpha$:
	$$\Gamma \tau-\tau\in \bigoplus_{\beta<\alpha} \opT_\beta.$$
	The group $\opG$ is called the structure group of $\,\mcT$.
\end{enumerate}

A sector is a $\opG$ invariant subspace $V \subset \opT$ and if $V\neq \{0\}$ the regularity of the sector $V$ is defined as $\min\{\alpha \in A \ : \ V\cap \opT_\alpha \neq \{0\} \}$.
\end{definition}
We make use of the natural shorthands, $\opT_{>\alpha},\,\opT_{\geq \alpha},\, \opT_{<\alpha},\,\opT_{\leq \alpha}$ and the projections $\mcQ_{>\alpha}$ etc.
\begin{definition}\label{def:model}
Given a regularity structure $\mcT=(\opT,\opG)$ and $r\in \mbN$ such that $r>|\min A|$, a model for $\mcT$ is a pair $M=(\Pi, \Gamma)$, consisting of 
\begin{itemize}
	\item a realisation map $\Pi: \mathbb{R}^d\to L(T, \mcD'(\mbG)),\ x\mapsto \Pi_x$, such that for any compact set $\mfK\subset \mbG$ one has $\|\Pi\|_{\gamma;\mfK}:=\sup_{x\in \mfK }\|\Pi\|_{\gamma,x}<+\infty$ for all $\gamma>0$, where
	$$\|\Pi\|_{\gamma,x}:= \sup_{\zeta\in A\cap (-\infty,\gamma) }\sup_{\tau \in T_\zeta} \sup_{\lambda<1}  \sup_{\phi\in \mfB_r} \frac{|\langle \Pi_x \tau, \phi^\lambda_x \rangle |}{|\tau|_{\zeta} \lambda^\zeta},$$
	\item a re-expansion map $ \Gamma: \mathbb{R}^d\times \mathbb{R}^d \to \opG, \ (x,y)\mapsto \Gamma_{x,y}$, which satisfies the algebraic conditions
	$$\Pi_x\Gamma_{x,y} = \Pi_y, \qquad \Gamma_{x,y}\Gamma_{y,z}=\Gamma_{x,z}$$ 
	and the analytic condition $\|\Gamma\|_{\gamma;\mfK}:=\sup_{x,y\in \mfK:\  {|y^{-1}x|}<1} \|\Gamma\|_{x,y,\gamma}<+\infty$ for all $\gamma>0$,
	where 
	$$\|\Gamma\|_{x,y,\gamma}:=\sup_{\zeta\in A\cap (-\infty,\gamma)} \sup_{A\ni \beta<\zeta}\sup_{\tau \in T_\zeta} \frac{|\Gamma_{x,y}\tau|_\beta}{{|y^{-1}x|}^{\zeta-\beta}|\tau|_{\zeta}}.$$ 
\end{itemize}
Lastly, we denote by $\mathcal{M}_\mcT$ the space of models for $\mcT$ equipped with the semi-norms $\|M\|_{\gamma; \mfK} :=\|\Pi\|_{\gamma;\mfK}+\|\Gamma\|_{\gamma; \mfK} $.
\end{definition}
\begin{remark}
In the rest of the article, often without any further comment, we will write $$r:= \min \left\{n\in \mathbb{N}\ : \ n> |\min A|\right\}\ .$$
\end{remark}
\subsection{The Polynomial Regularity Structure}\label{sec:polynomial_r_struct}
As an important example we describe the polynomial regularity structure and canonical model which are crucial in the analysis of singular SPDEs on homogeneous Lie groups. We define the structure space $\bar{\opT}$ to be the symmetric tensor (Hopf) algebra generated by $\{\pmb{\eta}_i\}_{i=1}^d$, which we think of as abstract lifts of the monomials $\{\eta_i\}_{i=1}^d$. For a multi-index $I$, we write $\bfeta^{I}:=\bfeta_1^{i_1}\cdot...\cdot\bfeta_d^{i_d}$ as well as $\mathbf{1}:=\bfeta^{0}$. The group $\bar{\opG}$ is given by a copy of $\mbG$ acting by 
$$ g\mapsto \Big( \Gamma_{g}: \bfeta_j\mapsto \bfeta_j  + \sum_{I,J\neq 0,\ d(I)+d(J)=\fraks_j} C_j^{I,J}\eta^I(g)\bfeta^J\ +\eta_j (g)\mathbf{1}\Big) 
$$
and $\Gamma_{g} \bfeta^I= \big(\Gamma_{g} \bfeta\big)^I$ .
The canonical polynomial model is defined by setting 
$$ \Pi_{x}\bfeta_j (z)=\eta_j (x^{-1}z)$$
and
$$ \Gamma_{x,y}\bfeta_j=\bfeta_j  + \sum_{I,J\neq 0,\ d(I)+d(J)=\fraks_j} C_j^{I,J}\eta^I(y^{-1} x)\bfeta^J\ +\eta_j (y^{-1} x)\mathbf{1}\ .$$
These maps are extended multiplicatively to all of $\bar{\opT}$.
Using Prop.~ \ref{prop:factorisation_formula} in the third equality,
\begin{align*}
\Pi_{x}\Gamma_{x,y}\bfeta_j(z) &= \Pi_{x}\Big(\bfeta_j  + \sum_{I,J\neq 0,\ d(I)+d(J) =\fraks_j} C_j^{I,J}\eta^I(y^{-1} x)\bfeta^J\ +\eta_j (y^{-1} x)\Big)(z)\\
&= \eta_j(x^{-1} z)  + \sum_{I,J\neq 0,\ d(I)+d(J)=\fraks_j} C_j^{I,J}\eta^I(y^{-1} x)\eta^J(x^{-1} z)\ +\eta_j (y^{-1} x) \\
&= \eta_j(y^{-1} x x^{-1}z)\\
&= \eta_j(y^{-1} z)\\
&= \Pi_y \bfeta_j (z).
\end{align*}
\subsubsection{Derivatives and abstract Polynomials}\label{sec:derivatives_and_abstract_polynomials}
Next we lift the vector fields $X_i$ to abstract differential operators $\mathcal{X}^i$ on $\bar{\opT}$ of degree $\fraks_j$, c.f. Section~\ref{subsec:local_ops}. We set
$$\mathcal{X}^i \bfeta^j = \delta_{i,j}\mathbf{1} + \sum_{I\neq 0,\  d(I)= \fraks_j-\fraks_i} C_j^{I, e_i} \bfeta^I$$
and extend this definition to the whole regularity structure by the Leibniz rule.
The conditions to be an abstract differential operator are checked directly:
\begin{enumerate}
\item Indeed $\mathcal{X}^i: \opT_\alpha\mapsto T_{\alpha-\fraks_i}$.
\item By the Leibinz rule the second property is checked by showing that
\begin{equation}\label{eq:remains_to_check}
	\mathcal{X}^i  \Gamma_{x,y}\bfeta^j= \Gamma_{x,y}\mathcal{X}^i \bfeta^j \ .
\end{equation}
Note that $\Pi_x \mathcal{X}^i \bfeta^j =X_i \Pi_x\bfeta^j\ ,$ since
$$\Pi_x \mathcal{X}^i \bfeta_j (z) = \Pi_x  \Big(\delta_{i,j}\mathbf{1} \,+ \hspace{-0.2em}\sum_{I\neq 0,\  d(I)= \fraks_j-\fraks_i} C_j^{I, e_i} \bfeta^I\Big) = \delta_{i,j}\, +\hspace{-0.2em} \sum_{I\neq 0,\  d(I)= \fraks_j-\fraks_i} C_j^{I, e_i} \eta^I(x^{-1}z)$$
and using left invariance of the vector field $X_i$ as well as \eqref{eq:derivative_of_polynomial}
$$X_i \Pi_x\bfeta_j  (z)= (X_i \eta_{j}( x^{-1} \cdot )) (z) = (X_i \eta_{j}) (x^{-1}z)= \delta_{i,j} + \sum_{I\neq 0,\  d(I)= \fraks_j-\fraks_i} C_j^{I, e_i} \eta^I(x^{-1}z)\ .$$
Thus \eqref{eq:remains_to_check} follows from the injectivity of the maps $\Pi_x$ and the relation $\Pi_x\Gamma_{x,y}= \Pi_y$.
\end{enumerate}
\begin{remark} In addition to showing that $\mathcal{X}_i$ is an abstract differential operator, we have shown that it also realizes $X_i$ for the polynomial model.
\end{remark}
\subsubsection{Abstract Taylor Expansions}\label{sec:abstract_taylor_polynomials}
The following operator,  which sends a smooth function to its abstract Taylor expansion, will be used throughout the article.
\begin{definition}\label{def:Taylor_as_Modelled_Dist}
For $x\in \mbG$, we define the family of maps
$$\opbfP^a_x: \mcC^a (\mbG) \to \bar{\opT}$$ 
where for $a>0$ 
$\opbfP^a_x[f]$ is characterised by the fact that $\Pi_e \opbfP^a_x[f] =\opP^a_x[f]\in \mathcal{P}_a$ and for $a\leq 0$ it satisfies $\opbfP^a_x[f]=0$.
\end{definition}
\begin{remark}\label{rem:taylor_extension}
Note that the map $\opbfP^a_x$ is well defined by Remark~\ref{rem:taylor_extension0} . We shall almost exclusively use it, when its argument is a smooth function $f\in C^\infty(\mbG)\subset \mcC^a(\mbG)$.
\end{remark}
\begin{lemma}\label{lem:coefficients_of_taylor}
Let $f\in C^\infty(\mbG)$ and $a \geq 0$. Then for each multi-index $I$ the coefficient of $\pmb{\eta}^I$ in $\opbfP^a_x[f]$ is given by a linear combination (depending only on $\mbG$) of $\{X^Kf(x)\}_{d(K)\leq d(I)}$. Furthermore, for $b\geq a$ one has
\begin{equation}\label{eq:abstract_taylor_project}
	\opbfP_x^a[f]= \mcQ_{< a} \opbfP^{b}_x [f]\ .
\end{equation}		
\end{lemma}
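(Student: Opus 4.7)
The plan is to compute the coefficients of $\opbfP_x^a[f]$ explicitly by inverting a linear system, and read off both claims from the resulting formula. Writing $\opbfP_x^a[f] = \sum_{d(I)<a} c_I(x)\,\pmb{\eta}^I$, the fact that $\Pi_e \pmb{\eta}^I=\eta^I$ together with Definition~\ref{def:Taylor_as_Modelled_Dist} gives $\opP_x^a[f] = \sum_{d(I)<a} c_I(x)\,\eta^I$, and Definition~\ref{def:taylor} then yields the linear system
$$X^K f(x) \;=\; \sum_{d(I)<a} M_{K,I}\, c_I(x), \qquad d(K)<a,$$
where $M_{K,I} := (X^K \eta^I)(e)$ depends only on $\mbG$.

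The crucial step will be showing that the matrix $M$ is block-diagonal with respect to the grading by homogeneous degree, i.e.\ $M_{K,I}=0$ whenever $d(K)\neq d(I)$. This rests on the observation that each $X_j$ is homogeneous of degree $\fraks_j$ in the sense that it sends functions of homogeneous degree $\lambda$ to functions of homogeneous degree $\lambda-\fraks_j$; this follows directly from left-invariance combined with the dilation identity $r\cdot\exp(tX_j)=\exp(tr^{\fraks_j}X_j)$, which itself is a consequence of $\fraks X_j=\fraks_j X_j$. Consequently $X^K\eta^I$ is either identically zero (when $d(K)>d(I)$, since by Remark~\ref{rem:eigenspaces} no non-zero polynomial on $\mbG$ has negative homogeneous degree) or a homogeneous polynomial of degree $d(I)-d(K)\geq 0$; the latter vanishes at $e$ unless $d(I)=d(K)$, in which case it is a non-zero constant.

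Granted the block-diagonality, invertibility of the full matrix $M$ asserted by Proposition~\ref{prop:uniqueness_taylor} forces each diagonal block $(M_{K,I})_{d(K)=d(I)=n}$ to be invertible. Inverting each block expresses $c_I(x)$ as a linear combination (with coefficients depending only on $\mbG$) of $\{X^K f(x)\}_{d(K)=d(I)}$, which establishes a sharper version of the first claim. For the second claim, the resulting formula for $c_I(x)$ is manifestly independent of the truncation parameter as long as it exceeds $d(I)$, so $\opbfP_x^a[f]$ and $\mcQ_{<a}\opbfP_x^b[f]$ have identical $\pmb{\eta}^I$-coefficients for every $d(I)<a$, yielding \eqref{eq:abstract_taylor_project}. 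The main technical point I expect to need to justify carefully is the block-diagonality of $M$, and in particular the identity $X^K\eta^I\equiv 0$ for $d(K)>d(I)$; this is precisely where one uses that the eigenvalues of $\fraks$ are bounded below by $1$, which prevents any polynomial pieces of negative homogeneous degree from appearing.
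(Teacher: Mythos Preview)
Your proof is correct, and it differs from the paper's mainly in the order of argument. The paper first establishes \eqref{eq:abstract_taylor_project} by observing directly that $\Pi_e\mcQ_{<a}\opbfP_x^b[f]$ satisfies the defining property of $\opP_x^a[f]$ in Definition~\ref{def:taylor} (which implicitly uses the same homogeneity fact you isolate, namely that $X^I$ applied to a polynomial of homogeneous degree $\geq a$ vanishes at $e$ whenever $d(I)<a$), and only then deduces the coefficient claim by combining \eqref{eq:abstract_taylor_project} with the isomorphism of Proposition~\ref{prop:uniqueness_taylor}. You instead make the block-diagonal structure of the change-of-basis matrix $M_{K,I}=(X^K\eta^I)(e)$ explicit up front and invert block by block; this buys you the sharper statement $c_I(x)\in\Span\{X^Kf(x):d(K)=d(I)\}$ rather than merely $d(K)\leq d(I)$, and makes the independence of $c_I$ from the truncation level manifest.

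One small slip worth noting: when $d(K)=d(I)$ the constant $(X^K\eta^I)(e)$ need not be non-zero (for instance $X_i\eta_j=0$ whenever $i\neq j$ with $\fraks_i=\fraks_j$, by \eqref{eq:derivative_of_polynomial}). This is harmless for your argument, since you correctly deduce invertibility of each diagonal block from invertibility of the full matrix via Proposition~\ref{prop:uniqueness_taylor}, not from non-vanishing of individual entries.
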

\begin{proof}\,
We first observe that \eqref{eq:abstract_taylor_project} holds since the polynomial $\Pi_e \mcQ_{\leq a} \opbfP^{b}_x [f]$ satisfies the property required in Definition~\ref{def:taylor}. The first part of the lemma follows by combining Proposition~\ref{prop:uniqueness_taylor} with \eqref{eq:abstract_taylor_project}.	
\end{proof}
\begin{lemma}\label{lem:second_polynomial}
In the setting of the above lemma one has 
$$|X^I \Pi_e \opbfP^a_x[f] (z)|= |X^I \opP^a_x[f] (z)| \lesssim \sum_{d(I)\leq d(J)< a} \sup_{d(J')\leq d(J)} |X^{J'}f(x)||z|^{d(J)-d(I)} \ .$$
\end{lemma}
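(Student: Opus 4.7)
The first equality is immediate from Definition~\ref{def:Taylor_as_Modelled_Dist}, which states that $\Pi_e \opbfP^a_x[f] = \opP^a_x[f]$. It remains to establish the claimed bound on $X^I \opP^a_x[f](z)$.

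If $d(I) \geq a$, then $\opP^a_x[f] \in \mcP_a$ has homogeneous degree strictly less than $a$, and applying $X^I$ (which lowers homogeneous degree by $d(I)$) yields zero, matching an empty sum. So assume $d(I) < a$. The plan is to invoke Remark~\ref{rem:derivative_of_taylor} to rewrite
$$X^I \opP^a_x[f] \;=\; \opP^{a - d(I)}_x[X^I f],$$
reducing the problem to estimating a Taylor polynomial of the function $X^I f$.

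Next, apply Lemma~\ref{lem:coefficients_of_taylor} to the polynomial on the right: writing $\opP^{a-d(I)}_x[X^I f](z) = \sum_{d(K) < a - d(I)} c_K(x)\, \eta^K(z)$, each coefficient $c_K(x)$ is a linear combination (with constants depending only on $\mbG$) of $\{X^L(X^I f)(x)\}_{d(L) \leq d(K)}$. Since the algebra of left-invariant differential operators is graded, each composite $X^L X^I$ can be written as a linear combination of operators $X^{J'}$ with $d(J') = d(L) + d(I)$, so we obtain
$$|c_K(x)| \;\lesssim\; \sup_{d(J') \leq d(K) + d(I)} |X^{J'} f(x)|.$$

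Finally, since $\eta^K$ is homogeneous of degree $d(K)$, one has $|\eta^K(z)| \lesssim |z|^{d(K)}$, so combining the two bounds and reindexing with $d(J) := d(K) + d(I)$ (so that $d(K) < a - d(I)$ becomes $d(I) \leq d(J) < a$ and $|z|^{d(K)} = |z|^{d(J) - d(I)}$) yields the claimed estimate. There is no real obstacle here; the argument is essentially bookkeeping, the only point requiring mild care being the reduction of $X^L X^I$ to a homogeneous-degree-graded combination of $X^{J'}$'s, which is a standard consequence of the PBW-type basis of left-invariant differential operators on $\mbG$.
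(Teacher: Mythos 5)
Your proof is correct, but it takes a slightly different route from the paper's. The paper writes $\opP^a_x[f] = \sum_{d(J)<a} c_J\,\eta^J$, bounds $|c_J|\lesssim \sup_{d(J')\leq d(J)}|X^{J'}f(x)|$ directly via Lemma~\ref{lem:coefficients_of_taylor}, and then differentiates the monomials, using the fact that $X^I\eta^J$ is homogeneous of degree $d(J)-d(I)$ (so $|X^I\eta^J(z)|\lesssim|z|^{d(J)-d(I)}$, and vanishes when $d(J)<d(I)$). You instead first commute $X^I$ through the Taylor-polynomial map via Remark~\ref{rem:derivative_of_taylor}, turning the left side into $\opP^{a-d(I)}_x[X^I f]$, then apply Lemma~\ref{lem:coefficients_of_taylor} to $X^I f$, and finally reduce the compositions $X^L X^I$ to a $d$-graded combination of $X^{J'}$'s before reindexing. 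Both routes are sound; the paper's is marginally shorter because it never has to regroup the differential operators $X^L X^I$ — that bookkeeping step is exactly what you pay for the convenience of differentiating before expanding. Either way the core inputs (the coefficient bound from Lemma~\ref{lem:coefficients_of_taylor} and homogeneity on $\mbG$) are the same.
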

\begin{proof}\,
Since the coefficient of ${\eta}^J$ in $ {\opP}^a_x[f]$ is bounded by a universal multiple of $\sup_{d(J')\leq d(J)} |X^{J'}f(x)|$ by Lemma~\ref{lem:coefficients_of_taylor} the claim follows using the fact that 
$|X^I\Pi_e \pmb{\eta}^J(z)|\lesssim |z|^{{d(J)-d(I)}}$ .
\end{proof}
The next lemma will be useful when proving Schauder estimates in Section~\ref{subsec:schauder}, since it allows us to circumvent the issue of having non-explicit Taylor polynomials.
\begin{lemma}\label{lem:to_bound_polynomials}
Let $\delta>0$ and $P\in \bar{\opT}_{<\delta}$. If, for some $\varepsilon \in [0,1]$,
$$\left| (X^{I}\Pi_e P) (e)\right|\leq \eps^{\delta-d(I)}$$
for all  $I\in \mathbb{N}^d$ such that $\delta> d(I)$,
then it holds that $|P|_a\lesssim_{\delta,\mbG} \eps^{\delta-a}$ for all $a\leq \delta$. 
\end{lemma}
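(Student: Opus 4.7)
The plan is to reduce the abstract bound on $|P|_a$ to Proposition~\ref{prop:uniqueness_taylor} applied to the concrete polynomial $\Pi_e P \in \mcP_\delta$, exploiting the fact that the isomorphism from $\mcP_\delta$ to its derivatives at $e$ is block-diagonal with respect to the grading by homogeneity. Writing $P = \sum_{d(I) < \delta} p_I \bfeta^I$, and recalling that $\bar{\opT}_a$ is finite-dimensional so any two norms on it are equivalent, it suffices to show $|p_I| \lesssim_{\delta, \mbG} \eps^{\delta - a}$ for every multi-index $I$ with $d(I) = a$.

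The crucial algebraic observation is that for any multi-index $J$, the polynomial $X^K \eta^J$ is a sum of monomials each of homogeneous degree $d(J) - d(K)$. This follows by iterating \eqref{eq:derivative_of_polynomial} via the Leibniz rule: $X_i \eta_j$ consists entirely of monomials of degree $\mfs_j - \mfs_i$, and the homogeneous degree of a product of homogeneous polynomials is additive. Since $\eta^I(e) = 0$ for $I \neq 0$, evaluating $X^K \eta^J$ at $e$ isolates its constant piece, and hence $X^K \eta^J(e) = 0$ unless $d(K) = d(J)$.

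Consequently, decomposing $\mcP_\delta = \bigoplus_{b \in \triangle,\, b < \delta} \mcP_b^{\mathrm{hom}}$ into the spans of monomials $\eta^I$ with $d(I)$ equal to a fixed value $b$, the linear isomorphism of Proposition~\ref{prop:uniqueness_taylor},
$$\mcP_\delta \ni \tilde{P} \mapsto \{X^K \tilde{P}(e)\}_{d(K) < \delta},$$
is block-diagonal with respect to this decomposition. Each block is then an isomorphism between finite-dimensional spaces of equal dimension, with inverse of norm depending only on $\mbG$ and $\delta$. Setting $P^{(a)} := \mcQ_a P$ so that $\Pi_e P^{(a)} \in \mcP_a^{\mathrm{hom}}$, applying the inverse of the $a$-block then yields
$$|P|_a = |P^{(a)}|_a \lesssim_{\mbG,\delta} \max_{d(K) = a} \bigl|X^K \Pi_e P^{(a)}(e)\bigr| = \max_{d(K) = a} \bigl|X^K \Pi_e P(e)\bigr| \leq \eps^{\delta - a},$$
where the middle equality uses the block-diagonality to discard contributions from $\mcQ_b P$ with $b \neq a$, and the final step invokes the hypothesis. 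The only step requiring any real care is the block-diagonality, but this is a direct consequence of the homogeneity grading carried by the vector fields $X_i$.
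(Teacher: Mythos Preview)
Your proof is correct and follows essentially the same approach as the paper: both arguments reduce to the isomorphism of Proposition~\ref{prop:uniqueness_taylor} between polynomial coefficients and derivatives at $e$. The paper simply cites Lemma~\ref{lem:coefficients_of_taylor}, which gives the triangular statement that the coefficient of $\bfeta^I$ depends linearly on $\{X^K(\Pi_e P)(e)\}_{d(K)\leq d(I)}$, and then uses $\eps\in[0,1]$ to absorb the lower-order contributions; you instead prove the sharper block-diagonal statement (only $d(K)=d(I)$ contributes) directly from the homogeneity of $X^K\eta^J$, which avoids needing $\eps\leq 1$ at that step but is otherwise the same idea.
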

\begin{proof}\,
We observe that $\opP_x^\delta[\Pi_e P]=\Pi_e P$. Thus, the claim follows directly from Lemma~\ref{lem:coefficients_of_taylor}.
\end{proof}
\subsection{Modelled Distributions}\label{sec:modelled_distributions}
\begin{definition}\label{def:modelled_dist}
Given a regularity structure  $\mcT=(\opT,\opG)$, a model $M=(\Pi, \Gamma)$  and $\gamma\in \mbR$ we define $
\msD_M^\gamma$ as the space of all continuous maps $f:\mbG\to \opT_{<\gamma}$, such that for all $\zeta \in A\cap (-\infty,\gamma)$ the following bounds hold for every compact set $\mfK \subset \mbG$
$$\sup_{x\in \mfK} |f(x)|_\zeta <+\infty, \qquad \sup_{\substack{x,y\in \mfK,\\0<{|y^{-1}x|}\leq 1}} \frac{|f(y)-\Gamma_{x,y}f(x)|_\zeta}{{|y^{-1}x|}^{\gamma-\zeta}}<+\infty \ .$$
We define the corresponding semi-norm $\|\,\cdot\,\|_{\gamma;\mfK}$
on $\msD_M^\gamma$ by setting,
\begin{equation*}\label{eq:modelled_dist_norm}
	\|f\|_{\gamma;\mfK} := \sup_{x\in \mfK} \sup_{\zeta<\gamma}|f(x)|_\zeta + \sup_{\zeta<\gamma} \sup_{\substack{x,y\in \mfK, \\ 0<{|y^{-1}x|}\leq 1}}\frac{|f(y)-\Gamma_{x,y}f(x)|_\zeta}{{|y^{-1}x|}^{\gamma-\zeta}}.
\end{equation*}
Given two models $M = (\Pi,\Gamma),\, \bar{M}=(\bar{\Pi},\bar{\Gamma})$, two modelled distributions $f\in \msD^\gamma_M,\, \bar{f}\in \msD^\gamma_{\bar M}$ and a compact set $\mfK\subset\mbG$ we define the quantity, 
\begin{equation*}
	\begin{aligned}
		\|f;\bar{f}\|_{\gamma;\mfK} :=& \sup_{x \in \mfK}\sup_{\zeta<\gamma}|f(x)-\bar{f}(x)|_\zeta \\
		&+ \sup_{\substack{x,y\in \mfK\\ 0<{|y^{-1}x|} \leq 1}}\sup_{\zeta<\gamma} \frac{|f(y)-\bar{f}(y)-\Gamma_{x,y}f(y)+\bar{\Gamma}_{x,y}\bar{f}(x)|_\zeta}{{|y^{-1} x|}^{\gamma-\zeta}}.
	\end{aligned}
\end{equation*}
For the set of modelled distributions taking values in a sector $V\subset \opT$ we write $\msD^\gamma_M(V)$ and if the regularity of the sector is $\alpha \in A$ we often use the shorthand $\msD^\gamma_{\alpha;M}$. We will freely drop the explicit dependence on the model, image sector and its regularity when the context is clear.
\end{definition}
\subsubsection{Reconstruction}\label{subsec:Reconstruction}
\begin{theorem}[Reconstruction Theorem]\label{th:Recontruction}
Let $\mcT=(\opT,\opG)$ be a regularity structure with $\underline{\alpha} = \min A$. Then for every $\gamma>0$ and $M = (\Pi,\Gamma) \in \mathcal{M}_{\mcT}$, there exists a unique, continuous linear map $\mcR_{M}:\msD_M^\gamma \rightarrow \mcC^{\underline{\alpha}}$, called the reconstruction operator associated to $M$, such that for any compact $\mfK$ and $\lambda \in (0,1]$ 
\begin{equation}\label{eq:Reconstruction_Regularity}
	\sup_{\psi \in \mfB_{r}} |\langle \mathcal{R}_M f- \Pi_x f(x),\psi^\lambda_x\rangle|\lesssim_{\mfK} \lambda^\gamma\|f\|_{\gamma;B_{2\mu\lambda}(x)}\|\Pi\|_{\gamma;B_{2\mu\lambda}(x)} \ ,
\end{equation}
uniformly over $x\in \mfK$, where $\mu\geq 1$ was defined in Remark~\ref{rem:triangle_inequality}.
Furthermore the map $M \rightarrow \mcR_M$ is locally Lipschitz continuous in the sense that for a second model $\bar{M}=(\bar{\Pi},\bar{\Gamma})$ and $\bar{f}\in \msD_{\bar{M}}^\gamma$, for every $\lambda \in (0,1]$
\begin{equation}\label{eq:rec2}
	\begin{aligned}
		\sup_{\psi \in \mfB_{r}} |\langle \mathcal{R}_M f-\mathcal{R}_{\bar{M}}\bar{f} -\Pi_x f(x)+\bar{\Pi}_x \bar{f}(x),\psi^{\lambda}_x\rangle|
		\lesssim_{\mfK} \lambda^\gamma \bigg(&\|f;\bar{f}\|_{\gamma;B_{2\mu\lambda}(x)}\|\bar{\Pi}\|_{\gamma;B_{2\mu\lambda}(x)}\\
		&+\|f\|_{\gamma;B_{2\mu\lambda}(x)}\|\Pi-\bar{\Pi}\|_{\gamma;B_{2\mu\lambda}(x)} \bigg)\ ,
	\end{aligned}
\end{equation}
\end{theorem}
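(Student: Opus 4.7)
The plan is to adapt the wavelet-free argument of \cite[Sec.~3.2]{friz_hairer_20_introduction} to the non-abelian setting, replacing Fourier/wavelet machinery by a purely convolution-based telescoping on $\mbG$. Uniqueness comes first and nearly for free: if $\xi \in \mcD'(\mbG)$ satisfies $|\langle \xi, \psi^\lambda_x\rangle| \lesssim \lambda^\gamma$ uniformly over $\psi \in \mfB_r$ and $\lambda \in (0,1]$, then testing a fixed $\phi \in \mcD(\mbG)$ via $\phi \ast \rho^{\lambda}$ for a standard mollifier $\rho$ and letting $\lambda \to 0$ forces $\langle \xi, \phi\rangle = 0$ because $\gamma > 0$. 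Applied to the difference of two candidate reconstructions, this gives uniqueness, and more importantly will give the required tail-summation step in the existence proof.

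For existence the first task is to construct the scaling/difference pair. I would fix a single smooth, compactly supported $\rho \in \mcD(\mbG)$ with $\int \rho\,dx = 1$ and sufficiently many vanishing moments (i.e.\ $\int \eta^I \rho\,dx = 0$ for $0 < d(I) \leq r$) to match the regularity index $\gamma$. Setting $\rho_n := \rho^{2^{-n}}$ and $\varphi_n := \rho_{n+1} - \rho_n \ast \rho_1$ yields a telescoping convolution identity $\rho_N = \rho_0 + \sum_{n<N} \varphi_n$, with $\varphi_n$ lying in $\kappa \cdot \mfB^{c 2^{-n}}_r(e)$ for universal $\kappa, c$ by Remark~\ref{rem:test_functions}, and $\rho_N \ast \phi \to \phi$ in $\mcD(\mbG)$ for every $\phi$. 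I would then define provisional reconstructions $\mcR_n f \in \mcD'(\mbG)$ by
\[
\langle \mcR_n f, \phi\rangle := \int_{\mbG} \langle \Pi_y f(y), \tilde\rho_{n,y}\rangle (\phi \ast \rho_n)(y) \, dy,
\]
or a close symmetric variant, which is well-defined because $\Pi_y f(y) \in \mcD'(\mbG)$ depends continuously on $y$.

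The key quantitative step is to show $|\langle \mcR_{n+1} f - \mcR_n f, \phi\rangle| \lesssim 2^{-n\gamma}$ uniformly in $n$ for $\phi$ in a compactly supported bounded family; this gives the Cauchy property and defines $\mcR f := \lim_n \mcR_n f$. I would rewrite this difference as an integral against $\varphi_n$ and, at each integration point $y$, use the consistency identity $\Pi_y f(y) = \Pi_z \Gamma_{z,y} f(y)$ to split $\Pi_y f(y) = \Pi_z f(z) + \Pi_z\big(\Gamma_{z,y} f(y) - f(z)\big)$ for a suitable base point $z$. The first summand integrates to zero against $\varphi_n$ up to the order dictated by the moment conditions on $\rho$ (using Theorem~\ref{th:taylor} to expand the smooth low-frequency piece $\Pi_z f(z)$ on the support of $\varphi_n$), and the second is controlled by the modelled-distribution coherence $|\Gamma_{z,y} f(y) - f(z)|_\zeta \lesssim |z^{-1}y|^{\gamma-\zeta}$ combined with the model bound $\|\Pi\|_{\gamma; \bar{\mfK}}$. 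Specialising $\phi = \psi^\lambda_x$ and separating scales $2^{-n} \lesssim \lambda$ from the Cauchy tail then yields \eqref{eq:Reconstruction_Regularity} with the factor $\lambda^\gamma$; the Lipschitz estimate \eqref{eq:rec2} is obtained by running the whole construction for $M$ and $\bar M$ simultaneously and taking algebraic differences at each step. The main obstacle is the telescoping construction itself: on a non-commutative $\mbG$ there is no Littlewood--Paley decomposition or wavelet basis, so one must verify by hand that dilations plus group convolution produce a decomposition with the correct supports, vanishing moments, and derivative scaling, while carefully tracking distances through compositions such as $\rho_n \ast \rho_1$ and translates $\tilde\rho_{n,y}$ using the non-symmetric quasi-triangle inequalities of Remark~\ref{rem:triangle_inequality}.
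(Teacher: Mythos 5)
The overall plan is aligned with the paper's (both adapt \cite[Sec.~13.4]{friz_hairer_20_introduction}), but the central construction contains a concrete error that derails the existence argument. The ``telescoping convolution identity'' you assert, namely $\rho_N=\rho_0+\sum_{n<N}\varphi_n$ with $\varphi_n:=\rho_{n+1}-\rho_n\ast\rho_1$, does not hold: expanding the right-hand side gives $\rho_0+\sum_{n=1}^N\rho_n-\bigl(\sum_{n=0}^{N-1}\rho_n\bigr)\ast\rho_1$, and the cross terms do not cancel, since $\rho_n\ast\rho_1\neq\rho_n$. Even if you replace this with the genuine additive telescope $\varphi_n:=\rho_{n+1}-\rho_n$, the decomposition you obtain lacks the structural property that the paper's proof actually leans on: the infinite convolution $\varphi^{(n)}:=\lim_{m\to\infty}\rho^{(n)}\ast\rho^{(n+1)}\ast\cdots\ast\rho^{(m)}$ constructed in Lemma~\ref{lem:convergence_of_test_functions}, which gives the \emph{cascade relation} $\varphi^{(n)}=\rho^{(n)}\ast\varphi^{(n+1)}$ and hence, for the partial reconstructions, $\mcR^{(n)}f=\mcR^{(n+1)}f\ast\tilde\rho^{(n)}$. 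That relation is exactly what makes Lemma~\ref{lem:Pre_Reconstruct_Lemma} applicable: it both drives the convergence of the $\xi_n$ in $\mcC^{-\beta}$ and supplies the identity $\xi_n=\xi\ast\tilde\varphi^{(n)}$ needed for the localised bound $|\langle\xi,\psi^\lambda_x\rangle|\lesssim\lambda^\gamma$. An additive difference $\rho_{n+1}-\rho_n$ does not yield any such convolution consistency between successive scales, so the Cauchy/summation step as sketched is unsupported. Note also that the paper must take the dilation base $\mfr>\|\rho\|_{L^1}>1$ for the infinite convolution to converge; the kernel $\rho$ has vanishing moments and hence negative parts, so $\|\rho\|_{L^1}>1$ is forced, and fixing $\mfr=2$ throughout without checking $\|\rho\|_{L^1}<2$ is a further unaddressed point.

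The other genuine gap is the claimed source of the cancellation. You write that after using model consistency, the ``first summand'' $\Pi_z f(z)$ ``integrates to zero against $\varphi_n$ up to the order dictated by the moment conditions on $\rho$ (using Theorem~\ref{th:taylor} to expand the smooth low-frequency piece $\Pi_z f(z)$).'' But $\Pi_z f(z)$ is a distribution of negative regularity — it is not smooth and admits no Taylor expansion — so this step is not available. In the paper the roles are reversed: the moment conditions together with Taylor's theorem (via Remark~\ref{rem:rescaled_taylor}) are applied to the \emph{smooth test function} $\psi^\lambda_x$, producing $\psi^\lambda_x\ast\rho^{(n)}-\psi^\lambda_x=(\psi^\lambda_x-\tilde\opP^r_y[\psi^\lambda_x])\ast\rho^{(n)}$, which is what is paired against $\xi_{n+1}$ in Lemma~\ref{lem:Pre_Reconstruct_Lemma}; meanwhile the difference $\mcR^{(m,m)}f-\mcR^{(m+1,m+1)}f\ast\tilde\rho^{(m)}$ is reduced via the model identity $\Pi_z=\Pi_y\Gamma_{y,z}$ to $\langle\Pi_y(f(y)-\Gamma_{y,z}f(z)),\varphi^{(m+1)}_z\rangle$, and then bounded by the coherence of $f$ and $\|\Pi\|_{\gamma}$ — no moment conditions enter at that step. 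Your uniqueness argument is correct, and the idea of running the construction for $M$ and $\bar M$ simultaneously to prove \eqref{eq:rec2} is fine, but the two points above need to be fixed before the existence argument and the bound \eqref{eq:Reconstruction_Regularity} go through.
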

\begin{remark}
Existence of a reconstruction operator for $\gamma \leq 0$ also holds, however, uniqueness does not. In the case $\gamma=0$ the analogous bound to \eqref{eq:Reconstruction_Regularity} contains an additional logarithmic correction on the right hand side, c.f. \cite{caravenna_zambotti_20}.
\end{remark}
\begin{remark}
It follows from the definition of the reconstruction operator, that if $f$ is a modelled distributions with values in a sector $V\subset \opT$ of regularity $\alpha\geq \underline{\alpha}$, then $\mathcal{R}_Mf\in C^{\alpha}$. 
\end{remark}
In order to prove Theorem~\ref{th:Recontruction} we require two preliminary results. As in \cite[Sec.~13.4]{friz_hairer_20_introduction} we make the observation that for every $N\geq 0$ there exists a $\rho : \mbG\rightarrow \mbR$, smooth and compactly supported in $B_1(e)$ and such that
\begin{equation}\label{eq:PolynomialAnnhiliate}
\int \eta^I(x) \rho(x)\dd x = \delta_{I,0},\quad 0<d(I)\leq N,
\end{equation}
where the $\delta$ here denotes the Kronecker delta applied componentwise to the multi-index. For $\mfr>1$ we define $\rho^{(n)}(x):= \mfr^{n|\mfs|}\rho (\mfr^{n}\cdot x)$ as well as,
\begin{equation*}
\rho^{(n,m)} = \rho^{(n)}\ast \rho^{(n+1)}\ast \cdots \ast \rho^{(m)},
\end{equation*}
with the convention $\rho^{(n,n)}= \rho^{(n)}$. We then have the following result.

\begin{lemma}\label{lem:convergence_of_test_functions}
If $\,\mfr> \| \rho\|_{L^1}>1$, then there exists a smooth, compactly supported function $\varphi^{(n)}=\lim_{m\rightarrow \infty} \rho^{(n,m)}$, where the convergence takes place in $\mcD(\mbG)$ and $\supp(\varphi^{(n)})\subset B_{C \mfr^{-n}}$ for $C=\frac{\mfr}{\mfr-1}$.
\end{lemma}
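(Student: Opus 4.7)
The plan is to show that $(\rho^{(n,m)})_{m\geq n}$ is a Cauchy sequence in the Fréchet space $\mcD(\mbG)$, which by completeness yields the limit $\varphi^{(n)}$. Controlling the common support comes first: iterating the quasi-triangle inequality of Remark~\ref{rem:triangle_inequality} and using $\supp(\rho^{(k)})\subset B_{\mfr^{-k}}(e)$, one gets $\supp(\rho^{(n,m)})\subset B_{\sum_{k=n}^m\mfr^{-k}}(e)\subset B_{C\mfr^{-n}}(e)$ with $C=\mfr/(\mfr-1)$, where the multiplicative constant $\mu$ from the quasi-triangle inequality is absorbed into $C$ (or alternatively one works with a homogeneous norm for which $\mu=1$).

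For the $L^\infty$-Cauchy estimate, assume $\int\rho=1$ (so each $\rho^{(k)}$ is a mean-one approximate identity at scale $\mfr^{-k}$) and write
\[
\rho^{(n,m+1)}(x)-\rho^{(n,m)}(x)=\int_{\mbG}\bigl[\rho^{(n,m)}(xz^{-1})-\rho^{(n,m)}(x)\bigr]\rho^{(m+1)}(z)\,dz.
\]
I would then apply the left Taylor expansion of Theorem~\ref{th:taylor} to $w\mapsto\rho^{(n,m)}(xw)$ about $w=e$ at some order $a\leq N+1$, and substitute $w=z^{-1}$. The key algebraic fact is that each $\eta^I(z^{-1})$ is a polynomial in $\eta_k(z)$ of the same homogeneous degree $d(I)$, so the vanishing moments $\int\eta^K\rho=0$ for $0<d(K)\leq N$ annihilate the polynomial part of the substituted expansion. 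The Taylor remainder estimate from Theorem~\ref{th:taylor}, combined with $|z|\leq\mfr^{-(m+1)}$ on $\supp(\rho^{(m+1)})$, gives
\[
\bigl|\rho^{(n,m+1)}(x)-\rho^{(n,m)}(x)\bigr|\lesssim \mfr^{-(m+1)a}\,\|\rho\|_{L^1}\max_{d(I)\geq a,\,|I|\leq[a]+1}\|X^I\rho^{(n,m)}\|_{L^\infty}.
\]
An analogous argument applied after any $X^J$-derivative, using that $X^J(\rho^{(n,m)}\ast\rho^{(m+1)})=\rho^{(n,m)}\ast X^J\rho^{(m+1)}$ and that $\int X^J\rho^{(m+1)}=0$ (since $X^J$ is left-invariant and the Haar measure is bi-invariant), produces the same kind of Cauchy estimate for each derivative.

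The hard part is to bound $\|X^I\rho^{(n,m)}\|_{L^\infty}$ uniformly in $m$; this is where the precise hypothesis $\mfr>\|\rho\|_{L^1}$ enters. The naive estimate $\|\phi\ast\psi\|_{L^\infty}\leq\|\phi\|_{L^\infty}\|\psi\|_{L^1}$ produces geometric growth at rate $\|\rho\|_{L^1}^{m-n}$, which is far too crude. The refined estimate again uses that $X^I\rho^{(m+1)}$ is mean-zero, then Taylor-expands $\rho^{(n,m)}(xz^{-1})$ around $z=e$ and integrates by parts against the polynomial part; vanishing moments kill all but the terms with $d(K)=d(I)$, producing a recursion
\[
\|X^I\rho^{(n,m+1)}\|_{L^\infty}\;\leq\;\Bigl(\sum_{d(K)=d(I)}M_K\Bigr)\max_{d(K)=d(I)}\|X^K\rho^{(n,m)}\|_{L^\infty}\;+\;\text{(fast-decaying remainder)},
\]
where the constants $M_K$ arise from the moments of $\rho$. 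The condition $\|\rho\|_{L^1}<\mfr$ is precisely what forces the aggregate multiplier on the right-hand side to be strictly less than $1$, closing the induction and yielding a uniform bound of the form $\|X^I\rho^{(n,m)}\|_{L^\infty}\leq C_I\,\mfr^{n(|\mfs|+d(I))}$ for every multi-index $I$ up to any prescribed order.

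Combining the three ingredients, $\sum_m\|X^J(\rho^{(n,m+1)}-\rho^{(n,m)})\|_{L^\infty}$ becomes a convergent geometric series for every $J$ with $d(J)<a$, so $(\rho^{(n,m)})_m$ is Cauchy in $C^k(\mbG)$ for every $k$. Together with the uniform support bound from Step 1, this gives convergence in the Fréchet topology of $\mcD(\mbG)$ to a smooth, compactly supported limit $\varphi^{(n)}$ satisfying $\supp(\varphi^{(n)})\subset B_{C\mfr^{-n}}(e)$ as claimed.
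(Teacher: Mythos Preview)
Your proof has a genuine gap in what you yourself call ``the hard part'': the recursion you sketch for $\|X^I\rho^{(n,m)}\|_{L^\infty}$ is not established. You assert that after Taylor-expanding and using vanishing moments, the surviving multiplier $\sum_{d(K)=d(I)} M_K$ is forced below $1$ by the hypothesis $\|\rho\|_{L^1}<\mfr$, but nowhere do you connect these moment constants to $\|\rho\|_{L^1}$ or to $\mfr$. As written, the recursion does not close. Moreover, the vanishing-moment property \eqref{eq:PolynomialAnnhiliate} is not needed for this lemma at all; it is used only later, in Lemma~\ref{lem:Pre_Reconstruct_Lemma}.

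The paper's argument sidesteps the difficulty you identified by switching from left- to right-invariant vector fields. Since $Y^I(\psi\ast\phi)=(Y^I\psi)\ast\phi$, one has $Y^I\rho^{(n,m)}=(Y^I\rho^{(n)})\ast\rho^{(n+1,m)}$, so \emph{all} derivatives fall on the fixed first factor $\rho^{(n)}$. Young's inequality then gives the crude bound $\|Y_i\rho^{(n,m)}\|_{L^\infty}\lesssim_n\|\rho\|_{L^1}^{m}$, and combining this with the first-order mean value estimate $\|f\ast\rho^{(m)}-f\|_{L^\infty}\lesssim\max_i\|Y_i f\|_{L^\infty}\,\mfr^{-m}$ (no higher Taylor expansion, no vanishing moments) yields
\[
\|\rho^{(n,m)}-\rho^{(n,m-1)}\|_{L^\infty}\lesssim_n\bigl(\|\rho\|_{L^1}/\mfr\bigr)^m,
\]
which is summable precisely because $\mfr>\|\rho\|_{L^1}$. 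Convergence of all $Y^I$-derivatives then follows from the same identity $Y^I\rho^{(n,m)}=(Y^I\rho^{(n)})\ast\rho^{(n+1,m)}$ together with $L^\infty$-convergence of $\rho^{(n+1,m)}$. The trick you were missing is simply to differentiate from the left rather than the right.
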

\begin{proof}\, 
First, note that since $\|\rho^{(m)}\|_{L^1}=\|\rho\|_{L^1}$ and $\rho^{(m)}$ is supported on a ball of radius $\mfr^{-m}$, it follows from the mean value theorem on $\mbG$ (c.f. \eqref{eq:common_form_taylor} with $a=0$) that
\begin{equation*}
	|f\ast \rho^{(m)}(x)-f(x)|  = \left| \int_{\mbG} (f(y)-f(x))\rho^{(m)}(y^{-1}x)\dd y\right| \lesssim \max_{i=1,...,d} \|Y_i f\|_{{L^\infty}} \|\rho\|_{L^1} \mfr^{-m} \ .
\end{equation*}
Secondly, since
\begin{equation}\label{eq:derive_and_conv}
	Y^I \rho^{(n,m)}=  Y^I(\rho^{(n)}\ast \rho^{(n+1,m)})=(Y^I \rho^{(n)})\ast  \rho^{(n+1,m)}
\end{equation}
one finds by applying Young's convolution inequality $m$-times, c.f. Section~\ref{subsec:distributions_convolutions}, that
\begin{align*}\|Y_i \rho^{(n,m)} \|_{L^\infty}= \|(Y_i \rho^{(n)})\ast  \rho^{(n+1,m)} \|_{L^\infty}\leq \| Y_i \rho^{(n)}\|_\infty \|\rho\|_{L^1}^{m-n-1} &\leq \mfr^{n(\mfs_i+|\mfs|)} \|Y_i \rho\|_{L^\infty} \|\rho\|^{m-n-1}_{L^1} \\
	&\leq \mfr^{2n|\mfs|} \|Y_i \rho\|_{L^\infty} \|\rho\|^{m-n-1}_{L^1}
\end{align*}
and therefore
\begin{align*}
	\|\rho^{(n,m)}- \rho^{(n,m-1)}\|_{L^\infty} = \|\rho^{(n,m-1)}\ast \rho^{(m)}- \rho^{(n,m-1)}\|_{L^\infty} 
	&\lesssim \max_{i=1,...,d} \|Y_i \rho^{(n,m-1)}\|_{{L^\infty}}   \|\rho\|_{L^1} \mfr^{-m} \\
	&\leq  \mfr^{2n|\mfs|}\max_{i=1,...,d}\|Y_i\rho\|_{L^\infty} \|\rho\|^{m-n-2}_{L^1} \mfr^{-m}  \\
	&\leq \mfr^{2n|\mfs|} \frac{\max_{i=1,...,d}\|Y_i\rho\|_{L^\infty} }{\|\rho\|^{n+2}_{L^1} }   \left(\frac{\|\rho\|_{L^1}}{ \mfr }\right)^m 
\end{align*}
which is summable in $m$ since we assumed that $\mfr>\|\rho\|_{L^1}$. Thus we may write,
\begin{equation*}
	\rho^{(n,m)} =\rho^{(n)} +\sum_{k=0}^{m-n-1} \rho^{(n,m-k)} - \rho^{(n,m-1-k)}, 
\end{equation*}
and it follows that $\rho^{(n,m)}$ converges uniformly as $m\to +\infty$. Using \eqref{eq:derive_and_conv} we obtain convergence in $\mcD(\mbG)$. 
It remains to check the support of $\varphi^{(n)}$. For two functions $f_1,f_2$ such that $\supp(f_i)\in B_{r_i}$ one has
$\supp(f_1\ast f_2)\in B_{r_1+r_2}$, hence it follows that $ \varphi^{(n)}$ is supported in a ball of radius $\sum_{m=n}^\infty \mfr^{-m}= \frac{\mfr^{-n}}{1-\mfr^{-1}} $.
\end{proof}
It follows from the definitions that
$\varphi^{(n)}= \rho^{(n)}\ast \varphi^{(n+1)}$. We set
\begin{equation*}
\tilde{\rho}^{(m,n)} := \tilde{\rho}^{(m)}\ast \tilde{\rho}^{(m-1)}\ast \cdots \ast \tilde{\rho}^{(n)}
\end{equation*}
and using \eqref{eq:convolution_reorder} we note that $\tilde{\varphi}^{(m+1)}\ast \tilde{\rho}^{(m)}=\tilde{\varphi}^{(m)}$ and $ \tilde{\rho}^{(m,n)} \to \tilde{\varphi}^{(n)}$ in $\mcD(\mbG)$ as $m\to \infty$.
\begin{lemma}\label{lem:Pre_Reconstruct_Lemma}
Let $\mfr>1$ and $\rho$ be as in Lemma~\ref{lem:convergence_of_test_functions} Let $\alpha>0$ and $\xi_n :\mbG\rightarrow \mbR$ be a sequence of functions such that for every compact $
\mfK\subset \mbG$ there exists a $C_{\mfK}$ such that $\sup_{x \in \mfK}|\xi_n(x)|
\leq C_{\mfK}\mfr^{\alpha n}$ and such that $\xi_n =  \xi_{n+1}\ast \tilde{\rho}^{(n)}$. Then the sequence $\xi_n$ is Cauchy in $\mcC^{-\beta}(\mbG)$ for every $\beta>\alpha$ and the limit $\xi$ satisfies $\xi_n = \xi\ast \tilde{\varphi}^{(n)}$.
If furthermore, for some $x\in \mbG$ and $\gamma>-\alpha$ one has the bound
$$|\xi_n(y)| \leq \mfr^{\alpha n}\left(|x^{-1} y|^{\gamma+\alpha}+\mfr^{-(\gamma+\alpha)n}\right)$$
uniformly over $n\geq 0$ and $y\in \mbG$ such that $|x^{-1} y|\leq 1$, then $|\langle \xi,\psi^\lambda_x\rangle|\lesssim \lambda^\gamma$ for all $\lambda \leq 1$ and $\phi \in \mfB_{r}$, where $r= -[-\alpha]$.
\end{lemma}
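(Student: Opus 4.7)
The plan follows the strategy of \cite[Sec.~13]{friz_hairer_20_introduction}, suitably adapted so that the mollifier $\rho$ plays well with the non-commutative convolution on $\mbG$ and so that the right-Taylor remainder from Theorem~\ref{th:taylor} can be invoked. First I would establish the telescoping identity $\xi_{k+1}-\xi_k = \xi_{k+1} - \xi_{k+1}\ast \tilde\rho^{(k)}$ (which is immediate from the recursion $\xi_n=\xi_{n+1}\ast \tilde\rho^{(n)}$), and then move $\tilde\rho^{(k)}$ over to the test function using \eqref{eq:throwing_convolution}, obtaining for any $\psi\in\mfB_r$ that
\begin{equation*}
\bigl\langle \xi_{k+1}-\xi_k,\,\psi^\lambda_x\bigr\rangle \;=\; \int_{\mbG}\xi_{k+1}(y)\bigl(\psi^\lambda_x(y) - \psi^\lambda_x\ast\rho^{(k)}(y)\bigr)\,\dd y.
\end{equation*}
The point is that, since $\rho$ annihilates all monomials $\eta^I$ with $0<d(I)\le N$, so does $\rho^{(k)}$, and the right-Taylor expansion of $\psi^\lambda_x$ around $y$ (cf.~Remark~\ref{rem:rescaled_taylor} and \eqref{eq:right_taylor}) together with the support properties of $\rho^{(k)}$ yield the pointwise bound $|\psi^\lambda_x(y)-\psi^\lambda_x\ast\rho^{(k)}(y)|\lesssim \lambda^{-|\mfs|-(N+1)}\mfr^{-k(N+1)}$ on a slightly enlarged ball. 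Choosing $N$ with $N+1>\alpha+|\mfs|$ (which is possible by our choice $r=-[-\alpha]$ and the freedom in $N$ in \eqref{eq:PolynomialAnnhiliate}) and using the hypothesis $\|\xi_{k+1}\|_{L^\infty(\mfK)}\lesssim \mfr^{\alpha(k+1)}$, one verifies that $\lambda^\beta|\langle \xi_{k+1}-\xi_k,\psi^\lambda_x\rangle|$ is summable in $k$ for every $\beta>\alpha$, giving the Cauchy property in $\mcC^{-\beta}_{\loc}(\mbG)$.

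Having obtained the limit $\xi\in\mcC^{-\beta}$, I would verify the identity $\xi_n=\xi\ast\tilde\varphi^{(n)}$ by iterating the recursion to write $\xi_n=\xi_m\ast\tilde\rho^{(m-1,n)}$ for every $m>n$, and then passing to the limit $m\to\infty$. Convergence on the right comes from Lemma~\ref{lem:convergence_of_test_functions}, which gives $\tilde\rho^{(m-1,n)}\to\tilde\varphi^{(n)}$ in $\mcD(\mbG)$, while convergence of $\xi_m$ to $\xi$ in $\mcC^{-\beta}$ ensures that the pairing is preserved; both sides being smooth functions, the equality holds pointwise.

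For the improved statement, I would fix $\psi\in\mfB_r$ and $\lambda\in(0,1]$, choose $n\in\NN$ such that $\mfr^{-(n+1)}<\lambda\le \mfr^{-n}$, and split
\begin{equation*}
\langle\xi,\psi^\lambda_x\rangle \;=\; \langle\xi_n,\psi^\lambda_x\rangle \;+\; \sum_{k\ge n}\langle \xi_{k+1}-\xi_k,\psi^\lambda_x\rangle.
\end{equation*}
The first term is a genuine integral; using $|\xi_n(y)|\le \mfr^{\alpha n}(|x^{-1}y|^{\gamma+\alpha}+\mfr^{-(\gamma+\alpha)n})$ on the support of $\psi^\lambda_x\subset B_\lambda(x)$ together with $\|\psi^\lambda_x\|_{L^\infty}\lesssim\lambda^{-|\mfs|}$ and $\gamma+\alpha>0$, direct integration gives $|\langle\xi_n,\psi^\lambda_x\rangle|\lesssim \mfr^{\alpha n}\lambda^{\gamma+\alpha}\lesssim\lambda^\gamma$. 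For the tail, I would reuse the argument from the first paragraph but substitute the refined pointwise bound on $\xi_{k+1}$: after the same manipulations, $|\langle\xi_{k+1}-\xi_k,\psi^\lambda_x\rangle|$ picks up a factor $(\mfr^{-k}/\lambda)^{N+1}$ from the Taylor remainder which, combined with $\mfr^{\alpha k}\lambda^{\gamma+\alpha}$ from the $\xi_{k+1}$-bound, summed over $k\ge n$, produces a geometric series dominated by $\lambda^\gamma$ provided $N+1>\alpha+\gamma+|\mfs|$ — again a condition we can ensure by picking $N$ sufficiently large in \eqref{eq:PolynomialAnnhiliate}.

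The main technical obstacle is the non-commutativity of $\mbG$: the convolution $\psi^\lambda_x\ast\rho^{(k)}$ must be expanded using the right-Taylor polynomial rather than the left one (since convolution translates from the right), and the support of $\tilde\rho^{(m-1,n)}$ accumulates on the ``wrong'' side of $\psi^\lambda_x$, so it is crucial to use both Theorem~\ref{th:taylor} and its right analogue \eqref{eq:right_taylor} carefully and to track the implicit constants $\beta=\beta(\mbG)$ appearing in the semi-metric inequalities of Remark~\ref{rem:triangle_inequality}. Once this bookkeeping is in place, the argument is essentially that of \cite[Prop.~13.15]{friz_hairer_20_introduction}.
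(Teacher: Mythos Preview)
Your overall strategy matches the paper's: throw the convolution onto the test function via \eqref{eq:throwing_convolution}, use Taylor's theorem together with the polynomial-annihilation property \eqref{eq:PolynomialAnnhiliate} of $\rho$, and for the second claim split at the scale $n$ with $\lambda\sim\mfr^{-n}$. However, your identification of which Taylor expansion is needed is backwards. You assert that the \emph{right} Taylor polynomial must be used ``since convolution translates from the right'', but $(\psi^\lambda_x*\rho^{(k)})(y)=\int \psi^\lambda_x(yw^{-1})\rho^{(k)}(w)\,dw$, and expanding $\psi^\lambda_x$ at $y$ in the variable $y^{-1}(yw^{-1})=w^{-1}$ is precisely the \emph{left} expansion $\tilde{\opP}^r_y[\psi^\lambda_x]$. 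This is what the paper does: combining Remark~\ref{rem:factorising_constants} with \eqref{eq:PolynomialAnnhiliate} gives $\tilde{\opP}^r_y[\psi^\lambda_x]*\rho^{(n)}(y)=\psi^\lambda_x(y)$, whence $\psi^\lambda_x*\rho^{(n)}(y)-\psi^\lambda_x(y)=(\psi^\lambda_x-\tilde{\opP}^r_y[\psi^\lambda_x])*\rho^{(n)}(y)$. The right Taylor polynomial would instead yield a polynomial in $yw^{-1}y^{-1}$, which is not the natural object here. (Your citation of Remark~\ref{rem:rescaled_taylor} is in fact the left-Taylor remark, so the confusion may be partly terminological.)

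There is also a more minor gap in the Cauchy estimate: your Taylor-remainder bound $|\psi^\lambda_x(y)-\psi^\lambda_x*\rho^{(k)}(y)|\lesssim \lambda^{-|\mfs|-(N+1)}\mfr^{-k(N+1)}$ is not useful when $\lambda\le\mfr^{-k}$, since the factor $\lambda^{-|\mfs|-(N+1)}$ blows up and the support volume is only $\mfr^{-k|\mfs|}$, not $\lambda^{|\mfs|}$. The paper handles this regime separately with the trivial bound $|\langle\xi_{n+1}-\xi_n,\psi^\lambda_x\rangle|\lesssim \mfr^{\alpha n}\|\psi^\lambda_x\|_{L^1}\lesssim\mfr^{\alpha n}$, which already gives $\lambda^\beta\mfr^{\alpha n}\le\mfr^{(\alpha-\beta)n}$ there. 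Your condition $N+1>\alpha+|\mfs|$ does not fix this; once the case split is in place the relevant requirement is just $r\ge\beta$ (which one may assume without loss of generality), and $|\mfs|$ plays no role.
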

\begin{proof}\,
The proof follows along the same steps as that of \cite[Lem. 13.24]{friz_hairer_20_introduction}, but one has to be careful since convolution is non-commutative in our setting. Let $\lambda \in (0,1]$ and $\psi \in \mfB_{r}$, we first establish the bound
\begin{equation}\label{eq:RecCauchyBound}
	|\langle \xi_{n}-\xi_{n+1},\psi^\lambda_x\rangle| \lesssim \lambda^{-\beta}\mfr^{(\alpha-\beta)n}
\end{equation}
uniformly over $\psi \in \mfB_{r}$, $\lambda \in (0,1]$ and locally uniformly over $x \in \mbG$. First observe the trivial bound,
\begin{equation*}
	|\langle \xi_{n}-\xi_{n+1},\psi^\lambda_x\rangle| \leq \sup_{x \in \bar\mfK}\left(|\xi_n(x)| + |\xi_{n+1}(x)|\right)\|\psi^\lambda_x\|_{L^1} \leq (1+\mfr^{\alpha})C_{\bar \mfK}\bar{C} \mfr^{\alpha n},
\end{equation*} 
where $\bar{C}:= \sup \{\int |\psi^\lambda(x)|\dd x \ : \ \psi \in \mfB_{r}\}<+\infty$ . Hence, when $\lambda\leq \mfr^{-n}$ the bound \eqref{eq:RecCauchyBound} holds directly.

In the case $\mfr^{-n}\leq \lambda$, using \eqref{eq:throwing_convolution} we rewrite
$$|\langle \xi_{n}-\xi_{n+1},\psi^\lambda_x\rangle| =\langle \xi_{n+1}\ast \tilde{\rho}^{(n)}-\xi_{n+1},\psi^\lambda_x\rangle| = |\langle \xi_{n+1},  \psi_x^\lambda*\rho^{(n)}   -\psi_x^\lambda\rangle|.$$
By Taylor's theorem and in particular Remark~\ref{rem:rescaled_taylor}
\begin{equation}\label{eq:TestFuncTaylorBnd}
	|\psi_x^\lambda(z) - \tilde{\opP}^r_y[\psi_x^\lambda](z)| \, \lesssim_{r}  \sum_{\delta>0}\lambda^{-(r+\delta)-|\mfs|}|y^{-1} z|^{r+\delta}\ ,
\end{equation}
where the sum runs over a finite set. It follows from \eqref{eq:def_convolution}, Remark~\ref{rem:factorising_constants}, Remark~\ref{rem:rescaled_taylor}  and \eqref{eq:PolynomialAnnhiliate} that we have 
\begin{equation*}
	\tilde{\opP}^r_y[\psi^\lambda_x]*\rho^{(n)} (z)= \int \tilde{\opP}^r_y[\psi^\lambda_x](z w^{-1}) \rho^{(n)}(w) dw=\int \tilde{\opP}^0_y[\psi^\lambda_x](z w^{-1}) \rho^{(n)}(w) dw= \psi_x^{\lambda}(y) 
\end{equation*}
and thus
\begin{equation*}
	\psi_x^\lambda*\rho^{(n)}(y) -\psi_x^\lambda(y) = (\psi_x^\lambda-\tilde{\opP}^r_y[\psi^\lambda_x])\ast \rho^{(n)}(y),
\end{equation*}
which by \eqref{eq:TestFuncTaylorBnd} is bounded uniformly by a multiple of $\sum_{\delta>0}\lambda^{-(r+\delta)-|\mfs|}\mfr^{-n(r+\delta)}\ $ and supported on a ball of radius $\lambda + \mfr^{-n}\leq 2\lambda$. Note that we were able to keep the convolution with $\rho^{(n)}$ on the right in both expressions by using  the rebased Taylor polynomial of Rem.~\ref{rem:rescaled_taylor}. Using the bound $|\xi_{n+1}|\lesssim_\alpha \mfr^{\alpha n}$ we conclude that
\begin{equation*}
	|\langle \xi_{n+1},  \psi_x^\lambda*\rho^{(n)}   -\psi_x^\lambda\rangle| \lesssim \sum_{\delta>0}\lambda^{-(r+\delta)}\mfr^{-n(r+\delta)} \mfr^{\alpha n} \lesssim \lambda^{-\beta}\mfr^{(\alpha-\beta)n}
\end{equation*}
where we used $\mfr^{-n}\leq \lambda$ and without loss of generality assumed that $r\geq \beta$ in the last line.
Hence $\{\xi_n\}_{n\geq 1}$ is Cauchy in $\mcC^{-\beta}(\mbG)$ and for any test function,
\begin{equation*}
	\langle \xi_n,\psi\rangle = \langle\xi_{n+1}, \psi* \rho^{(n)}\rangle = \langle \xi_{m+1}, \psi*\rho^{(n,m)}\rangle = \langle \xi, \psi*\varphi^{(n)}\rangle,
\end{equation*}
showing that $\xi_n=  \xi*\tilde{\varphi}^{(n)}$.

To prove the second claim, for any test function $\psi\in \mathfrak{B}_r$, $\lambda>0$ and $x\in \mbG$ we write,
\begin{equation*}
	\langle \xi,\psi^\lambda_x\rangle = \langle \xi_n, \psi^\lambda_x\rangle + \sum_{k\geq n} \langle \xi_{k+1}-\xi_k,\psi^\lambda_x\rangle,
\end{equation*}
where $n$ is chosen so that $\lambda \in [\mfr^{-(n+1)},\mfr^{-n}]$ and as a consequence
\begin{equation*}
	|\langle \xi_n,\psi^\lambda_x\rangle| \leq \lambda^{-|\mfs|} \mfr^{\alpha n} \int_{B_\lambda(x)} \left(|x^{-1} y|^{\gamma+\alpha}+ \mfr^{-(\gamma+\alpha)n}\right)\dd y \,  \lesssim\,  \lambda^{\gamma+\alpha}\mfr^{\alpha n} + \mfr^{-\gamma n} \lesssim \lambda^\gamma.
\end{equation*}
To bound the summands $\langle \xi_{k+1} -\xi_k,\varphi^\lambda_x\rangle$ we proceed as in the proof of the first claim to find that
\begin{align*}
	|\langle \xi_{k} -\xi_{k+1},\psi^\lambda_x\rangle| & = |\langle \xi_{k+1},  \psi^\lambda_x*\rho^{(k)}-\psi^\lambda_x\rangle| \\
	&= |\langle \xi_{k+1},  (\psi^\lambda_x-\tilde{\opP}^r_x[\psi^\lambda_x] ))*\rho^{(k)}\rangle| \\ 
	&\lesssim \sum_{\delta>0}\lambda^{-(r+\delta)-|\mfs|}\mfr^{-k(r+\delta)} \int_{B_{2\lambda}(x)} |\xi_{k+1}(y)|\dd y\\
	&\lesssim \sum_{\delta>0}\lambda^{-(r+\delta)-|\mfs|}\mfr^{k(\alpha-r-\delta)} \left(\int_{B_{2\lambda}(x)} |x^{-1} y|^{\gamma+\alpha}\dd y+\mfr^{-(\gamma+\alpha)(k+1)}\lambda^{|\mfs|}\right) \\
	&\lesssim \sum_{\delta>0}\left(\lambda^{\gamma+\alpha-r-\delta}\mfr^{k(\alpha-r-\delta)}  + \lambda^{-(r+\delta)}\mfr^{-k(\gamma+r+\delta)}\right)
\end{align*}
where the sum in $\delta$ is again over a finite set. Since $r+\delta>\alpha$ the quantity on the left is summable over $k\geq n$ and is of order $\lambda^{\gamma}$, concluding the proof.  
\end{proof}
We are now ready to proof Theorem~\ref{th:Recontruction}.
\begin{proof}[Proof of Theorem~\ref{th:Recontruction}]\,
For $m>0$ we first define the operators $\mcR^{(m,m)}:\msD^\gamma \rightarrow C(\mbG)$ by setting,
\begin{equation*}
	\left(\mcR^{(m,m)}f\right)(y) := \left( \Pi_yf(y)\ast \tilde{\varphi}^{(m)}\right)(y) = \langle\Pi_y f(y),{\varphi}^{(m)}_{y} \rangle.
\end{equation*}
We then set, for $n<m$,
\begin{equation*}
	\mcR^{(m,n)} f = \mcR^{(m,m)}f \ast \tilde{\rho}^{(m-1,n)} 
\end{equation*}
and recalling that  $\tilde{\varphi}^{(m+1)}\ast \tilde{\rho}^{(m)}=\tilde{\varphi}^{(m)}$ we find
\begin{align*}
	\mcR^{(m,n)}f - \mcR^{(m+1,n)}f
	&= \mcR^{(m,m)}f \ast \tilde{\rho}^{(m-1,n)}- \mcR^{(m+1,m+1)}f\ast\tilde{\rho}^{(m,n)}\\
	&= \big(\mcR^{(m,m)}f - \mcR^{(m+1,m+1)}f\ast\tilde{\rho}^{(m)}\big)\ast \tilde{\rho}^{(m-1,n)} \ .
\end{align*}
Using the identity
$$(F\ast \tilde{\phi})(x)= \langle F, \phi_x\rangle= \int F(y)\phi_x(y)dy \ ,$$
it follow by a straightforward computation that
\begin{align*}
	\big(\mcR^{(m,n)}f - \mcR^{(m+1,n)}f\big)(x)	
	&= \int \big(\mcR^{(m,m)}f - \mcR^{(m+1,m+1)}f\ast\tilde{\rho}^{(m)}\big)(y) {\rho}_x^{(m-1,n)}(y) dy\\
	&= \int\int \langle\Pi_yf(y)  - \Pi_zf(z),  {\varphi}_z^{(m+1)}\rangle {\rho}_y^{(m)}(z)dz  {\rho}_x^{(m-1,n)}(y) dy \ .
\end{align*}
Therefore, using the fact that $\Pi_z = \Pi_y \Gamma_{yz}$, we have,
\begin{equation*}
	\big(\mcR^{(m,n)}f - \mcR^{(m+1,n)}f\big)(x)	= \int\int \langle\Pi_y(f(y)  - \Gamma_{yz} f(z)),  {\varphi}_z^{(m+1)}\rangle {\rho}_y^{(m)}(z)dz  {\rho}_x^{(m-1,n)}(y) dy.
\end{equation*}
Then successively applying the facts that,
\begin{itemize}
	\item $\sup_{y,\,z \in B_{2\mu\mfr^{-m}}(0)}|\langle \Pi_y \tau,\varphi^{(m+1)}_z|\lesssim \| \Pi\|_{\gamma; B_{2\mu\mfr^{-m}}(0)} \mfr^{-\alpha m}|\tau|_\alpha$ for $\tau\in T_\alpha$, 
	\item $\|f(y)-\Gamma_{yz}f(z)\|_\alpha \lesssim\|f\|_{\gamma;B_{\mfr^{-m}}(y)}  \mfr^{(\alpha-\gamma)m}$ uniformly over $|y^{-1}z|\lesssim \mfr^{-m}$
	\item $\|\rho_x^{(n,m-1)}\|_{L^1}\lesssim 1$ uniformly over $m> n\geq 0$,
\end{itemize}
we establish the bound,
\begin{equation*}
	\|\mcR^{(m,n)}f - \mcR^{(m+1,n)}f\|_{L^\infty(\mfK)}\lesssim \| \Pi\|_{\gamma, \bar \mfK} \|f\|_{\gamma;\bar{\mfK}}  \mfr^{-\gamma m} \ ,
\end{equation*}
uniformly over $m\geq n\geq 0$, where $\bar{\mfK}$ denotes the two fattening of the set $\mfK$. It follows directly from the definition and properties of a model that we also have the bound,
\begin{equation}\label{eq:Approx_Reconstruct_Bnd}
	\|\mcR^{(n,n)} f\|_{L^\infty(\mfK)} \lesssim  \| \Pi\|_{\gamma, \bar \mfK} \|f\|_{\gamma;\bar{\mfK}}  \mfr^{-\underline{\alpha}n},
\end{equation}
where $\underline{\alpha}= \min A$. It follows that $\mcR^{(m,n)}f$ converges uniformly on compacts as $m\rightarrow \infty$ to $\mcR^{(n)}f$ which also satisfies the bound\eqref{eq:Approx_Reconstruct_Bnd}. Since it also holds that for every $m\geq n+1$,
\begin{equation*}
	\mcR^{(m,n)}f = \mcR^{(m,m)}f \ast \tilde{\rho}^{(m-1,n)} = \mcR^{(m,m)} f \ast \tilde{\rho}^{(m-1,n+1)}\ast \tilde{\rho}^{(n)}  = 	\mcR^{(m,n+1)}f \ast\tilde{\rho}^{(n)}\ , 
\end{equation*}
we find that
$$\mcR^{(n)}f= \mcR^{(n+1)}f \ast\tilde{\rho}^{(n)} \ .$$
Therefore we may apply Lemma~\ref{lem:Pre_Reconstruct_Lemma} to see that there exists a limit \linebreak $\mcR f:=\lim_{n\rightarrow \infty}\mcR^{(n)}f $.

With validity of the limit established we now turn to show the bounds \eqref{eq:Reconstruction_Regularity} and \eqref{eq:rec2}; this requires us to keep more careful track of the underlying sets in the proof. We begin with \eqref{eq:Reconstruction_Regularity}, first noting that if we define $f_x(y) := \Gamma_{y,x}f(x)$ then one has $\mcR^{(n,n)}f_x = \Pi_x f(x)\ast \tilde{\varphi}^{(n)}$ so that
\eqref{eq:Reconstruction_Regularity} can be written as the claim that for all $\lambda \in (0,1]$,
\begin{equation}\label{eq:Reconstruct_Post_Bnd_2}
	\sup_{\psi \in \mfB_{r}} | \langle \mcR(f-f_x),\psi^\lambda_x\rangle|\lesssim_{\mfK} 
	\lambda^{\gamma}\|f\|_{\gamma;B_{2\lambda}(x)}\|\Pi\|_{\gamma;B_{2\lambda}(x)} \ .
\end{equation}
Using that $|(f-f_x)(z)|_\alpha\lesssim\|f\|_{\gamma;B_{{|z^{-1}x|}}(x)} {|z^{-1}x|}^{\gamma-\alpha}$ for $x,\,z \in \mfK$, it follows that for all $y\in B_\lambda(x)$ one has
\begin{align}
	|(\mcR^{(n,n)}(f-f_x)(y)| = |\langle \Pi_y(f(y)-f_x(y)),\varphi^{(n)}_y\rangle| \notag &= |\langle \Pi_y(f(y)-\Gamma_{y,x}f(x)),\varphi^{(n)}_y\rangle|\\ \notag 
	&\lesssim \|\Pi\|_{\gamma;B_\lambda(y)} \|f\|_{\gamma;B_{C\mfr^{-n}}(y)} \sum_{\underline{\alpha}\leq\alpha\leq \gamma} \mfr^{-\alpha n}|y^{-1}x|^{\gamma-\alpha} \notag \\
	&\lesssim\|\Pi\|_{\gamma;B_\lambda(y)} \|f\|_{\gamma;B_{C\mfr^{-n}}(y)} \mfr^{-\underline{\alpha}n}({|y^{-1}x|}^{\gamma-\underline{\alpha}}+ \mfr^{(\underline{\alpha}-\gamma)n}), \label{eq:ApproxReconstruct_Bnd1}
\end{align}
where $C(\mfr):= \frac{\mfr}{\mfr-1}$ is as in Lemma 
\ref{lem:convergence_of_test_functions}. Given $n>n_0(\lambda)$ sufficiently larger, we have a uniform bound $C\mfr^{-n}\leq \lambda$. By the convergence of $\mcR^{(n,n)}$ in the first exponent, it follows that $\mcR^{(n)}$ satisfies the same bound and so inspecting the proof of the second half of Lemma~\ref{lem:Pre_Reconstruct_Lemma}, in particular noticing that we integrate the above estimate over $y\in B_\lambda(x)$, we conclude that \eqref{eq:Reconstruct_Post_Bnd_2} holds for the limit $\mcR$. 

Using the obvious notation we can also rewrite \eqref{eq:rec2} as
\begin{align*}
	\sup_{\psi \in \mfB_{r}} | \langle \mcR(f-f_x)- \bar{\mcR}(\bar{f}-\bar{f}_x) ,\psi^\lambda_x\rangle|\lesssim \lambda^\gamma &\big(\|f;\bar{f}\|_{\gamma;B_{2\lambda}(x)}\|\bar{\Pi}\|_{\gamma;B_{2\lambda}(x)} \\
	&+\|f\|_{\gamma;B_{2\lambda}(x)}\|\Pi-\bar{\Pi}\|_{\gamma;B_{2\lambda}(x)} \big)\ ,
\end{align*}
uniformly over $\lambda \in (0,1]$. This is seen very similarly to \eqref{eq:Reconstruct_Post_Bnd_2} but using this time that for $n$ large enough,
\begin{align*}
	&|\mcR^{(n,n)}(f-f_x)- \bar{\mcR}^{(n,n)}(\bar{f}-\bar{f}_x)(y)| \\
	&= |\langle \Pi_y(f(y)-\Gamma_{x,y}f(x))-\bar{\Pi}_y(\bar{f}(y)-\bar{\Gamma}_{y,x}\bar{f}(x)),\varphi^{(n)}_y\rangle|\\
	&= |\langle 
	\Pi_y(f(y)-\Gamma_{x,y}f(x)-\bar{f}(y)+\bar{\Gamma}_{y,x}\bar{f}(x))
	+
	(\Pi_y-\bar{\Pi}_y)(\bar{f}(y)-\bar{\Gamma}_{y,x}\bar{f}(x))
	,\varphi^{(n)}_y\rangle|\\
	&\lesssim \big(\|\Pi\|_{\gamma;B_{\lambda}(y)} \|f;\bar f\|_{\gamma;B_{\lambda}(y)}+\|\Pi-\bar{\Pi}\|_{\gamma;B_{\lambda}(y)} \|\bar f\|_{\gamma;B_{\lambda}(y)}\big)\sum_{\underline{\alpha}\leq\alpha\leq \gamma} \mfr^{-\alpha n}{|y^{-1}x|}^{\gamma-\alpha} \notag \ .
\end{align*}
It remains to show that the reconstruction map is unique for $\gamma>0$ and that $\mathcal{R}f$ is indeed an element of $C^{\underline{\alpha}}$. This is done exactly as in \cite[Sec.~3]{hairer_14_RegStruct}.
\end{proof}

\subsubsection{Functions as Modelled Distributions}
Let $\bar{\mcT}=(\bar{\opT},\bar{\opG})$ be the polynomial regularity structure equipped with the polynomial model. We show that in this setting the reconstruction theorem and its inverse map Taylor polynomials to H\"older functions and vice versa.
\begin{theorem}\label{th:hoelder_characterisation}
For $\gamma>0$ the reconstruction operator is an isomorphism between $\msD^\gamma (\mbG)$ and $\mcC^\gamma(\mbG)$. 
In particular the inverse of the reconstruction map is given by
\begin{equation*}\label{eq:inverse_of_reconstruction_map}
	\mcC^{\gamma}(\mbG) \ni f(\cdot) \mapsto \opbfP^{\gamma}_{(\,\cdot\,)}[f] \in \msD^\gamma(\mbG),
\end{equation*}
where $\opbfP^a$ is defined in Definition~\ref{def:Taylor_as_Modelled_Dist} 
\end{theorem}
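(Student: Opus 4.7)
The plan is to verify that the two maps are mutual inverses with matched continuity. Begin with $f\in\mcC^\gamma(\mbG)$ and set $F(x):=\opbfP^\gamma_x[f]$. The components $|F(x)|_\zeta$ are linear combinations of $\{X^Kf(x)\}_{d(K)\leq \zeta}$ by Lemma~\ref{lem:coefficients_of_taylor}, and these are bounded since $X^Kf\in\mcC^{\gamma-d(K)}$ is continuous for $d(K)<\gamma$ by Proposition~\ref{prop:holder_inclusion_compact}. For the increment condition $|F(y)-\Gamma_{x,y}F(x)|_\zeta\lesssim |y^{-1}x|^{\gamma-\zeta}$, the key identity is
\[
\Pi_y\bigl(\opbfP^\gamma_y[f]-\Gamma_{x,y}\opbfP^\gamma_x[f]\bigr)(z)=\tilde\opP^\gamma_y[f](z)-\tilde\opP^\gamma_x[f](z),
\]
which follows from $\Pi_y\Gamma_{x,y}=\Pi_x$ and $\Pi_x\opbfP^\gamma_x[f]=\tilde\opP^\gamma_x[f]$. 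Left-invariance of $X^J$ combined with Remark~\ref{rem:derivative_of_taylor} yields the iterated identity $X^J\tilde\opP^\gamma_x[f]=\tilde\opP^{\gamma-d(J)}_x[X^Jf]$, and applying Taylor's theorem to $X^Jf$ gives the pointwise bound $|X^J(\tilde\opP^\gamma_y[f]-\tilde\opP^\gamma_x[f])(y)|\lesssim|y^{-1}x|^{\gamma-d(J)}$ for $d(J)<\gamma$. Translating back to abstract components via the left-invariance identity $X^I\Pi_eP(e)=X^I\Pi_yP(y)$ and invoking Lemma~\ref{lem:to_bound_polynomials} with $\varepsilon=|y^{-1}x|$, $\delta=\gamma$ delivers the bound.

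The identity $\mcR F=f$ then follows from uniqueness of the reconstruction operator: Taylor's theorem gives $|f(z)-\tilde\opP^\gamma_x[f](z)|\lesssim|x^{-1}z|^\gamma$, hence $|\langle f-\Pi_xF(x),\phi^\lambda_x\rangle|\lesssim\lambda^\gamma$ uniformly over $\phi\in\mfB_r$, so $f$ fulfils the defining property of $\mcR F$ in Theorem~\ref{th:Recontruction}. Tracking the constants through the argument gives continuity $\|F\|_{\gamma;\mfK}\lesssim\|f\|_{\mcC^\gamma(\bar\mfK)}$.

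For the reverse direction, let $F\in\msD^\gamma(\mbG)$ and $f:=\mcR F$. The reconstruction estimate \eqref{eq:Reconstruction_Regularity} applied with $\tilde P_x:=\Pi_xF(x)$—continuous in $x$ by continuity of $F$ and the explicit formula for the polynomial model—verifies Definition~\ref{def:Holder} directly, so $f\in\mcC^\gamma$. For the identity $\opbfP^\gamma_x[f]=F(x)$ it suffices to show $\tilde\opP^\gamma_x[f]=\Pi_xF(x)$; the difference $Q_x\in\mcP_\gamma$ satisfies $|\langle Q_x,\phi^\lambda_x\rangle|\lesssim\lambda^\gamma$ uniformly over $\phi\in\mfB_0$, since both polynomials approximate $f$ to this order. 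Writing $Q_x=\sum_{d(I)<\gamma}c_I\eta^I(x^{-1}\cdot)$ and rescaling via $z=x(\lambda\cdot w)$ yields $\langle Q_x,\phi^\lambda_x\rangle=\sum_I c_I\lambda^{d(I)}\int\eta^I(w)\phi(w)\,dw$; choosing $\phi\in\mfB_0$ that pairs non-trivially with a prescribed monomial $\eta^I$ and extracting the leading-order term forces $c_I=0$ for every $d(I)<\gamma$.

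The main obstacle is the first step: tracking the interplay between $\Gamma_{x,y}$ and Taylor expansion under the non-commutative group law, and justifying the pointwise Taylor estimate for $X^Jf\in\mcC^{\gamma-d(J)}$ when Theorem~\ref{th:taylor} is stated for $C^\infty$ functions. The latter is handled by a mollification argument: $X^Jf$ is continuous (Proposition~\ref{prop:holder_inclusion_compact}), smooth approximants converge in $\mcC^{\gamma-d(J)-\varepsilon}$ for small $\varepsilon>0$, and the Taylor estimate passes to the limit locally uniformly. The remainder of the argument, including the uniqueness step for polynomials and the translation back from $\Pi_y$- to $\Pi_e$-coordinates, is essentially bookkeeping once these building blocks are in place.
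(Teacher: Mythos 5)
Your overall plan is sound and, in the second direction ($\msD^\gamma\to\mcC^\gamma$), matches the paper's. For the first direction the paper takes a different, and in this case essentially necessary, route. You reduce the increment bound to the pointwise Taylor-remainder estimate
\[
|X^J f(y)-\tilde\opP^{\gamma-d(J)}_x[X^J f](y)|\lesssim |y^{-1}x|^{\gamma-d(J)},\qquad d(J)<\gamma,
\]
and propose to obtain it by mollifying and ``passing the Taylor estimate to the limit.'' This is where the argument breaks down. Theorem~\ref{th:taylor} expresses the remainder through $X^I f$ with $d(I)\ge a$, $|I|\le[a]+1$; for smooth approximants $f_\varepsilon$ of an $\mcC^{\gamma-d(J)}$ function those derivatives are \emph{not} controlled by the $\mcC^{\gamma-d(J)-\kappa}$ norm and diverge as $\varepsilon\to 0$, so the bound does not pass to the limit with a uniform constant. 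If one runs Taylor at the lower order $\gamma-d(J)-\kappa$ to keep the approximants' constants bounded, the conclusion degrades to $F\in\msD^{\gamma-\kappa'}$, not $\msD^\gamma$. A correct mollification argument exists but is genuinely more delicate (one must tie the mollification scale to $|y^{-1}x|$, mollify on the appropriate side so that $X^J$ commutes with the regularisation, and use a mollifier annihilating polynomials of all degrees $<\gamma$) --- none of this is captured by ``converge in $\mcC^{\gamma-d(J)-\varepsilon}$ and pass to the limit.''

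The paper avoids the pointwise remainder estimate entirely: it uses the defining property of $\mcC^\gamma$ to bound the \emph{pairing} $\bigl|\bigl\langle \Pi_x\bigl(\opbfP^\gamma_x[f]-\Gamma_{x,y}\opbfP^\gamma_y[f]\bigr),\psi^\lambda_x\bigr\rangle\bigr|\lesssim\lambda^\gamma$ with $\lambda=|x^{-1}y|$ directly from $|\langle f-\tilde P_x,\psi^\lambda_x\rangle|$ and $|\langle f-\tilde P_y,\psi^\lambda_x\rangle|$, and then extracts the coefficient bounds $|c^I_{x,y}|\lesssim\lambda^{\gamma-d(I)}$ by a finite-dimensional surjectivity argument over $\psi$ before invoking Lemma~\ref{lem:to_bound_polynomials}. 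That route never needs the remainder of a non-smooth function. If you wish to keep your pointwise-Taylor strategy, you should state and prove the Taylor-remainder estimate for $\mcC^\alpha$ functions as a separate lemma rather than treat it as immediate. Two smaller points: the identity you state requires $\Pi_x\Gamma_{x,y}=\Pi_y$, not $\Pi_y\Gamma_{x,y}=\Pi_x$ (an indexing slip); and your uniqueness argument in the reverse direction (``both polynomials approximate $f$ to this order'') silently uses the same Taylor-remainder estimate for $\tilde\opP^\gamma_x[f]$, so it depends on the first direction being complete.
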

\begin{proof}\,
Given a modelled distribution in $\msD^\gamma$ and since we are working with the polynomial regularity structure equipped with its canonical model, it follows directly from the bound satisfied by the image of the reconstruction operator, i.e. Equation\eqref{eq:Reconstruction_Regularity}, and the definition of $\mcC^\gamma$ in \eqref{eq:Positive_Holder_Def} that the reconstruction operator is a map $\msD^\gamma (\mbG) \to \mcC^{\gamma}(\mbG)$. Continuity also follows from \eqref{eq:Reconstruction_Regularity} and by linearity.

To see the other direction recall that given a H\"older continuous distribution $f\in \mcC^\gamma(\mbG)$ by Proposition \ref{prop:holder_inclusion_compact} the $\{X^I f\}_{d(I)<\gamma}$ are actually functions and, in particular, that $\tilde{P}_x= \Pi_x\opbfP^{[\gamma]}_{x}[f]$. Therefore, for $\lambda=|x^{-1}y|_\mbG$ 
\begin{equation}\label{eq:some_local_equation}
	\left| \Pi_x\big(\opbfP^{[\gamma]}_{x}[f] -\Gamma_{x,y}\opbfP^{[\gamma]}_{y}[f] \big) (\psi^\lambda_x) \right|
	= |\langle \tilde{P}_x -f,\psi^\lambda_x\rangle| +|\langle f-\tilde{P}_y, \psi^\lambda_x\rangle| \lesssim \lambda^\gamma \
\end{equation}
uniformly in $\psi\in \mfB_{r}$.
On the other hand, writing $\opbfP^{[\gamma]}_{x}[f] -\Gamma_{x,y}\opbfP^{[\gamma]}_{y}[f] = \sum_{d(I)<\gamma} c^I_{x,y} \bfeta^I$, we find that 
$$\Pi_x\big(\opbfP^{[\gamma]}_{x}[f] -\Gamma_{x,y}\opbfP^{[\gamma]}_{y}[f] \big) (\psi^\lambda_x)= \sum_{d(I)<\gamma}  c^I_{x,y} \Pi_e\bfeta^I (\psi^\lambda)= \sum_{d(I)<\gamma}  c^I_{x,y}\lambda^{d(I)} \Pi_e\bfeta^I (\psi) \ .$$
Using that $\mcP_\gamma$ is a finite dimensional vector space and the surjectivity of the linear map
$$C_c^\infty(B_1(e)) \to \mathbb{R}^{\dim \mcP_\gamma}, \quad \psi \mapsto \{ \Pi_e\bfeta^I (\psi) \}_{d(I)<\gamma}$$
we find that
\begin{equation}\label{eq:some_local_equation2}
	\sum_{d(I)<\gamma}  |c^I_{x,y}|\lambda^{d(I)}
	\lesssim_\gamma \sup_{\psi\in \mfB_{r}}\left|\sum_{d(I)<\gamma}  c^I_{x,y}\lambda^{d(I)} \Pi_e\bfeta^I (\psi)\right|\ .
\end{equation}
Together, \eqref{eq:some_local_equation} and \eqref{eq:some_local_equation2} imply that $|c^I_{x,y}|\lesssim \lambda^{\gamma- d(I)}$, we may then apply \linebreak Lemma~\ref{lem:to_bound_polynomials} to see that $\opbfP^{[\gamma]}_{(\,\cdot\,)}[f] \in \msD^\gamma $ .
\end{proof}
\subsubsection{Local Reconstruction}

We will require the following further refinement of the reconstruction theorem which is an analogue in our case of \cite[Prop.~ 7.2]{hairer_14_RegStruct}.
\begin{prop}\label{prop:support_localised_reconstruction}
In the setting of Theorem~\ref{th:Recontruction} one has the improved bound,
\begin{equation}\label{eq:support_localised_reconstruction}
	\sup_{\psi\in \mfB_m}|(\mcR f - \Pi_x f(x))(\psi^\lambda_x)| \lesssim \lambda^{\gamma}  \|\Pi\|_{\gamma;B_{2\mu\lambda}(x)} \sup_{y,\,z \in \supp (\psi^\lambda_x)} \sup_{\ell <\gamma} \frac{|f(z)-\Gamma_{zy}f(y)|_\ell}{{|y^{-1}z|}^{\gamma-\ell}}\ ,
\end{equation}
as well as, given a second model $\bar{M}(\bar{\Pi},\bar{\Gamma})$ and a modelled distribution $\bar{f}\in \msD^\gamma_{\bar{M}}$, the analogous bound, that for every $\lambda \in (0,1]$,
\begin{equation}\label{eq:local_reconstruct_diff}
	\begin{aligned}
		\sup_{\psi \in \mfB_{r}} |\langle \mathcal{R}_M f-\mathcal{R}_{\bar{M}}\bar{f} -\Pi_x f(x)+\bar{\Pi}_x \bar{f}(x),\psi^{\lambda}_x\rangle|
		\lesssim& \lambda^\gamma \bigg(\|f;\bar{f}\|_{\gamma;\supp (\psi^\lambda_x)}\|\bar{\Pi}\|_{\gamma;B_{2\mu\lambda}(x)}\\
		&+\|f\|_{\gamma;\supp (\psi^\lambda_x)}\|\Pi-\bar{\Pi}\|_{\gamma;B_{2\mu\lambda}(x)} \bigg)\ ,			
	\end{aligned}
\end{equation}
for any $x\in \mbG$ and any $\lambda \in (0,1]$.
\end{prop}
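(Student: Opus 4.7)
The plan is to revisit the proof of Theorem~\ref{th:Recontruction} with more careful bookkeeping, so the final estimates depend only on the localised increments of $f$ on $\supp(\psi^\lambda_x)$ rather than on the full modelled distribution norm $\|f\|_{\gamma;B_{2\mu\lambda}(x)}$. As a first step I would write $\mcR f - \Pi_x f(x) = \mcR(f - f_x)$, where $f_x(y) := \Gamma_{y,x}f(x)$. That $f_x$ is a valid modelled distribution (in fact with zero increments, since $\Gamma_{y,z}\Gamma_{z,x} = \Gamma_{y,x}$) and that $\Pi_y f_x(y) = \Pi_y\Gamma_{y,x}f(x) = \Pi_x f(x)$ does not depend on $y$ together force $\mcR f_x = \Pi_x f(x)$ by the uniqueness half of the reconstruction theorem. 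I would then reconstruct $\mcR(f-f_x)$ via the same sequence of operators $\mcR^{(n,n)}$ used in the proof of Theorem~\ref{th:Recontruction}.

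The core of the argument is to improve the bound \eqref{eq:ApproxReconstruct_Bnd1}. There, the factor $\|f\|_{\gamma;B_{C\mfr^{-n}}(y)}$ entered only in order to bound the increment $|f(y) - \Gamma_{y,x}f(x)|_\alpha$; every other contribution came from $\|\Pi\|_{\gamma;B_{2\mu\lambda}(x)}$ or from the test function $\varphi^{(n)}_y$. In the localised version I would instead use
\begin{equation*}
|f(y) - \Gamma_{y,x}f(x)|_\alpha \;\lesssim\; N \cdot |y^{-1}x|^{\gamma-\alpha}, \qquad N := \sup_{y,z \in \supp(\psi^\lambda_x)} \sup_{\ell<\gamma} \frac{|f(z)-\Gamma_{z,y}f(y)|_\ell}{|y^{-1}z|^{\gamma-\ell}},
\end{equation*}
valid whenever both $y$ and $x$ lie in $\supp(\psi^\lambda_x)$. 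Integrating the resulting bound on $\mcR^{(n,n)}(f-f_x)(y)$ against $\psi^\lambda_x$, exactly as in the second half of Lemma~\ref{lem:Pre_Reconstruct_Lemma}, then yields \eqref{eq:support_localised_reconstruction}.

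The second bound \eqref{eq:local_reconstruct_diff} follows along identical lines once one applies the standard algebraic splitting
\begin{align*}
\Pi_y(f(y) - \Gamma_{y,x}f(x)) &- \bar{\Pi}_y(\bar{f}(y) - \bar{\Gamma}_{y,x}\bar{f}(x))\\
&= \Pi_y\bigl[(f-\bar{f})(y) - (\Gamma_{y,x}f(x) - \bar{\Gamma}_{y,x}\bar{f}(x))\bigr] + (\Pi_y - \bar{\Pi}_y)\bigl(\bar{f}(y) - \bar{\Gamma}_{y,x}\bar{f}(x)\bigr),
\end{align*}
which was already employed at the end of the proof of Theorem~\ref{th:Recontruction}. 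The two resulting summands are estimated using the localised increment quantities associated with $f;\bar{f}$ and $\bar{f}$, producing the two terms on the right-hand side of \eqref{eq:local_reconstruct_diff}.

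The main obstacle is bookkeeping of supports: one must verify that every $f$-evaluation entering the iterated convolutions $\mcR^{(m,n)}f - \mcR^{(m+1,n)}f$ remains in a fixed $O(\lambda)$-neighbourhood of $\supp(\psi^\lambda_x)$, so that $N$ is a valid substitute for $\|f\|_\gamma$. When $x \notin \supp(\psi^\lambda_x)$ a short chaining step through a reference point $x_0 \in \supp(\psi^\lambda_x)$ is required in order to transfer the natural estimate on $|f(y) - \Gamma_{y,x_0}f(x_0)|_\alpha$ (where $N$ applies directly) to the bound needed on $|f(y) - \Gamma_{y,x}f(x)|_\alpha$; the resulting extra terms are absorbed into the constants via $\|\Gamma\|_{\gamma;B_{2\mu\lambda}(x)}$ and the fact that $|x_0^{-1}x| \lesssim \lambda$.
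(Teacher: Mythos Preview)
Your proposal has a real gap. The sequence to which Lemma~\ref{lem:Pre_Reconstruct_Lemma} applies is $\xi_n = \mcR^{(n)}(f-f_x)$, not $\mcR^{(n,n)}(f-f_x)$, since the lemma needs the compatibility $\xi_n = \xi_{n+1}\ast\tilde\rho^{(n)}$. Passing from $\mcR^{(n,n)}$ to $\mcR^{(n)}$ goes through the telescoping differences $\mcR^{(m,n)}(f-f_x)-\mcR^{(m+1,n)}(f-f_x)$, and each of these involves increments $f(y')-\Gamma_{y'z'}f(z')$ at auxiliary points $y',z'$ lying in a $\mfr^{-n}$-neighbourhood of the integration variable $y$, not in $\supp(\psi^\lambda_x)$. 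Similarly, in the second half of Lemma~\ref{lem:Pre_Reconstruct_Lemma} the summands $\langle\xi_{k+1}-\xi_k,\psi^\lambda_x\rangle$ are controlled by integrating $|\xi_{k+1}|$ over a $\mfr^{-k}$-fattening of $\supp(\psi^\lambda_x)$. In both places the points at which $f$ is evaluated escape $\supp(\psi^\lambda_x)$, so substituting $N$ for $\|f\|_\gamma$ is not justified; your chaining step only repairs the pairing with the fixed base point $x$ and says nothing about these auxiliary evaluations. At best this route yields the bound with $\supp(\psi^\lambda_x)$ replaced by an $O(\lambda)$-fattening of it, which is strictly weaker than what is asserted.

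The paper takes a genuinely different route, following \cite[Prop.~7.2]{hairer_14_RegStruct}. Rather than revisiting the operators $\mcR^{(m,n)}$, one works with the already constructed $\mcR f$, writes $\psi^\lambda_x = \int_{\Lambda_n}\psi^\lambda_x\,\varphi^{(n)}_y\,dy$ where $\Lambda_n=\{y:\supp(\varphi^{(n)}_y)\cap\supp(\psi^\lambda_x)\neq\emptyset\}$, and chooses a measurable selection $\pi_n(y)\in\supp(\varphi^{(n)}_y)\cap\supp(\psi^\lambda_x)$. Setting
\[
R_n \;=\; \int_{\Lambda_n}\bigl(\mcR f-\Pi_{\pi_n(y)}f(\pi_n(y))\bigr)\bigl(\psi^\lambda_x\,\varphi^{(n)}_y\bigr)\,dy
\]
forces every evaluation of $f$ to occur at a point of $\supp(\psi^\lambda_x)$. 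The differences $R_{n-1}-R_n$ then involve only increments between $\pi_{n-1}(\cdot)$ and $\pi_n(\cdot)$, both in $\supp(\psi^\lambda_x)$, and are bounded directly by a multiple of $N\,\mfr^{-\gamma n}$; combined with $R_n\to 0$ and an elementary estimate on $(\mcR f-\Pi_x f(x))(\psi^\lambda_x)-R_{n_0}$ for $n_0$ with $\mfr^{-n_0}\sim\lambda$, summation over $n$ gives \eqref{eq:support_localised_reconstruction}. The selection $\pi_n$ is precisely the ingredient absent from your plan.
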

\begin{proof}\,
Since the right hand side of \eqref{eq:support_localised_reconstruction} is linear in $f$, as in \cite{hairer_14_RegStruct} we may assume it to be equal to $1$. 
We use the functions $\rho^{(n)}$ and $\varphi^{(n)}$ from Section~\ref{subsec:Reconstruction} and recall that they satisfy 
\begin{equation*}
	\varphi^{(n-1)}(x) = \int_{B_{\mfr^{-n+1}}}  \rho^{(n-1)}(y) \varphi_y^{(n)}(x) dy, \quad \varphi^{(n-1)}_y= \int_{B_{\mfr^{-n+1}}}  \rho^{(n-1)}(w) \varphi_{yw}^{(n)}(x) dw
\end{equation*}
in particular since $\int \rho^{(n)}(x)\dd x = 1$ we have $\int \varphi_y^{(n)} (\,\cdot\,)\dd y =1$ for all $n\geq 0$. 
Define, for fixed 
$\psi^\lambda_x$, the sets
\begin{equation*}
	\Lambda_n= \left\{ y\in \mbG \ : \ \supp (\varphi^{(n)}_y)\cap \supp (\psi^\lambda_x) \neq \emptyset \right\}\subset \mbG
\end{equation*}
which by definition is contained in $B_{\lambda+ C\mfr^{-n}}(x)$ with $C=\frac{\mfr}{\mfr{-1}}$ as in Lemma~\ref{lem:convergence_of_test_functions}, as well as a (measurable) function
\begin{equation*}
	\pi_n: \Lambda_n\to \supp (\varphi^{(n)}_y)\cap \supp (\psi^\lambda_x).
\end{equation*}

Next, let 
\begin{align*}
	R_n &= \int_{\Lambda_n} \big( \mathcal{R}f- \Pi_{\pi_n(y)}f(\pi_n(y))\big) (\psi_x^\lambda \varphi^n_y) dy\\
	&= \langle\mcR f,\psi_x^\lambda\rangle -    \int_{\Lambda_n}\Pi_{\pi_n(y)}f(\pi_n(y)) (\psi_x^\lambda \varphi^n_y) dy \ .
\end{align*}
It follows that for $n_0= \min \{n\in \mathbb{ N} \ : \ \mfr^{-n}\leq \lambda \}$, one has 
\begin{equation}\label{eq:local_rec_large_scale}
	\left|(\mcR f - \Pi_x f(x))(\psi^\lambda_x)- R_n\right| = \left| \int_{\Lambda_{n}}\big(\Pi_x f(x)-  \Pi_{\pi_n(y)} f (\pi_n(y))\big) (\psi_x^\lambda \varphi^n_y) dy\right| \lesssim \lambda^\gamma
\end{equation}
as well as for $n >  n_0$
\begin{align*}
	R_{n-1}- R_{n}
	=&  \int_{\Lambda_{n-1}}\Pi_{\pi_{n-1}(y)}f(\pi_{n-1}(y)) (\psi_x^\lambda \varphi^{n-1}_y) dy -
	\int_{\Lambda_n}\Pi_{\pi_n(z)}f(\pi_n(z)) (\psi_x^\lambda \varphi^n_z) dz\\
	=&  \int_{B_{\mfr^{-n+1}}} \rho^{(n-1)}(w)\int_{\Lambda_{n}}\Big( \Pi_{\pi_{n-1}(zw^{-1})}f(\pi_{n-1}(zw^{-1}))
	-\Pi_{\pi_n(z)}f(\pi_n(z))\Big) (\psi_x^\lambda \varphi^n_z) dzdw \ .
\end{align*}
Since $|\pi_n(z)^{-1}\pi_{n-1}(zw^{-1})|\leq \bar{C}\mfr^{-n}$ and for $\tau\in \opT_\alpha$ such that $|\tau| \leq 1$ 
\begin{equation*}
	|\Pi_{\pi_n(z)} \tau (\psi_x^\lambda \varphi^n_z)| \lesssim \lambda^{|\fraks|} \mfr^{-\alpha n}\ ,
\end{equation*}
we find that 
\begin{equation*}
	\int_{\Lambda_{n}} \Big| \big(\Pi_{\pi_{n-1}(zw^{-1})}f(\pi_{n-1}(zw^{-1}))
	-\Pi_{\pi_n(z)}f(\pi_n(z))\big) (\psi_x^\lambda \varphi^n_z) \Big| dz\lesssim \mfr^{-\gamma n}
\end{equation*}
and thus conclude
\begin{equation*}
	|R_n-R_{n-1}| \lesssim \mfr^{-\gamma n}.
\end{equation*}
A similar argument gives $|R_n|\to 0$ as $n \to \infty$ which combined with \eqref{eq:local_rec_large_scale} concludes the proof of \eqref{eq:support_localised_reconstruction}. The proof of the analogous bound, \eqref{eq:local_reconstruct_diff}, follows in a similar manner.
\end{proof}	
\subsection{Singular Modelled Distributions}\label{subsec:singular_modelled}
As in \cite{hairer_14_RegStruct} we will eventually be concerned with solutions to SPDEs that take values in spaces of modelled distributions with permissible singularities in some regions of the domain. Our main example will be modelled distributions on space-time domains that are allowed to be discontinuous at $\{t=0\}$, see Section~\ref{sec:evolution}. However, as in \cite{hairer_14_RegStruct} we build the notion of singular modelled distributions allowing for singularities on more general sets, generalisations of which have been used in \cite{gerencser_hairer_19_domains, gerencser_hairer_21_boundary_renorm} to study singular equations with boundary conditions. 

We fix a homogeneous sub-Lie group $P\subset \mbG$ with associated Lie algebra for which we write $\mfp \subset \mfg$. The assumption that $P$ be a homogeneous sub-Lie group means that the scaling map $\mfs$ restricts to a map $\bar{\mfs}:=\mfs|_{\mfp}:\mfp \rightarrow \mfp$ which is diagonalisable. We fix a decomposition $\mfg = \mfp \oplus \mfp^c$ such that $\mfp^c$ is also invariant under $\mfs$ and define the homogeneous dimension of $P$ and its complement, $P^c := \exp(\mfp^c)$ as, 
\begin{equation}\label{eq:hyperplane_dim}
|\bar{\mfs}|:= \text{trace} (
\mathfrak{s}|_{\mathfrak{p}}) \quad\text{and}\quad  |\mfm| = \text{trace}(\mfs|_{\mfp^c})\ .
\end{equation}
Furthermore, we set 
\begin{equation*}
|x|_P := 1 \wedge d_{\mbG}(x,P) = 1\wedge \inf\left\{ z \in P\,:\, {|x^{-1}z|} \right\}, \quad |x,y|_P := |x|_P \wedge |y|_P.
\end{equation*}
Note that since $P$ is closed (being the image under $\exp$ of a linear subspace of $\mfg$), one sees easily that the
infimum above is actually a minimum. Given $\mfK \subset \mbG$ we define the set
\begin{equation*}
\mfK_P := \left\{ (x,y) \in (\mfK\setminus P)^2\,:\, x\neq y \, \text{ and }\, {|x^{-1}y|} \leq |x,y|_{P}\right\}.
\end{equation*}
That is $\mfK_P$ contains all the points in $\mfK$ that are closer to each other than they are to $P$.
\begin{definition}[Singular Modelled Distributions]\label{def:sing_modelled_dist}
Given a regularity structure $\mcT$ and a subgroup $P$ as above, for any $\gamma>0$, $\eta \in \mbR$ and maps $f:\mbG\setminus P \rightarrow \mcT$, we set 
\begin{equation*}\label{eqtwonorms}
	\|f\|_{\gamma,\eta;\mfK} := \sup_{x \in \mfK\setminus P} \sup_{\zeta<\gamma} \frac{|f(x)|_\zeta}{|x|_P^{(\eta-\zeta)\wedge 0}},\quad
	\llbracket f\rrbracket_{\gamma,\eta;\mfK} := \sup_{x \in \mfK\setminus P} \sup_{\zeta<\gamma} \frac{|f(x)|_\zeta}{|x|^{\eta-\zeta}_P}.
\end{equation*}
Then given a model $M=(\Pi,\Gamma)$ as well as a sector $V$, the space $\msD^{\gamma,\eta}_{P,M}(V)$ consists of all functions $f: \mbG\setminus P \rightarrow V_{< \gamma}$ such that for every compact set $\mfK \subset \mbG$,  
\begin{equation*}
	\vertiii{f}_{\gamma,\eta;\mfK} := \|f\|_{\gamma,\eta;\mfK} + \sup_{(x,y)\in \mfK_P} \sup_{\zeta<\gamma} \frac{|f(x)-\Gamma_{x,y}f(y)|_\zeta}{{|y^{-1}x|}^{\gamma-\zeta} |x,y|^{\eta-\gamma}_P} <+\infty.
\end{equation*}
For two models $M=(\Pi,\Gamma),\, \bar{M}=(\bar{\Pi},\bar{\Gamma})$ and two modelled distributions $f \in  \msD^{\gamma,\eta}_{P,M},\,\bar{f} \in \msD^{\gamma,\eta}_{P,\bar{M}}$ we also set,
\begin{equation*}
	\vertiii{f;\bar{f}}_{\gamma,\eta;\mfK}  := \|f-\bar{f}\|_{\gamma,\eta;\mfK} + \sup_{(x,y) \in \mfK_P} \sup_{\zeta <\gamma} \frac{|f(x)-\bar{f}(x) - \Gamma_{x,y} f(y) + \bar{\Gamma}_{xy}\bar{f}(y)|_\zeta}{{|y^{-1}x|}^{\gamma-\zeta}|x,y|^{\eta-\gamma}_{P}}.
\end{equation*}
If $V$ is a sector of regularity $\alpha \in A$, where appropriate we will use the shorthand $\msD^{\gamma,\eta}_{\alpha;P,M} = \msD^{\gamma,\eta}_{P,M}(V)$ and we will drop the dependence on the model when the context is clear.
\end{definition}
\begin{remark}\label{rem:sing_modelled_is_modelled}
{We refer to \cite{hairer_14_RegStruct} for more intuition regarding the definition of these spaces and their properties - all of which carry over to our setting. In particular \cite[Rem. 6.4]{hairer_14_RegStruct} discusses the relationship between the spaces $\msD^{\gamma,\eta}_P$ and $\msD^\gamma$.}
\end{remark}
\begin{remark}
The family of norms $\llbracket f\rrbracket_{\gamma,\eta;\mfK}$ and the two following lemmas play a role when we consider fixed-point maps in Section \ref{sec:evolution} below. Their utility is in allowing us to extract small scaling parameters in terms of the distance to the subgroup. In the semi-linear evolution equation setting this allows us to obtain fixed points on sufficiently short time intervals.
\end{remark}
\begin{lemma}\label{lem:SingularNorm_Ordering}
Let $\mfK\subset \mbG$ be a compact domain such that for every $x\in \mfK$ and $\bar{x}:= \argmin_{y\in P}|x^{-1}y|$ one has that the points $\bar{x}\big( \lambda\cdot (\bar{x}^{-1} x)\big) \in \mfK$ for every $\lambda\in [0,1]$. Also let $f \in \msD^{\gamma,\eta}_P$ for some $\gamma>0$ and assume that for every $\zeta<\eta$ the map $x\mapsto \mcQ_\zeta f(x)$ extends continuously in such a way that for $x\ \in P$ one has $\mcQ_\zeta f(x) =0$. Then, one has the bound, 
\begin{equation*}
	\llbracket f\rrbracket_{\gamma,\eta;\mfK} \lesssim \vertiii{f}_{\gamma,\eta;\mfK},
\end{equation*}
where the implied constant depends affinely on $\|\Gamma\|_{\gamma;\mfK}$ but is otherwise independent of $\mfK$. Similarly, if $\bar{f}\in \msD^{\gamma,\eta}_{P,\bar{M}}$ with respect to a different model $\bar{M}=(\bar{\Pi},\bar{\Gamma})$ and is such that 
$$\lim_{x\to P}\mcQ_\zeta(f(x)-\bar{f}(x)) =0$$
for every $\zeta<\eta$, then one has the bound
\begin{equation*}
	\llbracket f -\bar{f} \rrbracket_{\gamma,\eta;\mfK} \lesssim \vertiii{f;\bar{f}}_{\gamma,\eta;\mfK} + \|\Gamma-\bar{\Gamma}\|_{\gamma;\mfK}\left(\vertiii{f}_{\gamma,\eta;\mfK}+ \vertiii{\bar{f}}_{\gamma,\eta;\mfK}\right),
\end{equation*}
with proportionality constant also depending affinely on $\|\Gamma\|_{\gamma;\mfK}$ and $\|\bar{\Gamma}\|_{\gamma;\mfK}$.
\end{lemma}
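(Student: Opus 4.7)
The plan is to reduce everything to the pointwise bound $|\mcQ_\zeta f(x)|_\zeta \lesssim |x|_P^{\eta-\zeta}$ for every $\zeta\in A$ with $\zeta<\eta$, since for $\zeta\geq\eta$ the suprema defining the two norms already agree term-by-term. I establish this improved pointwise bound by downward induction on $\zeta\in A\cap(-\infty,\eta)$, with the largest such $\zeta$ serving as the base case (where the inductive hypothesis is vacuous).

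For the inductive step, I fix $x\in\mfK\setminus P$ and any $y$ with $(x,y)\in\mfK_P$, which automatically implies $|x|_P\asymp|y|_P$ via the quasi-triangle inequality. I decompose
\begin{equation*}
\mcQ_\zeta f(x) = \mcQ_\zeta\bigl(f(x)-\Gamma_{x,y}f(y)\bigr) + \mcQ_\zeta f(y) + \sum_{\zeta<\zeta'<\gamma}\mcQ_\zeta\Gamma_{x,y}\mcQ_{\zeta'}f(y).
\end{equation*}
The first term is bounded via Definition~\ref{def:sing_modelled_dist} by $\vertiii{f}_{\gamma,\eta;\mfK}|x^{-1}y|^{\gamma-\zeta}|x,y|_P^{\eta-\gamma}\lesssim|y|_P^{\eta-\zeta}$, using $|x^{-1}y|\leq|x,y|_P\leq|y|_P$. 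Each summand is controlled by combining the $\Gamma$-bound with either the inductive hypothesis (when $\zeta'<\eta$) or the definition of $\|f\|_{\gamma,\eta;\mfK}$ (when $\zeta'\geq\eta$), producing $|x^{-1}y|^{\zeta'-\zeta}|y|_P^{\eta-\zeta'}\lesssim|y|_P^{\eta-\zeta}$ in both cases. The net result is the one-step inequality
\begin{equation*}
|\mcQ_\zeta f(x)|_\zeta \leq C\bigl(|y|_P^{\eta-\zeta} + |\mcQ_\zeta f(y)|_\zeta\bigr),
\end{equation*}
with $C$ depending affinely on $\|\Gamma\|_{\gamma;\mfK}$ and on $\vertiii{f}_{\gamma,\eta;\mfK}$, but not on $\mfK$ otherwise.

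To finish, I iterate this inequality along a sequence $x_0=x,x_1,x_2,\ldots$ in $\mfK$ with $x_n\to P$, each consecutive pair in $\mfK_P$, and $|x_n|_P\leq(1-\delta)|x_{n-1}|_P$ for some fixed $\delta\in(0,1)$. Using the star-convex hypothesis this is achieved by setting $x_n:=\bar{x}_{n-1}\bigl(\sigma\cdot\bar{x}_{n-1}^{-1}x_{n-1}\bigr)$ for a common dilation factor $\sigma\in(0,1)$ sufficiently close to $1$. The continuous extension hypothesis gives $|\mcQ_\zeta f(x_n)|_\zeta\to 0$ and the geometric series $\sum_n(1-\delta)^{n(\eta-\zeta)}$ converges, producing $|\mcQ_\zeta f(x)|_\zeta\lesssim|x|_P^{\eta-\zeta}$ with the claimed dependencies. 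The second inequality of the lemma follows by running the identical iteration for $f-\bar{f}$, with the one-step estimate now picking up additional cross-terms proportional to $\|\Gamma-\bar{\Gamma}\|_{\gamma;\mfK}\bigl(\vertiii{f}_{\gamma,\eta;\mfK}+\vertiii{\bar{f}}_{\gamma,\eta;\mfK}\bigr)$ which carry through the iteration unchanged. The main obstacle is the sequence construction: in the non-abelian setting the naive geometric choice $x_n=\bar{x}(2^{-n}\cdot\bar{x}^{-1}x)$ violates the $\mfK_P$ condition, because the constant $\mu\geq 1$ in the quasi-triangle inequality (Remark~\ref{rem:triangle_inequality}) forces $|x_{n-1}^{-1}x_n|$ to be at least of the same order as $|x_n|_P$. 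One must therefore take $\sigma$ close to $1$ and invoke a Baker--Campbell--Hausdorff expansion showing $|w^{-1}(\sigma\cdot w)|=O\bigl((1-\sigma)^{1/\mfs_d}\bigr)|w|=o(|w|)$ as $\sigma\uparrow 1$, ensuring simultaneously $\mfK_P$-compatibility and strict geometric decay of $|x_n|_P$.
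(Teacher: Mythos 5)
Your proof is correct and follows the same overall strategy as the paper's: a downward induction on $\zeta \in A\cap(-\infty,\eta)$ combined with chaining along a sequence tending to $\bar{x}\in P$ and using the continuous-extension hypothesis to kill the boundary term. The paper's own proof is very terse; it refers to Hairer's Lemma 6.5 and states the sequence $x_n := \bar{x}\big(2^{-n}\cdot(\bar{x}^{-1}x)\big)$, which is precisely the choice you flag as problematic, so a more careful look at your objection is warranted.

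Your observation that $(x_{n-1},x_n)$ may fail the literal $\mfK_P$ constraint $|x_{n-1}^{-1}x_n|\leq|x_{n-1},x_n|_P$ is technically correct, but the attribution of this failure to the non-abelian constant $\mu\geq 1$ is slightly misleading. The same thing already happens in the \emph{Euclidean} anisotropic setting: with $P=\{t=0\}$, parabolic scaling $\mfs_0=2$, and $x$ a pure time vector, one computes $|x_{n-1}-x_n|_\mfs/|x_n|_P = (2^{\mfs_0}-1)^{1/\mfs_0} = \sqrt{3}>1$. So this is a generic feature of the dilation, not of noncommutativity, and it is implicit in Hairer's original argument that the constraint in $\mfK_P$ can be weakened to $|x^{-1}y|\lesssim|x,y|_P$ up to a fixed group-dependent constant, at the cost of an additional multiplicative factor in the final estimate. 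Your alternative route --- choosing $\sigma$ close to $1$ so that the BCH expansion gives $|w^{-1}(\sigma\cdot w)|=O\big((1-\sigma)^{1/\mfs_d}\big)|w|$, hence literal $\mfK_P$-membership --- is a perfectly valid way to avoid having to make that implicit absorption precise, and your BCH computation is correct. Both routes work; yours has the advantage of not appealing to an unstated constant-slack fact, the paper's has the advantage of matching Hairer's write-up exactly.

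One small slip in your write-up: the one-step estimate should read
\begin{equation*}
|\mcQ_\zeta f(x)|_\zeta \leq C|y|_P^{\eta-\zeta} + |\mcQ_\zeta f(y)|_\zeta
\end{equation*}
with the constant multiplying only the first term. The $|\mcQ_\zeta f(y)|_\zeta$ contribution enters your decomposition with coefficient exactly $1$ (it is the term $\zeta'=\zeta$ in the sum). As you have written it, with $C$ on both terms, the iteration produces $\sum_n C^n|x_n|_P^{\eta-\zeta}$, which converges only if $C(1-\delta)^{\eta-\zeta}<1$ --- a constraint that fails precisely when you must take $\delta$ small to satisfy $\mfK_P$. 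With the corrected one-step bound the iteration gives $C\sum_n |x_n|_P^{\eta-\zeta}$, which converges as claimed, and the issue disappears. This looks like a notational slip rather than a real gap, but it is worth making explicit since it interacts with your $\sigma$-close-to-$1$ choice.
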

\begin{proof}\,
The proof follows almost exactly as that of \cite[Lem. 6.5]{hairer_14_RegStruct} but here using the sequence $x_0:=x,\, x_\infty := \bar{x}= \argmin_{z\in P} {|x^{-1}z|}$ and $x_n:=  {x_\infty\big( 2^{-n} \cdot (x^{-1}_\infty x_0 )\big)}$. 
\end{proof}
\begin{lemma}
Let $\gamma >0$, $\kappa \in (0,1)$ and assume $f,\,\bar{f}$ satisfy the assumptions of Lemma~\ref{lem:SingularNorm_Ordering}. Then, for every compact $\mfK \subset \mbG$, one has
\begin{equation*}
	\vertiii{f;\bar{f}}_{(1-\kappa)\gamma,\eta;\mfK} \lesssim \llbracket f-\bar{f}\rrbracket^\kappa_{\gamma,\eta;\mfK}\left(\vertiii{f}_{\gamma,\eta;\mfK} + \vertiii{\bar{f}}_{\gamma,\eta;\mfK}\right)^{1-\kappa}.
\end{equation*}
\end{lemma}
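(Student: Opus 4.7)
Plan. Let $A_0 := \vertiii{f}_{\gamma,\eta;\mfK}+\vertiii{\bar f}_{\gamma,\eta;\mfK}$ and $B_0 := \llbracket f-\bar f\rrbracket_{\gamma,\eta;\mfK}$. The norm $\vertiii{f;\bar f}_{(1-\kappa)\gamma,\eta;\mfK}$ splits into the pointwise part $\|f-\bar f\|_{(1-\kappa)\gamma,\eta;\mfK}$ and an increment part involving $g(x,y):=(f-\bar f)(x)-\Gamma_{x,y}f(y)+\bar\Gamma_{x,y}\bar f(y)$. For each part the plan is to produce two endpoint estimates---a coarse one proportional to $A_0$ and a fine one proportional to $B_0$---and take their geometric mean with weights $1-\kappa$ and $\kappa$ respectively, producing the prefactor $A_0^{1-\kappa}B_0^{\kappa}$.

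For the pointwise part, fix $\zeta<(1-\kappa)\gamma$. The definitions of $\vertiii{f}$ and $\vertiii{\bar f}$ give $|f(x)-\bar f(x)|_\zeta\le A_0|x|_P^{(\eta-\zeta)\wedge 0}$, while $\llbracket\cdot\rrbracket$ gives $|f(x)-\bar f(x)|_\zeta\le B_0|x|_P^{\eta-\zeta}$. Taking the geometric mean yields a bound with exponent $(1-\kappa)[(\eta-\zeta)\wedge 0]+\kappa(\eta-\zeta)$ on $|x|_P$. A short case-analysis on the sign of $\eta-\zeta$, together with $|x|_P\le 1$ (allowing any surplus positive power to be dropped), shows that this is compatible with the target exponent $(\eta-\zeta)\wedge 0$ and hence the required pointwise bound follows.

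For the increment part, the coarse estimate
$$|g(x,y)|_\zeta\le A_0\,|y^{-1}x|^{\gamma-\zeta}|x,y|_P^{\eta-\gamma}$$
is immediate from the triangle inequality applied to the definition of $\vertiii{f}$ and $\vertiii{\bar f}$. For the fine estimate, I decompose $g(x,y)=(f-\bar f)(x)-\Gamma_{x,y}(f-\bar f)(y) + (\bar\Gamma_{x,y}-\Gamma_{x,y})\bar f(y)$ and bound the first two terms using the strong pointwise estimate $|(f-\bar f)(\cdot)|_\beta\le B_0|\cdot|_P^{\eta-\beta}$ combined with the analytic action bound on $\Gamma_{x,y}$. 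The key observation---already used in Lemma~\ref{lem:SingularNorm_Ordering}---is that on $\mfK_P$ one has $|y^{-1}x|\le|x,y|_P$ and $|x|_P\sim|y|_P\sim|x,y|_P$. Summing the contributions over the finitely many admissible homogeneities $\beta$ yields $|g(x,y)|_\zeta\lesssim B_0|x,y|_P^{\eta-\zeta}$, with the $(\bar\Gamma-\Gamma)$ contribution absorbed into the implicit constant exactly as in the preceding lemma.

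Interpolating the coarse and fine estimates with weights $1-\kappa$ and $\kappa$ produces a bound of the form $A_0^{1-\kappa}B_0^{\kappa}|y^{-1}x|^{(1-\kappa)(\gamma-\zeta)}|x,y|_P^{\eta-(1-\kappa)\gamma-\kappa\zeta}$. To match the exponents required by $\vertiii{\cdot}_{(1-\kappa)\gamma,\eta;\mfK}$, one uses $|y^{-1}x|\le|x,y|_P\le 1$ to transfer the residual factor $(|y^{-1}x|/|x,y|_P)^{\kappa\zeta}\le 1$ (this is automatic for $\zeta\ge 0$). The main technical step is precisely this bookkeeping of exponents: verifying that the geometric-mean weights combine with the inequality $|y^{-1}x|\le|x,y|_P$ valid on $\mfK_P$ to reproduce the form of the $((1-\kappa)\gamma,\eta)$-norm; the argument is entirely parallel to the one used for Lemma~\ref{lem:SingularNorm_Ordering}.
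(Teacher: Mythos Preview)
Your approach is the same as the paper's, which simply defers to \cite[Lem.~6.6]{hairer_14_RegStruct}: geometric-mean interpolation between a coarse bound on the increment (proportional to $A_0$, from the definition of $\vertiii{\cdot}_{\gamma,\eta}$) and a fine bound obtained from the pointwise control $\llbracket\cdot\rrbracket_{\gamma,\eta}$, together with the relation $|y^{-1}x|\le|x,y|_P$ on $\mfK_P$.

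One imprecision deserves comment. In your fine estimate for the increment you write that the $(\bar\Gamma_{x,y}-\Gamma_{x,y})\bar f(y)$ contribution is ``absorbed into the implicit constant exactly as in the preceding lemma''. This term is controlled by $\|\Gamma-\bar\Gamma\|_{\gamma;\mfK}\,\vertiii{\bar f}_{\gamma,\eta;\mfK}\,|x,y|_P^{\eta-\zeta}$, which is proportional to $A_0$ and \emph{not} to $B_0$; so the fine bound is really of the form $(B_0+\|\Gamma-\bar\Gamma\|A_0)|x,y|_P^{\eta-\zeta}$, and after interpolation one picks up an extra term of order $\|\Gamma-\bar\Gamma\|^\kappa A_0$ on the right-hand side. (Taking $f=\bar f$ with $\Gamma\neq\bar\Gamma$ gives $B_0=0$ while the left-hand side is nonzero, so the inequality as literally stated cannot hold without such a term.) This wrinkle is already present in the original argument and is harmless in applications, where $\|\Gamma-\bar\Gamma\|$ is controlled jointly with $B_0$; but your phrasing is misleading, since in Lemma~\ref{lem:SingularNorm_Ordering} the factor $\|\Gamma-\bar\Gamma\|$ appears \emph{explicitly} on the right-hand side rather than in the constant. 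A minor second point: your exponent transfer via $(|y^{-1}x|/|x,y|_P)^{\kappa\zeta}\le 1$ tacitly assumes $\zeta\ge 0$, which you note but do not address.
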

\begin{proof}\,
Follows by a direct adaptation of the proof of \cite[6.6]{hairer_14_RegStruct}. One need only replace $\mbR^d$ there by $\mbG$ here.
\end{proof}
\subsubsection{Reconstruction Theorem for Singular Modelled Distributions}
Since the reconstruction is purely local, it follows from our earlier proof that for any singular modelled distribution,  $f\in \msD^{\gamma,\eta}_P$, there exists a unique element $\tilde{\mcR}f\in \mcS'(\mbG\setminus P)$, i.e. in the dual of smooth functions compactly supported away from $P$, such that,
\begin{equation*}
\langle \tilde{\mcR}f - \Pi_x f(x), \psi^\lambda_x\rangle \lesssim \lambda^{\gamma},
\end{equation*}
for all $x\notin P$ and $\lambda \ll d_{\mbG}(x,P)$. However, we show below that under appropriate assumptions there exists a natural extension of $\tilde{\mcR}f$ to an actual distribution on $\mbG$ with regularity $\mcC^{\underline{\alpha}}$. 	%
\begin{prop}[Singular Reconstruction]\label{prop:singular_reconstruction_v2} Let $f\in \msD^{\gamma,\eta}_P(V)$, $\gamma>0$ and $\eta\leq \gamma$. Then, provided $\alpha \wedge \eta >-|\mfm|$, where $|\mfm|$ is the scaled dimension of the complement of the singular hyperplane defined by \eqref{eq:hyperplane_dim}, there exists a unique distribution $\mcR f \in \mcC^{{\alpha} \wedge \eta}_\mfs$ such that $(\mcR f)(\varphi) = (\tilde{\mcR}f)(\varphi)$ for any smooth test function compactly supported away from $P$. If $f$ and $\bar{f}$ are given with respect to two models $M$ and $\bar{M}$ then, for any compact $\mfK$, it holds that
\begin{equation*}
	\|\mcR_M f- \mcR_{\bar{M}} \bar{f}\|_{\mcC^{{\alpha} \wedge \eta}(\mfK)} \lesssim \vertiii{f;\bar{f}}_{\gamma,\eta;\bar{\mfK}} + \vertiii{M;\bar{M}}_{\gamma;\bar{\mfK}},
\end{equation*} 
where the constant depends on semi-norms of $f,\,\bar{f}$ and $M,\,\bar{M}$ on $\bar{\mfK}$.
\end{prop}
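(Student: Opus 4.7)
The plan is to adapt the extension strategy of \cite[Thm.~6.9]{hairer_14_RegStruct} to the sub-Riemannian geometry of $\mbG$. First observe that for any compact $\mfK' \Subset \mbG \setminus P$, the restriction $f|_{\mfK'}$ is a classical modelled distribution in $\msD^\gamma_M$, so Theorem~\ref{th:Recontruction} already produces a distribution $\tilde{\mcR} f$ on $\mbG \setminus P$ satisfying \eqref{eq:Reconstruction_Regularity} locally. The content of the proposition is thus to show that $\tilde{\mcR} f$ extends canonically across $P$ as an element of $\mcC^{\alpha \wedge \eta}$, where $\alpha$ is the regularity of $V$.

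I would build the extension via a partition of unity adapted to $P$. Choose smooth functions $\chi_n : \mbG \to [0,1]$, $n \geq 0$, with $\chi_n$ supported in $\{\mfr^{-(n+1)} \leq |x|_P \leq \mfr^{-(n-1)}\}$ for $n \geq 1$, with $\chi_0$ supported away from $P$, satisfying $\sum_n \chi_n \equiv 1$ on a neighbourhood of $P$ together with the bound $|X^I \chi_n| \lesssim \mfr^{n\, d(I)}$. Such a family exists because $\mfs$ preserves the decomposition $\mfg = \mfp \oplus \mfp^c$, so one can construct $\chi_n$ from a fixed bump by dilating anisotropically in the transverse variables (viewed through an ordered version of the coordinate map $\Phi$ of Remark~\ref{rem:decompose_exp_map} that lists the $\mfp$-eigenvectors first). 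For a test function $\psi \in \mfB^\lambda_r(x)$ I would then define
\[
\langle \mcR f, \psi\rangle := \sum_{n \geq 0} \langle \tilde{\mcR} f, \psi\, \chi_n\rangle\ ,
\]
which recovers $\tilde{\mcR} f(\psi)$ whenever $\psi$ is supported away from $P$.

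The main estimate is to bound each summand using Proposition~\ref{prop:support_localised_reconstruction} applied at a base point $y_n$ with $|y_n|_P \sim \mfr^{-n}$. One combines the local reconstruction bound $|\langle \tilde{\mcR} f - \Pi_{y_n} f(y_n), \psi\chi_n\rangle| \lesssim \lambda_n^\gamma \vertiii{f}_{\gamma,\eta;\mfK}$ (with $\lambda_n := \lambda \vee \mfr^{-n}$) with the pointwise bound $|f(y_n)|_\zeta \lesssim |y_n|_P^{(\eta-\zeta)\wedge 0}$ from $\vertiii{\cdot}_{\gamma,\eta;\mfK}$, together with a model bound for $\Pi_{y_n} f(y_n)$ tested against $\psi \chi_n$. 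The transverse localisation of $\supp \chi_n \cap \supp \psi$ contributes a geometric factor $\mfr^{-n|\mfm|} \lambda^{-|\mfm|}$ (when $\lambda \geq \mfr^{-n}$) to the $L^1$-mass of $\psi \chi_n$. Summing over $n$, one obtains a dyadic series whose convergence to a quantity of order $\lambda^{\alpha \wedge \eta}$ is precisely forced by the assumption $\alpha \wedge \eta > -|\mfm|$, giving $\mcR f \in \mcC^{\alpha \wedge \eta}$. Uniqueness follows because any two extensions differ by a distribution supported on $P$; the hypothesis $\alpha \wedge \eta > -|\mfm|$ combined with the same transverse volume bound forbids any non-zero such distribution from lying in $\mcC^{\alpha \wedge \eta}$, as is seen by testing against bumps concentrated transversally at scale $\eps \to 0$. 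The Lipschitz estimate for two models follows by repeating the same decomposition with the two-model local bound \eqref{eq:local_reconstruct_diff} and the difference norm $\vertiii{f;\bar f}_{\gamma,\eta;\bar{\mfK}}$ in place of their single-model analogues.

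The main technical obstacle is the geometric volume estimate: showing that the $\mfr^{-n}$-neighbourhood of $P$ within a compact set has homogeneous measure of order $\mfr^{-n|\mfm|}$. In the Euclidean setting this is trivially an orthogonality statement, but on a non-abelian $\mbG$ the group product mixes $\mfp$ with $\mfp^c$. I would overcome this by using the coordinate map $\Phi$ of Remark~\ref{rem:decompose_exp_map} with the $\mfp$-basis vectors listed before the $\mfp^c$-basis vectors; since $\mfs$ is diagonal in the $\{X_j\}$ basis and eigenspaces transform under $\dil_r$ by pure powers of $r$, left translation by an element of $P$ preserves the transverse slice up to a bi-Lipschitz change of coordinates, reducing the estimate to the scaling properties \eqref{eq:Phi_Two_Sided_Scaling} on the transverse Euclidean factor.
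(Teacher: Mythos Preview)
Your overall strategy matches the paper's exactly: both reduce the extension problem to constructing a dyadic partition of unity adapted to $P$ and then invoking the local reconstruction bound (Proposition~\ref{prop:support_localised_reconstruction}) on each piece, with the rest of \cite[Lem.~6.9]{hairer_14_RegStruct} carrying over verbatim. You also correctly identify the ordered exponential map listing the $\mfp$-directions first as the right tool to handle the non-commutative mixing; this is precisely the map $\tilde\Phi$ of \eqref{eq:defining_tilde_Phi}, which the paper uses to define the homogeneous, left $P$-invariant function $N_P$ of \eqref{eq:Np_def}.

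There is, however, a genuine gap in your partition. Your $\chi_n$ is indexed by scale only, so $\psi\chi_n$ is supported in a set of diameter $\sim\lambda$ along $P$ but only $\sim\mfr^{-n}$ transversally. When $\lambda>\mfr^{-n}$ this is not contained in a ball of radius $\mfr^{-n}$, and the derivatives of $\chi_n$ blow up at rate $\mfr^{nd(I)}\gg\lambda^{-d(I)}$, so $\psi\chi_n$ is \emph{not} (a multiple of) an element of $\mfB_r^{\lambda_n}(y_n)$ for any choice of $\lambda_n$ and $y_n$. Proposition~\ref{prop:support_localised_reconstruction} therefore does not apply to it directly; the ``geometric factor $\mfr^{-n|\mfm|}\lambda^{-|\mfm|}$'' you mention is exactly the symptom of this mismatch, not a cure. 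The paper (following Hairer) resolves this by introducing a second index: one fixes a lattice $\mfZ\subset P$ and a bump $\tilde\varphi$ with $\sum_{y\in\mfZ}\tilde\varphi_y\equiv 1$ near $P$, and sets $\phi_{n,y}(x):=\varphi(CN_P(2^n\cdot x))\,\tilde\varphi_y(2^n\cdot x)$. Each $\phi_{n,y}$ is then a genuine bump at scale $2^{-n}$ centred near a point at distance $\sim 2^{-n}$ from $P$, so local reconstruction applies at that scale, and the sum over the $\sim(\lambda 2^{n})^{|\bar\mfs|}$ lattice points $y$ meeting $\supp\psi$ produces the correct volume factor. The properties \eqref{eq:dilation_scaling_of_N} and \eqref{eq:translation_invariance_of_N} of $N_P$ are what make this double-indexed family a partition of unity with the right scaling behaviour; your ordered-$\Phi$ idea is the seed of this construction but you need to push it to $N_P$ and add the lattice index.
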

We provide the proof of Proposition~\ref{prop:singular_reconstruction_v2} at the end of this section, let us first make some preparatory observations. Recalling the decomposition $\mfg=\mfp \oplus \mfp^c$ defined at the start of the subsection, we define 
the projections $\pi_c: \mfg\to  \mfp^c$ and $\pi_{\mfp}: \mfg\to  \mfp$. Then using the decomposition $X = X^{\mfp}+ X^c \in \mfp \oplus \mfp^c$, we define the map
\begin{equation}\label{eq:defining_tilde_Phi}
\tilde{\Phi}:\mfg \rightarrow \mbG, \quad	\tilde{\Phi}(X) = \exp(X^{\mfp})\exp(X^c).
\end{equation}
Similarly to Remark \ref{rem:decompose_exp_map} one sees that $\tilde{\Phi}$ is a global diffeomorphism.
We then define the map
\begin{equation}\label{eq:Np_def}
N_P: \mbG\to \mathbb{R}_+, \qquad x\mapsto \left|\exp\left(\pi_c\circ \tilde{\Phi}^{-1} (x)\right)\right| 
\end{equation}
and observe the following properties.
\begin{itemize}
\item For $x\in \mbG$ one has that $N_P(x)=0$ if and only if $x\in P\ .$ This follows from the fact that $P= \exp(\mfp)$.
\item For $x\in \mbG$ and $\delta>0$ one has the identity 
\begin{equation}\label{eq:dilation_scaling_of_N}
	N_P (\delta \cdot x)= \delta N_P(x)\ .
\end{equation}
Indeed, writing $x= \tilde{\Phi} (\tilde{X})$, we have $$N_P (\delta \cdot x)=|\exp( \pi_c(\dil_\delta \tilde{X}))|=|\exp(\dil_\delta \pi _c( \tilde{X}))|= |\delta\cdot \exp( \pi_c( \tilde{X}))|=\delta N_P(x) \ .$$
\item For any $x\in \mbG$ and $y\in P$ one has  
\begin{equation}\label{eq:translation_invariance_of_N}
	N_P (y x)= N_ P(x)\ .
\end{equation}
This follows from the observation that, writing $x= \tilde{\Phi}(\tilde{X})$ and $y=\tilde{\Phi}(Y)= \exp(Y)$ one finds that $yx=\exp(\opH (Y,\tilde{X}^\mfp))\exp(\tilde{X}^{c})$ where $\opH$ was defined in \eqref{eq:BCH}. Identity \eqref{eq:translation_invariance_of_N} then follows from the fact that $\opH (Y,\tilde{X}^\mfp)  \in \mfp$.
\item \hspace{0.2em}The map $N_P$ is Lipschitz continuous on $\mbG$ and it follows from Remark~{\ref{rem:triangle_inequality}} that $N_P$ is smooth on ${ \mbG\setminus P}$. 
\item \hspace{0.2em}There exists a constant $C>0$ such that for all $x\in \mbG$
\begin{equation}\label{eq:N_dist_compare_1}
	C N_P(x)\leq d_{\mbG}(x,P) \ .
\end{equation}
In a neighbourhood of the origin this follows directly from the fact that $N_P$ is Lipschitz continuous. Homogeneity of $N_p$ (given by \eqref{eq:dilation_scaling_of_N} above) and of the distance function $x\mapsto d_{\mbG}(x,P)$ then shows that this constant is in fact uniform on all of $\mbG$.
\item \hspace{0.2em} For all $x\in \mbG$
\begin{equation}\label{eq:N_dist_compare_2}
	d_{\mbG}(x,P)\leq N_P(x) \ .
\end{equation}
Indeed, write $x= \tilde{\Phi}({X})= \exp(X^{\mfp})\exp(X^c)$ and note that 
$$d(x,P)\leq |\exp(X^{\mfp})^{-1} x|= |\exp(X^c)|= N_P(x)\ .$$
\end{itemize}

\begin{proof}[Proof of Proposition~\ref{prop:singular_reconstruction_v2}]\,
The proof is analogous to that of \cite[Lem.6.9]{hairer_14_RegStruct}, the only element that does not adapt ad verbatim, is the construction of the partition of unity $\varphi_{x,n}$. We therefore present an alternative construction of a partition of unity on $\mbG$, which satisfies all the required conditions.

First let $\varphi:\mbR_+\rightarrow [0,1]$ be a smooth compactly, supported function such that $\supp(\varphi)=[1/2,2]$ and with the property that for all $r\in \mbR_+$,
\begin{equation*}
	\sum_{n\in \mbZ} \varphi(2^nr) =1.
\end{equation*}
Secondly, let $\mfZ\subset P$ be a lattice (see Section~\ref{sec:discrete_subgroups}) and let $\tilde{\varphi}$ be smooth, compactly supported, such that
\begin{equation}\label{eq:substep_for_partition_of_unity}
	\sum_{y\in\mfZ}  \tilde{\varphi}_y(x) =1
\end{equation}
for all $x$ in the $\frac{2}{C}$-fattening of $P\subset \mbG$, where $C$ is the constant in \eqref{eq:N_dist_compare_1}and \eqref{eq:N_dist_compare_2}. 
For $y\in \mfZ$ we then set
$$\phi_{y} (x) := \varphi(C N_P(x)) \tilde{\varphi}_y(x) \ , $$
where the constant $C$ is as in \eqref{eq:N_dist_compare_1}. To conclude, we then set for every $n \geq 0$, $y\in \mfZ$ and $x \in \mbG$,
$$\phi_{n,y} (x) :=\phi_{y} (2^n \cdot x).$$
By using \eqref{eq:dilation_scaling_of_N} and \eqref{eq:translation_invariance_of_N}, it directly follows that $\phi_{n,y}(x) =(\phi_{0,e}) \left( y^{-1} (2^n\cdot x) \right)$. Since $\phi_{1,e}$ has compact support and is such that for all $x\in \supp(\phi_{1,e})$, one has $ d_{\mbG}(x,P)\geq C N_P(x)\geq 1/2$, it only remains to check that $\{\phi_{n,y}\}_{n \in \mbZ, y\in \mfZ}$ is in fact a partition of unity. Indeed let $x\in \mbG$, then

$$\sum_{n\in \mbZ, y\in \mfZ} \phi_{n,y} (x) = \sum_{n \in \mbZ, y\in \mfZ}  \varphi(CN_P(2^n\cdot x)) \tilde{\varphi}_y(2^n\cdot  x) = \sum_{n\in \mbZ} \varphi(2^n C N_P(x))  {\sum_{y\in \mfZ}{\tilde{\varphi}_y(2^n \cdot  x)}}= 1 \ ,$$
where in the second last equality we used that 
$
\sum_{y\in \mfZ}\tilde{\varphi}_y(2^n \cdot  x)=1
$
whenever $d(x, P)\leq \frac{1}{C}2^{-n+1}$ and in the last equality we used \eqref{eq:substep_for_partition_of_unity}. The remainder of the proof of \cite[Lem. 6.9]{hairer_14_RegStruct} then adapts ad verbatim by also also making use of Proposition~\ref{prop:support_localised_reconstruction}.
\end{proof}
\begin{remark}\label{rem:reconstruction_smooth_models}
If the model $M$ is smooth, i.e. $\Pi_x\tau \in C^\infty(\mbG)$ for every $\tau \in \opT$, one finds exactly as in \cite[Rem.~3.15]{hairer_14_RegStruct} that for any modelled distribution $f\in \msD^{\gamma}$ with $\gamma>0$ one has the identity 
$\mathcal{R}_Mf(x) = \left(\Pi_x f(x)\right)(x)$ (and in particular $\mathcal{R}_Mf$ is a continuous function).
\end{remark}	
\subsection{Convolution with Singular Kernels}\label{subsec:schauder}
In this section we describe how to lift the action of singular kernels onto the regularity structure. While most arguments adapt from \cite{hairer_14_RegStruct} some care has to be taken due to the fact that convolutions are not commutative and we do not have an explicit formula for Taylor expansions. This latter issue is circumvented using Lemma~\ref{lem:to_bound_polynomials} which allows us to reduce our analysis to similar expressions as appear in \cite{hairer_14_RegStruct}. The examples we have in mind are the singular part of Greens functions of left invariant differential operators satisfying the following assumption. 
\begin{assumption}\label{ass:K0_decompose}
For $\beta \in (0, |\fraks|)$, the kernel $K:\mbG\setminus \{e\}\to \mathbb{R}$ can be decomposed as 
\begin{equation}\label{eq:local_kernel_decompose}
	K(x)=\sum_{n\in\mathbb{N}} K_n(x)
\end{equation}
where the smooth functions $K_n$ are supported on $B_{2^{-n}}$ and 
\begin{itemize}
	\item for each $I\in \mbN^d$ there exists a constant $C(I)>0$, uniform in $n\in \mbN$ such that
	\begin{equation*}
		\sup_{x \in \mbG}	|X^I K_n(x)|\leq C(I) 2^{(|\fraks|-\beta+d(I))n},
	\end{equation*}
	\item for any multi-indices $I,\,J\in \mbN^d$ there exists a constant $C(I,J)>0$ uniform in $n\in \mbN$ such that,
	\begin{equation*}
		\left|\,\int_{\mbG} \eta^I(x) X^J K(x) dx\,\right|\leq C(I,J) 2^{-\beta n},
	\end{equation*}
	\item there exists an integer $r$, such that
	\begin{equation*}
		\int_{\mbG} \eta^I(x) K(x) \, dx=0
	\end{equation*}
	for all multi-indices $I\in \mathbb{N}^d$ with scaled degree $d(I)\leq r$.
\end{itemize}
\end{assumption}
\begin{remark}
We note that all of the analysis in the remainder of this section also applies to kernels of the form $K:(\mbG\setminus \{e\})^2\rightarrow \mbR$ satisfying an analogue of \cite[Ass.~5.1 \& Ass.~5.4]{hairer_14_RegStruct} adapted to our setting. Although Assumption \ref{ass:K0_decompose} is somewhat less general we choose to work with it for two reasons; firstly it is simpler to verify and secondly it highlights the role that translation invariance plays in our applications. Lemma~\ref{lem:kernel_decompose} which is an amalgam of \cite[Lem.~5.5 \& Lem.~7.7]{hairer_14_RegStruct} in our setting, shows that fundamental solutions of left-translation invariant, homogeneous linear operators can always be decomposed into a compactly support part satisfying Assumption \ref{ass:K0_decompose} and a sufficiently well-behaved remainder.
\end{remark}
\begin{remark}
Although we work explicitly with the one-parameter kernels of Assumption \ref{ass:K0_decompose} it will sometimes be convenient in the proofs below to define $K(x,y):= K(y^{-1}x)$ and use the notation $K(f)(x):= \int K(x,y)f(y)\dd y = f\ast K(x)$. Note that under this convention, for any left-translation invariant vector field $X$, one has $(XK)(f)=X(Kf)$. 
\end{remark}
From now on we shall exclusively work with regularity structures $\mcT$ models $M$ satisfying the following assumption.
\begin{assumption}\label{ass:polynomial_sector}
For each $a\in \triangle$, the vector space $\opT_a$ coincides with the linear span of abstract monomials $\pmb{\eta}^I$ with $d(I)=a$ and the model $M\in \mcM_\mcT$ restricted to the polynomial sector $\bar{\opT}=\bigoplus_{a\in \triangle} \opT_a$, is the canonical polynomial model.
\end{assumption}
We point out that the assumption $K$ annihilates polynomials causes no real restriction on the type of kernels since the result \cite[Lem.~5.5]{hairer_14_RegStruct} adapts in a straightforward manner to our setting, see also Lemma \ref{lem:kernel_decompose} below. We point out that this assumption is convenient but not crucial for the theory, c.f. \cite{hairer_singh_22_manifolds}. In the remainder of this subsection we show how the action of kernels of this type are lifted onto the regularity structure and act on modelled distributions. Given a $\gamma \in \mbR\setminus \triangle$ we write $\mcK_\gamma$ for this lift; it corresponds to $K$ in the sense that for $f\in\msD^\gamma$
\begin{equation}\label{eq:ker_recon_commute}
\mathcal{R}\mathcal{K}_\gamma f=K(\mathcal{R}f)
\end{equation}
and in that it satisfies an appropriate version of the classical Schauder estimates. 
\begin{definition}
Given a sector $V$, a map $\mathcal{I}:V\mapsto \bar{\opT}$ is called an abstract integration map of order $\beta>0$ if it satisfies the following properties:
\begin{enumerate}
	\item For each $\alpha\in A,  \ \mathcal{I}:V_\alpha\to \opT_{\alpha+\beta}$, where $\opT_{\alpha+\beta}:=\{0\}$ for $\alpha+\beta\notin A$.
	\item It annihilates polynomials, that is $\mathcal{I}:\bar{\opT}\cap V\to  \{0\}$.
	\item For each $\tau \in \opT,\ \Gamma\in \mbG:\  (\mathcal{I}\ \Gamma- \Gamma \mathcal{I})(\tau)\in \bar{\opT}.$
\end{enumerate}
\end{definition}
Assume that the kernel $K$ satisfies Assumption \ref{ass:K0_decompose} for some $\beta>0$. We associate to $K$ the map $\mcJ:\mbR^d\to L(\opT,\bar{\opT})$ which for every $\tau\in \opT_\alpha$ is given by
\begin{equation}\label{eq:introduce_J(x)_operator}
\mcJ(x)\tau:= \sum_n \opbfP^{\alpha+\beta}_x (\Pi_x \tau\ast K_n),
\end{equation}
where the last sum is seen to converge absolutely by first observing that by Assumption \ref{ass:K0_decompose} for any $\tau\in \opT_\alpha$
\begin{equation*}
|X^I (\Pi_x \tau\ast K_n)(x)|= | \Pi_x \tau\ast X^I K_n(x)|=  |\Pi_x \tau (X^I_1K_n(x,\cdot))|\lesssim 2^{-n(\alpha+\beta- d(I))}\ ,
\end{equation*}
and then using Lemma~\ref{lem:coefficients_of_taylor}.
\begin{definition}\label{def realise K}
Given a regularity structure $\mcT$ equipped with an Integration map $\mathcal{I}$, a kernel $K$ and a model $M=(\Pi,\Gamma)$, we say the model $M$ realises $K$ for $\mcI$ if for each $\tau\in \opT_\alpha$ and each $x\in \mbG$,
\begin{equation*}
	\Pi_x\mathcal{I}\tau= K(\Pi_x\tau)- \Pi_x \mcJ(x)\tau .
\end{equation*}
\end{definition}
Now we can define the lift of the kernel K, namely for $f\in \msD^\gamma (V)$ we set: 
\begin{equation*}
\mathcal{K}_\gamma f(x):=\mcI f(x)+\mcJ(x)f(x)+\mcN_\gamma f(x),
\end{equation*}
where
$$(\mathcal{N}_\gamma f)(x)=\sum_n \opbfP^{\gamma+\beta}_x \big( (\mathcal{R}f-\Pi_xf(x)) \ast K_n\big)$$
where the last sum converges by the same argument as for $\mcJ (x)\tau$.
\begin{remark}
Given a kernel $K$ satisfying Assumption \ref{ass:K0_decompose} a regularity structure and model satisfying Assumption \ref{ass:polynomial_sector},
it turns out that one can always extend the regularity structure and model to be equipped with an integration map $\mathcal{I}$ realizing the kernel $K$. This is the content of the extension theorem found as \cite[Thm.~5.14]{hairer_14_RegStruct}, which holds in our setting as well. While we do not reproduce the whole proof since it is a straightforward adaptation of the original one, we present the main steps below, see Lemmas~\ref{lemma:commutation}, \ref{lem:analytic_pi_bounds} and \ref{lem:analytic_gamma_bound}, so that the interested reader will easily be able to fill in the remaining details.
\end{remark}
The next theorem which is an analogue of \cite[Thm.~5.12]{hairer_14_RegStruct}, confirms that $\mathcal{K}_\gamma$ does indeed correspond to $K$ in the sense of \eqref{eq:ker_recon_commute} and satisfies the desired Schauder estimates.
\begin{theorem}\label{th:schauder}
Let 
$\gamma\in \mathbb{R}\setminus \triangle$ and $\beta>0$ be such that 
$\gamma+\beta\not\in \triangle$, let
$K:\mbG\setminus\{e\}\rightarrow \mbR$ be a kernel satisfying Assumption~\ref{ass:K0_decompose} for $r\geq\gamma+\beta$, let $\mcT=(\opT,\opG)$ be a regularity structure and $M=(\Pi,\Gamma)$ be a model satisfying Assumption \ref{ass:polynomial_sector}. 
Furthermore assume that $\mcT$ is equipped with an abstract integration operator and $M$ realises $K$ for $\mcI$. Then for any sector $V$ of regularity $\alpha \in A$ the operator $\mathcal{K}_\gamma$ is a continuous linear map from $\msD_M^\gamma(V)$ to $\msD_{(\alpha+\beta)\wedge 0}^{\gamma+\beta}$ satisfying the identity
\begin{equation*}
	\mathcal{R}\mathcal{K}_\gamma f=K(\mathcal{R}f),
\end{equation*}
for all $f\in \msD_M^\gamma(V)$. Furthermore, if we denote by $M=(\Pi,\Gamma),\ \bar{M}=(\bar{\Pi},\bar{\Gamma})$ two models as above and by $\mathcal{K}_\gamma$, respectively $\bar{\mathcal{K}}_{\gamma}$ the associated operators, one has the bound
$$
\|\mathcal{K}_\gamma f; \bar{\mathcal{K}}_{\gamma}\bar{f} \|_{\gamma+\beta;\mfK} 
\lesssim_C
\vertiii{ f,\bar{f}}_{\gamma;\bar{\mfK}} \\
+\|\Pi-\bar{\Pi}\|_{\gamma;\bar{\mfK}}  +\|\Gamma-\bar{\Gamma}\|_{\gamma;\bar{\mfK}} \ ,
$$
where the implicit constant depends on the norms of $M, \bar M$ and $f\in\msD_M^\gamma(V)$ and $\bar{f}\in \msD^\gamma_{\bar M}(V)$.
\end{theorem}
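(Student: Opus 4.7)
The proof plan is to follow the template of the proof of \cite[Thm.~5.12]{hairer_14_RegStruct}, exploiting the scale-by-scale decomposition $K=\sum_n K_n$ together with the split
\[
\mathcal{K}_\gamma f(x) = \mathcal{I} f(x) + \mathcal{J}(x) f(x) + \mathcal{N}_\gamma f(x),
\]
but with the explicit monomial-wise Taylor expansion used there replaced by reductions to pointwise derivative bounds via Lemma~\ref{lem:to_bound_polynomials}. As a warm-up I would confirm absolute convergence of the sums defining $\mathcal{J}(x)f(x)$ and $\mathcal{N}_\gamma f(x)$ in each homogeneity $a<\gamma+\beta$: for $\tau\in\opT_\alpha$ and each multi-index $J$ with $d(J)<\alpha+\beta$, Assumption~\ref{ass:K0_decompose} and the identity $X^J(\psi\ast\phi)=\psi\ast X^J\phi$ yield
\[
|X^J(\Pi_x\tau\ast K_n)(x)| \lesssim 2^{-n(\alpha+\beta-d(J))},
\]
which, via Lemma~\ref{lem:to_bound_polynomials}, translates into a bound on each homogeneous component of $\opbfP_x^{\alpha+\beta}(\Pi_x\tau\ast K_n)$ that is summable in $n$. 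The corresponding bound for $\mathcal{N}_\gamma f(x)$ uses the Reconstruction estimate \eqref{eq:Reconstruction_Regularity} on balls of radius $2^{-n}$.

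The heart of the proof is the Hölder-type bound on $|\mathcal{K}_\gamma f(y) - \Gamma_{y,x}\mathcal{K}_\gamma f(x)|_\zeta$ for $x,y\in\mbG$ with $|x^{-1}y|\leq 1$. I would fix $n_0$ so that $2^{-n_0}\sim|x^{-1}y|$ and split each $n$-sum at $n_0$. The abstract piece $\mathcal{I}(f(y)-\Gamma_{y,x}f(x))$ is controlled directly by the modelled-distribution property of $f$ together with the fact that $\mathcal{I}:\opT_\zeta\to\opT_{\zeta+\beta}$. For the polynomial parts, Lemma~\ref{lem:to_bound_polynomials} again reduces matters to showing, for each $J$ with $d(J)<\gamma+\beta-\zeta$, a bound of the form $|X^J\Pi_e(\cdot)(e)|\lesssim|x^{-1}y|^{\gamma+\beta-\zeta-d(J)}$. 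At scales $n\geq n_0$ one uses the direct sup-bounds above, while at scales $n<n_0$ one exploits cancellation between the expansions based at $x$ and $y$, following the argument of \cite[Sec.~5]{hairer_14_RegStruct}. The $\mathcal{N}_\gamma$-contribution requires the local reconstruction bound of Proposition~\ref{prop:support_localised_reconstruction} rather than the global one, so that only the modelled-distribution norm of $f$ on $B_{C|x^{-1}y|}(x)$ enters.

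The identity $\mcR\mathcal{K}_\gamma f = K(\mcR f)$ follows from the defining property that $M$ realises $K$: summing the three pieces yields
\[
\Pi_x \mathcal{K}_\gamma f(x) = K(\Pi_x f(x)) + \Pi_x\mathcal{N}_\gamma f(x),
\]
and using $K=\sum_n K_n$ together with the reconstruction bound on $\mcR f - \Pi_x f(x)$ and the vanishing-moment property of Assumption~\ref{ass:K0_decompose}, one checks that
\[
|\langle K(\mcR f) - \Pi_x\mathcal{K}_\gamma f(x),\phi^\lambda_x\rangle|\lesssim\lambda^{\gamma+\beta}
\]
uniformly in $\phi\in\mfB_r$ and $\lambda\in(0,1]$. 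Since $\gamma+\beta>0$, uniqueness in Theorem~\ref{th:Recontruction} pins down $\mcR\mathcal{K}_\gamma f = K(\mcR f)$. The locally Lipschitz continuity in the model follows by linearity, replacing the reconstruction bounds above by their model-difference analogues \eqref{eq:local_reconstruct_diff} and carrying the additional $\|\Pi-\bar\Pi\|$- and $\|\Gamma-\bar\Gamma\|$-terms through every estimate.

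The principal conceptual obstacle is the absence of a clean coordinate representation of Taylor polynomials on $\mbG$: the $\bfeta^I$-coefficients of $\opbfP^{\alpha+\beta}_x(\Pi_x\tau\ast K_n)$ are only implicit linear combinations of its derivatives (Lemma~\ref{lem:coefficients_of_taylor}), so individual homogeneous components cannot be read off and estimated directly as in the commutative case. Lemma~\ref{lem:to_bound_polynomials} is the device that sidesteps this by reducing every estimate to a pointwise derivative bound on the function being expanded, and is exactly what makes the reduction to \cite[Sec.~5]{hairer_14_RegStruct} viable. A secondary technical nuisance is non-commutativity: the identities $X^I(\psi\ast\phi)=\psi\ast X^I\phi$ and $(X^I\psi)\ast\phi=\psi\ast Y^I\phi$ from Section~\ref{subsec:distributions_convolutions} must be applied carefully each time a derivative is moved onto or off a kernel $K_n$, and the asymmetry between left- and right-invariant derivatives (for instance in how $X^J$ acts on $y\mapsto K_n(y^{-1}x)$) propagates through all of the bounds and has to be tracked consistently.
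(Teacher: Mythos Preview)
Your proposal is correct and follows essentially the same approach as the paper: both reduce to the template of \cite[Thm.~5.12]{hairer_14_RegStruct}, with Lemma~\ref{lem:to_bound_polynomials} serving as the key device to bypass the non-explicit Taylor polynomials by reducing all polynomial-sector estimates to pointwise derivative bounds $|X^I\Pi_e P(e)|\lesssim |y^{-1}x|^{\gamma+\beta-d(I)}$, followed by the same small-scale/large-scale split. Two minor remarks: the paper makes the commutation identity of Lemma~\ref{lemma:commutation} explicit (it is what pins down the polynomial remainder as $\Gamma_{x,y}\mathcal{N}_\gamma f(y)-\mathcal{N}_\gamma f(x)+\mcJ(x)(\Gamma_{x,y}f(y)-f(x))$), and the ordinary reconstruction bound of Theorem~\ref{th:Recontruction} suffices here---the localised version of Proposition~\ref{prop:support_localised_reconstruction} is only needed for the singular Schauder estimate.
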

The fact that the next lemma (\cite[Lem.~5.16]{hairer_14_RegStruct}) still holds in our setting is the underlying reason that all proofs extend in a rather straight forward manner from \cite{hairer_14_RegStruct} and one does not require a more involved notion of abstract integration map, which is for example the case on general Riemannian manifolds c.f.\cite{hairer_singh_22_manifolds}.
\begin{lemma}\label{lemma:commutation}
In the setting of Theorem \ref{th:schauder} one has the identity
\begin{equation*}
	\Gamma_{x,y}(\mcI+\mcJ(y))= (\mcI+\mcJ(x))\Gamma_{x,y}
\end{equation*}
for all $x,y\in \mbG$.
\end{lemma}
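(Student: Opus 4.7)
The plan is to rearrange the claim and verify it after testing against the canonical polynomial model. First I would rewrite the identity in the equivalent form
\[
\Gamma_{x,y}\mathcal{I} - \mathcal{I}\,\Gamma_{x,y} \;=\; \mcJ(x)\Gamma_{x,y} - \Gamma_{x,y}\,\mcJ(y),
\]
and observe that both sides take values in the polynomial sector $\bar{\opT}$: the left-hand side by property (3) in the definition of an abstract integration map, and the right-hand side because each $\mcJ(\cdot)$ lands in $\bar{\opT}$ by \eqref{eq:introduce_J(x)_operator}, while $\bar{\opT}$ is preserved by $\Gamma_{x,y}$ under Assumption~\ref{ass:polynomial_sector} (the structure group acts on $\bar{\opT}$ as the polynomial structure group described in Section~\ref{sec:polynomial_r_struct}).

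Next, since $\Pi_x$ is injective on $\bar{\opT}$ (its values $\Pi_x \bfeta^I$ form a linearly independent family of genuine polynomials), it suffices to verify the identity after applying $\Pi_x$. Using $\Pi_x \Gamma_{x,y} = \Pi_y$ together with the realisation property of Definition~\ref{def realise K} applied at both $x$ and $y$, one computes
\begin{align*}
\Pi_x \big(\Gamma_{x,y}\mathcal{I} - \mathcal{I}\,\Gamma_{x,y}\big) \tau
&= \Pi_y \mathcal{I}\tau - \Pi_x \mathcal{I}(\Gamma_{x,y}\tau) \\
&= \big(K(\Pi_y\tau) - \Pi_y\mcJ(y)\tau\big) - \big(K(\Pi_x \Gamma_{x,y}\tau) - \Pi_x\mcJ(x)\Gamma_{x,y}\tau\big).
\end{align*}
The two $K$-terms cancel thanks to $\Pi_x \Gamma_{x,y}\tau = \Pi_y \tau$, and rewriting $\Pi_y \mcJ(y)\tau = \Pi_x \Gamma_{x,y} \mcJ(y)\tau$ leaves precisely $\Pi_x \big(\mcJ(x)\Gamma_{x,y} - \Gamma_{x,y}\mcJ(y)\big)\tau$, which closes the argument by the injectivity of $\Pi_x$ on $\bar{\opT}$.

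The only non-trivial bookkeeping concerns the legitimacy of treating $\mcJ(x)\tau$ as an absolutely convergent series in $\bar{\opT}_{<\alpha+\beta}$, so that one may apply $\Pi_x$ and $\Gamma_{x,y}$ termwise; this is already established in the discussion immediately following \eqref{eq:introduce_J(x)_operator} by combining the kernel bounds of Assumption~\ref{ass:K0_decompose} with Lemma~\ref{lem:coefficients_of_taylor}. I do not anticipate any further analytic obstacle beyond this, the content of the lemma being essentially algebraic once the realisation identity is in place.
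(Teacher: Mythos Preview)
Your proposal is correct and follows essentially the same approach as the paper: the paper's proof simply states that one unravels the definitions and uses injectivity of $\Pi_x$ on the polynomial sector, exactly as you have done. Your write-up in fact spells out the computation that the paper leaves to the reader (and to \cite{hairer_14_RegStruct}), so there is nothing to add or correct.
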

\begin{proof}\,
The proof consists of unravelling the Definitions and using the fact that the map $\Pi_x$ is injective when restricted to polynomials, exactly as in \cite{hairer_14_RegStruct}. 
\end{proof}
We introduce the following quantity; for $I\in \mathbb{N}^d, \alpha>0, n\in \mathbb{N}$ and $x,y,z\in \mbG$, set 
\begin{equation}
K^{I,\alpha}_{n,xy}(z):=  X^I_1 K_n(y,z)-\tilde{\opP}^{\alpha+\beta-d(I)}_x[X^I_1 K_n(\cdot,z)] (y)=X^I_1 \big(K_n(\,\cdot\,,z)-\tilde{\opP}^{\alpha+\beta}_x[K_n(\cdot,z)]\big)(y)
\end{equation}
where the second equality follows from Remark \ref{rem:derivative_of_taylor}. Here we reiterate that we are using the notation $K(x,y):=K(y^{-1}x)$ where $K$ satisfies Assumption \ref{ass:K0_decompose}. Taylor's theorem and Remark \ref{rem:rescaled_taylor} then yield that
\begin{align}\label{eqeq}
K^{I,\alpha}_{n,xy}(z) 
&=\sum_{|J|\leq [\alpha+\beta-d(I)]+1, d(J)\geq \alpha+\beta-d(I)} \int_{\mbG} X_1^{J}(X_1^I K_n)(x\tilde{z},z) Q^J(x^{-1}y, d\tilde{z}) \ .
\end{align}
As in \cite{hairer_14_RegStruct} the motivation for defining these quantities comes from the identity
\begin{align*}
\Pi_x (\mathcal{I}\tau)(\phi)= K(\Pi_x\tau)(\phi) - \Pi_xJ(x)\tau(\phi) &= \sum_{n} \int \big(\Pi_x\tau (K_n(y,\cdot)) - \tilde{\opP}_x^{\alpha+\beta}[K_n(\Pi_x\tau)](y)\big)\phi(y) \, dy  \\
&= \sum_{n} \int \Pi_x\tau\big(K_n(y,\cdot) - \tilde{\opP}_{x,1}^{\alpha+\beta}[K_n(\cdot,\cdot) ](y)\big) \phi(y) \, dy  \\
&= \sum_{n} \int \Pi_x\tau(K^{0,\alpha}_{n,xy}) \phi(y)  \, dy
\end{align*}
where we use the subscript for $\tilde{\opP}_{x,1}[K(\cdot,\cdot)](y)$ to clarify that one expands in the first coordinate.
The next Lemma collects the results of \cite[Lem.~5.18, Lemma 5.19]{hairer_14_RegStruct}, the proofs of which adapt ad verbatim.
\begin{lemma}\label{lem:analytic_pi_bounds}
Let $\alpha\in A$ and $\tau\in \opT_\alpha$ and assume $\alpha+\beta\notin \triangle$, then the following bound holds 
\begin{equation}
	|(\Pi_y\tau) (K^{I,\alpha}_{n,xy})|\lesssim \|\Pi\|_{\alpha,B_2(x)}(1+\|\Gamma\|_{\alpha,B_2(x)})\sum_{\delta>0} 2^{\delta n}|y^{-1}x|^{\delta+\alpha+\beta -d(I)} \ ,
\end{equation} 
where the sum over $\delta$ runs over a finite set of positive real numbers. The same bound holds for $|(\Pi_x\tau) (K^{I,\alpha}_{n,xy})|$, as well as the analogue bound on the difference of two models.

Furthermore one has the bound 
\begin{equation}
	\sum_n \left|\int_{\mbG} (\Pi_x\tau) (K^{I,\alpha}_{n,xy})\phi_x^\lambda(y) \dd y\,\right|\lesssim \lambda^{\alpha+\beta} \|\Pi\|_{\alpha;B_2(x)}(1+\|\Gamma\|_{\alpha;B_2(x)})\ ,
\end{equation} 
as well as the analogous bound for the difference of two models, where all these bounds hold uniformly over $x\in \mbG$, $\lambda \in (0,1]$ and $\phi\in \mathfrak{B}_r$
\end{lemma}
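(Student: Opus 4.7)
The plan is to adapt the arguments of \cite[Lem.~5.18, Lem.~5.19]{hairer_14_RegStruct} to our non-commutative setting, using the explicit Taylor remainder \eqref{eqeq} together with the scaling and moment bounds of Assumption~\ref{ass:K0_decompose} and the estimates on the Taylor measures $Q^J$ from Theorem~\ref{th:taylor}.

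First I would prove the pointwise bound on $(\Pi_y\tau)(K^{I,\alpha}_{n,xy})$. Using $\Pi_y = \Pi_x\Gamma_{x,y}$, decompose $\Gamma_{x,y}\tau = \sum_{\zeta\leq\alpha}\sigma_\zeta$ with $\sigma_\zeta\in\opT_\zeta$ and $|\sigma_\zeta|_\zeta\lesssim \|\Gamma\|_\alpha|\tau|_\alpha|y^{-1}x|^{\alpha-\zeta}$, and insert \eqref{eqeq} to reduce to bounding
\[
\int_{\mbG}(\Pi_x\sigma_\zeta)\bigl(X^J_1 X^I_1 K_n(x\tilde z,\cdot)\bigr)\,Q^J(x^{-1}y,d\tilde z),\qquad d(J)\geq \alpha+\beta-d(I).
\]
By left-invariance the function $z\mapsto X^J_1X^I_1K_n(x\tilde z,z)=(X^JX^IK_n)(z^{-1}x\tilde z)$ is supported in $B_{2^{-n}}(x\tilde z)$ and, after multiplication by $2^{-n(d(I)+d(J)-\beta)}$, lies in a bounded subset of $\mfB_r^{2^{-n}}(x\tilde z)$ thanks to Assumption~\ref{ass:K0_decompose}. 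The model bound then gives $|(\Pi_x\sigma_\zeta)(X^J_1X^I_1K_n(x\tilde z,\cdot))|\lesssim \|\Pi\|_\zeta|\sigma_\zeta|_\zeta\,2^{n(d(I)+d(J)-\beta-\zeta)}$. Combining with $\int|Q^J(x^{-1}y,\cdot)|\lesssim|y^{-1}x|^{d(J)}$ and summing over $J,\zeta$ yields the desired bound with $\delta := d(I)+d(J)-\beta-\zeta$. Positivity $\delta>0$ is precisely where the assumption $\alpha+\beta\notin\triangle$ enters: the boundary case $\zeta=\alpha$, $d(J)=\alpha+\beta-d(I)$ is excluded since $\alpha+\beta-d(I)\notin \triangle$. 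The bound for $(\Pi_x\tau)$ in place of $(\Pi_y\tau)$ follows from the identical argument by only keeping the $\zeta=\alpha$ term.

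Next I would establish the integrated bound by splitting the sum over $n$ at $n_0:=\lfloor\log_2\lambda^{-1}\rfloor$. For $n\leq n_0$ I apply the pointwise bound on $\supp(\phi^\lambda_x)\subset B_\lambda(x)$ and integrate, producing contributions of the form $2^{n\delta}\lambda^{\delta+\alpha+\beta-d(I)}$ that telescope to the claimed order in $\lambda$ since $\delta>0$. For $n>n_0$ (so $2^{-n}<\lambda$) I use the direct bound $|(\Pi_x\tau)(X^I_1 K_n(y,\cdot))|\lesssim \|\Pi\|_\alpha|\tau|_\alpha 2^{n(d(I)-\alpha-\beta)}$ for the principal term, and bound the Taylor-polynomial correction $\tilde\opP^{\alpha+\beta-d(I)}_x[X^I_1K_n(\cdot,z)](y)$ via Lemma~\ref{lem:second_polynomial} together with the moment-cancellation property of $K_n$ from Assumption~\ref{ass:K0_decompose}; the resulting geometric sum again produces the correct power of $\lambda$. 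The bounds for differences of models follow by inserting the telescoping identity $\Pi_y\tau-\bar\Pi_y\tau = (\Pi_x-\bar\Pi_x)\Gamma_{x,y}\tau + \bar\Pi_x(\Gamma_{x,y}-\bar\Gamma_{x,y})\tau$ and applying the preceding estimates piece by piece.

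The main obstacle is the careful verification that the rescaled functions $X^J_1X^I_1K_n(x\tilde z,\cdot)$ really are admissible test functions centred near $x$, i.e.\ that the $\mfB_r$-derivative bounds hold uniformly in $J$ (and in $\tilde z\in\supp Q^J(x^{-1}y,\cdot)$) with the stated constant; because $\mbG$ is non-commutative, derivatives in the second argument of $K(x,y)=K(y^{-1}x)$ translate into compositions of right-invariant fields, and one must invoke Assumption~\ref{ass:K0_decompose} together with the commutator identity \eqref{eq:Vector_Field_Composition} to recast everything in terms of left-invariant derivatives. Once this bookkeeping is in place, the remainder of the argument adapts \emph{mutatis mutandis} from \cite[Sec.~5]{hairer_14_RegStruct}.
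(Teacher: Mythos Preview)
Your proposal is correct and follows essentially the same route as the paper, which simply states that the proofs of \cite[Lem.~5.18, Lem.~5.19]{hairer_14_RegStruct} ``adapt ad verbatim'' using the explicit remainder representation \eqref{eqeq}; you have merely spelled out in more detail how that adaptation goes. Your identification of the only genuinely non-Euclidean bookkeeping issue---that differentiating $z\mapsto K_n(z^{-1}w)$ produces right-invariant rather than left-invariant derivatives---is accurate, and it is resolved exactly as you indicate, by rewriting $Y^L$ in terms of $X^J$ with polynomial coefficients that are uniformly bounded on $\supp K_n\subset B_{2^{-n}}$ (equivalently, by rescaling $K_n$ to unit scale where the change of frame is harmless and then scaling back).
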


As in \cite{hairer_14_RegStruct} we introduce for $x,y\in \mbG$ the following operator
\begin{equation}
\mcJ_{x,y}:=\mcJ(x)\Gamma_{x,y}-\Gamma_{x,y} \mcJ(y)
\end{equation}
\begin{lemma}\label{lem:analytic_gamma_bound}
Under the assumptions of Theorem \ref{th:schauder} one has for each $a\in \Delta$, $\tau\in \opT_\alpha$
\begin{equation*}
	|\mcJ_{x,y}\tau|_a\lesssim \|\Pi\|_{\alpha, B_2(x)}(1+\|\Gamma\|_{\alpha, B_2(x)})|y^{-1}x|^{\alpha+\beta-a}|\tau|
\end{equation*}
uniformly in $x,y\in \mbG$ satisfying $x\in B_1(y)$. Again, the analogous bound for the difference of two models holds as well.
\end{lemma}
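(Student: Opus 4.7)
The plan is to apply Lemma~\ref{lem:to_bound_polynomials} to reduce the bound on $|\mcJ_{x,y}\tau|_a$ to scalar estimates on $X^I\Pi_e\mcJ_{x,y}\tau(e)$, then express these in terms of the kernel remainders $K^{I,\alpha}_{n,\cdot}$ already controlled by Lemma~\ref{lem:analytic_pi_bounds}, and finally close the bound by splitting the sum in $n$ into coarse and fine scales.

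Since $\mcJ_{x,y}\tau = \Gamma_{x,y}\mcI\tau - \mcI\Gamma_{x,y}\tau$ lies in $\bar{\opT}_{<\alpha+\beta}$ by Lemma~\ref{lemma:commutation}, Lemma~\ref{lem:to_bound_polynomials} applied with $\delta = \alpha+\beta$ and $\varepsilon = |y^{-1}x|$ reduces the claim to the scalar estimate
\[
\bigl|X^I\Pi_e\mcJ_{x,y}\tau(e)\bigr| \lesssim \|\Pi\|_{\alpha;B_2(x)}\bigl(1+\|\Gamma\|_{\alpha;B_2(x)}\bigr)\,|y^{-1}x|^{\alpha+\beta-d(I)}\,|\tau|
\]
for every multi-index $I$ with $d(I) < \alpha+\beta$. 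For the first half of $\mcJ_{x,y}\tau$, the identity $X^I\Pi_e\opbfP^b_x[F](e) = X^I F(x)$ valid for $d(I)<b$, together with $\Pi_x\Gamma_{x,y}=\Pi_y$ and the homogeneous decomposition of $\Gamma_{x,y}\tau$, gives
\[
X^I\Pi_e\mcJ(x)\Gamma_{x,y}\tau(e) = \sum_n\sum_{\zeta:\,\zeta+\beta > d(I)}\bigl(\Pi_x(\Gamma_{x,y}\tau)_\zeta\bigr)\bigl(X^I_1 K_n(x,\cdot)\bigr).
\]
For the second half, the polynomial model satisfies $\Pi_e\Gamma_{x,y}p(z) = \Pi_y p(xz)$; combined with left-invariance of $X^I$ in $z$ and Remark~\ref{rem:derivative_of_taylor}, this yields
\[
X^I\Pi_e\Gamma_{x,y}\mcJ(y)\tau(e) = \sum_n\tilde{\opP}^{\alpha+\beta-d(I)}_y\!\bigl[X^I(\Pi_y\tau\ast K_n)\bigr](x).
\]
Interchanging $\Pi_y\tau$ with the (linear) Taylor expansion in the first kernel argument and regrouping identifies the difference as $\sum_n (\Pi_y\tau)(K^{I,\alpha}_{n,yx})$, up to correction terms arising from the components $\zeta \leq d(I)-\beta$ of $\Gamma_{x,y}\tau$, which are handled alongside the main estimate in the next step.

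Finally, we sum over scales by splitting at $n_0 := \lceil \log_2(1/|y^{-1}x|)\rceil$. On the coarse range $n\leq n_0$, the first estimate of Lemma~\ref{lem:analytic_pi_bounds} applied componentwise in $\zeta$ gives a finite geometric sum in $2^{\delta n}|y^{-1}x|^{\delta+\alpha+\beta-d(I)}$ dominated by $|y^{-1}x|^{\alpha+\beta-d(I)}$. On the fine range $n>n_0$, the two pieces of $K^{I,\alpha}_{n,yx}$ are estimated directly from Assumption~\ref{ass:K0_decompose}: $X^I_1 K_n(x,\cdot)$ is a rescaled test function of scale $2^{-n}$ at $x$ against which we test $\Pi_x(\Gamma_{x,y}\tau)_\zeta$ for $\zeta+\beta>d(I)$, while the Taylor polynomial piece is bounded via Lemma~\ref{lem:second_polynomial} applied to the coefficients $X^J_1 K_n(y,\cdot)$ paired with $\Pi_y\tau$. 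In each case the tail sum $\sum_{n>n_0}2^{cn}$ has strictly negative exponent under the relevant constraints and totals to $|y^{-1}x|^{\alpha+\beta-d(I)}$. The difference bound for two models follows by applying the same decomposition with the corresponding difference inequalities of Lemma~\ref{lem:analytic_pi_bounds} and invoking bilinearity to split the $\Pi-\bar\Pi$ and $\Gamma-\bar\Gamma$ contributions.

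The main obstacle is the algebraic bookkeeping in the second step: the non-commutativity of $\mbG$ forces careful handling when interchanging $X^I$, $\Gamma_{x,y}$ and the Taylor operators, and one must verify that the correction terms coming from the homogeneous decomposition of $\Gamma_{x,y}\tau$ (those excluded by the condition $\zeta+\beta>d(I)$) either cancel internally or can be absorbed into the same form of bound. Once this identification is in place, the scale-splitting argument mirrors that of \cite{hairer_14_RegStruct} and the remaining estimates are essentially routine.
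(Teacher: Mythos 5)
Your proposal is correct and follows essentially the same route as the paper's proof: reduce via Lemma~\ref{lem:to_bound_polynomials} to scalar estimates on $X^I\Pi_e\mcJ_{x,y}\tau(e)$, identify the main contribution as $(\Pi_y\tau)(K^{I,\alpha}_{n,yx})$ plus corrections from the low-homogeneity components of $\Gamma_{x,y}\tau$, and split the sum in $n$ at the scale $|y^{-1}x|$, applying Lemma~\ref{lem:analytic_pi_bounds} on the coarse range and, on the fine range, estimating the two halves of $\mcJ^n_{x,y}\tau$ separately with direct kernel bounds and Lemma~\ref{lem:second_polynomial}. The only thing the sketch glosses over — which the paper makes explicit — is that on the coarse range the correction terms indexed by $\zeta\leq d(I)-\beta$ do not cancel internally; rather, each contributes $|y^{-1}x|^{\alpha-\zeta}2^{-n(\zeta+\beta-d(I))}$, and one verifies directly (using $\beta+\zeta\notin\triangle$ from Assumption~\ref{ass:polynomial_sector} to exclude $d(I)=\zeta+\beta$) that these sum to the claimed order, so the "absorption" you flag as the obstacle is in fact an honest separate computation rather than a cancellation.
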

\begin{proof}\,
First we write $\mcJ_{x,y}=\sum_n \mcJ^n_{x,y}$ where each $\mcJ^n_{x,y}$ is defined by replacing $K$ by only one summand $K_n$ in the definition of $\mcJ_{x,y}$.
Observe that since $\mcJ^n_{x,y}\tau\in \bar{\opT}_{<\alpha+\beta}$ it suffices by Lemma~\ref{lem:to_bound_polynomials} 
to show that
$$p^n_\tau(w):=\Pi_e(\mcJ^n_{x,y}\tau)(w)\in \mathcal{P}_{\alpha+\beta}$$
satisfies 
\begin{equation}\label{required bound}
	\sum_{n} |(X^I p^n_\tau)(e)|\lesssim \|\Pi\|_{\alpha, B_2(x)}(1+\|\Gamma\|_{\alpha, B_2(x)})|y^{-1}x|^{\alpha+\beta-d(I)}|\tau|\ .
\end{equation}
We treat the cases $|y^{-1}x|\leq 2^{-n}$ and $|y^{-1}x|> 2^{-n}$ separately. In the case $|y^{-1}x|\leq 2^{-n}$, using the definitions of the polynomial regularity structure, we find
\begin{equation*}
	p^n_\tau(x^{-1} z)= \Pi_e(\mcJ^n_{x,y}\tau)(x^{-1}z)= \Pi_x(\mathcal{J}^n_{x,y}\tau)(z)
\end{equation*}
and thus
\begin{equation*}
	X^Ip^n_\tau(e)= X^I (\Pi_x\mcJ^n_{x,y}\tau)(x)\ .
\end{equation*}
Using the analogous notation $\mcJ^n(x)$ for the operator consisting only of one summand in \eqref{eq:introduce_J(x)_operator}, we find
\begin{align*}
	\Pi_x\mcJ^n(x)\Gamma_{x,y}\tau (z) =
	\sum_{\gamma\leq\alpha} \Pi_x\left(\mcJ^n(x)\mcQ_\gamma\Gamma_{x,y}\tau\right) (z) &=\sum_{\gamma\leq\alpha} \Pi_x \mathbf{P}^{\gamma+\beta}_x [\big(\Pi_x \mcQ_\gamma\Gamma_{x,y}\tau\big)_2 K_n(\cdot,\cdot)] (z)\\
	&=(\Pi_y \tau) \left(\tilde{\opP}^{\alpha+\beta}_{x,1}[K_n(\cdot,\cdot)](z)\right)\\
	&\quad
	-\sum_{\gamma\leq\alpha} \Pi_x \mcQ_\gamma\Gamma_{x,y}\tau\big( \tilde{\opP}^{\alpha+\beta}_{x,1}[K_n(\cdot,\cdot)]-\tilde{\opP}^{\gamma+\beta}_{x,1}[K_n(\cdot,\cdot)]\big)(z)\\
	&= \tilde{\opP}^{\alpha+\beta}_{x}[(\Pi_y \tau)_2K_n(\cdot,\cdot)](z)\\
	&\quad
	-\sum_{\gamma<\alpha} \Pi_x \mcQ_\gamma\Gamma_{x,y}\tau\big( \tilde{\opP}^{\alpha+\beta}_{x,1}[K_n(\cdot,\cdot)]-\tilde{\opP}^{\gamma+\beta}_{x,1}[K_n(\cdot,\cdot)]\big)(z)
\end{align*}
and 
\begin{align*}
	\Pi_x\Gamma_{x,y}\mcJ^n(y)\tau (z) 
	= \Pi_y\mcJ^n(y)\tau (z) = \tilde{\opP}_y^{\alpha+\beta}[(\Pi_y\tau)_2 K_n(\cdot,\cdot)] (z)
	=\tilde{\opP}^{\alpha+\beta}_{x}\left[  \left(\Pi_y\tau\right)_2\big( \tilde{\opP}_{y,1}^{\alpha+\beta}[K_n(\cdot,\cdot)]\big) \right](z)  \ ,
\end{align*}
where in the third equality we used the fact that $\tilde{\opP}^{\alpha+\beta}_{x}$ acts as the identity map on polynomial functions of degree less then $\alpha+\beta$. Therefore,
\begin{align*}
	p_\tau^n(x^{-1}z) &= 
	\underbrace{\tilde{\opP}^{\alpha+\beta}_{x}\Big[(\Pi_y \tau)_2\left(K_n(\cdot,\cdot)- \tilde{\opP}_{y,1}^{\alpha+\beta}[K_n(\cdot,\cdot)]\right) \Big](z)}_{:=p^n_{\tau,1}(x^{-1}z)} \\
	&\quad-\sum_{\gamma<\alpha} \underbrace{\Pi_x \mcQ_\gamma\Gamma_{x,y}\tau\big( \tilde{\opP}^{\alpha+\beta}_{x,1}[K_n(\cdot,\cdot)]-\tilde{\opP}^{\gamma+\beta}_{x,1}[K_n(\cdot,\cdot)]\big)(z)}_{:=p^{n,\gamma}_{\tau, 2}(x^{-1}z)} 
\end{align*}
Using the general formula $X^I(\tilde{\opP}_x^a[f])(x)=X^I({\opP}_x^a[f])(e)= X^If(x)$ for $d(I)< a$ we find that
\begin{equation*}
	X^I p^n_{\tau,1} (e)=X^I \Big[(\Pi_y \tau)_2\big(K_n(\cdot,\cdot)- \tilde{\opP}_{y,1}^{\alpha+\beta}[K_n(\cdot,\cdot)]\big) \Big](x)= (\Pi_y \tau)(K^{I,\alpha+\beta}_{n,x,y})
\end{equation*}
and so by Lemma~\ref{lem:analytic_pi_bounds} we find that
$$\sum_{n\, :\, |y^{-1}x|\leq 2^{-n}} |X^I p^n_{\tau,1} (e)| \lesssim |y^{-1}x|^{\alpha+\beta-d(I)}|\tau| \ . $$
Concerning $p^{n,\gamma}_{\tau, 2}$, since one has the identity
\begin{equation*}
	p^{n,\gamma}_{\tau, 2}(x^{-1}z)=-\tilde{\opP}^{\gamma+\beta}_{x}[(\Pi_x \mcQ_\gamma\Gamma_{x,y}\tau)_2K_n(\cdot,\cdot)](z)+  \tilde{\opP}^{\alpha+\beta}_{x}[(\Pi_x \mcQ_\gamma\Gamma_{x,y}\tau)_2K_n(\cdot,\cdot)](z)\ , 
\end{equation*}
we find that $X^I p^{n,\gamma}_{\tau, 2}(e)= 0$ unless $d(I)\in [\gamma+\beta,\alpha+\beta)$, in which case 
\begin{equation*}
	X^I p^{n,\gamma}_{\tau, 2}(e)= \Pi_x \mcQ_\gamma\Gamma_{x,y}\tau\big( X^I_1 K_n(x,\cdot)\big) \ .
\end{equation*}
Thus,
\begin{equation*}
	\sum_{n\, :\,  |y^{-1}x|\leq 2^{-n}} |X^I p^{n,\gamma}_{\tau, 2}(e)| \lesssim \sum_{n\, :\,  |y^{-1}x|\leq 2^{-n}} |y^{-1}x|^{\alpha-\gamma} 2^{-n(\gamma +\beta -d(I))}|\tau|_a \lesssim |y^{-1}x|^{\alpha+\beta -d(I)}|\tau|\ ,
\end{equation*}
where we used that the case $d(I)= \gamma+\beta$ does not arise as by Assumption~\ref{ass:polynomial_sector}.

To treat the case $|y^{-1}x|> 2^{-n}$, one writes %
\begin{equation*}
	p_\tau^n(z) = \sum_{\gamma\leq
		\alpha} \underbrace{\Pi_x\mcJ^n(x)(\mcQ_\gamma\Gamma_{x,y}\tau)(xz)}_{=:q_{\tau, 1}^{n,\gamma}(z) } - \underbrace{\Pi_x\Gamma_{x,y}\mcJ^n(y)\tau(xz)}_{=:q_{\tau, 2}^n(z)}
\end{equation*}
and using the fact that $\gamma+ \beta\notin \triangle$ one finds 
\begin{equation*}
	|X^I q_{\tau, 1}^{n,\gamma}(e)|= \begin{cases}
		\left| \Pi_x \mcQ_\gamma\Gamma_{x,y}\tau \left( X^I_1 K_n(x,\cdot) \right) \right|, & \text{if }d(I)< \gamma+ \beta,\\
		0,&\text{otherwise},
	\end{cases}
\end{equation*}
which is bounded uniformly over $|y^{-1}x|< 1$ by a constant multiple of \linebreak $|y^{-1}x|^{\alpha-\gamma} 2^{-n(\gamma+\beta- d(I))}$. Concerning $q_{\tau, 2}^n$, we find that
\begin{equation*}
	q_{\tau, 2}^n(z)= \Pi_x\Gamma_{x,y}\mcJ^n(y)\tau(xz)=\Pi_y \mcJ^n(y)\tau(xz) = {\opP}_{y,1}^{\alpha+\beta}[(\Pi_y\tau)_2  K_n(\cdot,\cdot)] (y^{-1}xz))
\end{equation*}
and therefore by Lemma~\ref{lem:second_polynomial}
\begin{align*}
	|X^I q_{2}^n(e)|  &\lesssim \sum_{d(I)<d(J)\leq \alpha+\beta} \sup_{d(J')\leq d(J)} \left| \Pi_y\tau \left( X^{J'}_1K_n(y,\cdot)\right)\right| \, |y^{-1}x|^{d(J)-d(I)}|\tau|\\
	& \lesssim \sum_{d(I)\leq d(J)< \alpha+\beta} \sup_{d(J')\leq d(J)} 2^{-n(\alpha+\beta- d(J'))}|y^{-1}x|^{d(J)-d(I)}|\tau|\\
	&\lesssim\sum_{d(I)\leq d(J)< \alpha+\beta} 2^{-n(\alpha+\beta- d(J))}|y^{-1}x|^{d(J)-d(I)}|\tau|\ .
\end{align*}
Since $p^n_{\tau}= \sum_{\gamma\leq \alpha} q_{\tau, 1}^{n,\gamma} + q_{\tau,2}^n$ one obtains (\ref{required bound}) by summing over $\left\{n\in \mathbb{N} \ : \ |y^{-1}x|> 2^{-n}\right\}$.
\end{proof}
\begin{proof}[Proof of Theorem~\ref{th:schauder}]\,

First, we need to check that for $a \in A$ it holds that
\begin{equation}\label{eq:schauder_main_step}
	| \mathcal{K}_\gamma f(x)-\Gamma_{x,y}\mathcal{K}_\gamma f(y)|_a\lesssim |y^{-1}x|^{\gamma+\beta-a}\ .
\end{equation}
For $a\notin \triangle$ this bound follows from Lemma \ref{lemma:commutation}, continuity of the operator of $\mcI$ and properties of modelled distributions by an ad verbatim adaptation of the same argument in the proof of \cite[Thm.~ 5.12]{hairer_14_RegStruct}.
The only place, where the proof \cite[Thm.~ 5.12]{hairer_14_RegStruct} does not adapt directly is when showing the bound for $a \in \triangle$.
As in the proof of Lemma~\ref{lem:analytic_gamma_bound} we use Lemma~\ref{lem:to_bound_polynomials} to circumvent the difficulty of not having explicit Taylor expansions in our setting. The rest of the proof adapts almost ad verbatim.

We first note that the polynomial part of $\mathcal{K}_\gamma f(x)-\Gamma_{x,y}\mathcal{K}_\gamma f(y)$ is given by
\begin{equation}\label{eq polynomial part}
	P:=\underbrace{\Gamma_{x,y}\mathcal{N}_\gamma f(y)}_{=:P_1} - \underbrace{\mathcal{N}_\gamma f(x)}_{=:P_2} + \underbrace{ \mcJ(x)(\Gamma_{x,y}f(y)-f(x) )}_{=:P_3}
\end{equation}
and thus, in order to prove \eqref{eq:schauder_main_step} for the polynomial part it suffices by Lemma~\ref{lem:to_bound_polynomials} to show that for any $d(I)<\gamma+\beta$, $p:=\Pi_e P$ satisfies
\begin{equation}
	|X^I p(e)| \lesssim  |y^{-1}x|^{\gamma+\beta-d(I)} \ .
\end{equation}
Using \eqref{eq:local_kernel_decompose} we define the analogous decompositions $\mcJ=\sum_n \mcJ^n$ and $\mcN_\gamma=\sum_n \mcN^n_\gamma$ and similarly write $P=\sum_{n=0} P^n$, etc. 

As usual we use different strategies for small and large scales, starting with the case $2^{-n}\leq |y^{-1}x|$. In this case we separately estimate $p_i:=\Pi_e P_i$ for $i\in \{1,2,3\}$ . 
\begin{itemize}
	\item Noting that 
	$$p^n_{1}(z)= \Pi_x  \Gamma_{x,y}\mathcal{N}_\gamma f(y) (xz)=  {P}_{y,1}^{\gamma+\beta}\left[(\mathcal{R}f-\Pi_yf(y))_2 \big( K_n(\cdot,\cdot)\big)\right] (y^{-1}xz),$$
	we find by Lemma~\ref{lem:second_polynomial}
	\begin{align*}
		|X^I p^n_{1}(e)| 
		&\lesssim \sum_{d(I)\leq d(J)<\gamma +\beta} \sup_{d(J')\leq d(J)} \left|\big(\mathcal{R}f-\Pi_yf(y)\big) \big( X^{J'}_1K_n(y,\cdot)\big)\right| |y^{-1}x|^{d(J)-d(I)}\\
		& \lesssim \sum_{d(I)\leq d(J)< \gamma+\beta} \sup_{d(J')\leq d(J)} 2^{-n(\gamma+\beta- d(J'))}|y^{-1}x|^{d(J)-d(I)}\\
		&\lesssim\sum_{d(I)\leq d(J)< \gamma+\beta} 2^{-n(\gamma+\beta- d(J))}|y^{-1}x|^{d(J)-d(I)} \ .
	\end{align*}
	
	\item Regarding $p^n_2$ we find that $X^I p^n_2(e)= \left( \mathcal{R}f-\Pi_xf(x)\right) \left( X^{I}_1K_n(x,\cdot)\right)$ and thus by the Reconstruction, see Theorem~\ref{th:Recontruction},
	\begin{equation*}
		| X^I p^n_2(e)| \lesssim 2^{-n(\gamma+\beta- d(I))} \ .
	\end{equation*}
	\item Lastly, for $p^n_3$, using the definition of $\mcJ (x)$ and properties of models and modelled distributions we find 
	\begin{align*}
		|X^I p^n_3(e)|
		&\leq \sum_{\delta\in A :\  d(I)-\beta <\delta <\gamma} \left|\big(\Pi_x \mcQ_{\delta}(\Gamma_{x,y}f(y)-f(x) \big)(X^I_1K(x,\cdot))\right| \\
		&\lesssim  \sum_{\delta\in A :\  d(I)-\beta <\delta <\gamma} |y^{-1}x|^{\gamma-\delta} 2^{-n(\beta+\delta-d(I))} \ .
	\end{align*}
\end{itemize}
Therefore summing over $2^{-n}\leq |y^{-1}x|$, since $\beta+\delta\notin \triangle$ for any $\delta\in A$ and $\beta+\gamma\notin \triangle$,   we find 
\begin{equation*}
	\sum_{n \,:\, 2^{-n}\leq |y^{-1}x|} |X^I p^n(e)|\leq  \sum_{n \,:\,\ 2^{-n}\leq |y^{-1}x|}\sum_{i=1}^3 |X^I p_i^n(e)| \lesssim |y^{-1}x|^{\gamma+\beta-d(I)} \ .
\end{equation*}

Next we turn to the case $2^{-n}> |y^{-1}x|$; here a computation analogous to the one preceding \cite[Eq.~(5.48)]{hairer_14_RegStruct} gives
\begin{equation}\label{Schauder large scale expression to bound}
	X^I p^n(e)= (\Pi_yf(y)-\mathcal{R}f)(K^{I,\gamma}_{n,x,y}) -\sum_{\zeta\leq d(I)-\beta} \big( \Pi_x\mcQ_\zeta(\Gamma_{x,y}f(y)-f(x)\big) \big(X^I_1K_n(x,\cdot)\big)
\end{equation}
which can be bounded exactly the way it is done there.

Finally, one may follow the concluding steps of the proof of \cite[Thm.~5.12]{hairer_14_RegStruct}, finding that for any $\phi \in \mfB$,
\begin{equation*}
	\big(\Pi_x(\mcK_\gamma f(x))-K(\mathcal{R}f)\big) (\phi^\lambda_x)= \sum_n \int \big( \Pi_x f(x)-\mathcal{R}f\big) (K_{n,xy}^\gamma)\phi_x^\lambda(y) dy\,
\end{equation*}
and thus one obtains the identity $\mcR\mcK_\gamma f = K(\mcR f)$.

The proof of the difference bound follows by similar steps as to those presented here and applied in the proof of \cite{hairer_14_RegStruct}.
\end{proof}
\subsubsection{Schauder Estimates for Singular Modelled Distributions}
Since our main application will be to semi-linear evolution equations we will often require a Schauder estimate for modelled distributions with permissible singularities near a given subgroup $P\subset \mbG$, as in Section~\ref{subsec:singular_modelled} on singular modelled distributions.
\begin{prop}\label{prop:singular_schauder}
In the setting of Section~\ref{subsec:singular_modelled} and Theorem~\ref{th:schauder}, given a sector $V$ of regularity $\alpha \in A$, the operator $\mcK_\gamma$ is well defined on $\msD^{\gamma,\eta}_P(V)$ for $\eta<\gamma$ provided that  $\gamma>0$ and  $\eta\wedge \alpha>-|\mfm|$. Furthermore, if $\gamma+\beta\notin \triangle$ and $\eta+\beta\notin \triangle$, one has $\mcK_\gamma f\in \msD^{\gamma+\beta,(\eta\wedge \alpha)+\beta}_P$ continuity bound
\begin{equation}\label{eq:singular_schauder}
	\vertiii{\mathcal{K}_\gamma f; \bar{\mathcal{K}}_\gamma\bar{f}}_{\gamma+\beta,(\eta\wedge \alpha)+\beta;\mfK} \lesssim 
	\vertiii{f; \bar{f}}_{\gamma,\eta;\bar{\mfK}} 
	+\|\Pi-\bar{\Pi}\|_{\gamma;\bar{\mfK}} +\|\Gamma-\bar{\Gamma}\|_{\gamma;\bar{\mfK}} \ 
\end{equation}
over any compact set $\mfK \subset \mbG$ and admissible models, where the implicit constant is uniform in  the semi-norms of $M, \bar M$ and $f\in \msD_{P,M}^{\gamma,\eta}(V),\,\bar{f}\in \msD_{P,\bar{M}}^{\gamma,\eta}(V)$ on $\bar{\mfK}$.
\end{prop}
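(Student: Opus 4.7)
The plan is to adapt the proof of \cite[Prop.~6.16]{hairer_14_RegStruct} to our non-commutative setting, leveraging the machinery developed in the preceding sections. First I would define $\mcK_\gamma f(x) := \mcI f(x) + \mcJ(x)f(x) + \mcN_\gamma f(x)$ by the same formula as in the non-singular case, where now $\mcN_\gamma f(x) = \sum_n \opbfP^{\gamma+\beta}_x\big((\mcR f - \Pi_x f(x))\ast K_n\big)$ with $\mcR f$ the singular reconstruction obtained via Proposition~\ref{prop:singular_reconstruction_v2}. The constraints $\eta \wedge \alpha > -|\mfm|$ and $\gamma > 0$ are exactly what is needed for $\mcR f$ to be a well-defined distribution on all of $\mbG$. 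Checking $\mcR \mcK_\gamma f = K(\mcR f)$ then follows as in Theorem~\ref{th:schauder}, so the first task is just to verify the two quantitative bounds $\|\mcK_\gamma f\|_{\gamma+\beta,(\eta\wedge\alpha)+\beta;\mfK} < \infty$ and the corresponding bound on the $\Gamma$-difference.

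The core of the proof is to control, for $(x,y) \in \mfK_P$ and each $a \in A$ with $a < \gamma+\beta$,
\[
|\mcK_\gamma f(x) - \Gamma_{x,y}\mcK_\gamma f(y)|_a \lesssim |y^{-1}x|^{\gamma+\beta-a}\,|x,y|_P^{(\eta\wedge\alpha)+\beta-\gamma-\beta},
\]
together with the pointwise bound $|\mcK_\gamma f(x)|_a \lesssim |x|_P^{((\eta\wedge\alpha)+\beta-a)\wedge 0}$. For the non-polynomial components (i.e.\ $a \notin \triangle$) this follows immediately from the estimates on $\mcI$ and $\mcJ$ produced by Lemmas~\ref{lemma:commutation}, \ref{lem:analytic_pi_bounds} and \ref{lem:analytic_gamma_bound}, combined with the defining bounds for $f \in \msD^{\gamma,\eta}_P$. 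For the polynomial components ($a \in \triangle$) I would, as in the proof of Theorem~\ref{th:schauder}, apply Lemma~\ref{lem:to_bound_polynomials} to reduce each bound to an estimate on $|X^I \Pi_e(\Gamma_{x,y}\mcK_\gamma f(y) - \mcK_\gamma f(x))(e)|$. Writing this difference via the decomposition \eqref{eq polynomial part} then reduces matters to bounds on $\mcN_\gamma$ and on $\mcJ(x)(\Gamma_{x,y}f(y)-f(x))$ analogous to those obtained in the proof of Theorem~\ref{th:schauder}.

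The new feature relative to Theorem~\ref{th:schauder} is that the sum $\sum_n$ over kernel scales $K_n$ must be split three ways according to the competing scales $|y^{-1}x|$, $|x,y|_P$ and $2^{-n}$. Specifically, I would treat separately the ranges $2^{-n} \leq |y^{-1}x|$, $|y^{-1}x| < 2^{-n} \leq |x,y|_P$ and $|x,y|_P < 2^{-n}$. In the first two ranges the kernel $K_n(x,\cdot)$ stays in a region where $f$ satisfies its full bounds from the modelled distribution norm, and one reuses the case analysis from Theorem~\ref{th:schauder} essentially verbatim, merely inserting the sharpened dependence on $|x,y|_P$ coming from $\vertiii{f}_{\gamma,\eta;\mfK}$. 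The hard part is the third range $2^{-n} > |x,y|_P$: here $K_n$ is supported in a ball whose radius exceeds the distance to $P$, so the expansion of $f$ at $x$ or $y$ is no longer useful, and one must instead estimate $\mcR f \ast K_n$ and $\Pi_x f(x) \ast K_n$ directly. The key point, exactly as in \cite[Prop.~6.16]{hairer_14_RegStruct}, is that one gains via the improved pointwise bound $|f(z)|_\zeta \lesssim |z|_P^{(\eta-\zeta)\wedge 0}$, combined with the singular reconstruction bound on $\mcR f - \Pi_z f(z)$ tested against $\phi^\lambda_z$ for $\lambda \sim 2^{-n}$; integrating these against $K_n$ and using the scaled measure estimate $\int_{B_{2^{-n}}(x) \cap P^c} |z|_P^\sigma \dd z \lesssim 2^{-n(|\mfs|+\sigma)}$ (valid precisely for $\sigma > -|\mfm|$) produces bounds summable in $n$ over this range.

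The main technical obstacle I anticipate is carrying out the third-range estimate cleanly, because in the non-commutative setting the Taylor remainder expressions $K^{I,\alpha}_{n,xy}$ from \eqref{eqeq} involve right-translates of kernels and the swap between $\Pi_x$ and $\Pi_y$ costs an extra factor of $\|\Gamma\|$; one must carefully verify that the resulting powers of $|x,y|_P$ and $|y^{-1}x|$ combine to give exactly $|y^{-1}x|^{\gamma+\beta-a}|x,y|_P^{(\eta\wedge\alpha)-\gamma}$. The continuity of $\mcK_\gamma$ in the model and modelled distribution (the difference estimate \eqref{eq:singular_schauder}) then follows by running the same decomposition in parallel for $(M,f)$ and $(\bar M,\bar f)$, inserting $\pm$ terms as in the proof of Theorem~\ref{th:schauder}, and invoking the difference versions of Lemmas~\ref{lem:analytic_pi_bounds} and \ref{lem:analytic_gamma_bound} together with Proposition~\ref{prop:singular_reconstruction_v2}.
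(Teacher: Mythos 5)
Your proposal follows essentially the same path as the paper: define $\mcK_\gamma f$ by the same formula with $\mcR f$ the singular reconstruction, reduce the polynomial component estimates to scalar bounds via Lemma~\ref{lem:to_bound_polynomials} applied to the decomposition $P = P_1 - P_2 + P_3$ from \eqref{eq polynomial part}, and split the kernel sum into the three regimes $2^{-n}\leq |y^{-1}x|$, $|y^{-1}x| < 2^{-n}\lesssim |x,y|_P$, and $2^{-n}\gtrsim|x,y|_P$, mirroring \cite[Prop.~6.16]{hairer_14_RegStruct}. The only cosmetic deviation is that the paper groups the last two regimes together (both handled via \eqref{Schauder large scale expression to bound}) whereas you group the first two, but the underlying estimates invoked are the same.
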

\begin{proof}\,
The proof follows exactly along the same lines as the one of \cite[Prop.~6.16]{hairer_14_RegStruct}, where as in the proof of Theorem~\ref{th:schauder} the only modification needed is due to the fact that our Taylor expansions are non explicit. Using the same notation as in the proof of Theorem \ref{th:schauder}, we recall \eqref{eq polynomial part}
\begin{equation}
	P:=\underbrace{\Gamma_{x,y}\mathcal{N}_\gamma f(y)}_{=:P_1}- \underbrace{\mathcal{N}_\gamma f(x)}_{=:P_2} + \underbrace{ \mcJ(x)(\Gamma_{x,y}f(y)-f(x) )}_{=:P_3}.
\end{equation}
This time, in order to show that $\mcK_\gamma f \in \msD^{\gamma+\beta,(\eta\wedge \alpha)+\beta}_{P}$, by Lemma~\ref{lem:to_bound_polynomials}, we are required to show that for $p(e):=\Pi_e P = \sum_{i=1}^3 \Pi_e P_i$,
\begin{equation}
	|X^I p(e)| \lesssim {|y^{-1}x|}^{\gamma+\beta-d(I)} |x,y|^{(\eta\wedge \alpha)+\beta-\gamma}_P\, ,
\end{equation}
There are three scales, which need separate arguments. 
\begin{itemize}
	\item In the case $2^{-n}\leq |y^{-1}x|$ one proceeds exactly as in the proof of Theorem~\ref{th:schauder} using the decomposition $p^n = \sum_{i=1}^3 p^n_i$.
	\item In the cases $2^{-n}\in  \left(|y^{-1}x|, \frac{1}{2}|x,y|_P\right)$ and $2^{-n}\geq  \frac{1}{2}|x,y|_P$ one uses Equation~\eqref{Schauder large scale expression to bound} and proceeds exactly as in \cite{hairer_14_RegStruct} from there onwards.
\end{itemize}
Again, the proof of the difference bound follows analogously.
\end{proof}
\subsection{Local Operations}\label{subsec:local_ops}
To handle SPDEs using regularity structures we require suitable extensions to modelled distributions of standard local operations.
%
Since these extensions adapt mutatis mutandis from \cite{hairer_14_RegStruct}, we only provide pointers to the relevant literature.
\begin{enumerate}
\item Differential operators can be lifted to the regularity structure and the model exactly as in \cite[Def.~14.1]{friz_hairer_20_introduction} and one retains \cite[Prop. 5.28 \& Prop. 6.15]{hairer_14_RegStruct}.
We note that any left invariant differential operator $\mfL$ of homogeneous degree $\beta$ on $\mbG$ is of the form
$$\mfL=\sum_{d(I)= \beta} a_I X^I,$$ 
for some $I\in \mbN^d$. Thus, in order to lift such an $\mfL$, it suffices to lift each of the differential operators $\{X_i\}_{i=1}^d$ to abstract differential operators. 
In Section~\ref{sec:derivatives_and_abstract_polynomials} we lifted the differential operators $X_i$ to the polynomial regularity structure. 
\item 	Next, recall the notion of a product on a regularity structure, c.f.\ \cite[Def.~14.3]{friz_hairer_20_introduction}. In particular \cite[Thm.~4.7 \& Prop.~6.12]{hairer_14_RegStruct} still holds.
\item 	Lastly, we mention that  composition of modelled distributions with smooth functions, c.f.\ \cite[Section~4.1]{hairer_14_RegStruct} as well as the coresponding results adapt ad verbatim to the setting of homogeneous Lie groups. 
\end{enumerate}
\subsection{Symmetries}\label{subsec:symmetries}
As in \cite[Sec.~3.6]{hairer_14_RegStruct}, we shall consider modelled distributions which respect certain symmetries of $\mbG$. In this work we will only consider the symmetries of $\mbG$ under the canonical left action of a discrete subgroup $\mfG\subset \mbG$ as in Section~\ref{sec:discrete_subgroups} acting by $\mfG\times\mbG\ni (n, x)\mapsto (n x)\in \mbG$. We extend this to an action on function $\psi:\mbG\rightarrow \mbR$ by pull-back, i.e.
\begin{equation*}
(n^* \psi)(x):= \psi(n^{-1}x) = \psi_n(x).
\end{equation*}
For a regularity structure $\mcT=(\opT,\opG)$ we give the following definition, by analogy with \cite[Def.~3.33]{hairer_14_RegStruct} but restricted to this more straightforward setting. 
\begin{definition}\label{def:adapted_model}
Given a discrete sub-group $\mfG\subset \mbG$ as above, we say that a model $M=(\Pi,\Gamma)$ is adapted to the action of $\mfG$ if, for every test function, $\phi \in C^\infty_c(\mbG)$, $x \in \mbG$, $\tau \in \opT$ and $n \in \mfG$ one has
$$(\Pi_{nx}\tau)(n^* \psi) = (\Pi_x  \tau)(\psi)\quad \text{and}\quad \Gamma_{nx,ny}  = \Gamma_{x,y}.$$
A modelled distribution $f :\mbG\rightarrow \opT$ is said to be symmetric if $ f( nx )= f(x)$ for every $x \in \mbG$ and $n \in \mfG$.
\end{definition}
The following proposition is an amalgam of \cite[Prop.~~3.38 \& Prop.~~5.23]{hairer_14_RegStruct}.
\begin{prop}
Let $\mcT$ be a regularity structure, $\mfG\subset \mbG$ be a discrete subgroup as above and $M=(\Pi,\Gamma)$ be adapted to the action of $\mfG$ (according to Definition \ref{def:adapted_model}). Then, for every modelled distribution $f\in \msD^\gamma$, for some $\gamma >0$, symmetric with respect to the action of $\mfG$, the following hold;
\begin{enumerate}
	\item  For every $\phi \in C^\infty_c(\mbG)$ and $n\in \mfG$ one has $(\mcR f) (n^* \phi) = (\mcR f)(\phi)$.
	\item For any $x\in \mbG$ and $n\in \mfG$ one has $(\mcK_\gamma f)(nx)= (\mcK_\gamma f)(x).$
\end{enumerate}
\end{prop}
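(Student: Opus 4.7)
The plan is to deduce (1) from the uniqueness part of the reconstruction theorem and then deduce (2) from (1) together with a direct verification that the building blocks of $\mcK_\gamma$, namely $\mcI$, $\mcJ(\cdot)$ and $\mcN_\gamma$, all intertwine with the $\mfG$-action. The central fact that makes everything work is that, for $n\in \mfG$, left-multiplication by $n$ is a diffeomorphism which commutes with the left-invariant vector fields $X_i$ and preserves the Haar measure, so differentiation, convolution, and the abstract Taylor expansion $\opbfP^a$ all behave covariantly with respect to it.

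For (1), define $\xi \in \mcD'(\mbG)$ by $\xi(\phi) := (\mcR f)(n^\ast \phi)$. First observe the elementary identity $n^\ast(\phi^\lambda_x) = \phi^\lambda_{nx}$, which follows directly from the definition $\phi^\lambda_x(z) = \lambda^{-|\mfs|}\phi(\lambda^{-1}\cdot(x^{-1}z))$. Using that $M$ is adapted and that $f(nx) = f(x)$, one then computes
\begin{equation*}
    (\Pi_x f(x))(\phi^\lambda_x) = (\Pi_{nx} f(x))(n^\ast\phi^\lambda_x) = (\Pi_{nx} f(nx))(\phi^\lambda_{nx}).
\end{equation*}
Combining this with $\xi(\phi^\lambda_x) = (\mcR f)(\phi^\lambda_{nx})$ and applying the reconstruction bound \eqref{eq:Reconstruction_Regularity} at the point $nx$ yields $|\xi(\phi^\lambda_x) - (\Pi_x f(x))(\phi^\lambda_x)| \lesssim \lambda^\gamma$. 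Since $f \in \msD^\gamma$ with $\gamma > 0$, uniqueness in Theorem~\ref{th:Recontruction} forces $\xi = \mcR f$, which is exactly the symmetry claim.

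For (2), recall $\mcK_\gamma f(x) = \mcI f(x) + \mcJ(x)f(x) + \mcN_\gamma f(x)$. The term $\mcI f(nx) = \mcI f(x)$ is immediate from symmetry of $f$ and linearity of $\mcI$. For the remaining two terms, the key observation is the following: if $g, h \in C^\infty(\mbG)$ satisfy $g(y) = h(n^{-1}y)$, then by left-invariance of the $X_i$ one has $X^I g(nx) = X^I h(x)$ for every multi-index $I$, and therefore by Lemma~\ref{lem:coefficients_of_taylor} (which expresses the coefficients of $\opbfP^a_x[\cdot]$ as linear combinations of such derivatives) we obtain the identity $\opbfP^a_{nx}[g] = \opbfP^a_x[h]$ in $\bar{\opT}$. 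I apply this to $g = (\Pi_{nx}\tau) \ast K_m$ and $h = (\Pi_x \tau)\ast K_m$; the relation $g = n^\ast h$ is a short computation using the adapted property $\Pi_{nx}\tau = n^\ast (\Pi_x\tau)$ and the left-invariance of the Haar measure applied to \eqref{eq:def_convolution}. Summing over the kernel index $m$ yields $\mcJ(nx)\tau = \mcJ(x)\tau$, and since $f(nx) = f(x)$ this gives $\mcJ(nx)f(nx) = \mcJ(x)f(x)$. For $\mcN_\gamma$ the same argument applies after replacing $\Pi_x\tau$ by $\mcR f - \Pi_x f(x)$, using part (1) to conclude that this difference is itself $n$-invariant as a distribution.

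No substantial obstacle is expected: the argument is essentially an unwinding of definitions, and the only thing one has to verify carefully is that the convolution and the abstract Taylor expansion both covariantly transport the $\mfG$-action, which is a direct consequence of left-invariance of Haar measure and of the vector fields $X_i$. The one subtle point worth flagging is the appeal to uniqueness in (1): this is valid precisely because $\gamma > 0$, which is part of the hypotheses.
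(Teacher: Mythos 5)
Your proof is correct and takes essentially the same approach as the paper, which simply defers to the proofs of \cite[Prop.~3.38]{hairer_14_RegStruct} (uniqueness of reconstruction) and \cite[Prop.~5.23]{hairer_14_RegStruct} (intertwining of $\mcI$, $\mcJ(\cdot)$, $\mcN_\gamma$ with the group action via left-invariance and Haar invariance); your write-up merely makes that adaptation explicit, correctly using $n^*(\phi^\lambda_x)=\phi^\lambda_{nx}$, the characterisation $\Pi_{nx}\tau=n^*(\Pi_x\tau)$, and the fact that $\opbfP^a_x[\cdot]$ is determined by left-invariant derivatives at $x$.
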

\begin{proof}\,
The proof of the first point follows exactly the steps of the proof of \linebreak \cite[Prop.~~3.38]{hairer_14_RegStruct} in our simplified setting.

The proof of the second follows similarly along the lines of the proof of \cite[Prop.~5.23]{hairer_14_RegStruct}, in particular using the assumption that the model is adapted to the action of $\mfG$ and the vector fields $\{X_i\}_{i=1}^d$ are left-translation invariant.
\end{proof}
\subsection{Bounds on Models}\label{subsec:bounds_on_models}
We state two results which, as in the Euclidean setting, allow one to reduce the number of stochastic estimates needed in order to obtain convergence of models to only $\Pi|_{\opT_{<0}}$.
The proof of the next proposition, \cite[Prop 3.31]{hairer_14_RegStruct}, adapts ad verbatim to our setting
\begin{prop}\label{prop:recursive_bounds_1}
Let $\mcT=(\opT,\opG)$ be a regularity structure and $(\Pi, \Gamma)$ a model. For $\alpha\in (0,\infty)\cap A$, the action of $\Pi$ on $\opT_\alpha$ is fully determined by the action of $\Pi$ on $\opT_{<\alpha}$ as well as the action of $\Gamma$ on $\opT_\alpha$. Furthermore, one has the bound
$$\sup_{x\in \mfK} \sup_{\lambda<1} \sup_{\phi\in \mfB_r} \sup_{\tau\in \opT_\alpha\setminus\{0\}} \frac{|\Pi_x \tau(\phi_{x}^\lambda)|}{\lambda^{\alpha} |\tau |_\alpha} \leq \| \Pi \|_{\alpha;\bar{\mfK}} \| \Gamma\|_{\alpha;\bar{\mfK}},$$
as well as the analogous difference bound.
\end{prop}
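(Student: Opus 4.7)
The plan is to identify $\Pi_x\tau$ with the reconstruction of a cleverly chosen modelled distribution taking values in $\opT_{<\alpha}$, whose construction only invokes the action of $\Gamma$ on $\opT_\alpha$. For fixed $x\in\mfK$ and $\tau \in \opT_\alpha$, I would set
\[ f(y) := \Gamma_{y,x}\tau - \tau. \]
The structure group axiom guarantees $f(y) \in \opT_{<\alpha}$, and the cocycle identity $\Gamma_{y,z}\Gamma_{z,x}=\Gamma_{y,x}$ yields $f(y)-\Gamma_{y,z}f(z)=\Gamma_{y,z}\tau-\tau$; together with the pointwise estimate $|f(y)|_\beta \leq \|\Gamma\|_{y,x,\alpha}|y^{-1}x|^{\alpha-\beta}|\tau|_\alpha$, this places $f$ in $\msD^{\alpha}$ as a modelled distribution with values in the sector $\opT_{<\alpha}$, with $\|f\|_{\alpha;\bar{\mfK}} \lesssim \|\Gamma\|_{\alpha;\bar{\mfK}}|\tau|_\alpha$. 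Crucially, because $f$ takes values in $\opT_{<\alpha}$, the reconstruction $\mcR f$ supplied by Theorem~\ref{th:Recontruction} depends only on $\Pi|_{\opT_{<\alpha}}$ together with the $\Gamma$-data used to build $f$.

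The key identification would be $\mcR f = \Pi_x\tau$. Indeed, for any $y$ one has
\[ \Pi_y f(y) = \Pi_y\Gamma_{y,x}\tau - \Pi_y\tau = \Pi_x\tau - \Pi_y\tau, \]
so the candidate $\Pi_x\tau$ satisfies $\langle \Pi_x\tau - \Pi_y f(y),\psi_y^\lambda\rangle = \langle \Pi_y\tau,\psi_y^\lambda\rangle$, and the analytic condition of the model applied to $\tau\in \opT_\alpha$ gives $|\langle\Pi_y\tau,\psi_y^\lambda\rangle|\lesssim \lambda^\alpha$. Hence $\Pi_x\tau$ is a valid reconstruction of $f$, and uniqueness of the reconstruction operator (which requires exactly $\alpha>0$) forces $\mcR f = \Pi_x\tau$. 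The determination claim follows immediately since the right-hand side only involves $\Pi|_{\opT_{<\alpha}}$ and $\Gamma$.

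For the quantitative estimate, I would specialise the reconstruction bound \eqref{eq:Reconstruction_Regularity} to the base point $y=x$, at which $f(x)=0$ and therefore $\Pi_x f(x)=0$. Substituting $\mcR f = \Pi_x\tau$ then yields
\[ |\langle \Pi_x\tau,\psi_x^\lambda\rangle| = |\langle \mcR f - \Pi_x f(x),\psi_x^\lambda\rangle| \lesssim \lambda^\alpha \|f\|_{\alpha;\bar{\mfK}}\|\Pi\|_{\alpha;\bar{\mfK}} \lesssim \lambda^\alpha |\tau|_\alpha \|\Gamma\|_{\alpha;\bar{\mfK}}\|\Pi\|_{\alpha;\bar{\mfK}}, \]
which is precisely the asserted bound. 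The analogous difference bound for two models $(\Pi,\Gamma), (\bar\Pi,\bar\Gamma)$ would follow in the same way by applying \eqref{eq:rec2} to the pair $f,\bar f$, after estimating $\vertiii{f;\bar f}_{\alpha;\bar{\mfK}}$ by a constant multiple of $|\tau|_\alpha\|\Gamma-\bar\Gamma\|_{\alpha;\bar{\mfK}}$.

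The only delicate point — what I would call the main obstacle — is the apparent circularity in using $\Pi_x\tau$ itself to verify that it is a valid reconstruction of $f$. This is resolved because the reconstruction identity requires only the uniform bound $|\langle \Pi_y\tau,\psi_y^\lambda\rangle|\lesssim \lambda^\alpha$, which is an automatic consequence of $(\Pi,\Gamma)$ being a model and imposes no structural constraint on $\Pi|_{\opT_\alpha}$ itself; uniqueness of the reconstruction for $\gamma = \alpha > 0$ then closes the loop and produces the quantitative estimate.
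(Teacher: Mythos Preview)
Your proposal is correct and follows essentially the same route as the paper, which defers to \cite[Prop.~3.31]{hairer_14_RegStruct}: there too one sets $f_x(y)=\Gamma_{y,x}\tau-\tau\in\opT_{<\alpha}$, identifies $\Pi_x\tau$ as the (unique, since $\alpha>0$) reconstruction of $f_x$, and reads off the bound from the reconstruction estimate at the base point $x$ where $f_x(x)=0$. The only cosmetic discrepancy is that the reconstruction theorem yields a $\lesssim$ rather than the $\leq$ appearing in the displayed statement; this is harmless for the proposition's intended use.
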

Furthermore, one has the following simple consequence of Lemma~\ref{lemma:commutation} and Lemma~\ref{lem:analytic_gamma_bound}.
\begin{prop}\label{prop:recursive_bounds_2}
Under the assumptions of Theorem \ref{th:schauder} one has for each $\alpha \in A\setminus \triangle$, $\tau\in \opT_\alpha$ and $\delta\in A$ one has
\begin{equation*}
	\sup_{x,y\in \mfK\, :\, {|y^{-1}x|}<1 } \frac{|\Gamma_{x,y} \mathcal{I} \tau|_{\delta+\beta}}{|\tau|_{\alpha-\beta} {|y^{-1}x|}^{\alpha-\delta}} \lesssim ( 1+\| \Gamma\|_{\alpha;\bar{\mfK}}) \|\Pi\|_{\alpha;\bar{ \mfK}}\,\,+ \sup_{x,y\in \mfK\, :\, {|y^{-1}x|}<1 } \frac{|\Gamma_{x,y} \tau|_\delta}{|\tau|_{\alpha-\beta} {|y^{-1}x|}^{\alpha-\delta}} \ .
\end{equation*}
Again, the analogous bound for the difference of two models holds as well. 
\end{prop}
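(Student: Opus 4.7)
The strategy is to reduce everything to the commutation identity from Lemma~\ref{lemma:commutation} and then appeal to Lemma~\ref{lem:analytic_gamma_bound}. Rearranging the identity
$$\Gamma_{x,y}(\mcI+\mcJ(y))= (\mcI+\mcJ(x))\Gamma_{x,y}$$
gives the decomposition
$$\Gamma_{x,y}\mcI\tau \;=\; \mcI\Gamma_{x,y}\tau \;+\; \bigl(\mcJ(x)\Gamma_{x,y}-\Gamma_{x,y}\mcJ(y)\bigr)\tau \;=\; \mcI\Gamma_{x,y}\tau \;+\; \mcJ_{x,y}\tau,$$
where by construction $\mcJ_{x,y}\tau$ lies in the polynomial sector $\bar{\opT}$. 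Applying the triangle inequality in the seminorm $|\,\cdot\,|_{\delta+\beta}$ then yields
$$|\Gamma_{x,y}\mcI\tau|_{\delta+\beta} \;\leq\; |\mcI\Gamma_{x,y}\tau|_{\delta+\beta} \;+\; |\mcJ_{x,y}\tau|_{\delta+\beta}.$$

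For the first term, I would use that $\mcI$ is a fixed linear map that raises homogeneity by exactly $\beta$, in particular sending $\opT_\delta$ into $\opT_{\delta+\beta}$ (and being zero on the polynomial sector). On the finite-dimensional space $\opT_\delta$ this yields a bound $|\mcI\Gamma_{x,y}\tau|_{\delta+\beta}\lesssim|\Gamma_{x,y}\tau|_\delta$ by a constant depending only on the abstract integration map. Dividing by $|\tau|\,{|y^{-1}x|}^{\alpha-\delta}$ and taking the supremum over $x,y\in\mfK$ with ${|y^{-1}x|}<1$ produces exactly the second term on the right-hand side of the claimed estimate.

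For the second term, Lemma~\ref{lem:analytic_gamma_bound} applies directly with $a=\delta+\beta\in\triangle$ (which is where the polynomial part of $\mcJ_{x,y}\tau$ lives), giving
$$|\mcJ_{x,y}\tau|_{\delta+\beta} \;\lesssim\; \|\Pi\|_{\alpha;\bar{\mfK}}\bigl(1+\|\Gamma\|_{\alpha;\bar{\mfK}}\bigr)\,{|y^{-1}x|}^{\alpha+\beta-(\delta+\beta)}\,|\tau| \;=\; \|\Pi\|_{\alpha;\bar{\mfK}}\bigl(1+\|\Gamma\|_{\alpha;\bar{\mfK}}\bigr)\,{|y^{-1}x|}^{\alpha-\delta}\,|\tau|.$$
After dividing by $|\tau|\,{|y^{-1}x|}^{\alpha-\delta}$, this contribution becomes precisely the first summand on the right-hand side. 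Summing the two estimates and taking the supremum concludes the proof. The difference bound for two models is obtained in exactly the same way, replacing Lemma~\ref{lem:analytic_gamma_bound} by its analogous difference statement (which is asserted in the same lemma) and using linearity of $\mcI$.

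There is no substantive obstacle here; the work has already been done in establishing the commutation relation (Lemma~\ref{lemma:commutation}) and the polynomial-sector bound on $\mcJ_{x,y}$ (Lemma~\ref{lem:analytic_gamma_bound}). The only point requiring care is matching the exponents: the gain of $\beta$ provided by $\mcI$ on the ``$\Gamma$'' term matches exactly the loss of $\beta$ incurred by measuring $\mcJ_{x,y}\tau$ at level $\delta+\beta$ instead of $\delta$, which is precisely what makes the inequality homogeneous of order ${|y^{-1}x|}^{\alpha-\delta}$ on both sides.
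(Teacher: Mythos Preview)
Your proof is correct and follows exactly the approach the paper indicates: the proposition is stated there as ``a simple consequence of Lemma~\ref{lemma:commutation} and Lemma~\ref{lem:analytic_gamma_bound}'', and you have filled in precisely those details. One small remark: your parenthetical ``$a=\delta+\beta\in\triangle$'' is not automatic, but this is harmless since if $\delta+\beta\notin\triangle$ then $|\mcJ_{x,y}\tau|_{\delta+\beta}=0$ anyway (as $\mcJ_{x,y}\tau\in\bar{\opT}$), so the bound is trivial in that case.
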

\section{Applications to Semilinear Evolution Equations}\label{sec:evolution}
In this section we specialize to the setting, when the homogeneous Lie group $\mbG$ has distinguished time direction, see Section~\ref{subsec:kernel_examples} below for illustrative examples. Specifically, assume we are given decomposition of the Lie algebra $\mfg=\mfp^c \oplus  \mfp$ where both summands are $\fraks$-invariant subspaces and furthermore $\mfp^c$ is one dimensional. In line with this decomposition, in this section we deviate from the convention stated above Equation~\eqref{eq:eigenvectors} and reorder the basis of $\mfg$ so that we always have the time component, i.e. $X_1\in \mfp^c$, in the first slot. Under this assumption we also fix the basis $\bar{X}=\{X_i\}_{i=2}^{d}$ of $\mfp$ where $\mfs X_i =\mfs_i X_i$. %
We note that $P:= \exp(\mfp) \subset\mbG$ is a homogeneous subgroup as in \ref{subsec:singular_modelled} and we recall the notation defined there,
\begin{equation*}
\bar{\mfs }:= \mfs|_{\mfp}\quad \text{and}\quad 	|\bar{\mfs}| := \text{trace}(\mfs|_{\mfp}).
\end{equation*}
In particular we have $|\mfs| = \mfs_1+|\bar{\mfs}|$. From now on we identify the Lie group 
$\mbG$ with $\mathbb{R}\times P$ under the diffeomorphism
$$\mathbb{R}\times P\to \mbG, \quad (t,x)\mapsto x\exp(tX_{1})\ .$$
and we shall often use the notation $z=(t,x) \in \mbR\times P=\mbG$. Let us collect some useful facts;
\begin{itemize}
\item Under this identification we see that $\mathbb{R}\times \{e\}\subset \mbG$ is a homogeneous subgroup.
\item The scaling map behaves as expected, in that for any $z=(t,x)\in \mbG$ and $\delta>0$
$$ \delta\cdot (t,x)= (\delta^{\mfs_1}t, \delta\cdot x ) \ ,$$
where $\delta\cdot x$ is understood to be the restriction of the dilation to $P$.
\item The map $\tilde{\Phi}$ from \eqref{eq:defining_tilde_Phi} is related to our decomposition here, in that for any $X^{\mfp}\in \mfp$ and $t\in \mathbb{R}$, we have $\tilde{\Phi}(X^{\mfp} + tX_{1})=(t,\exp(X^{\mfp}) )$.
\item Recall the map $N_P:\mbG\to \mathbb{R}_{+}$ defined by \eqref{eq:Np_def}. There exists a constant $c'>0$ such that  $N_P(t,x)=c'|t|^{\frac{1}{\mfs_1}}$. In particular, by \eqref{eq:N_dist_compare_1} and \eqref{eq:N_dist_compare_2} there exist a constant $c>0$ such that for any $(t,x)\in \mbG$
\begin{equation}\label{eq:comparability}
	\frac{1}{c} |t|^{\frac{1}{\mfs_1}} \leq d\left( (t,x),P\right) \leq c |t|^{\frac{1}{\mfs_1}}\ .
\end{equation}
\end{itemize}
A core assumption for the remainder of this section will be that of non-anticipativity.
\begin{assumption}\label{ass:non_anticipative}
We say that $G:\mbG\setminus\{e\}\rightarrow \mbR$ is non-anticipative if for any $(t,x),\,(s,y) \in {\mbG}$ one has $G((s,y)^{-1}(t,x))=0$ whenever $s\geq t$. 
\end{assumption}
We will also require an assumption of prescribed homogeneity on the kernel, with respect to the dilation map.
\begin{assumption}\label{ass:kernel_scaling}
For $\sigma\in \mathbb{R}$, we say that $G:\mbG\setminus\{e\}\rightarrow \mbR$ is smoothly $\sigma$-homogeneous if it is smooth and for any $z\in \mbG\setminus \{e\}$ and $\lambda>0$,
\begin{equation*}\label{eq:evol_kernel_scaling}
	G(\lambda \cdot z) = \lambda^{\sigma} G(z).
\end{equation*}
\end{assumption}
We now have the following analogue of \cite[Lem.~7.4]{hairer_14_RegStruct}.
\begin{lemma}\label{lem:K_hat_def}
Given $G:\mbG\setminus \{e\} \rightarrow \mbR$ satisfying Assumption~\ref{ass:non_anticipative}  and Assumption~\ref{ass:kernel_scaling} with $\sigma=-|\bar{\mfs}|$, then there exists a smooth function $\hat{G}:P\rightarrow \mbR$ such that for all $(t,x)\in \mbG$ with $t>0$,
\begin{equation}\label{eq:evol_kernel_time_scale}
	G(t,x) = t^{-\frac{\bar{\mfs}}{\mfs_1}} \hat{G}\left(t^{-\frac{1}{\mfs_1}}\cdot x \right).
\end{equation}
For every multi-index $I\in \mathbb{N}^{d-1}$ and every $n> 0$ there exists a constant $C>0$ such that uniformly over $x \in \mbG$,	%
\begin{equation}\label{eq:spatial_kernel_schwarz}
	|\bar{X}^I \hat{G}(x)| \leq C(1+|x|^2)^{-n}.
\end{equation}
\end{lemma}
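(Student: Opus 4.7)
The first identity follows at once from the homogeneity assumption. Indeed, writing $z=(t,x)$ with $t>0$, choose the dilation parameter $\lambda=t^{-1/\mfs_1}$, so that
\[
\lambda\cdot (t,x) = (\lambda^{\mfs_1}t,\lambda\cdot x)=\bigl(1,\,t^{-1/\mfs_1}\cdot x\bigr).
\]
By Assumption~\ref{ass:kernel_scaling} with $\sigma=-|\bar{\mfs}|$,
\[
G(1,t^{-1/\mfs_1}\cdot x)=\lambda^{-|\bar{\mfs}|}G(t,x)=t^{|\bar{\mfs}|/\mfs_1}G(t,x),
\]
so setting $\hat{G}(x):=G(1,x)$ we obtain \eqref{eq:evol_kernel_time_scale}. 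Since $(1,x)\neq e$ for every $x\in P$, the function $\hat{G}$ is smooth on $P$ by smoothness of $G$ on $\mbG\setminus\{e\}$.

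For the decay estimate \eqref{eq:spatial_kernel_schwarz}, the plan is to again exploit homogeneity, this time with $\lambda=|x|^{-1}$ for $|x|$ large, combined with the observation that non-anticipativity together with smoothness forces $G$ to vanish to all orders on the set $\{t\le 0\}\setminus\{e\}$. Concretely, for $|x|\ge 1$, set $\lambda:=|x|^{-1}$ and $\tilde{x}:=\lambda\cdot x\in P$, which has $|\tilde{x}|=1$ (up to the equivalence of homogeneous norms of Remark~\ref{rem:triangle_inequality}). Since $\hat{G}(x)=\lambda^{|\bar{\mfs}|}G(\lambda^{\mfs_1},\lambda\cdot x)$ by homogeneity, and the left-invariant vector fields $\bar{X}_j$ scale as $\bar{X}_j(f\circ\mathrm{dil}_\lambda)=\lambda^{\mfs_j}(\bar{X}_jf)\circ\mathrm{dil}_\lambda$ (since their eigenvalues under $\mfs$ are precisely $\mfs_j$), one iterates to get
\[
\bar{X}^I\hat{G}(x)=|x|^{-(|\bar{\mfs}|+d(I))}\,(\bar{X}^I_2 G)\bigl(|x|^{-\mfs_1},\tilde{x}\bigr),
\]
where $\bar{X}^I_2$ denotes the action of $\bar{X}^I$ on the $P$-variable.

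Now the key input is that by Assumption~\ref{ass:non_anticipative} (applied with the time component being additive under the group law, which holds since $\mfp^c$ is a one–dimensional $\mfs$-invariant summand), $G$ vanishes on $\{(t,x):t\le 0\}\setminus\{e\}$. Since $G$ is smooth on $\mbG\setminus\{e\}$, all of its derivatives $\bar{X}^I_2 G$ vanish on this set as well; consequently $(\bar{X}^I_2 G)(t,y)$ vanishes to infinite order as $t\to 0^+$ at each fixed $y$ with $|y|=1$. Since the unit sphere $\{y\in P:|y|=1\}$ is compact and $\bar{X}^I_2 G$ is smooth in a neighbourhood of it, one obtains the uniform bound
\[
\sup_{|\tilde{x}|=1}|(\bar{X}^I_2 G)(t,\tilde{x})|\le C_{I,N}\,t^{N}\qquad\text{for all }t\in(0,1],\ N\ge 1.
\]
Inserting this with $t=|x|^{-\mfs_1}$ gives $|\bar{X}^I\hat{G}(x)|\le C_{I,N}|x|^{-(|\bar{\mfs}|+d(I)+\mfs_1 N)}$ for $|x|\ge 1$ and any $N$, which combined with smoothness of $\hat{G}$ on the compact set $\{|x|\le 1\}$ yields \eqref{eq:spatial_kernel_schwarz}.

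The main technical point is the infinite-order vanishing of $G$ at $\{t=0\}\setminus\{e\}$: one has to verify this carefully from non-anticipativity and smoothness, and to promote it to a bound that is uniform over the (compact) unit sphere in $P$. Everything else is bookkeeping with the dilation and the scaling behaviour of the left-invariant fields $\bar{X}_j$.
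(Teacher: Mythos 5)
Your proof is correct and essentially reproduces the argument that the paper implicitly invokes: the paper simply cites \cite[Lem.~7.4]{hairer_14_RegStruct} and says to replace Euclidean derivatives by the left-invariant fields $\bar{X}$, which is exactly the computation you have carried out in full. The two non-trivial ingredients you identify — that $\bar X^I_2 G$ is homogeneous of degree $-|\bar\mfs|-d(I)$ (so that dilating to the unit sphere $\{|\tilde x|=1\}$ costs exactly $|x|^{-(|\bar\mfs|+d(I))}$), and that smoothness plus vanishing of $G$ on the open half-space $\{t<0\}$ forces infinite-order vanishing of $G$ and all its derivatives on $\{t=0\}\setminus\{e\}$, which by Taylor's theorem in $t$ together with compactness of $\{(t,\tilde x):t\in[0,1],\,|\tilde x|=1\}\subset\mbG\setminus\{e\}$ yields the uniform $O(t^N)$ bound — are precisely the two steps in Hairer's original proof. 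One small remark: the parenthetical aside about additivity of the time component under the group law is unnecessary (and is slightly off in general, since the identification $\mbG\cong\mbR\times P$ via $(t,x)\mapsto x\exp(tX_1)$ need not make $t$ additive), because the only consequence of non-anticipativity you actually use is $G(t,x)=0$ for $t\le 0$, which follows immediately from the definition by taking $(s,y)=(0,e)$.
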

\begin{proof}\,
The proof follows exactly the same steps of \cite[Lem.~7.4]{hairer_14_RegStruct} after replacing the usual derivatives there with the vector fields $\bar{X}=\{X_i\}_{i=2}^d$. 
\end{proof}

\begin{lemma}\label{lem:kernel_decompose} 
Let $G:\mbG\setminus \{e\} \rightarrow \mbR$ satisfy Assumption~\ref{ass:non_anticipative} and Assumption \ref{ass:kernel_scaling} with $\sigma = -|\bar{\mfs}|$. Then, for any $r>0$, there exist smooth functions  $K:\mbG\setminus \{e\} \rightarrow \mbR$ and $K_{-1}: \mbG\to \mathbb{R}$, both satisfying Assumption~\ref{ass:non_anticipative} and such that $G= K+ K_{-1}$ and
\begin{itemize}
	\item $K$ is compactly and satisfies Assumption \ref{ass:K0_decompose} with $\beta= \mfs_1$.
	\item $K_{-1}:\mbG\rightarrow \mbR$ is globally smooth and such that $X^I K_{-1}\in L^\infty_\loc (\mbR;L^1(P))$ for any $I\in \mbN^{d}$.
\end{itemize}	
\end{lemma}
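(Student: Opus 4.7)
The strategy is to split $G$ using a dyadic partition of unity in a smooth homogeneous norm $|\cdot|$ on $\mbG$ (see Remark~\ref{rem:triangle_inequality}), and then correct the singular part by a compactly supported smooth function in order to enforce the vanishing-moment condition. Pick $\rho\in C_c^\infty((0,\infty))$ with $\supp\rho\subset[1/2,2]$ and $\sum_{n\in\mbZ}\rho(2^n t)=1$ for $t>0$, and set $K_n(x):=\rho(2^n|x|)\,G(x)$ for $n\in\mbZ$. Using the $(-|\bar{\mfs}|)$-homogeneity of $G$ and the fixed compactly supported profile $\psi(y):=\rho(|y|)G(y)$, a change of variables gives $K_n(x)=2^{n|\bar{\mfs}|}\psi(2^n\cdot x)$. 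Since each $X^I$ is left-invariant and scales as $X^I(f\circ \dil_{2^n})(x)=2^{-n d(I)}(X^If)(2^n\cdot x)$, this immediately yields $\|X^I K_n\|_\infty \lesssim 2^{n(|\bar{\mfs}|+d(I))}=2^{n(|\mfs|-\mfs_1+d(I))}$, matching the first bullet of Assumption~\ref{ass:K0_decompose} with $\beta=\mfs_1$.

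Define $\tilde K:=\sum_{n\geq 0}K_n=\chi_0(|x|)\,G(x)$ where $\chi_0:=\sum_{n\geq 0}\rho(2^n\cdot)$ is a smooth cutoff equal to $1$ near $0$ and supported in $[0,2]$; so $\tilde K$ is compactly supported and smooth away from $e$. For each multi-index with $d(I)\leq r$ compute $c_I:=\int\eta^I\tilde K\,\dd x$, and choose $\varphi\in C_c^\infty(\mbG)$ with $\supp\varphi\subset (B_2\setminus \overline{B_{1/2}(e)})\cap\{t>0\}$ solving the finite-dimensional linear system $\int\eta^I\varphi\,\dd x=c_I$ for all $d(I)\leq r$. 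Solvability is automatic because a polynomial vanishing on a non-empty open subset of $\mbG$ vanishes identically, so the linear map $\varphi\mapsto(\int\eta^I\varphi\,\dd x)_{d(I)\leq r}$ is surjective onto $\mbR^N$. Then set
\begin{equation*}
K:=\tilde K-\varphi,\qquad K_{-1}:=G-K,\qquad \tilde K_0:=K_0-\varphi,\qquad \tilde K_n:=K_n\quad (n\geq 1),
\end{equation*}
so that $K=\sum_{n\geq 0}\tilde K_n$ and the third bullet of Assumption~\ref{ass:K0_decompose} holds by construction. The second bullet is automatic from scaling combined with integration by parts: since the Haar measure is left-invariant and $X^I$ is left-invariant, $\int \eta^I\,X^J K_n\,\dd x=(-1)^{|J|}\int X^{J^{\mathrm{rev}}}(\eta^I)\,K_n\,\dd x$. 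The polynomial $X^{J^{\mathrm{rev}}}(\eta^I)$ is homogeneous of degree $d(I)-d(J)$, hence vanishes when $d(J)>d(I)$; in the complementary case, the explicit scaling $\int\eta^K K_n\,\dd x = 2^{-n(\mfs_1+d(K))}\int\eta^K\psi\,\dd y$ for each monomial $\eta^K$ in the expansion yields the required bound by $C\,2^{-\mfs_1 n}$.

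It remains to check that $X^IK_{-1}\in L^\infty_{\loc}(\mbR;L^1(P))$ for every multi-index $I$. We have $K_{-1}=\bigl(1-\chi_0(|\cdot|)\bigr)G+\varphi$; the second term is smooth and compactly supported, and the cutoff removes only a bounded neighbourhood of $e$. For the remaining piece, Lemma~\ref{lem:K_hat_def} gives $G(t,x)=t^{-|\bar{\mfs}|/\mfs_1}\hat G(t^{-1/\mfs_1}\cdot x)$ with $\hat G$ satisfying the Schwartz-type bound \eqref{eq:spatial_kernel_schwarz}; after differentiation and the change of variables $y=t^{-1/\mfs_1}\cdot x$, the integrals $\int_P|X^IG(t,x)|\,\dd x$ are locally bounded in $t\neq 0$, which suffices. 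Non-anticipativity is preserved at every step: $G$ vanishes on $\{t\leq 0\}$, multiplication by the non-negative function $\rho(2^n|\cdot|)$ preserves zero sets, and $\varphi$ was chosen supported in $\{t>0\}$. The main technical point is to execute the moment correction without spoiling any of the three structural requirements simultaneously (dyadic size scaling, non-anticipativity and compact support at unit scale); this is achieved by placing $\varphi$ on a fixed annulus strictly inside $\{t>0\}$, so that it can be absorbed into $\tilde K_0$ without perturbing the higher-scale bounds and is invisible to the large-scale integrability of $K_{-1}$.
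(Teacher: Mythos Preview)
Your construction is essentially the standard dyadic decomposition and moment-correction argument of \cite[Lem.~5.5 \& Lem.~7.7]{hairer_14_RegStruct}, which is precisely what the paper invokes. Two points need attention.

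First, the scaling identity you state for the left-invariant fields has the wrong sign in the exponent: one has $X^I\bigl(f(2^n\cdot\,)\bigr)(x)=2^{+n d(I)}(X^If)(2^n\cdot x)$, not $2^{-n d(I)}$. Your final bound $\|X^IK_n\|_\infty\lesssim 2^{n(|\bar{\mfs}|+d(I))}$ is nonetheless the correct one, so this is presumably a slip.

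Second, and more substantively, the argument that $X^IK_{-1}\in L^\infty_{\loc}(\mbR;L^1(P))$ is incomplete near $t=0$. You show that $\int_P|X^IG(t,x)|\,\dd x$ is locally bounded for $t\neq 0$ and assert that this ``suffices''. It does not: differentiating the representation $G(t,x)=t^{-|\bar{\mfs}|/\mfs_1}\hat G(t^{-1/\mfs_1}\cdot x)$ (this is where the Fa\`a di Bruno step the paper mentions enters) produces negative powers of $t$, and after the change of variables one finds that $\int_P|X^IG(t,\cdot)|\,\dd x$ diverges like a negative power of $t$ as $t\downarrow 0$ whenever $d(I)>0$. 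The missing observation is that the cutoff $(1-\chi_0)$ forces $|x|$ to be bounded below by a positive constant once $t$ is small; on that region $|t^{-1/\mfs_1}\cdot x|\gtrsim t^{-1/\mfs_1}$, and the Schwartz-type decay \eqref{eq:spatial_kernel_schwarz} of $\hat G$ (with exponent $M$ chosen large) then beats the negative power of $t$. Writing this out via the Leibniz expansion of $X^I\bigl((1-\chi_0)G\bigr)$ --- the terms with at least one derivative on the cutoff are compactly supported and harmless --- closes the gap.
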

\begin{proof}\,
The proof readily adapts from \cite[Lem.~5.5 \& Lem.~7.7]{hairer_14_RegStruct} with the only real change being to skip the last step in the proof of  \cite[Lem.~7.7]{hairer_14_RegStruct} and instead using Faa di Bruno's formula along with \eqref{eq:spatial_kernel_schwarz} to obtain the claimed integrabillity of $K_{-1}$. 
\end{proof}
\begin{remark}
Note that more careful analysis yields improved bounds on $K_{-1}$, however, $K_{-1}\in L^\infty_\loc(\mbR;L^1(P))$ suffices for our purposes.
\end{remark}

\subsection{Short Time Behaviour of Kernel Convolutions}
Together, the following two results show, in our setting, that the lift of the kernel applied to a modelled distribution, $f$, can be controlled on sets of the form $O_T:= \{z\in \mbG \,:\, d(z,P)\leq T\}$ only using information about $f$ on the same set. Note that using \eqref{eq:comparability} there exists a constant $c>0$ such that $[0,cT]\times P\subset O_T$. Hence, the corresponding notion of local solutions, described in Section~\ref{subsec:example_fixed_point}, corresponds to the usual one.

First, we introduce some final pieces of notation, let $\bR^+ :\mbR\times P \rightarrow \mbR$ be a map such that for all $x\in P$ one has $\bR^+(t,x)=1$ for $t>0$ and $\bR^+(t,x)=0$ for $t\leq 0$. From now on we shall also use subspaces of the previously introduced H\"older spaces (Definition~\ref{def:Holder}), modelled distributions (Definition~\ref{def:modelled_dist}) and singular modelled distributions (Definition~\ref{def:sing_modelled_dist}) writing, for example
\begin{equation*}
\bar{\mcC}^\alpha(\mbG)\subset {\mcC}^\alpha(\mbG) ,\quad \bar{\msD}^{\gamma}_{\alpha}\subset {\msD}^{\gamma}_{\alpha},\quad \bar{\msD}^{\gamma,\eta}_{\alpha,P}\subset {\msD}^{\gamma,\eta}_{\alpha,P},
\end{equation*}
where we allow the set $\mfK$ in the relevant definitions to be any closed subset $\mfK\subseteq O_T$ for some $T>0$ (in particular $\mfK$ is not necessarily compact). We shall often write the corresponding semi-norms for example as
\begin{equation*}
\|\,\cdot\,\|_{\alpha;O_T\cap \mfK},\quad \vertiii{\,\cdot\,}_{\gamma;O_T\cap\mfK},\quad \vertiii{\,\cdot\,}_{\gamma;\eta;O_T\cap \mfK}\ ,
\end{equation*}
where in particular we allow $\mfK= O_T$.

With these notions at hand the proof of the next theorem adapts directly from the proof of \cite[Thm.~7.1]{hairer_14_RegStruct}. 
\begin{theorem}\label{th:K0_time_scaling}
Let $\gamma>0$, $\mcT$ be a regularity structure and $M:= (\Pi,\Gamma)$ and $\bar{M}= (\bar{\Pi},\bar{\Gamma})$ models and 
$K: \mbG\setminus\{e\}\rightarrow \mbR$ a non-anticipative kernel (see Assumption~\ref{ass:non_anticipative}) such that the assumptions of  Theorem~\ref{th:schauder} are satisfied for some $\beta > 0$ and a sector $V$ of regularity $\alpha>-\mfs_1$. Then, for every $T\in (0,1]$,  and $\eta>-\mfs_1$ and for $\kappa>0$ small enough
\begin{align}\label{eq:K0_scaling}
	\vertiii{\mathcal{K}_\gamma \bR^+ f}_{\gamma+\beta,(\eta \wedge \alpha)+\beta-\kappa;O_T} &\lesssim T^{\kappa/\mfs_1}\vertiii{f}_{\gamma,\eta;O_T}\\
	\vertiii{\mcK_\gamma \bR^+f;\bar{\mcK}_\gamma \bR^+ f}_{\gamma+\beta,(\eta \wedge \alpha)+\beta-\kappa;O_T} & \lesssim   T^{\kappa/\mfs_1}\left(\vertiii{f;\bar{f}}_{\gamma,\eta;O_T}+\vertiii{M;\bar{M}}_{\gamma;O_2}\right).\label{eq:K0_scaling_diff}
\end{align}
The constant in the first bound depends only on $\vertiii{M}_{\gamma;O_2}$, while in the second it may also depend on $\vertiii{f}_{\gamma,\eta;O_T}\vee\vertiii{\bar{f}}_{\gamma,\eta;O_T}$ and $\vertiii{M}_{\gamma;O_2} \vee \vertiii{\bar{M}}_{\gamma;O_2}$.
\end{theorem}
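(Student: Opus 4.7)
The plan is to adapt the proof of \cite[Thm.~7.1]{hairer_14_RegStruct} through a three-step procedure, building on the singular Schauder estimates of Proposition~\ref{prop:singular_schauder} and the comparability relation \eqref{eq:comparability} which identifies $|z|_P$ with $|t|^{1/\mfs_1}$ up to constants. The key small parameter $T^{\kappa/\mfs_1}$ will ultimately arise from the fact that on the set $O_T$ one has $|z|_P \lesssim T^{1/\mfs_1}$ uniformly.

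First, I would show that the map $f\mapsto \bR^+ f$ is bounded from $\bar{\msD}^{\gamma,\eta}_{\alpha;P}$ restricted to $O_T$ into itself (or at worst with a mild loss that can be absorbed). Since $\bR^+$ is locally constant away from $P$, the singular modelled distribution character of $\bR^+ f$ at points of $O_T \setminus P$ coincides with that of $f$ up to extending $f$ by zero for $t \leq 0$; the non-anticipativity of $K$ (Assumption~\ref{ass:non_anticipative}) ensures that the convolution at a point $z = (t,x)$ with $t > 0$ only tests $\bR^+ f$ on $\{s \leq t\} \subset O_T$. This localisation is what allows the right-hand side of \eqref{eq:K0_scaling} to depend only on $\vertiii{f}_{\gamma,\eta;O_T}$ rather than on a larger set, and it is what makes this theorem distinct from a naive application of Proposition~\ref{prop:singular_schauder}.

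Second, I would apply the singular Schauder estimate \eqref{eq:singular_schauder} to $\bR^+ f$, which yields $\mcK_\gamma \bR^+ f \in \msD^{\gamma+\beta, (\eta\wedge\alpha)+\beta}_{P}$ with norm controlled by $\vertiii{f}_{\gamma,\eta;O_T}$ (using the localisation of the previous step) and the model norms on a slightly fattened neighbourhood. The third step is the interpolation trick: if $g \in \msD^{\gamma+\beta, (\eta\wedge\alpha)+\beta}_{P}$ with $\vertiii{g}_{\gamma+\beta,(\eta\wedge\alpha)+\beta;O_T}$ finite, then for any $\kappa > 0$
\begin{equation*}
\vertiii{g}_{\gamma+\beta,(\eta\wedge\alpha)+\beta-\kappa;O_T} \lesssim \left(\sup_{z \in O_T} |z|_P\right)^{\kappa} \vertiii{g}_{\gamma+\beta,(\eta\wedge\alpha)+\beta;O_T}\lesssim T^{\kappa/\mfs_1}\vertiii{g}_{\gamma+\beta,(\eta\wedge\alpha)+\beta;O_T},
\end{equation*}
where the second inequality invokes \eqref{eq:comparability}. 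Combining these three steps yields \eqref{eq:K0_scaling}, and \eqref{eq:K0_scaling_diff} follows by the same argument applied to the difference bound already built into Proposition~\ref{prop:singular_schauder}, using the bilinear structure of the estimate in $(f, M)$.

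The main obstacle I anticipate is a careful verification that step one works without introducing spurious boundary terms from the non-commutativity of $\mbG$. Specifically, when bounding $|\bR^+ f(z) - \Gamma_{z,z'}\bR^+ f(z')|_\zeta$ for $(z,z') \in (O_T)_P$, one must handle the case in which $z$ and $z'$ lie on opposite sides of $P$. In the Euclidean setting this is straightforward because $\Gamma$ commutes with coordinate projections onto the time direction; here the analogue requires using that $\mfp^c$ is one-dimensional together with the factorisation formula of Proposition~\ref{prop:factorisation_formula} to control how $\Gamma_{z,z'}$ mixes the polynomial sector, keeping the estimates uniform in $T$. Once this technicality is resolved, the rest of the argument follows by the same template as \cite[Thm.~7.1]{hairer_14_RegStruct}.
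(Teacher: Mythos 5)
Your overall template is the right one (singular Schauder plus a mechanism to extract a small power of $T$, paralleling \cite[Thm.~7.1]{hairer_14_RegStruct}, which is exactly what the paper does), and your observations about non-anticipativity localising the estimate to $O_T$ are correct. However, the crucial step three --- the ``interpolation trick'' --- is stated in a form that is false for general $g$, and this is where the heart of the argument actually lies.

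Concretely, the claim
\begin{equation*}
\vertiii{g}_{\gamma+\beta,\bar\eta-\kappa;O_T} \lesssim T^{\kappa/\mfs_1}\,\vertiii{g}_{\gamma+\beta,\bar\eta;O_T}, \qquad \bar\eta := (\eta\wedge\alpha)+\beta,
\end{equation*}
does not hold for an arbitrary $g\in\msD^{\gamma+\beta,\bar\eta}_P$. The $\|\cdot\|$-part of $\vertiii{\cdot}_{\gamma+\beta,\bar\eta;\mfK}$ (recall Definition~\ref{def:sing_modelled_dist}) carries the truncated exponent $(\bar\eta-\zeta)\wedge 0$: whenever $\zeta<\bar\eta-\kappa$, both the $\bar\eta$-weight and the $(\bar\eta-\kappa)$-weight reduce to $|x|_P^0=1$, so nothing is gained. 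For instance, $g\equiv\tau$ with $\tau\in\opT_\zeta$ and $\zeta<\bar\eta-\kappa$ has $\|g\|_{\gamma+\beta,\bar\eta;O_T}=\|g\|_{\gamma+\beta,\bar\eta-\kappa;O_T}=|\tau|$. The inequality you wrote is therefore not an interpolation between two $\vertiii{\cdot}$ norms; it is a non-trivial property of the specific modelled distribution $\mcK_\gamma\bR^+f$.

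What actually supplies the factor $T^{\kappa/\mfs_1}$ is Lemma~\ref{lem:SingularNorm_Ordering} (the analogue of \cite[Lem.~6.5]{hairer_14_RegStruct}), and the $\llbracket\cdot\rrbracket$ norm, which are absent from your sketch. Because $K$ is non-anticipative and $\bR^+f$ vanishes for $t\le 0$, the components $\mcQ_\zeta(\mcK_\gamma\bR^+f)(x)$ with $\zeta<\bar\eta$ extend continuously by zero onto $P$; this is exactly the hypothesis of Lemma~\ref{lem:SingularNorm_Ordering}, which then upgrades the $\vertiii{\cdot}$ control to the strictly stronger $\llbracket\cdot\rrbracket$ control, in which \emph{every} component carries the genuine power $|x|_P^{\bar\eta-\zeta}$ (no truncation by $\wedge 0$). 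Only at that point does the uniform bound $\sup_{z\in O_T}|z|_P\lesssim T^{1/\mfs_1}$ (from \eqref{eq:comparability}) yield the factor $T^{\kappa/\mfs_1}$. Your step one is morally what makes Lemma~\ref{lem:SingularNorm_Ordering} applicable, but the passage through $\llbracket\cdot\rrbracket$ is the missing ingredient, and without it the argument does not close. I'd also flag that when $\zeta$ is between $\bar\eta-\kappa$ and $\bar\eta$ the truncated weight changes from zero to $\bar\eta-\kappa-\zeta$, so even for well-behaved $g$ the direct comparison of the two $\vertiii{\cdot}$ norms costs you exactly what you were hoping to gain; the route through $\llbracket\cdot\rrbracket$ sidesteps this.
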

\begin{proof}\,
The proof follows almost ad verbatim the proof of \cite[Thm.~7.1]{hairer_14_RegStruct}, only using our modified definition of the sets $O_T:=\{z\in \mbG\,:\,\dd_{\mbG}(z,P)\leq T \}$.
\end{proof}
\begin{remark}
In fact, given any closed set $\mfK\subset \mbG$ the bounds \eqref{eq:K0_scaling} and \eqref{eq:K0_scaling_diff} both hold if $O_T$ is replaced on the left hand side by $O_T\cap \mfK$ and on the right hand side by $O_T\cap \bar{\mfK}$. However, we will not make use of this fact in this article.
\end{remark}
While Theorem~\ref{th:K0_time_scaling} treats the lift of the singular part of the kernel the following lemma shows that the application of the smooth remainder can be lifted into the polynomial regularity structure in a similar manner. 
\begin{lemma}\label{lem:kernel_remainder_lift}
Let $K_{-1}:\mbG\rightarrow \mbR$ be a smooth, non-anticipative kernel on $\mbG$, such that for any $I \in \mbN^{d}$ the functions $X^I K_{-1}(t,\,\cdot\,)$ are bounded in $L^{1}(P)$, locally uniformly in $t\in \mbR$. Then, under the assumptions of Theorem~\ref{th:K0_time_scaling} one has the bound
\begin{equation*}
	\vertiii{\opbfP_{(\,\cdot\,)}^\gamma [K_{-1} \mcR_M\bR^+ f ] }_{\gamma+\beta, (\eta \wedge \alpha) +\beta-\kappa;O_T}\lesssim T^{\kappa/\mfs_1} \vertiii{ f}_{\gamma, {\eta};O_T}   
\end{equation*}
as well as the analogous difference bound 
$$\vertiii{\opbfP_{(\,\cdot\,)}^\gamma [K_{-1} \mcR_M \bR^+ f ]; \opbfP_{(\,\cdot\,)} ^\gamma [K_{-1}\mcR_{\bar{M}}\bR^+ \bar{f} ] }_{\gamma+\beta, (\eta \wedge \alpha)+\beta -\kappa;O_T}\lesssim T^{\kappa/\mfs_1} \left( \vertiii{ f; \bar{f}}_{\gamma, {\eta};O_T} + \vertiii{M;\bar{M}}_{\gamma, O_1}\right) \ .  $$
\end{lemma}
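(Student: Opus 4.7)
The plan is to first establish that $g := \mathcal{R}_M \mathbf{R}^+ f \ast K_{-1}$ is a smooth function, so that $\opbfP^{\gamma+\beta}_{(\cdot)}[g]$ (interpreting the $\gamma$ in the statement as $\gamma+\beta$, consistent with the target space) is a well-defined modelled distribution valued in the polynomial sector $\bar{\opT}$. Smoothness follows from the joint hypotheses: the non-anticipative structure combined with $\mathbf{R}^+$ localises the convolution to the time slab $(0,t)$ at a point $x=(t,y)\in O_T$ with $t>0$, and for any $I\in\mbN^{d}$ the family $X^IK_{-1}(s,\cdot)$ lies uniformly in $L^1(P)$ for $s$ in this bounded slab, which together with the finite order of the distribution $\mcR_M\mathbf{R}^+ f$ yields that $X^I g(x) = (\mcR_M\mathbf{R}^+ f \ast X^I K_{-1})(x)$ is well-defined and continuous in $x$.

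By Lemma~\ref{lem:coefficients_of_taylor}, the coefficient of $\bfeta^I$ in $\opbfP^{\gamma+\beta}_x[g]$ is a universal linear combination of $\{X^K g(x)\}_{d(K)\leq d(I)}$, so Lemma~\ref{lem:to_bound_polynomials} reduces the bound $|\opbfP^{\gamma+\beta}_x[g]|_\zeta \lesssim |x|_P^{((\eta\wedge\alpha)+\beta-\kappa-\zeta)\wedge 0}$ to an estimate on $|X^I g(x)|$ for $d(I)<\gamma+\beta$. The key estimate is then
\begin{equation*}
|X^I g(t,y)| \;\lesssim\; \int_0^t \Bigl|\bigl\langle \mcR_M\mathbf{R}^+ f,\, (X^I K_{-1})((s,\cdot)^{-1}(t,y))\bigr\rangle_{\!P}\Bigr|\,ds,
\end{equation*}
where $\langle\cdot,\cdot\rangle_P$ denotes the spatial pairing at fixed time $s$. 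Testing against $X^IK_{-1}(s,\cdot)$ as a spatially integrable smooth function and using the pointwise decay of $\mcR_M\mathbf{R}^+ f$ near the hyperplane $P$ coming from Proposition~\ref{prop:singular_reconstruction_v2} combined with Definition~\ref{def:sing_modelled_dist}, this integrand is of the order $s^{(\eta\wedge\alpha)/\mfs_1}$ (up to $\kappa$), which integrates to $t^{((\eta\wedge\alpha)+\mfs_1)/\mfs_1}$. Using $|x|_P\sim t^{1/\mfs_1}$ and $|x|_P\leq T^{1/\mfs_1}$ on $O_T$, an $\eps$-amount of the extracted exponent can be traded for the desired prefactor $T^{\kappa/\mfs_1}$ together with the requisite power of $|x|_P$.

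For the second piece of the $\vertiii{\,\cdot\,}_{\gamma+\beta,(\eta\wedge\alpha)+\beta-\kappa;O_T}$ norm, namely the translation bound for $(x,y)\in(O_T)_P$, we apply Taylor's theorem (Theorem~\ref{th:taylor}) to the smooth function $g$: the polynomial $\opbfP^{\gamma+\beta}_x[g] - \Gamma_{x,y}\opbfP^{\gamma+\beta}_y[g]$ has coefficients controlled by derivatives $X^Jg$ of order $d(J)\leq \gamma+\beta$, producing, via an application of Lemma~\ref{lem:to_bound_polynomials} again, a bound of the shape $|y^{-1}x|^{\gamma+\beta-\zeta}$ times a supremum of these derivatives on a small ball around $x,y$; this supremum is controlled by the previous step, with the singularity scale $|x,y|_P^{(\eta\wedge\alpha)+\beta-\gamma-\beta}$ extracted from the same time-integration argument. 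The difference bound follows verbatim by replacing $\mcR_M\mathbf{R}^+ f$ with $\mcR_M\mathbf{R}^+ f - \mcR_{\bar M}\mathbf{R}^+\bar f$ throughout and invoking the local Lipschitz continuity of the singular reconstruction operator (again Proposition~\ref{prop:singular_reconstruction_v2}). The main obstacle I anticipate is the careful bookkeeping between the two independent scales $|y^{-1}x|$ (local Euclidean scale) and $|x,y|_P$ (distance to $P$) when choosing how much of the integrated exponent to trade for the $T^{\kappa/\mfs_1}$ prefactor; this must be done uniformly as $(\eta\wedge\alpha)$ ranges over its permitted interval $(-\mfs_1,\gamma]$, which forces a small but uniform loss encoded by $\kappa$.
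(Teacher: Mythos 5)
Your skeleton is the right one and is in the same spirit as the paper's (extremely brief) proof, which simply defers to the argument of \cite[Lem.~7.3]{hairer_14_RegStruct} with compact support replaced by the $L^1$-in-space integrability hypothesis: namely, reduce the $\msD^{\gamma+\beta,(\eta\wedge\alpha)+\beta-\kappa}_P$ bound to estimates on $X^I g$ for $g=(\mcR_M\bR^+f)\ast K_{-1}$ via Lemma~\ref{lem:coefficients_of_taylor} and Lemma~\ref{lem:to_bound_polynomials}, and use Theorem~\ref{th:taylor} for the translation component. That part of the plan is sound.

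The gap is in the central estimate. You assert that the ``spatial pairing at fixed time'' is of order $s^{(\eta\wedge\alpha)/\mfs_1}$, citing Proposition~\ref{prop:singular_reconstruction_v2} and Definition~\ref{def:sing_modelled_dist}, but neither gives a time-slice bound against a fixed, merely spatially-$L^1$ test function: the singular reconstruction bound controls pairings $\langle\mcR f,\phi^\lambda_z\rangle$ against $\lambda$-scaled test functions, and extracting a genuine slice-in-time estimate from it requires a scale decomposition of $X^I K_{-1}((\cdot)^{-1}(t,y))$ (as in the localised Proposition~\ref{prop:support_localised_reconstruction}) that you do not carry out. Moreover, the time-slicing of $\mcR_M\bR^+f$ itself, which a priori is only a space-time distribution, needs justification. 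Finally, a crucial structural fact is left implicit: $K_{-1}$ is smooth \emph{and} non-anticipative, hence it vanishes together with all its derivatives on the half-space $\{t\leq 0\}$. This is precisely what makes your identity $X^I g=(\mcR_M\bR^+f)\ast X^I K_{-1}$ hold without boundary contributions and what lets the final bound be uniform in $d(I)$; absent this observation, one would expect time derivatives to worsen the singularity at $|x|_P\to 0$ by one power of $|x|_P^{-\mfs_1}$ per derivative, and the weight $(\eta\wedge\alpha)+\beta-\kappa$ would be exactly saturated rather than comfortably beaten. You should state this vanishing explicitly, since it is the feature that makes the whole lemma work and distinguishes $K_{-1}$ from a generic integrable kernel. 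A minor point: $\alpha$ is fixed as the sector regularity, so $\eta\wedge\alpha$ is a single number, not a quantity ranging over an interval, so the remark about uniformity over $(-\mfs_1,\gamma]$ is not needed.
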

\begin{proof}\, The argument adapts mutatis mutandis form the proof of \cite[Lem.~7.3]{hairer_14_RegStruct}, replacing  the compact support assumption therein by the integrability property of $K_{-1}$.
\end{proof}
As discussed in the proof of \cite[Lem.~7.3]{hairer_14_RegStruct} the factor $T^{\kappa/\mfs_1}$ in both estimates of Lemma~\ref{lem:kernel_remainder_lift} could, in this case, be replaced with any arbitrary power of $T$, since $K_{-1}$ is smooth. Here, we choose to specify $T^{\kappa/\mfs_1}$ to as to agree with Theorem~\ref{th:K0_time_scaling}, since they will always be applied together.
\subsection{Initial Conditions}
We will only consider evolution equations on domains without boundary so that our only boundary data is the initial condition and proceed as in \cite[Sec.~7.2]{hairer_14_RegStruct}.
\begin{lemma}\label{lem:Initial_cond}
Let $u_0\in {\mcC}^\alpha( {P})$ such that $\|u_0\|_{{\mcC}^\alpha( {P})}<\infty$ and $\mcT=(\opT,\opG)$ be a regularity structure containing the polynomial regularity structure over $\mbG$ and let $G$ be a kernel satisfying Assumptions~\ref{ass:non_anticipative} and \ref{ass:kernel_scaling}.
We set for $t\neq 0$
\begin{equation*}
	G(u_0)(t,x) := \int_{P} G((0,y)^{-1}(t,x))u_0(y)\dd y, 
\end{equation*}	
where we used the suggestive but only formal notation if $\alpha\leq 0$.
Then, for every $\gamma>0$ the map $\opbfP^\gamma_{(\,\cdot\,)} [G(u_0)] : \mbG\setminus  P \rightarrow \opT$ belongs to $\msD^{\gamma,\alpha}_{0;P}$ for every $\gamma>(\alpha\wedge 0)$ and furthermore, for any $T>0$, one has
$$\vertiii{\opbfP^\gamma_{(\,\cdot\,)} [G(u_0)]}_{\gamma, {\eta};O_T}<\infty\,.$$
\end{lemma}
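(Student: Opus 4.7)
The plan is to adapt the proof of \cite[Lem.~7.5]{hairer_14_RegStruct} to the homogeneous Lie group setting, handling carefully the non-commutative aspects of the convolution. First I would rewrite the smoothed initial condition in a tractable form. Using the identification of $\mbG$ with $\mbR\times P$, one has $(0,y)^{-1}(t,x) = (t,y^{-1}x)$, hence by Lemma~\ref{lem:K_hat_def},
\begin{equation*}
	G(u_0)(t,x) \;=\; t^{-|\bar{\mfs}|/\mfs_1}\int_{P} \hat{G}\bigl(t^{-1/\mfs_1}\cdot (y^{-1}x)\bigr)\,u_0(y)\,\dd y,
\end{equation*}
so that $G(u_0)\in C^\infty(\mbG\setminus P)$ by the Schwartz-type bound \eqref{eq:spatial_kernel_schwarz} and standard differentiation under the integral sign (the vector fields $X_i$ being left-invariant act only on the $x$-variable of the integrand via the left-invariant basis $\bar{X}_i$, and in the time direction differentiation is controlled by the homogeneity).

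Next I would establish the key pointwise derivative bound: for every multi-index $I\in \mathbb{N}^d$ and every $z=(t,x)$ with $t>0$,
\begin{equation}\label{eq:initcond_derivative_target}
	|X^I G(u_0)(z)|\;\lesssim\;\|u_0\|_{\mcC^\alpha(P)}\cdot t^{(\alpha-d(I))/\mfs_1 \wedge 0}.
\end{equation}
For $d(I)>\alpha$, this follows by pulling the derivatives onto the kernel: each $X_i$ acting on $\hat{G}(t^{-1/\mfs_1}\cdot (\,\cdot\,))$ produces a factor $t^{-\mfs_i/\mfs_1}$ times another rapidly decaying profile, and then we test against $u_0\in \mcC^\alpha$ at scale $t^{1/\mfs_1}$ using the vanishing moment property (which we arrange either by using Taylor's theorem on $u_0$ up to order $\lfloor\alpha\rfloor$ when $\alpha>0$, or by a direct scaling argument using Definition~\ref{def:Holder} when $\alpha\leq 0$). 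For $d(I)\leq \alpha$, the same scaling computation, combined with the fact that $\int_P \hat G(t^{-1/\mfs_1}\cdot w)\dd w = t^{|\bar{\mfs}|/\mfs_1}\int_P \hat G$, yields a uniform bound. By \eqref{eq:comparability}, the factor $t^{1/\mfs_1}$ is comparable to $|z|_P$, so \eqref{eq:initcond_derivative_target} reads $|X^I G(u_0)(z)|\lesssim |z|_P^{(\alpha-d(I))\wedge 0}$.

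Finally, I would verify the two conditions defining $\msD^{\gamma,\alpha}_{0;P}$. For the pointwise piece $\|f\|_{\gamma,\alpha;\mfK}$, I would use Lemma~\ref{lem:coefficients_of_taylor}, which expresses the coefficient of $\pmb{\eta}^I$ in $\opbfP^\gamma_z[G(u_0)]$ as a linear combination of $\{X^J G(u_0)(z)\}_{d(J)\leq d(I)}$, and apply \eqref{eq:initcond_derivative_target}. For the increment bound, since $G(u_0)\in C^\infty(\mbG\setminus P)$ and $\Gamma_{z,z'}$ on the polynomial sector is the re-expansion governed by Proposition~\ref{prop:factorisation_formula}, Lemma~\ref{lem:to_bound_polynomials} reduces matters to showing
\begin{equation*}
	\bigl|X^I\Pi_e\bigl(\opbfP^\gamma_z[G(u_0)]-\Gamma_{z,z'}\opbfP^\gamma_{z'}[G(u_0)]\bigr)(e)\bigr|\;\lesssim\;{|(z')^{-1}z|}^{\gamma-d(I)}|z,z'|_P^{\alpha-\gamma},
\end{equation*}
which follows from Theorem~\ref{th:taylor} applied to $X^I G(u_0)$ on $\bar{\mfK}\subset\mbG\setminus P$ together with \eqref{eq:initcond_derivative_target} at the base point $z$ (the supremum of derivatives over the ball $B_{\beta^{[\gamma]+1}{|(z')^{-1}z|}}(z)$ stays within a fixed multiple of $|z,z'|_P$ of $P$ since $(z,z')\in\mfK_P$). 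The main obstacle is the interplay between the non-commutative convolution and the scaling identity for the kernel when pulling left-invariant derivatives through the integral: this must be carried out carefully so that the scaling factor $t^{-\mfs_i/\mfs_1}$ for each $X_i$ is recovered exactly, but once this is in place the rest of the argument is a direct transcription of the Euclidean proof.
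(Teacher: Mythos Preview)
Your approach is correct but organised differently from the paper's. The paper first invokes Lemma~\ref{lem:kernel_decompose} to split $G=K+K_{-1}$, then handles the compactly supported singular piece $K(u_0)$ either via the dyadic Schauder machinery of Section~\ref{subsec:schauder} or by the scaling-profile argument of \cite[Lem.~7.5]{hairer_14_RegStruct}, and handles the smooth remainder $K_{-1}(u_0)$ by quoting Lemma~\ref{lem:kernel_remainder_lift}. You instead work with the full kernel $G$ directly through the profile $\hat G$ of Lemma~\ref{lem:K_hat_def}, obtain the pointwise derivative bound \eqref{eq:initcond_derivative_target}, and then verify the $\msD^{\gamma,\alpha}_{0;P}$ conditions via Lemma~\ref{lem:coefficients_of_taylor}, Lemma~\ref{lem:to_bound_polynomials} and Theorem~\ref{th:taylor}. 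This is essentially Hairer's original route applied to the whole of $G$ rather than just to $K$; it works because the Schwartz-type decay \eqref{eq:spatial_kernel_schwarz} of $\hat G$ is strong enough to replace compact support when testing against $u_0\in\mcC^\alpha(P)$ (a point you should make explicit, since Definition~\ref{def:Holder} uses compactly supported test functions). Your route is more self-contained and avoids the decomposition entirely; the paper's route is more modular and reuses machinery already in place. One small gap in your sketch: in the regime $d(I)<\alpha$ the uniform bound does not follow from scaling alone --- one must subtract the Taylor polynomial of $u_0$ at $x$ and exploit that $\bar X^I\hat G$ integrates suitable monomials to zero, otherwise the naive estimate gives the divergent factor $t^{-d(I)/\mfs_1}$.
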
 
\begin{proof}\,
We first decompose the kernel as in Lemma~\ref{lem:kernel_decompose} and write
$$G(u_0)(t,x)= K(u_0)(t,x)+K_{-1}(u_0)(t,x)\ .$$ 
For the summands of $K(u_0)(t,x)$, one can argue as in the proof of \cite[Lem.~ 7.5]{hairer_14_RegStruct} or as in Section~\ref{subsec:schauder}. 
The desired bound for $K_{-1}(u_0)(t,x)$ follows exactly as in Lemma \ref{lem:kernel_remainder_lift}.
\end{proof}
\subsection{An Example Fixed Point Theorem}\label{subsec:example_fixed_point}
At this point, we have all ingredients at hand to straightforwardly see that the very general fixed point Theorem \cite[Theorem~7.8]{hairer_14_RegStruct} as well as the other parts of \cite[Section~7.3]{hairer_14_RegStruct} 
adapt to the setting of homogeneous Lie Groups. For the sake of conciseness and with the primary example of Anderson-type models in mind, we refrain from presenting this material in all generality and instead show an example fixed point theorem. This result covers Anderson type equations, such as the one treated in Section~\ref{sec:worked_application}. Recall here the notation introduced at the start of this section, in particular the identity $|\mfs|=\mfs_1+|\bar{\mfs}|$.
\begin{theorem}\label{th:Anderson_fixed_point}
Let $\alpha \in (-\mfs_1,0]$, $\gamma \in (-\alpha,+\infty)$, $\eta \in (-\mfs_1 -\alpha,0]$ and $G:\mbG\setminus \{e\}\rightarrow \mbR$ be a kernel satisfying Assumptions~\ref{ass:non_anticipative} and \ref{ass:kernel_scaling} with $\sigma= -|\bar{\mfs}|$ and the decomposition $G=K+K_{-1}$ as in Lemma \ref{lem:kernel_decompose}. Furthermore, let $\mcT=(\opT,\opG)$
be a regularity structure containing the polynomial regularity structure, equipped with an abstract integration operator $\mcI$ of order $\mfs_1$ and containing an element $\Xi\in \opT_{\alpha}$, where $\alpha=\min A$. Then, given a model $M=(\Pi,\Gamma)$ which realises $K$ for $\mcI$ and satisfies the assumptions of Theorem~\ref{th:schauder} along with any $v \in \msD^{\gamma,\eta}_{0;P,}$, the map
\begin{equation}\label{eq:abstract_fixed_point_map}
	\mcM_{T;v}:\msD^{\gamma,\eta}_0 \rightarrow \msD^{\gamma,\eta}_0 \qquad U\mapsto \mcK_\gamma\bR^+(U\Xi) + \bP^\gamma_{(\,\cdot\,)}[K_{-1}\bR^+(U \Xi)] + v,
\end{equation}
is well-defined and there exists a unique fixed point for some $T>0$. 

Furthermore, if $\Pi_e\Xi$ and $v$ are $\mfG$-periodic and the model is adapted to the action of $\mfG$ on the group (as defined in Section~\ref{subsec:symmetries}), then the unique fixed point is as well.

Crucially, the solution map depends continuously on the model.
\end{theorem}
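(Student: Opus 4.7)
The plan is to apply Banach's fixed point theorem to $\mcM_{T;v}$ on a suitable closed ball in $\msD^{\gamma,\eta}_0(O_T)$ for sufficiently small $T>0$. The key enabling estimates are the short-time Schauder bounds of Theorem~\ref{th:K0_time_scaling} and Lemma~\ref{lem:kernel_remainder_lift}, which provide a gain of $T^{\kappa/\mfs_1}$ that can be made arbitrarily small. Throughout, we make crucial use of the decomposition $G=K+K_{-1}$ of Lemma~\ref{lem:kernel_decompose}, so that $\mcK_\gamma$ handles the singular piece $K$ while the smooth remainder $K_{-1}$ is absorbed into the polynomial sector via $\opbfP^\gamma_{(\cdot)}$.

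First I would verify that $\mcM_{T;v}$ maps $\msD^{\gamma,\eta}_0(O_T)$ into itself. Given $U\in\msD^{\gamma,\eta}_0$, treating $\Xi$ as a constant modelled distribution in $\msD^{\gamma'}_{\alpha}$ for any large $\gamma'$, the multiplication rule for (singular) modelled distributions places $U\Xi$ in $\msD^{\gamma+\alpha,\eta+\alpha}_{\alpha}$. Since $\alpha>-\mfs_1$ and $\gamma>-\alpha$, we have $\gamma+\alpha>0$ so the singular Schauder setup is applicable; Theorem~\ref{th:K0_time_scaling} with $\beta=\mfs_1$ yields
\begin{equation*}
\vertiii{\mcK_{\gamma+\alpha}\bR^+(U\Xi)}_{\gamma+\alpha+\mfs_1,\,(\eta+\alpha)\wedge\alpha+\mfs_1-\kappa;\,O_T}\lesssim T^{\kappa/\mfs_1}\vertiii{U\Xi}_{\gamma+\alpha,\,\eta+\alpha;\,O_T},
\end{equation*}
while Lemma~\ref{lem:kernel_remainder_lift} gives the analogous bound for the $\opbfP_{(\cdot)}^{\gamma}[K_{-1}\bR^+(U\Xi)]$ term. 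Using $\eta\leq 0$ one has $(\eta+\alpha)\wedge\alpha=\eta+\alpha$ and, since $\alpha+\mfs_1>0$, choosing $\kappa<\alpha+\mfs_1$ ensures that $(\eta+\alpha)+\mfs_1-\kappa\geq\eta$, so after projecting via $\mcQ_{<\gamma}$ the image lies in $\msD^{\gamma,\eta}_0(O_T)$. Adding $v\in\msD^{\gamma,\eta}_{0;P}$ preserves this, and the multiplicative estimate for modelled distributions shows $U\Xi$ depends continuously and boundedly on $U$ in terms of the seminorm of the model.

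Next I would exploit exactly the same bounds applied to the differences $\mcM_{T;v}(U)-\mcM_{T;v}(\bar U)$. The bilinearity of multiplication, Theorem~\ref{th:K0_time_scaling} (difference form) and Lemma~\ref{lem:kernel_remainder_lift} (difference form) combine to give
\begin{equation*}
\vertiii{\mcM_{T;v}(U);\mcM_{T;v}(\bar U)}_{\gamma,\eta;\,O_T}\leq C\,T^{\kappa/\mfs_1}\vertiii{U;\bar U}_{\gamma,\eta;\,O_T},
\end{equation*}
for a constant $C$ depending on the model and on a bound on $\vertiii{U}_{\gamma,\eta}\vee\vertiii{\bar U}_{\gamma,\eta}$. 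Picking $T$ small enough that $CT^{\kappa/\mfs_1}<1/2$ makes $\mcM_{T;v}$ a strict contraction on the closed ball of radius $2\vertiii{v}_{\gamma,\eta;O_T}+1$; Banach's theorem then produces the unique fixed point. Periodicity of the fixed point follows because $\bR^+$, multiplication, $\mcK_\gamma$ and $\opbfP^\gamma_{(\cdot)}$ all preserve $\mfG$-invariance under the hypothesis that the model is adapted to the action of $\mfG$ (Section~\ref{subsec:symmetries}), and Banach's uniqueness statement forces the limit to be symmetric. Continuity of the solution map in the model is obtained by running the same difference estimates with two distinct models $M,\bar M$; the difference bounds of Theorem~\ref{th:K0_time_scaling}, Lemma~\ref{lem:kernel_remainder_lift} and the multiplication map all feature an additive $\vertiii{M;\bar M}_{\gamma;O_2}$ term, yielding a locally Lipschitz bound on the solution in the model after applying the contraction inequality once more.

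The principal technical obstacle is bookkeeping the interaction between the various exponents: one must simultaneously verify (i) that the multiplication $U\cdot\Xi$ is admissible and lands in a space of singular modelled distributions amenable to Proposition~\ref{prop:singular_schauder}, (ii) that after convolution the exponent pair $(\gamma+\alpha+\mfs_1,\eta+\alpha+\mfs_1-\kappa)$ can be truncated back to $(\gamma,\eta)$ with a genuinely positive power of $T$ left over, and (iii) that the small loss $\kappa$ required to extract the $T$-factor does not collide with the constraint $\eta>-\mfs_1-\alpha$. All three constraints reduce to the strict inequalities $\alpha+\mfs_1>0$ and $\eta+\alpha+\mfs_1>0$ in the hypothesis, which allow choosing $\kappa$ small enough; once this accounting is carried out, the rest of the proof reduces to standard Banach fixed point arguments combined with the symmetry preservation framework of Section~\ref{subsec:symmetries}.
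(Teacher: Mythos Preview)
Your proposal is correct and follows essentially the same approach as the paper's proof sketch: both use the multiplication rule to place $U\Xi$ in $\msD^{\gamma+\alpha,\eta+\alpha}_{\alpha;P}$, invoke Theorem~\ref{th:K0_time_scaling} and Lemma~\ref{lem:kernel_remainder_lift} to extract the factor $T^{\kappa/\mfs_1}$, verify the embedding $\msD^{\gamma+\alpha+\mfs_1,\eta+\alpha+\mfs_1-\kappa}_{(\alpha+\mfs_1)\wedge 0;P}\hookrightarrow \msD^{\gamma,\eta}_{0;P}$, and then run Banach's fixed point theorem together with the symmetry framework of Section~\ref{subsec:symmetries}. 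If anything, you have spelled out the exponent bookkeeping (the role of the constraints $\alpha+\mfs_1>0$ and $\eta+\alpha+\mfs_1>0$ in choosing $\kappa$) more explicitly than the paper, which simply records the embedding and defers to \cite[Thm.~7.8]{hairer_14_RegStruct}.
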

\begin{remark}
Note that using that the abstract mild equation given by \eqref{eq:abstract_fixed_point_map} is linear in $U$, the local time of existence  $T>0$ can be chosen independent of the initial condition. Thus, given suitable bounds on the model over a suitably large set of the form $\{z\in \mbG\,:\, d(z,P)\leq T+1\}$, by restarting the equation one obtains existence of a fixed point for arbitrary $T>0$. We refer to \cite[Theorem~5.2]{hairer_labbe_18_multiplicative} for details.
\end{remark}
\begin{remark}
In practice we will usually take $v = \bP^\gamma[G(u_0)]$ so that by Lemma~\ref{lem:Initial_cond} the condition $v\in \msD^{\gamma,\eta}_{0;P}$ is satisfied provided $u_0 \in \mcC^{\eta}(P)$, where $\eta+\alpha>-\mfs_1$. The notation $\mcK$ and $K_{-1}$ is carried over from Section~\ref{sec:r_structures} and Section~\ref{sec:evolution}, in particular $\mcK$ is the lift of $K$, the compactly supported, singular part of the fundamental solution, see Theorem~\ref{th:K0_time_scaling} and $K_{-1}$ the smooth remainder, see Lemma~\ref{lem:kernel_remainder_lift}.
\end{remark}
\begin{proof}[Proof sketch of Theorem~\ref{th:Anderson_fixed_point}]\,
The proof is a straightforward adaptation of the proof of \cite[Theorem~7.8]{hairer_14_RegStruct} and					
follows by applying Theorem~\ref{th:K0_time_scaling} and Lemma~\ref{lem:kernel_remainder_lift} to show that the map $\mcM_{\bar{T};v}$ has a unique fixed point in $\msD^{\gamma,\eta}_{0}$ for some $T>0$.
Since $\alpha=\min A$, the modelled distribution $x\mapsto \Xi$ is an element of $\msD^{\gamma,\gamma}_{\alpha;P}$ for any $\gamma>0$. Assuming $U\in \msD^{\gamma,\eta}_{0;P}$ 
one has
$	U\Xi \in \msD^{\gamma+\alpha,\eta+\alpha}_{\alpha;P}.$
So by using the singular Schauder estimate, Proposition~\ref{prop:singular_schauder}, along with the assumption that $\eta\leq 0$ we find $(\eta+\alpha)\wedge \alpha =  \eta+\alpha>-\mfs_1$, so that
\begin{equation*}
	\mcK(U\Xi ) \in \msD^{\gamma+\alpha+\beta,\eta+\alpha
		+ \beta}_{(\alpha+\beta)\wedge 0;P}\ .
\end{equation*}
Then, one readily checks that  $\msD^{\gamma+\alpha+\beta,\eta+\alpha + \beta}_{(\alpha+\beta)\wedge 0;P}\hookrightarrow \msD^{\gamma,\eta}_{0;P}$ which concludes the proof that $\mcM_{T;v}$ is a self-mapping on $\msD^{\gamma,\eta}_{\alpha;P}$. By comparing $\mcM_{T;v} U$ and $\mcM_{T;v}\bar{U}$ for $U,\,\bar{U} \in \msD^{\gamma,\eta}_0$, one then shows that $\mcM_{T;v}$ is a contraction for sufficiently small $T>0$. 

From the steps outlined above one furthermore obtains the claims of $\mfG$-periodicity and the continuous dependence of the solution on the model.
\end{proof}
\subsection{Concrete Examples of Differential Operators and Kernels}\label{subsec:kernel_examples}
The theory developed in the proceeding sections provides the analytic framework to treat singular SPDEs using the tools of regularity structures where the linear part of the equation is given by an operator $\mfL$ satisfying the assumptions of Folland's theorem, Theorem~\ref{th:folland_translation}. In this section we present a non-exhaustive list of homogeneous Lie groups and linear differential operators, $\mfL$, to which our results apply. Generically, these equations are of the form,
\begin{equation*}
\mfL u = \partial_t u - \bar{\mfL} u = F(u, \bar{X} u,\ldots,\xi),\quad u\tzero=u_0, 
\end{equation*}
where $\xi$ is a suitable noise, the non-linearity is linear in the noise and only depends on lower order derivatives of $u$ than are contained in $\mfL$ which satisfies the assumptions of Theorem~\ref{th:folland_translation}, c.f. \cite{folland_75_subelliptic}.
\begin{enumerate}[label=\textbf{Setting \arabic*}]
\item \label{setting:one} \hspace{0.2em} $P$ is a stratified Lie Group (see Definition \ref{def:stratified}) with a basis $\{X_i\}_{i=1}^m$ of $W_1$, generating the Lie Algebra. We equip $\mbG:=\mathbb{R}\times P$ with the trivial homogeneous Lie Group structure 
\begin{equation*}
	\mbG\times \mbG  \ni\left((t,x), (t',x')\right) \mapsto (t+t',x x')
\end{equation*}
with the extended scaling $\lambda \cdot (t,x) = (\lambda^2 t, \lambda \cdot x)$. The heat type operator associated to the sub-Laplacian
\begin{equation*}
	\mfL=\partial_t - \sum_{i=1}^m X_i^2.
\end{equation*}
satisfies the assumptions of Theorem~\ref{th:folland_translation}.
and it follows from \cite[Thm.~3.1 \& Prop.~3.3]{folland_75_subelliptic} that $\mfL$ is non-anticipative.
We shall discuss a notable example of the setting, that of the heat operator on the Heisenberg group, below.
\item \label{setting:two} \hspace{0.2em} A generalisation of the above setting is to take $P$ a homogeneous Lie Group and equip $\mbG:= \mbR\times P$ with the same structure as above, but this time define the scaling $\lambda \cdot (t,x)= (\lambda^{2q}t,\lambda \cdot x)$ for $q \geq 2$ and an integer. A natural class of operators is given by
\begin{equation*}
	\mfL_Q=\partial_t -  Q(X_1,\ldots,X_m)\ ,
\end{equation*}
where $Q$ is a polynomial of homogeneous degree $2q$ and such that $\mfL_Q$  and $\mfL_Q^\ast$ are hypoelliptic and $\mfL_Q$ is non-anticipative, see \cite[Eq.~7.8]{hairer_14_RegStruct}. A notable example is the heat type operator associated to the bi-sub-Laplacian on a stratified Group.

\item \label{setting:three} \hspace{0.2em} The homogeneous Lie Group $\mbG= \mathbb{R}\times P$ is equipped with a non-trivial Lie group structure. In this case we look at differential operators of the form,
\begin{equation*}
	\mfL=X_0- Q(X_1,\ldots,X_m) \ .
\end{equation*}
where $X_0 = \partial_t + \bar{X}_0$ and the operators $\mfL$, $\mfL^\ast$ satisfy the criteria of Folland's theorem and $\mfL$ is non-anticipative. Examples are parabolic/restricted H\"ormander operators, c.f.\ \cite[Def.~1.1]{hairer_16_Advanced} or \cite[Ch.~3]{bell_95_degenerate} and the kinetic Fokker--Planck operator fits into this setting.
\end{enumerate}
\subsubsection{Heat Operator on the Heisenberg Group} \label{subsec:heisenberg_heat}
In the context of \ref{setting:one} and \ref{setting:two} we recall that the Heisenberg group, $\mbH^n$, defined in Section \ref{subsec:group_examples}, is a stratified Lie group. Identifying $\mbH^n = \mbR^{2n}\times \mbR$ we recall that 
\begin{equation*}
A_i(x,y,z) = \partial_{x_i} + y_i \partial_z,\quad B_i(x,y,z) = \partial_{y_i} - x_i \partial_z,\quad C(x,y,z) = \partial_z
\end{equation*}
are left-translation invariant vector fields.
It is readily checked that the operator
\begin{equation*}
\mfL = \partial_t -\sum_{i=1}^n (A_i^2+B_i^2),
\end{equation*}
is homogeneous with respect to the scaling, 
\begin{equation*}
\lambda \cdot (t,x,y,z):=(\lambda^{2}t,\lambda x,\lambda y,\lambda^2z),
\end{equation*}
Applying \cite[Thm.~2.1]{folland_75_subelliptic} there exists a unique, fundamental solution $K:\mbH^n\rightarrow \mbR$ associated to $\mfL$ which is smooth away from $e$ and satisfies the assumptions of Lemma \ref{lem:kernel_decompose}. As a result our framework allows for the study of semi-linear evolution equations of the form,
\begin{equation*}
\partial_t u - \mfL u = F(u,A_1u,\ldots,A_n u,B_1u,\ldots,B_nu, \xi), \quad u\tzero =u_0.
\end{equation*}
\subsubsection{Matrix Exponential Groups and Kolmogorov Type Operators}\label{subsec:k_operators}
A wide class of operators fitting into \ref{setting:three} above are the Kolmogorov (or K-type) operators on $\mbR\times \mbR^n$ for $n\geq 1$. The group structure is a matrix exponential group, as defined in Section \ref{subsec:group_examples}. Given two, rational, $n\times n$ block matrices,
\begin{equation*}
A = \begin{pmatrix}
	A_0 & \cdots & 0\\
	\vdots & \ddots & \vdots\\
	0 &\cdots& 0
\end{pmatrix},\qquad B = \begin{pmatrix}
	0 & B_1 & 0& \cdots &0\\
	0& 0 & B_2 &\cdots &0\\
	\vdots & \vdots & \ddots & \ddots & \vdots\\
	0 & 0&\cdots & 0 & B_{k}\\
	0 & 0 &\cdots  &0 &0
\end{pmatrix}. 
\end{equation*}
with $A_0$ constant, positive definite and of rank $q\leq n$ and each $B_i$ a $p_{i-1}\times p_{i}$ block matrix of rank $p_i$, where $q =p_0 \geq p_1\geq \cdots \geq p_{k}$ and $\sum_{i=1}^{k}p_i=n$. Then the linear operator, defined for $u:\mbR\times \mbR^{n}\rightarrow \mbR$ by
\begin{equation*}
\mfL u(t,z) = \partial_tu(t,z) - \nabla \cdot (A \nabla u(t,z)) + B z \cdot \nabla u(t,z),
\end{equation*}
satisfies H\"ormander's condition. Equipping $\mbR\times \mbR^n$ with the matrix exponential group structure associated to $B$ (and defined in Section \ref{subsec:group_examples}) it is readily checked that $\mfL$ satisfies the assumptions of Theorem~\ref{th:folland_translation}. In fact, there is an explicit formula for the fundamental solution. We first let
\begin{equation*}
C(t) := \frac{1}{t} \int _0^t \exp(-sB^\top) A \exp(-s B) \dd s
\end{equation*}
and then using the same notation for the effective spatial dimension, $|\bar{\mfs}|:=\sum_{i=0}^k (2i+1)p_i$,
\begin{equation*}
K(t,z) = \left(\frac{1}{(4\pi)^{n} t^{|\bar{\mfs}|} \det C(t)}\right)^{1/2} \exp\left(-\frac{C(t)^{-1} z \cdot z}{4t}\right).
\end{equation*}
The kinetic Fokker--Plank operator falls into this class. Consider the domain $\mbR\times \mbR^{2d}$, with variables $(t,x,v)$ and set, as block matrices,
\begin{equation*}
A = \begin{pmatrix}
	\mbI_d & 0\\
	0 & 0
\end{pmatrix}, \quad B= \begin{pmatrix}
	0& \mbI_d\\
	0&0
\end{pmatrix}.
\end{equation*}
Then the associated operator is
\begin{equation*}
\mfL u(t,v,x) = \partial_t u(t,v,x) - \Delta_v u(t,v,x) - v\cdot \nabla_x u(t,v,x).
\end{equation*}
and the fundamental solution in fact has the explicit form,
\begin{equation*}\label{eq:kFP_kernel}
K(t,v,x)= \frac{2\sqrt{3}}{d(4\pi)^d t^{2d+1}} \exp\left(-\frac{|v|^2}{4t} -\frac{3\left|x+\frac{v}{2}t\right|^2}{t^3}\right).
\end{equation*}
See~\cite[Sec.~7]{ilin_khashminski_64_brownian} for a derivation in the case $d=1$ and \cite{manfredini_97} in the general case. 
\section{A Regularity Structure for Anderson Equations}\label{sec:PAM_structure}
In this section we present a brief construction of a sufficiently rich regularity structure $\mcT=(\opT,\opG)$ in order to solve abstract fixed point equations of the form 
\begin{equation}\label{eq:abstract_equation}
U= \mathcal{I} (U\Xi)+U_0 \ ,
\end{equation}			
as treated by Theorem~\ref{th:Anderson_fixed_point}, where $\mathcal{I}$ denotes the abstract part of the lift of a $2$ regularising kernel as in Section~\ref{subsec:schauder}, $\Xi$ is the lift of a noise of regularity $\alpha$ and $U_0$ is the polynomial part of $U$. In order for the equation to be subcritical one needs to impose $\alpha>-2$ and thus it follows
from the previous discussions that we can look for a fixed point in a space $\msD^\gamma_P$ for $\gamma<2$. Thus we only need to incorporate abstract polynomials $\bfeta_i$ with $\fraks_i<2$ into our regularity structure and since one has the following identities in this case 
\begin{equation}\label{eq:algebraic_simplification}			
\eta_i(xy)= \eta_i(x)+\eta_i(y), \qquad \opP^\gamma_x[f](\,\cdot\,)= f(x)+ \sum_{i\, :\,  \fraks_i< \gamma} \eta_i(\cdot) X_i f(x)
\end{equation}		
one can work with essentially a truncated version of the algebraic framework developed in the Abelian setting $\mathbb{R}^d$ by \cite{bruned_hairer_zambotti_19_algebraic}.
Therefore, in the remainder of this short section, we freely use notations and definitions from \cite{bruned_hairer_zambotti_19_algebraic}, often without further explanation. 
\begin{remark}
Let us emphasize that for general equations the construction described below does not apply. We believe it would be interesting to study the ramifications of working on an general homogeneous Lie group to the results and constructions in \cite{bruned_hairer_zambotti_19_algebraic}, \cite{bruned_chandra_chevyrev_hairer_21_renormalising}.
\end{remark}
We define two edge types $\mathfrak{L}=\{\mathfrak{t},\Xi\}$ and declare $|\mathfrak{t}|=2$ and $|\Xi|= \alpha$ and as well as a scaling in the sense of \cite{bruned_hairer_zambotti_19_algebraic} $\underline{ \fraks}=(\fraks_1,\ldots, \fraks_n)$, 
where $n= \max\{i\leq d: \fraks_i<2\}$. \footnote{ Recall form Section~\ref{sec:analysis_background} that $\fraks_1,\ \fraks_2,..,\fraks_n$ are the first $n$ eigenvalues of the scaling $\fraks$ in increasing order and that $X_i$ are the corresponding eigenvectors}
Motivated by \eqref{eq:abstract_equation} we define the naive (normal) rule, c.f. \cite[Def.~5.7]{bruned_hairer_zambotti_19_algebraic}
\begin{equation}\label{eq:naive_rule}
\mathring{\mathfrak{R}}(\mathfrak{
	t})= \{(\mathfrak{t},\Xi), (\mathfrak{t}), (\Xi), ()\}, \quad \mathring{\mathfrak{R}}(\Xi)=\{()\}
	\end{equation}
	and consider its completion $\mathfrak{R}$ constructed in \cite[Prop. 5.21]{bruned_hairer_zambotti_19_algebraic} (note that this second step is only necessary if $\alpha\leq -3/2$).
	We denote by $\CT^{\ex}$ and $\CT_+^{ex}, \CT_-\subset \CT_-^{ex}$ the corresponding spaces constructed in \cite[Def.~5.26, Def.~5.29]{bruned_hairer_zambotti_19_algebraic} and summarise some important properties of these spaces.
	\begin{prop}\label{prop:struct}
The triple $\{\CT^{\ex}, \CT^{\ex}_+, \CT_-^{ex}, \CT_-\}$ has the following algebraic structure.
\begin{itemize}
	\item The spaces $\CT^{\ex}_+, \CT_-, \CT_-^{ex}$ are graded Hopf algebras with coproduct  denoted by by  $\triangle^+$ and $\triangle^-$ respectively. 
	\item The space $\CT^{\ex}$ is a right comodule over $\CT^{\ex}_+$ .
	\item The spaces $\CT^{\ex}$ and $\CT_+^{ex}$ are left comodules over $\CT_-$ .
	\item These spaces satisfy the co-interaction property, i.e. the following diagram commutes 
	\begin{equation*}
		\begin{tikzcd}
			\CT^{\ex} \arrow{r}{\triangle^+} \arrow[swap]{d}{\triangle^-} & \CT^{\ex}\otimes \CT^{\ex}_+ \arrow{d}{\mathcal{M}^{(1,3)(2)(4)}\circ(\triangle^- \otimes\triangle^-)} \\%
			\CT^{\ex}_-\otimes \CT^{\ex} \arrow{r}{\id\otimes \triangle^+}&  \CT^{\ex}_-\otimes \CT^{\ex} \otimes \CT^{\ex}_+
		\end{tikzcd}
	\end{equation*} 
\end{itemize} 
\end{prop}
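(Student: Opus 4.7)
The plan is to reduce the claims to the analogous statements already proved in \cite{bruned_hairer_zambotti_19_algebraic} in the Euclidean (abelian) setting. The essential observation, already hinted at by the discussion preceding the proposition, is that the subcriticality condition $\alpha > -2$ together with the completion procedure of \cite[Prop.~5.21]{bruned_hairer_zambotti_19_algebraic} forces every polynomial decoration appearing in a tree in the support of $\mathfrak{R}$ to have scaled degree strictly less than $2$. Consequently only the basis vectors $X_1,\ldots, X_n$ (with $\mathfrak{s}_i<2$) can arise, and for these the group-law expansion in Proposition~\ref{prop:factorisation_formula} reduces to the additive identity in \eqref{eq:algebraic_simplification}. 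In other words, on the sub-coalgebra of polynomial decorations that we actually need, $\mathbb{G}$ behaves like an abelian group.

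First, I would check that the rule $\mathfrak{R}$ is normal and subcritical in the sense of \cite[Def.~5.7, Def.~5.14, Def.~5.22]{bruned_hairer_zambotti_19_algebraic}; normality is immediate from the explicit form \eqref{eq:naive_rule} and the completion procedure, while subcriticality follows from $\alpha > -2$, $|\mathfrak{t}|=2$, and the standard homogeneity count for trees built from rule \eqref{eq:naive_rule}. At this point the combinatorial setup of \cite[Sec.~5]{bruned_hairer_zambotti_19_algebraic} applies verbatim to produce the sets of decorated trees underlying $\CT^{\ex}$, $\CT^{\ex}_+$, $\CT_-$ and $\CT^{\ex}_-$, together with the coproducts $\triangle_+$, $\triangle_-$.

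Next I would verify item by item that the proofs in \cite[Sec.~6]{bruned_hairer_zambotti_19_algebraic} carry over. The coassociativity of $\triangle_+$ and $\triangle_-$, the Hopf-algebra structures on $\CT^{\ex}_+$, $\CT_-$ and $\CT^{\ex}_-$ (\cite[Thm.~6.15, Thm.~6.20]{bruned_hairer_zambotti_19_algebraic}), the comodule structures (\cite[Thm.~6.28]{bruned_hairer_zambotti_19_algebraic}), and the co-interaction identity (\cite[Thm.~6.32]{bruned_hairer_zambotti_19_algebraic}), are all proved by purely combinatorial manipulations of forests and extraction-contraction operations; they never use any specific algebraic property of $\mathbb{R}^d$ beyond the additive behaviour of the polynomial decorations under the group law (i.e.~the binomial identity for $\eta^I$). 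By \eqref{eq:algebraic_simplification}, every identity of this type needed in the BHZ proofs continues to hold in our setting once one restricts to decorations of scaled degree below the homogeneity threshold $2$, which by subcriticality is automatic.

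The main subtlety—and the place where the proof genuinely differs from BHZ—is in checking that the constants $C^{I,J}_j$ appearing in Proposition~\ref{prop:factorisation_formula}, which in general prevent one from treating $\mathbb{G}$ as abelian, do not contribute. I would handle this by observing that these constants only couple indices $I,J$ with $d(I)+d(J)=\mathfrak{s}_j$, so they are relevant only when $\mathfrak{s}_j \geq 2$; but any polynomial decoration of scaled degree $\geq 2$ is excluded by the normality of $\mathfrak{R}$ combined with subcriticality, since such a factor would force a tree of homogeneity outside the range prescribed by $\mathfrak{R}$. Once this exclusion is made precise, the combinatorial formulas for $\triangle_+$ and $\triangle_-$ become literally those of \cite{bruned_hairer_zambotti_19_algebraic}, and the four assertions of the proposition follow by direct citation. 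The hardest bookkeeping step, and the place where one must be most careful, is verifying that the truncation is respected under iteration of $\triangle_+$ and under the twisted antipode used to define $\triangle_-$; this amounts to checking that extractions and contractions cannot produce polynomial labels of scaled degree $\geq 2$, which follows by a degree-counting argument analogous to the proof of \cite[Prop.~5.34]{bruned_hairer_zambotti_19_algebraic}.
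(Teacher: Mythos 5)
Your core observation — that the BHZ constructions of $\CT^{\ex}$, $\CT^{\ex}_+$, $\CT_-$, $\CT^{\ex}_-$ and the coproducts $\triangle^+$, $\triangle^-$ are purely combinatorial operations on decorated trees, and the proofs of the Hopf-algebra, comodule and co-interaction properties never invoke the group law on the underlying space — is correct, and it is essentially the whole content of the paper's treatment of this proposition: the paper simply cites \cite{bruned_hairer_zambotti_19_algebraic}. However, the subsequent half of your argument introduces a spurious difficulty and then ``resolves'' it with a factually incorrect claim, which constitutes a genuine gap.

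Concretely: the constants $C^{I,J}_j$ from Proposition~\ref{prop:factorisation_formula} never enter the BHZ combinatorics at all. BHZ defines $\triangle^+$ on abstract polynomial symbols by declaring them primitive ($\triangle^+ X_k = X_k\otimes 1 + 1\otimes X_k$) and extending multiplicatively; this is a formal declaration on decorated trees, not a consequence of, nor coupled to, the group law on $\mbR^d$ or $\mbG$. The non-abelian Taylor expansion becomes relevant only when one tries to build a \emph{model} realising these symbols as actual monomials on $\mbG$, which the paper addresses separately — and that is exactly why the paper subsequently introduces the truncated subspaces $\opT\subset\CT^{\ex}$ and $\CT_+\subset\CT^{\ex}_+$ (degree $<\gamma<2$ in polynomial decorations). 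Moreover, your claim that ``subcriticality forces every polynomial decoration in a tree in the support of $\mathfrak{R}$ to have scaled degree $<2$'' is false: in BHZ, the rule constrains edge/noise types at nodes, not the polynomial node-decorations $\mathfrak{n}$, so the full spaces $\CT^{\ex}$, $\CT^{\ex}_+$ do contain arbitrary polynomial labels. Thus the proposition as stated (about the full BHZ spaces) holds by direct citation; your attempted degree-exclusion argument is both unnecessary and incorrect, and one must be careful not to confuse the combinatorial structure of the coproducts with the later model-building constraint that motivates the paper's passage to $\opT$ and $\CT_+$.
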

Note that the fact that we can leverage the framework developed in \cite{bruned_hairer_zambotti_19_algebraic} relies crucially on the fact that it suffices to include polynomials of degree $<2$, which translates into the algebraic relation $\triangle^+(\bfeta_j) = \bfeta_j\otimes 1 + 1\otimes \bfeta_j$ whenever $\fraks_j<2$. In general one would need a modification $\tilde{\triangle}^+$ of the map $\triangle^+$ such that 		\linebreak
$\tilde{\triangle}^+(\bfeta_j) = \sum_{d(I)+d(J)=\fraks_j} C_j^{I,J}\bfeta^I \otimes \bfeta^J$. Furthermore, one would need a modification $\tilde{\triangle}^-$ of triangle $\triangle^-$ and both modifications, $\tilde{\triangle}^+$ and $\tilde{\triangle}^-$, would depend on the group structure of $\mathbb{G}$, since the form of higher order Taylor polynomials depends crucially on it. To circumvent these considerations we restrict ourselves to working with the following subspaces.

Let $\opT$ consists of the span of those basis vectors (trees) $T\in \CT^{\ex}$ where $T$ and its grading $|T|_+$ (c.f. \cite[Def.~5.3]{bruned_hairer_zambotti_19_algebraic}) satisfy the following properties
\begin{itemize}
\item\hspace{0.2em} it is planted and such that $|T|_+<\gamma$ or $T=T'\Xi$ where $T'$ is planted and satisfies $|T'|_+<\gamma$,
\item \hspace{0.2em} the only polynomial decorations appearing are at the second highest node and of degree $<\gamma$.
\end{itemize}
Similarly, we set 
$\CT_+\subset\CT_+^{ex}$ to consist of the subalgebra  generated by those trees $T\in \CT^{\ex}_+$ satisfying
\begin{itemize}
\item \hspace{0.2em} $|T|_+<\gamma$
\item \hspace{0.2em} the only polynomial decorations appearing are at the second highest node and of degree $<\gamma$.
\end{itemize} 
Observe that Proposition \ref{prop:struct} still holds with $\CT^{\ex}_+$ replaced by $\CT_+$ and $\CT^{\ex}$ replaced by $\opT$. We define $\mathcal{G}_+$ to be the character group of  $\CT_+$. From now on our regularity structure shall be given by 
\begin{equation}\label{eq:constructed_reg_struc}
\mcT=(\opT,\opG) \ ,
\end{equation}
where $\opG$ consists of the elements $\Gamma\in L(T,T) $ of the form $\Gamma= (\id\otimes f)\triangle^+$
for some $f\in \mathcal{G}_+$.
\subsection{Smooth Models}
In order to define the models, we have to deviate slightly from \cite{bruned_hairer_zambotti_19_algebraic}. For $i\in \{1,...,n\}$ we write $\bfeta_i$ instead of $X_i$ for abstract polynomials and we declare a map $\bfPi: \CT \to C^\infty (\mbG)$ (c.f. \cite[Def.~6.8]{bruned_hairer_zambotti_19_algebraic}) to be \textit{admissible} for a smooth function $\xi\in C^{\infty}(\mbG)$ and a kernel $K$ as in Assumption~\ref{ass:K0_decompose}, if for every $x\in \mbG$
$$\bfPi \Xi(x) = \xi(x), \quad \Pi\bfeta_i (x)= \eta_i (x) $$
and 
$$\bfPi \mathcal{I} \tau= K \bfPi\tau  , \quad \bfPi \mathcal{I}_i \tau= (X_iK)\bfPi\tau \ ,$$
where we write $\mathcal{I}$ resp. $\mathcal{I}_i$ for the operation of attaching an edge of type $\mathfrak{t}$, resp. $(\mathfrak{t},e_i)$ to the root.
For $\bfPi$ an admissible map as above and $z\in \mathbb{G}$ we recursively define $f_z\in \mathcal{G}_+$ and $\Pi_z$ by
\begin{align*}
f_z(\mathcal{I} \tau) &=\, -\hspace{-0.2em} \sum_{d(I)<|\mathcal{I}^\mathfrak{t}(\tau)|_+} \eta^I(z^{-1}) X^I K(\Pi_z\tau)(z)\\
\Pi_z \mathcal{I}\tau(\bar{z})&= K(\Pi_z\tau)(\bar{z}) \,-\hspace{-0.2em} \sum_{d(I)<|\mathcal{I}(\tau)|_+} \eta^I(z^{-1}\bar{z}) X^I K(\Pi_z\tau)(z)\ ,
\end{align*}
and by the analogous expression for $\mathcal{I}_i\tau$, where $i\in \{1,...,n\}$, c.f. \cite[Lem.~6.9]{bruned_hairer_zambotti_19_algebraic}.
We say that $\bfPi$ is \textit{canonical}, if 
it satisfies $\bfPi \Xi \tau =\bfPi \Xi \bfPi \tau$ for every $\tau$ such that $\Xi \tau\in \mathcal{T}$, and it does not see the extended decoration, i.e is reduced in the sense of \cite[Def.~6.21]{bruned_hairer_zambotti_19_algebraic}.
One can check that for such canonical lifts $\bfPi$
the maps $\Pi_z= (\bfPi\otimes f_z) \triangle^+$ and $\Gamma_{\gamma_{x,y}}$
where $\gamma_{x,y}=  (f_x^{-1}\otimes f_y)\triangle^+$ form a model
\begin{equation}\label{eq:canonical_model}
M=(\Pi, \Gamma_{\gamma})\ . 
\end{equation}
The renormalisation group $\mathcal{G}_-$ is defined to be the character group of the Hopf algebra $\CT^{\ex}_-$. We observe that in our case $\CT^{\ex}_-$ coincides (as an algebra) with the free algebra spanned by a family of linear trees. (In the case $\alpha>-3/2$, one furthermore has $\CT^{\ex}_-=\CT_-$  and we list the trees explicitly in the next section).
The group $\mathcal{G}_-$ acts on models as follows. For $g\in\mathcal{G}_-$ let $R_g=(g\otimes \id )\triangle^-$, we set  
\begin{equation}\label{eq:local_ref}
\hat{M}^g=(\hat{\Pi}^g, \hat{\Gamma}^g)= (\Pi R_g, \Gamma_{\gamma R_g}) \ .
\end{equation}
One can check that every element the orbit of a smooth canonical model as in \eqref{eq:canonical_model} is indeed a model.

\begin{remark}\label{rem:reduce_convergence}
Note that in order to show convergence of a family of models $\hat{M}^{(\eps)}=(\hat{\Pi}^{(\eps)}, \hat{\Gamma}^{(\eps)})$ of the form \eqref{eq:local_ref} for the regularity structure $\mathcal{T}= (\opT, \opG) $ for some smooth underlying noises $\xi_\epsilon$ and a sequence of elements $g_\epsilon\in {\mathcal{G}}_-$, it is sufficient to show convergence of 
$\hat{\Pi}^\eps|_{\opT_{\leq 0}}$ due to Propositions~\ref{prop:recursive_bounds_1} and \ref{prop:recursive_bounds_2}.
\end{remark}
We emphasise again that this section relied on the assumption $\gamma<2$.
\section{The Anderson Equation on Stratified Lie Groups}\label{sec:worked_application}
In this section we apply the machinery developed in this article in order to solve a class of parabolic Anderson type models on compact quotients of arbitrary $d$-dimensional, stratified, Lie groups. Let $\mbG$ be a stratified Lie group (recall Definition~\ref{def:stratified}) with Lie algebra $\mfg$ and $\mfG$ a lattice subgroup with its canonical 
left action on $\mbG$.  
Recall from Section~\ref{sec:discrete_subgroups} the definition of the quotient map $\pi: \mbG \to S =\mbG/ \mfG$,  we shall, without loss of generality assume that there exists a fundamental 
domain of this action, $K\subset \mbG$, such that $e\in B_N(e) \subset K $ for some $N\in \mathbb{N}$, large enough.\footnote{This is for example achieved by replacing the homogeneous norm on $\mbG$ with a multiple of itself. We only require this condition in order to make the integrands in Section~\ref{subsec:stochastic_estimates} supported on the fundamental domain $K$.}
For $m\leq d$, let $X_1,...,X_m$ be a basis of $W_1$, the family of left invariant vector fields generating $\mfg$ and we write $\tilde{X}_i=\pi_\ast X_i$ for the push-forward vector fields on $S$. Finally we recall the notion of convolution $*_S$ on $S$ induced by the right action of $\mbG$ on $S$, as defined in Section~\ref{sec:discrete_subgroups}. 

We shall solve equations where the driving noise satisfies the following assumption for some $\bar{\alpha}>0$.
\begin{assumption}\label{ass:covariance_assumption}
For $\bar{\alpha}\in (-|\fraks|/2, 0)$, assume that $\xi$ is of the form $\xi =\tilde{\xi}*_S \mfc_{\bar{\alpha}}$, where $\tilde{\xi}$ denotes a Gaussian white noise\footnote{Recalling that $S$ is a measure space, $\tilde{\xi}$ can be defined as a centred Gaussian field, over $L^2(S)$ and with covariance given by the $L^{2}(S)$ inner product.} on $S$ 
and $\mfc_{\bar{\alpha}}: \mbG\setminus\{ e \} \to \mathbb{R}$ is a function with bounded support, satisfying $|\mfc_{\bar{\alpha}} (x)|\lesssim |x|^{-|\fraks|+(|\fraks|/2+ \bar{\alpha})}$
as well as $\mfc_{\bar{\alpha}}(x^{-1})= \mfc_{\bar{\alpha}} (x)$ for all $x\in \mbG$.
\end{assumption}
\begin{remark}
For $\xi$ satisfying Assumption~\ref{ass:covariance_assumption}, a simple calculation shows that for any $\phi, \psi\in L^2(S)$,
$$ \mbE[\langle \xi, \phi\rangle \langle \xi, \psi\rangle ] = \int_S  (\phi*_S \mfc_{\bar{\alpha}})(x)( \psi*_S \mfc_{\bar{\alpha}})(x)\dd x \ . $$ 
In particular, coloured noise where the regularisation is given by a negative fractional power of the sub-Laplacian fits into our setting, c.f.\ \cite{baudoin_ouyang_tindel_wang_22_hPAM_ito}.
\end{remark}
\begin{remark}
The assumption that $\mfc_{\bar{\alpha}}(z)=\mfc_{\bar{\alpha}}(z^{-1})$ is not crucial, but allow us to reuse computations previously carried out for equations on Euclidean domains in \cite{hairer_pardoux_15_wong_zakai}.  A robust treatment of similar equations in the Euclidean setting, driven by more general driving noise, even beyond the Gaussian setting, is given in \cite{chandra_shen_17_nonGaussian}. A similar remark holds for the choice of mollifier $\rho$ in Theorem~\ref{th:main_application} below.
\end{remark}	
In the above setting, we have the following theorem.
\begin{theorem}\label{th:main_application}
Fix $T>0$ arbitrary and let $\xi$ be a noise satisfying Assumption~\ref{ass:covariance_assumption} with $\bar{\alpha}\in (-3/2,-1)$ and $\eta \in (-(2+\bar{\alpha}),0)$. For $\eps\in (0,1)$, let $u_{\eps}: [0,T]\times S\to \mathbb{R}$ be the unique solution to the random PDE 
\begin{equation}\label{eq:random_pde}
	\partial_t u_\eps= \sum_{i=1}^m \tilde{X}_i^2 u_\eps + u_\eps(\xi_\eps-c_\eps), \qquad u|_{t=0}=u_0\in \mcC^{\eta}(S) \ , 
\end{equation}
where $\xi_{\eps}:= \xi^\alpha *_S \rho^\eps$, for $\rho\in \mathcal{C}_c^{\infty}(B_1(e))$ satisfying $\rho(z)=\rho(z^{-1})$ and $\{c_\eps(\rho)\}_{\eps\in (0,1)}$ is a family of (diverging) constants, depending on $\rho$ and such that $|c_\eps| \lesssim \varepsilon^{2+2\bar{\alpha}}$.
Then, there exists a random function $u: [0,T]\times S\to \mbR$, independent of the chosen mollifier, $\rho\in \mathcal{C}_c^{\infty}(B_1(e))$, such that
$$\sup_{(t,x)\in [0,T]\times S} t^{-\eta}|u_\eps(t,x)-u(t,x)|\to 0$$
in probability as $\eps\to 0$.
\end{theorem}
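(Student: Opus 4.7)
The approach is to apply the general framework developed throughout the paper: lift \eqref{eq:random_pde} to an abstract fixed point problem in singular modelled distributions, solve it via Theorem~\ref{th:Anderson_fixed_point}, and obtain the limit $u$ from a family of renormalised models that are shown to converge in probability. First, I would instantiate the regularity structure $\mcT=(\opT,\opG)$ of Section~\ref{sec:PAM_structure} with minimal degree $\alpha=\bar{\alpha}-\kappa$ for some small $\kappa>0$, equipped with the abstract integration operator $\mcI$ of order $\mfs_1=2$ corresponding to the fundamental solution $G$ of $\partial_t-\sum_i X_i^2$ on $\mbG$, decomposed as $G=K+K_{-1}$ via Lemma~\ref{lem:kernel_decompose}. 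Since $(2+\bar{\alpha})+\eta>0$ by hypothesis, Theorem~\ref{th:Anderson_fixed_point} applies to the lift of \eqref{eq:random_pde} once a model is fixed, producing a unique fixed point $U\in \msD^{\gamma,\eta}_{0;P}$ on $[0,T]\times S$ (iterating the local result, using linearity in $U$), which is $\mfG$-symmetric provided the model is $\mfG$-adapted. For smooth models this $U$ reconstructs via Remark~\ref{rem:reconstruction_smooth_models} to the classical solution of the corresponding random PDE.

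Next I would build the family of renormalised models as in Section~\ref{sec:PAM_structure}: the mollified noise $\xi_\eps$ is smooth and $\mfG$-periodic, yielding a canonical smooth model $M^{(\eps)}$ adapted to the $\mfG$-action, and I would apply a character $g_\eps\in\mcG_-$ to obtain the renormalised model $\hat{M}^{(\eps)}$ via \eqref{eq:local_ref}. For $\bar{\alpha}\in(-3/2,-1)$ a direct enumeration of the reduced trees in $\opT_{\le 0}\cap \mcT_-$ shows that the only tree of negative degree requiring nontrivial renormalisation is $\mcI(\Xi)\Xi$ of degree $2\bar{\alpha}+2\in(-1,0)$; setting $g_\eps(\mcI(\Xi)\Xi)=-c_\eps$ with
\begin{equation*}
c_\eps \;=\; \mbE\bigl[\bigl(K*\xi_\eps\bigr)(e)\,\xi_\eps(e)\bigr],
\end{equation*}
and trivially on polynomially-decorated extensions, produces the renormalisation whose effect on the reconstructed equation is exactly the subtraction of $c_\eps U$ from the nonlinearity; this is verified by unfolding the definitions of $R_{g_\eps}$ and $\mcR$ at the level of smooth models. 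The bound $|c_\eps|\lesssim\eps^{2+2\bar{\alpha}}$ follows from a direct computation using Assumption~\ref{ass:covariance_assumption} and Lemma~\ref{lem:K_hat_def}.

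The crux of the proof is stochastic convergence of $\hat{M}^{(\eps)}$. By Remark~\ref{rem:reduce_convergence} and Propositions~\ref{prop:recursive_bounds_1}--\ref{prop:recursive_bounds_2} it suffices to prove, for each $\tau\in\opT_{\le 0}$ and some $\theta,\kappa>0$, bounds of the form
\begin{equation*}
\mbE\bigl|\langle\hat{\bfPi}^{(\eps)}\tau,\varphi^\lambda_x\rangle\bigr|^{2}\lesssim\lambda^{2|\tau|+\kappa},\qquad
\mbE\bigl|\langle(\hat{\bfPi}^{(\eps)}-\hat{\bfPi}^{(\eps')})\tau,\varphi^\lambda_x\rangle\bigr|^{2}\lesssim(\eps\vee\eps')^{\theta}\,\lambda^{2|\tau|+\kappa},
\end{equation*}
uniformly over $\varphi\in\mfB_r$, $\lambda\in(0,1]$ and $x$ in the fundamental domain, which via equivalence of moments in the Gaussian chaoses and a Kolmogorov-type criterion on $\mbG$ imply convergence in probability of $\hat{M}^{(\eps)}$ in $\mcM_{\mcT}$. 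Using the covariance from Assumption~\ref{ass:covariance_assumption}, each such second moment reduces to a convolution integral involving $K$ and $\mfc_{\bar{\alpha}}$, and because the supports of the relevant kernels sit strictly inside the fundamental domain (which contains $B_N(e)$), one may replace integration over $S$ by integration over $\mbG$ without boundary effects. The required bounds then follow from essentially the same graphical arguments as in the proofs of \cite{hairer_labbe_18_multiplicative}, with Euclidean homogeneity and partial derivatives replaced by homogeneous scaling and left-invariant vector fields, combined with the pointwise decay estimates of $K$ and its derivatives coming from Lemma~\ref{lem:K_hat_def}. The divergent tree $\mcI(\Xi)\Xi$ is precisely the one whose mean was removed by $c_\eps$, so after renormalisation it lies in the second Wiener chaos and has a well-defined limit; this verification is the main technical obstacle.

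Finally, the local Lipschitz continuity of the solution map in Theorem~\ref{th:Anderson_fixed_point} combined with reconstruction yields $U_\eps\to U$ in $\msD^{\gamma,\eta}_{0;P}$ over $O_T$ in probability, and reconstruction of the underlying smooth models gives $u_\eps=\mcR U_\eps\to u:=\mcR U$; the prefactor $t^{-\eta}$ is the standard weight associated to the singularity of singular modelled distributions near the time-zero subgroup $P$. Independence of the limit from $\rho$ follows because any two admissible mollifiers produce renormalisation constants differing by a quantity remaining bounded as $\eps\to 0$, and the corresponding models converge to the same limit in $\mcM_\mcT$ by the same stochastic estimates above.
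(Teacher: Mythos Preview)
Your proposal is correct and follows essentially the same route as the paper's proof: build the regularity structure of Section~\ref{sec:PAM_structure}, lift the equation via Theorem~\ref{th:Anderson_fixed_point}, renormalise the canonical smooth model by the single constant $c_\eps=\mbE[\xi_\eps(e)\,K(\xi_\eps)(e)]$ attached to the tree $\mcI(\Xi)\Xi$, verify (using Remark~\ref{rem:reconstruction_smooth_models}) that the reconstructed solution of the renormalised model solves \eqref{eq:random_pde}, and conclude from convergence of the models (Proposition~\ref{prop:models_actually_converge} and Proposition~\ref{prop:stochastic_estimates}) together with continuity of the solution map. The only point where you are slightly less explicit than the paper is the justification that no other negative tree needs renormalisation: the paper notes that $\mcT_-$ is generated by $\Xi$, $\mcI(\Xi)\Xi$ and $\mcI(\mcI(\Xi)\Xi)\Xi$, and that the characters on the two odd trees may be taken to vanish because the noise is centred Gaussian, which you should state rather than leave implicit in the phrase ``requiring nontrivial renormalisation''.
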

\begin{remark}
We point out that the proof of Theorem~\ref{th:main_application} given below modifies mutatis mutandis to the case where the noise is allowed to depend on time.
In this case the natural modification of Assumption~\ref{ass:covariance_assumption}  is to assume that the noise is of the form $\xi =\tilde{\xi}\ast  \mfc_{\bar{\alpha}}$, where $\tilde{\xi}$ is a space-time white noise on $\mbR\times S$ and $\mfc_{\bar{\alpha}}$ satisfies $|\mfc_{\bar{\alpha}}(t,x)|\leq (|x|+|t|^{1/2})^{-|\fraks|/2-1+ \bar{\alpha}}$. The only difference in the proof is to replace all convolutions and integrals over $S$ with convolutions and integrals over $\mbR\times S$.
We point out that this form of noise is not covered by \cite{baudoin_ouyang_tindel_wang_22_hPAM_ito}, where the white in time assumption is required in order to make use of martingale arguments, see also Remark~\ref{rem:other_paper} below.
\end{remark}
\begin{remark}
The range of $\bar{\alpha}$ considered in Theorem~\ref{th:main_application} does not cover the full subcritical regime of the Anderson equation on a stratified Lie group. Indeed one expects an analogue of Theorem~\ref{th:main_application} to hold for any $\bar{\alpha} \in (-2,0)$. The main obstacle is the absence of a BPHZ type theorem in the setting of homogeneous Lie groups, see \cite{chandra_hairer_16,hairer_steele_23_spectral} for the corresponding result on $\mbR^d$. Let us further point out that in order for the Carnot group $\mbG$ to be non-trivial it must have scaled dimension $|\mfs|\geq 4$, hence the sub-critical regime for $\bar{\alpha}$ is necessarily contained in Assumption~\ref{ass:covariance_assumption}. 
\end{remark}

\begin{remark}\label{rem:other_paper}
Let us point to the work \cite{baudoin_ouyang_tindel_wang_22_hPAM_ito}, where, using It\^o calculus methods, the authors construct a solution-theory in the case, when the noise is white in time and coloured in space, corresponding to the full subcritical regime on the Heisenberg group. This probabilistic notion of solution circumvents the need for explicit renormalisation as in \eqref{eq:random_pde}. Furthermore, their results are obtained on the infinite volume group $\mbH^n$, rather than the compact quotient space that we consider here. We expect that by using weighted modelled distributions, c.f. \cite{hairer_labbe_18_multiplicative}, together with the techniques developed in \cite{hairer_pardoux_15_wong_zakai}, one can recover the solution constructed in \cite{baudoin_ouyang_tindel_wang_22_hPAM_ito} together with a Wong--Zakai type theorem in the analogue of the regime ${\bar{\alpha}\in (-3/2, -1)}$, treated by Theorem~\ref{th:main_application}. If one is interested in the full subcritical regime however,
this would not just require a suitable BPHZ-type theorem in the setting of homogeneous Lie groups, but furthermore a systematic way to single out the It\^o model among an (arbitrarily) high dimensional family of models obtained.
\end{remark}
\begin{proof}[Proof of Theorem~\ref{th:main_application}]\,
We follow a usual strategy in the theory of regularity structures, showing the equivalent theorem for the pulled back equations on $\mbG$. For this, we work with the regularity structure $\mathcal{T}=(\opT,\opG)$ constructed in \eqref{eq:constructed_reg_struc} with $|\Xi|= (-3/2,\bar{\alpha})$.
Note that since we are in \ref{setting:one} we can directly apply Theorem~\ref{th:folland_translation} to see that there exists a unique, fundamental solution $G:(\mbR \times \mbG)\setminus (0,e) \rightarrow \mbR$ and satisfying the assumptions of Lemma~\ref{lem:kernel_decompose}, so that we have $G= K+K_{-1}$ enjoying the properties described therein.

Denote
by $M^{(\varepsilon)}=(\Pi^{(\eps)}, \Gamma^{\varepsilon})$ 
the canonical model for $\mathcal{T}$, satisfying $\Pi^{(\eps)}_z \Xi = \xi_{\eps}$ and realising $K$ for $\mathcal{I}$. From now on we shall use the 
common tree notation, with the obvious changes in interpretation, as defined for example in \cite[Sec.~4.1]{hairer_pardoux_15_wong_zakai}, and write 
$$ \<Xi>:= \Xi ,\quad \<IXi>= \mathcal{I}\Xi, \quad \<Xi2>= \Xi \mathcal{I} \Xi, \quad \text{etc.}$$
Since $\bar{\alpha}\in (-3/2,-1$) the rule defined in \eqref{eq:naive_rule} is complete and $\mathcal{T}_-$ coincides (as an algebra) with the free commutative algebra generated by $\left\{	\<Xi>,\ \<Xi2>,\ \<Xi3> \right\}$.
Since the noise is Gaussian and centred, it is sufficient to work with the subgroup $\tilde{\mcG}_-\subset \mcG_-$ consisting of $g\in \mcG_-$ satisfying
$$ g(\tau)= 0 \quad \Rightarrow \quad \tau\in \Span \left\{  \<Xi>,\ \<Xi3>\right\} \ . $$
For each $\varepsilon\in (0,1)$ we fix $g_\eps \in \tilde{\mcG}_-$ to be the character specified by
$$g_\eps(\<Xi2>) = \mbE[\xi_\eps(e) K(\xi_\eps )(e)]\ .$$
Thus, applying  Theorem~\ref{th:Anderson_fixed_point}, we obtain a solution $U_\eps: [0,T]\times \mbG\to \opT_{<\gamma}$ to the abstract  lifted equation, \eqref{eq:abstract_fixed_point_map}, for the model $\hat{M}^{(\varepsilon)}:=M_\eps^{g_\eps}$.
Next we show that $u^R_\eps:=\mathcal{R}_{\hat{M}^{(\varepsilon)}} U_\eps$ actually solves Equation \eqref{eq:random_pde} in several steps.
\begin{itemize}
	\item First, note that for any $z=(t,x)\in [0,T]\times \mbG$, the abstract solution has the explicit form 
	$$U_\eps (z)= u_\eps^0 (z)\big( \mathbf{1} +\<IXi2>+ \<IXi> \big) 
	+\sum_{\fraks_i<\gamma} u^i_\eps (z) \bfeta_i \ ,$$
	since we are working with the expansion up to order $\gamma<\frac{3}{2}$. In particular $\hat{\Pi}_z^{(\varepsilon)}U_\eps (z) (z)= u_\eps^0 (z)$.
	\item We observe that therefore
	$$ (U_\eps \,\, \<Xi>) (z)= u_\eps^0 (z)\big( \<Xi>+ \<Xi2>+ \<Xi3> \big) 
	+\sum_{\fraks_i<\gamma} u^i_\eps (z) \bfeta_i\, \, \<Xi>\ .$$
	\item We also note that $\hat{\Pi}_z^{(\varepsilon)}  \left(\sum_{\fraks_i<\gamma} u^i_\eps (z) \bfeta_i\<Xi>\right)(z)=0$ and 
	$$\hat{\Pi}_z^{(\varepsilon)} \left( u_\eps^0 (z)\big( \<Xi>+ \<Xi2>+ \<Xi3>\big) \right) (z)
	= u_\eps^0 (z) \left( \hat{\Pi}_z^{(\varepsilon)} \<Xi>(z) + \hat{\Pi}_z^{(\varepsilon)}\big(  \<Xi2>+ \<Xi3>\big)(z) \right)
	= u_\eps^0 (z) (\xi_\eps(z) - c_\eps )\ .$$
	\item Thus, we can conclude using Remark~\ref{rem:reconstruction_smooth_models} that 
	$$\mathcal{R}_{\hat{M}^{(\varepsilon)}} ( U^g\<Xi>) 
	= u_\eps^0 (z) (\xi_\eps(z) - c_\eps )= \left(\mathcal{R}_{\hat{M}^{(\varepsilon)}}U^g\right)(z) \, (\xi_\eps(z) - c_\eps ) \  .$$
\end{itemize}
Thus it only remains to show that the family of models $\hat{M}^{(\varepsilon)}$ converges to a limiting model $M$, which is the content of Proposition~\ref{prop:models_actually_converge}, stated below.
\end{proof}
\begin{prop}\label{prop:models_actually_converge}
The familly of models $\hat{M}^{(\varepsilon)}=(\hat{\Pi}^{(\varepsilon)}, \hat{\Gamma}^{\varepsilon})$ constructed in the proof of Theorem~\ref{th:main_application} converges as $\varepsilon\to 0$.
\end{prop}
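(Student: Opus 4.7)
By Remark~\ref{rem:reduce_convergence}, it suffices to prove convergence of the random maps $\hat{\Pi}^{(\varepsilon)}\tau$ for those $\tau$ of negative homogeneity. Under the assumption $\bar\alpha\in(-3/2,-1)$, the negative-homogeneity basis vectors in $\opT$ are exactly $\<Xi>$ (of degree $\bar\alpha$), $\<Xi2>=\Xi\mcI\Xi$ (of degree $2\bar\alpha+2$) and $\<Xi3>=\Xi\mcI(\Xi\mcI\Xi)$ (of degree $3\bar\alpha+4$, which is negative only for $\bar\alpha<-4/3$ and should be treated only in that case). The plan is to show that for each such $\tau$ and each compact $\mfK\subset\mbG$ there is $\kappa>0$ such that for all $\varepsilon,\bar\varepsilon\in(0,1)$,
\begin{equation*}
\mbE\bigl[|\langle \hat{\Pi}^{(\varepsilon)}_x\tau-\hat{\Pi}^{(\bar\varepsilon)}_x\tau,\psi^\lambda_x\rangle|^{2}\bigr] \lesssim (\varepsilon\vee\bar\varepsilon)^{2\kappa}\,\lambda^{2(|\tau|+\kappa)}
\end{equation*}
uniformly over $x\in\mfK$, $\lambda\in(0,1]$ and $\psi\in\mfB_r$. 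Combined with Gaussian hypercontractivity to upgrade to $L^p$ for arbitrary $p$, and a Kolmogorov-type argument adapted to our setting (as in \cite[Thm.~10.7]{hairer_14_RegStruct}), this gives Cauchy convergence of the models in the topology of $\mcM_\mcT$.

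For $\<Xi>$, the quantity $\hat{\Pi}^{(\varepsilon)}_x\<Xi>$ is simply $\xi_\varepsilon$, independent of $x$, and its convergence to a limit $\xi\in\mcC^{\bar\alpha}$ follows directly from Assumption~\ref{ass:covariance_assumption} and the hypothesis $\rho(z)=\rho(z^{-1})$: the relevant $L^2$ bound reduces to a kernel estimate of the form $\int \mfc_{\bar\alpha}(y)\mfc_{\bar\alpha}(z^{-1}y)\,\dd y \lesssim |z|^{2\bar\alpha}$, which holds by the same scaling argument as on $\mbR^d$, since it depends only on the prescribed power-law decay of $\mfc_{\bar\alpha}$ and on the homogeneity of the Haar measure under $\{\dil_r\}$. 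For the renormalised tree $\<Xi2>$, expand $\hat{\Pi}^{(\varepsilon)}_x\<Xi2>$ into its Wiener chaos components. The second-chaos component is the Wick product $\xi_\varepsilon(z)\diamond K(\xi_\varepsilon)(z)$, while the zeroth-chaos component is exactly $\mbE[\xi_\varepsilon(z)K(\xi_\varepsilon)(z)]=c_\varepsilon$, which is the subtraction prescribed by the definition of $g_\varepsilon$. Convergence of the second-chaos piece is then a graph-type estimate involving one copy of $K$ and two copies of the covariance $\mfc_{\bar\alpha}\ast\tilde\mfc_{\bar\alpha}$ (together with a mollifier), which converges by the same power-counting as in \cite{hairer_pardoux_15_wong_zakai}, using our Assumption~\ref{ass:K0_decompose} to treat the kernel on $\mbG$. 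For $\<Xi3>$ (when present) the renormalisation is zero in the reduced group $\tilde\mcG_-$ and one decomposes into third and first Wiener chaos; convergence of each component reduces again to a graph integral over $\mbG$ bounded by power counting.

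The main technical obstacle, and essentially the only deviation from the Euclidean setting, is carrying out these graph integrals in the non-commutative group. The only structural fact needed, however, is that the Haar measure is translation-invariant and transforms as $|\delta\cdot E|=\delta^{|\mfs|}|E|$, together with the sub-additivity of the homogeneous norm in Remark~\ref{rem:triangle_inequality}. Consequently, one obtains the standard ``BPHZ-style'' power-counting bound that a convolution integral
\begin{equation*}
\int_{\mbG^n} \prod_{i} |x_i^{-1}x_{j(i)}|^{a_i}\,\dd x_1\cdots\dd x_n
\end{equation*}
is controlled by the corresponding scaling dimension, since all estimates that appear in \cite{hairer_pardoux_15_wong_zakai} for the Anderson equation involve only three to four points and are verifiable by direct change of variables on $\mbG$. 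The bounds $|c_\varepsilon|\lesssim\varepsilon^{2+2\bar\alpha}$ follow from the same scaling applied to the single covariance–kernel contraction.

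Once the variance bounds above are established for the three negative-degree trees, the full convergence of models $\hat{M}^{(\varepsilon)}\to M$ follows from Remark~\ref{rem:reduce_convergence}, Propositions~\ref{prop:recursive_bounds_1} and \ref{prop:recursive_bounds_2}, and the standard argument reconstructing $\hat\Gamma^{(\varepsilon)}$ from $\hat\Pi^{(\varepsilon)}$ through the coaction structure of Proposition~\ref{prop:struct}. The hardest step is the second-chaos estimate for $\<Xi2>$, as it is the only place where a genuine renormalisation is required; all other estimates are purely analytic and largely independent of the non-commutativity of the underlying group.
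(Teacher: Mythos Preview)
Your approach is essentially the same as the paper's: reduce via Remark~\ref{rem:reduce_convergence} to the negative-homogeneity trees, establish moment bounds on $\hat{\Pi}^{(\varepsilon)}\tau$ for $\tau\in\{\<Xi>,\<Xi2>,\<Xi3>\}$ via Wiener chaos expansion and graph integrals, and then pass through hypercontractivity and a Kolmogorov-type criterion. The paper simply packages the moment bounds separately as Proposition~\ref{prop:stochastic_estimates} and then invokes a Kolmogorov argument built on the mollifiers $\varphi^{(n)}$ of Lemma~\ref{lem:convergence_of_test_functions}, whereas you sketch both steps inline; structurally there is no difference.

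One small slip: your claim that ``for $\<Xi3>$ the renormalisation is zero in the reduced group $\tilde\mcG_-$'' is misleading. It is true that $g_\varepsilon(\<Xi3>)=0$, but the action $R_{g_\varepsilon}=(g_\varepsilon\otimes\id)\triangle^-$ on $\<Xi3>$ still picks up the sub-divergence coming from the $\<Xi2>$ subtree, so $\hat{\Pi}^{(\varepsilon)}\<Xi3>$ is \emph{not} the canonical $\Pi^{(\varepsilon)}\<Xi3>$; the first-chaos component carries a counterterm proportional to $c_\varepsilon$. This does not change the overall strategy, but the first-chaos bound for $\<Xi3>$ only closes once that subtraction is taken into account. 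Your parenthetical that $\<Xi3>$ need only be treated when $\bar\alpha<-4/3$ is a correct refinement that the paper does not make explicit.
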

\begin{proof}\,
Note that by Remark~\ref{rem:reduce_convergence} it is sufficient to show convergence of $\hat{\Pi}^{(\varepsilon)}|_{\opT_{<0}}$ in the model topology, which follows from a stright-forward adaptation of Kolmogorov's criterion to models using the molifiers $\varphi^{(n)}$ constructed in Lemma~\ref{lem:convergence_of_test_functions} together with the estimates obtained in Proposition~\ref{prop:stochastic_estimates}.
\end{proof}
\begin{remark}
For a detailed proof of a Kolmogorov type criterion for general models on Euclidean domains, i.e. $\mathbb{G}=\mathbb{R}^{d}$, in the spirit above,  we refer to \cite[Appendix B]{hairer_steele_23_spectral}. 
\end{remark}
\subsection{Stochastic Estimates for the Renormalised Model}\label{subsec:stochastic_estimates}
In this subsection we obtain convergence the renormalised models.  						
We shall freely use techniques of \cite[Sec~10]{hairer_14_RegStruct} and as well as graphical notation similar to that in \cite[Sec.~5]{hairer_pardoux_15_wong_zakai}, with some slight changes in interpretation. For example we write
\begin{equation*}
\bigl( \Pi_{(0,e)}^{(\eps)} \<Xi2>\bigr)(\varphi^\lambda) = \;
\begin{tikzpicture}[scale=0.35,baseline=0.3cm]
	\node at (0,-1)  [root] (root) {};
	\node at (-2,1)  [dot] (left) {};
	\node at (-2,3)  [dot] (left1) {};
	\node at (0,1) [var] (variable1) {};
	\node at (0,3) [var] (variable2) {};
	
	\draw[testfcn] (left) to  (root);
	
	\draw[kernel1] (left1) to (left);
	\draw[rho] (variable2) to (left1); 
	\draw[rho] (variable1) to (left); 
\end{tikzpicture}\;
+ \;
\begin{tikzpicture}[scale=0.35,baseline=0.3cm]
	\node at (0,-1)  [root] (root) {};
	\node at (-2,1)  [dot] (left) {};
	\node at (-2,3)  [dot] (left1) {};
	\node at (0,2) [dot] (variable1) {};
	\node at (0,2) [dot] (variable2) {};
	
	\draw[testfcn] (left) to (root);
	
	\draw[kernel1] (left1) to (left);
	\draw[rho] (variable2) to (left1); 
	\draw[rho] (variable1) to (left); 
\end{tikzpicture}\; ,
\end{equation*}
with the following interpretations;
\begin{itemize}
\item The node \tikz[baseline=-3] \node [root] {}; represents the origin, $(0,e)\in\mbR \times \mbG$ while the edge \tikz[baseline=-0.1cm] \draw[testfcn] (1,0) to (0,0); represents integration against the rescaled test function $\varphi^{\lambda}$.
\item The node \tikz[baseline=-3] \node [var] {}; represents an instance of the $\mfG$ left periodic white noise on $\mbG$ while the edge \tikz[baseline=-0.1cm] \draw[rho] (0,0) to (1,0); represents the kernel $\mfc_{\bar{\alpha}}\ast \rho_{\eps}$. Note therefore, that when $\eps =0$ our diagrams will still contain dotted lines, as opposed to \cite{hairer_pardoux_15_wong_zakai}.
\item The nodes \tikz[baseline=-3] \node [dot] {}; represent dummy variables $z =(t,x)\in \mbR\times \mbG$ which are to be integrated out and the edges \tikz[baseline=-0.1cm] \draw[kernel] (0,0) to (1,0); represent integration against the kernel $K$. A barred arrow 
\tikz[baseline=-0.1cm] \draw[kernel1] (0,0) to (1,0);
represents a factor of $K(t-s,y^{-1}x) - K(-s,y^{-1})$, where
$(s,y)$ and $(t,x)$ are the coordinates of the start and end
points of the arrow respectively.
\end{itemize}
As in \cite[Sec.~10.2]{hairer_14_RegStruct} and \cite[Sec.~5.1.1]{hairer_pardoux_15_wong_zakai} we will also use the notation $\mcW^{(\eps;k)}\tau$ to denote the kernel associated to the $k^{\text{th}}$-homogeneous Wiener chaos component of $\Pi_{(0,e)}\tau$ and similarly $\hat{\mcW}^{(\eps;k)}\tau$ for the renormalised model. For example, we find that $\hat{\mcW}^{(\eps;2)}\<Xi2>((s,y);x_1,x_2)=\mcW^{(\eps;2)}\<Xi2>((s,y);x_1,x_2)$ is given by
\begin{equation*}
\mcW^{(\eps;2)}\<Xi2>((s,y);x_1,x_2) := \iint_{\mbR\times \mbG} (\mfc_{\bar{\alpha}} \ast \rho_\eps) (y_1^{-1}x_1) (\mfc_{\bar{\alpha}} \ast \rho_{\eps})(y^{-1}x_2) \left(K(s-s_1,y_1^{-1}y)-K(-s_1,y_1^{-1})\right) \dd s_1\dd y_1.
\end{equation*}
\begin{prop}\label{prop:stochastic_estimates}
Fix $\delta>0$ small enough. Then, for every $\lambda \in (0,1)$, $\varphi\in \mfB_r$ and $z=(t,x)\in \mathbb{R}\times \mbG$, there exist random variables
\begin{equation}\label{eq:renormalised_model}
	(\hat{\Pi}_{z}\<Xi>)(\varphi^\lambda),\qquad  (\hat{\Pi}_{z}\<Xi2>)(\varphi^\lambda), \qquad (\hat{\Pi}_{z}\<Xi3>)(\varphi^\lambda)
\end{equation}
such that for any $p\geq 1$ the following estimates hold uniformly over $\lambda,\varepsilon\in (0,1)$
\begin{enumerate}[label={\arabic*a)}]
	\item \label{it:Xi1_eps_bound}  $\quad\mbE\left[|(\hat{\Pi}^{(\eps)}_{z}\<Xi>)(\varphi^\lambda)|^p\right] \lesssim \lambda^{p(\bar{\alpha}-\delta)}$,
	\item \label{it:Xi2_eps_bound} $\quad
	\mbE\left[|(\hat{\Pi}^{(\eps)}_{z}\<Xi2>)(\varphi^\lambda)|^p\right] \lesssim \lambda^{p(2+2\bar{\alpha}-\delta )}$,
	\item \label{it:Xi3_eps_bound}
	$\quad\mbE\left[|(\hat{\Pi}^{(\eps)}_{z}\<Xi3>)(\varphi^\lambda)|^p\right] \lesssim \lambda^{p(4+3\bar{\alpha}-\delta )}$,
\end{enumerate}
as well as
\begin{enumerate}[label={\arabic*b)}]
	\item \label{it:Xi1_diff_bound} $\quad\mbE\left[|(\hat{\Pi}^{(\eps)}_{z}\<Xi>)(\varphi^\lambda)-(\hat{\Pi}_z\<Xi>)(\varphi^\lambda)|^p\right] \lesssim \varepsilon^{p\delta}  \lambda^{p(\bar{\alpha}-\delta)}$ ,
	\item \label{it:Xi2_diff_bound} $\quad\mbE\left[|(\hat{\Pi}^{(\eps)}_{z}\<Xi2>)(\varphi^\lambda)-(\hat{\Pi}_z\<Xi2>)(\varphi^\lambda)|^p\right] \lesssim \varepsilon^{p\delta}\lambda^{p(2+2\bar{\alpha}-\delta )}$ ,
	\item \label{it:Xi3_diff_bound} $\quad\mbE\left[|(\hat{\Pi}^{(\eps)}_{z}\<Xi3>)(\varphi^\lambda)-(\hat{\Pi}_z\<Xi3>)(\varphi^\lambda)|^p\right] \lesssim \varepsilon^{p\delta}\lambda^{p(4+3\bar{\alpha}-\delta )}$ .
\end{enumerate}
\end{prop}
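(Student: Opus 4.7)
The plan is to carry out a standard Wiener chaos argument adapted to the homogeneous Lie group setting. Since the Gaussian white noise $\tilde{\xi}$ is centred and our trees have at most three noise leaves, Gaussian hypercontractivity (equivalence of $L^p$ norms on each fixed Wiener chaos) reduces all estimates to the case $p=2$. For each tree $\tau \in \{\<Xi>,\<Xi2>,\<Xi3>\}$, we decompose
\begin{equation*}
 \bigl(\hat{\Pi}^{(\eps)}_z \tau\bigr)(\varphi^\lambda) = \sum_{k} I_k\bigl(\hat{\mcW}^{(\eps;k)}\tau(z;\varphi^\lambda,\cdot)\bigr),
\end{equation*}
where $I_k$ is the $k$-th multiple Wiener integral and $\hat{\mcW}^{(\eps;k)}\tau$ is the associated symmetric kernel. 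The second moment is then an explicit $L^2(S^k)$ norm of this kernel, which we estimate graphically.

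For $\<Xi>$ the situation is trivial: only the first chaos contributes and the kernel is $\mfc_{\bar\alpha}\ast\rho_\eps$, giving the bound \ref{it:Xi1_eps_bound} by direct computation using $|\mfc_{\bar\alpha}(z)|\lesssim |z|^{-|\mfs|/2+\bar\alpha}$. For $\<Xi2>$, the renormalisation by $c_\eps$ is designed precisely to cancel the zeroth chaos component $\mbE[\hat\Pi^{(\eps)}_z\<Xi2>(\varphi^\lambda)]$ which would otherwise diverge like $\eps^{2+2\bar\alpha}$; what remains is a single second-chaos term whose $L^2$ norm reduces to the graphical integral
\begin{equation*}
\iint \bigl(\mfc_{\bar\alpha}\ast\rho_\eps\bigr)(y_1^{-1}x_1)\bigl(\mfc_{\bar\alpha}\ast\rho_\eps\bigr)(y_2^{-1}x_2) \bigl(K(s_1-s,y^{-1}y_1)-K(-s,y^{-1}y_1)\bigr)(\cdots)\,\varphi^\lambda(z_1)\varphi^\lambda(z_2)\,
\end{equation*}
with the obvious integration variables. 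For $\<Xi3>$, only the first and third chaos contribute (since the second chaos is killed by centering), and each produces a graph whose scaling is evaluated by the standard machinery.

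To control these integrals one applies the iterated convolution/multiplication bounds for singular kernels: the main inputs are that $K$ scales as $|\cdot|^{-|\mfs|+2}$ with the group homogeneous norm, that $\mfc_{\bar\alpha}\ast\rho_\eps$ is bounded by a multiple of $|\cdot|^{-|\mfs|/2+\bar\alpha}$ (uniformly in $\eps$), and the generalised triangle inequality from Remark~\ref{rem:triangle_inequality}. Since all our graphs have the same combinatorial structure as the corresponding trees treated in \cite[Sec.~10]{hairer_14_RegStruct} and \cite[Sec.~5]{hairer_pardoux_15_wong_zakai}, and since bi-invariance of the Haar measure under left and right translations (used to reduce integrals over $\mbG$ to integrals against rescaled balls) carries over verbatim, the same power-counting yields bounds \ref{it:Xi1_eps_bound}--\ref{it:Xi3_eps_bound}. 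The barred arrow in the graph for $\<Xi2>$ is what enforces the positive renormalisation exponent: using Taylor's theorem (Theorem~\ref{th:taylor}) one gains a factor $\lambda^{\delta}$ at the cost of replacing $K$ with a worse singular kernel, which is the left-invariant analogue of the estimate used in \cite[Lem.~10.14]{hairer_14_RegStruct}.

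The difference bounds \ref{it:Xi1_diff_bound}--\ref{it:Xi3_diff_bound} are obtained from the same chaos decomposition by writing $\mfc_{\bar\alpha}\ast\rho_\eps - \mfc_{\bar\alpha}$ as a kernel of slightly worse scaling but gaining an explicit factor $\eps^{\delta}$; one then telescopes over the noise edges so that each summand has one ``differenced'' noise kernel while the remaining ones are uniformly bounded, and applies the same power counting as for the $\eps$-uniform bounds. The limiting random variables $(\hat\Pi_z\tau)(\varphi^\lambda)$ in \eqref{eq:renormalised_model} are then defined as the resulting $L^2$ limits. The main technical obstacle is the graphical power counting on $\mbG$: one must verify that the standard Euclidean tools (sub-additivity of the homogeneous norm, translation invariance of Haar measure, Young-type convolution inequalities) are sufficient substitutes for their flat-space counterparts in order to reduce all integrals to estimates that have already been carried out in \cite{hairer_pardoux_15_wong_zakai}. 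Once this is established, no further group-specific input is needed in this subcritical regime.
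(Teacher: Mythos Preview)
Your proposal is correct and follows essentially the same approach as the paper: reduce to $p=2$ via hypercontractivity, use translation invariance to work at $z=(0,e)$, decompose into Wiener chaos, and then invoke the graphical power-counting of \cite[Sec.~10]{hairer_14_RegStruct} and \cite[Sec.~5]{hairer_pardoux_15_wong_zakai}, noting that the relevant convolution lemmas carry over to the homogeneous group setting.

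One small correction: for $\<Xi3>$ there is no second chaos to be ``killed by centering'' --- a tree with three noise leaves only has chaos components of orders $3$ and $1$ by parity. What the renormalisation (inherited from the $c_\eps$ counterterm via $\triangle^-$ acting on $\<Xi3>$) actually does is cancel the divergent part of the \emph{first} chaos contribution arising from contracting the two noises joined by the kernel $K$. Similarly, for $\<Xi2>$ the renormalisation does not exactly annihilate the zeroth chaos but rather removes its divergent local part, leaving a uniformly bounded remainder coming from the barred-arrow subtraction. These are cosmetic slips in the explanation; the estimates you would carry out are unchanged.
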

\begin{proof}
\,
Firstly, we note that by translation invariance of the noise it is enough to consider $z=(0,e)$. Furthermore, since all random variables belong to a finite Wiener chaos, it suffices to show these estimates for $p=2$, \cite[Lem.~10.5]{hairer_14_RegStruct}. 
The first two estimates, \ref{it:Xi1_eps_bound} and \ref{it:Xi1_diff_bound}, follow readily by Ito's isometry.		

For the next items we recall \cite[Lem.~ 10.14 \& 10.18]{hairer_14_RegStruct}, the natural analogues of which hold in our setting as well.					
We can write (with the adaptation of the meaning of the diagrams described above) the Wiener chaos decomposition of the second symbol as 
\begin{equation}\label{eq:decompPiXiTwo}
	\bigl(\hat \Pi_{(0,e)}^{(\eps)} \<Xi2>\bigr)(\varphi^\lambda) = \;
	\begin{tikzpicture}[scale=0.35,baseline=0.3cm]
		\node at (0,-1)  [root] (root) {};
		\node at (-2,1)  [dot] (left) {};
		\node at (-2,3)  [dot] (left1) {};
		\node at (0,1) [var] (variable1) {};
		\node at (0,3) [var] (variable2) {};
		
		\draw[testfcn] (left) to  (root);
		
		\draw[kernel1] (left1) to (left);
		\draw[rho] (variable2) to (left1); 
		\draw[rho] (variable1) to (left); 
	\end{tikzpicture}\;
	- \;
	\begin{tikzpicture}[scale=0.35,baseline=0.3cm]
		\node at (0,-1)  [root] (root) {};
		\node at (-1,1)  [dot] (left) {};
		\node at (0,3)  [dot] (top) {};
		\node at (1,1) [dot] (right) {};
		
		\draw[testfcn] (left) to  (root);
		
		\draw[kernel] (right) to (root);
		\draw[rho] (top) to (right); 
		\draw[rho] (top) to (left); 
	\end{tikzpicture}\;.
\end{equation}		
The first summand is the graphical representation of the iterated integral, $I_2((\hat{\mathcal{W}}^{(\varepsilon;2)} \<Xi2>)(z))$ integrated against a test function centred at $(0,e)\in \mbR\times \mbG$; see for example the analogous second term in \cite[Eq.~(10.23)]{hairer_14_RegStruct}.
The second summand is the graphical analogue of the first term in \cite[Eq.~(10.23)]{hairer_14_RegStruct} given by $I_0((\hat{\mathcal{W}}^{(\varepsilon;0)} \<Xi2>)(z)).$
Next note that by virtually an identical calculation as in the beginning of the proof of \cite[Thm.~10.19]{hairer_14_RegStruct} one finds,
\begin{equation}\label{eq:double_schauder}
	|\langle ((\hat{\mathcal{W}}^{(\varepsilon;1)} \<IXi>)(z), (\hat{\mathcal{W}}^{(\varepsilon;1)} \<IXi>)(\bar{z}) \rangle|\lesssim
	|z|^{2\bar{\alpha}+4}+ 
	|\bar{z}|^{2\bar{\alpha}+4} \ ,
\end{equation}
which implies
$$|\langle (\hat{\mathcal{W}}^{(\varepsilon;2)} \<Xi2>)(z), (\hat{\mathcal{W}}^{(\varepsilon;2)} \<Xi2>)(\bar{z}) \rangle|\lesssim
(|z|^{2\bar{\alpha}+4}+ 
|\bar{z}|^{2\bar{\alpha}+4}) |\bar{z}^{-1}z|^{2\bar{\alpha}}\ .
$$
Together with the simple estimate $I_0((\hat{\mathcal{W}}^{(\varepsilon;0)} \<Xi2>)(z))|\lesssim |z|^{2\bar{\alpha}+2}$
this gives \ref{it:Xi2_eps_bound}.

Next we turn to show \ref{it:Xi3_eps_bound}. First, note that by a similar calculation as for \eqref{eq:double_schauder} one finds
$$|\langle (\hat{\mathcal{W}}^{(\varepsilon;2)} \<IXi2>)(w), (\hat{\mathcal{W}}^{(\varepsilon;2)} \<IXi2>)(\bar{w}) \rangle|
\lesssim |w|^{{2(4+2\bar{\alpha})} }+ |\bar{w}|^{2(4+2\bar{\alpha})}  \ .
$$
Therefore, together with the bound
$I_0((\hat{\mathcal{W}}^{(\varepsilon;0)} \<IXi2>)(w))|\lesssim |w|^{2\bar{\alpha}+4}$ we find
$$\mbE\left|\big((\hat{\Pi}^{(\eps)}_{(0,e)}\<IXi2>)\diamond(\hat{\Pi}^{(\eps)}_{(0,e)}\<Xi> )\big)(\varphi^\lambda) \right|^2 \lesssim\lambda^{2(4+3\bar{\alpha})} \ , $$
where $\diamond$ denotes the Wick product.
As in \cite[Sec.~5.3.2]{hairer_pardoux_15_wong_zakai} we find 
\begin{equation*}
	\left(\hat \Pi_{(0,e)}^{(\eps)} \<Xi3>\right)(\varphi^\lambda)=
	\begin{tikzpicture}[scale=0.35,baseline=0.5cm]
		\node at (0,-1)  [root] (root) {};
		\node at (-2,1)  [dot] (left) {};
		\node at (-2,3)  [dot] (left1) {};
		\node at (-2,5)  [dot] (left2) {};
		\node at (0,1) [var] (variable1) {};
		\node at (0,3) [var] (variable2) {};
		\node at (0,5) [var] (variable3) {};
		
		\draw[testfcn] (left) to  (root);
		
		\draw[kernel1] (left1) to (left);
		\draw[kernel1] (left2) to (left1);
		\draw[rho] (variable3) to (left2); 
		\draw[rho] (variable2) to (left1); 
		\draw[rho] (variable1) to (left); 
	\end{tikzpicture}
	\; + \;
	\left(
	\begin{tikzpicture}[scale=0.35,baseline=0.5cm]
		\node at (0,-1)  [root] (root) {};
		\node at (-2,1)  [dot] (left) {};
		\node at (-2,3)  [dot] (left1) {};
		\node at (-2,5)  [dot] (left2) {};
		\node at (0,2) [dot] (variable2) {};
		\node at (0,5) [var] (variable3) {};
		
		\draw[testfcn] (left) to  (root);
		
		\draw[kernel1] (left1) to (left);
		\draw[kernel1] (left2) to (left1);
		\draw[rho] (variable3) to (left2); 
		\draw[rho] (variable2) to (left1); 
		\draw[rho] (variable2) to (left); 
	\end{tikzpicture}
	\; - \;
	\begin{tikzpicture}[scale=0.35,baseline=0.5cm]
		\node at (0,-1)  [root] (root) {};
		\node at (-2,1)  [dot] (left) {};
		\node at (-1,3)  [dot] (left1) {};
		\node at (-2,5)  [dot] (left2) {};
		\node at (0,2) [dot] (variable2) {};
		\node at (0,5) [var] (variable3) {};
		
		\draw[testfcn] (left) to  (root);
		
		\draw[kernel] (left1) to (left);
		\draw[kernel1] (left2) to (left);
		\draw[rho] (variable3) to (left2); 
		\draw[rho] (variable2) to (left1); 
		\draw[rho] (variable2) to (left); 
	\end{tikzpicture}
	\right)
	\; + \;
	\begin{tikzpicture}[scale=0.35,baseline=0.5cm]
		\node at (0,-1)  [root] (root) {};
		\node at (-2,1)  [dot] (left) {};
		\node at (-2,3)  [dot] (jnct) {};
		\node at (-0.75,3)  [dot] (left1) {};
		\node at (-2,5)  [dot] (left2) {};
		\node at (0.5,3) [var] (variable2) {};
		
		\draw[testfcn] (left) to  (root);
		
		\draw[kernel1] (left1) to (left);
		\draw[kernel1] (left2) to (left1);
		\draw[rho] (jnct) to (left2); 
		\draw[rho] (variable2) to (left1); 
		\draw[rho] (jnct) to (left); 
	\end{tikzpicture}
	\; - \;
	\begin{tikzpicture}[scale=0.35,baseline=0.3cm]
		\node at (0,-1)  [root] (root) {};
		\node at (-1,1)  [dot] (left) {};
		\node at (-0.5,2)  [dot] (topl) {};
		\node at (1,1) [dot] (right) {};
		\node at (2,3)  [var] (variable) {};
		\node at (0,3)  [dot] (topr) {};
		
		\draw[testfcn] (right) to  (root);
		
		\draw[kernel] (left) to (root);
		\draw[rho] (topr) to (topl); 
		\draw[rho] (topl) to (left); 
		\draw[kernel1] (topr) to (right); 
		\draw[rho] (right) to (variable); 
	\end{tikzpicture}\; 
\end{equation*}
and introduce the notation $Q_\eps(s,y)= K(s,y) (\mfc_{\alpha}*\rho_\eps)^{*2}(y)$. We see that $c_\eps=\iint_{\mbR\times \mbG} Q_\eps(z)\dd z $, and define the renormalised kernel
$\msR Q_\eps(s,y):= Q_\eps(s,y)- c_\eps \delta_{(0,e)}$. Then, as in \cite[Eq.~(5.14)]{hairer_pardoux_15_wong_zakai}, using the notation \tikz[baseline=-0.1cm] \draw[kernelBig] (0,0) to (1,0); for the renormalised kernel, one finds
\begin{equation*}
	\left(\hat \Pi_{(0,e)}^{(\eps)} \<Xi3>\right)(\varphi^\lambda) = \;
	\begin{tikzpicture}[scale=0.35,baseline=0.5cm]
		\node at (0,-1)  [root] (root) {};
		\node at (-2,1)  [dot] (left) {};
		\node at (-2,3)  [dot] (left1) {};
		\node at (-2,5)  [dot] (left2) {};
		\node at (0,1) [var] (variable1) {};
		\node at (0,3) [var] (variable2) {};
		\node at (0,5) [var] (variable3) {};
		
		\draw[testfcn] (left) to  (root);
		
		\draw[kernel1] (left1) to (left);
		\draw[kernel1] (left2) to (left1);
		\draw[rho] (variable3) to (left2); 
		\draw[rho] (variable2) to (left1); 
		\draw[rho] (variable1) to (left); 
	\end{tikzpicture}
	\; + \;
	\begin{tikzpicture}[scale=0.35,baseline=0.5cm]
		\node at (0,-1)  [root] (root) {};
		\node at (-2,1)  [dot] (left) {};
		\node at (-2,3)  [dot] (left1) {};
		\node at (-2,5)  [dot] (left2) {};
		\node at (0,5) [var] (variable3) {};
		
		\draw[testfcn] (left) to  (root);
		
		\draw[kernelBig] (left1) to (left);
		\draw[kernel1] (left2) to (left1);
		\draw[rho] (variable3) to (left2); 
	\end{tikzpicture}
	\; - \;
	\begin{tikzpicture}[scale=0.35,baseline=0.5cm]
		\node at (0,-1)  [root] (root) {};
		\node at (-2,1)  [dot] (left) {};
		\node at (-2,3)  [dot] (left1) {};
		\node at (-2,5)  [dot] (left2) {};
		\node at (0,3) [dot] (variable2) {};
		\node at (0,5) [var] (variable3) {};
		
		\draw[testfcn] (left) to  (root);
		
		\draw[kernel1] (left2) to (variable2);
		\draw[rho] (variable3) to (left2); 
		\draw[rho] (left) to (left1); 
		\draw[rho] (variable2) to (left1); 
		\draw[kernel] (variable2) to (root); 
	\end{tikzpicture}
	\; + \;
	\begin{tikzpicture}[scale=0.35,baseline=0.5cm]
		\node at (0,-1)  [root] (root) {};
		\node at (-2,1)  [dot] (left) {};
		\node at (-2,3)  [dot] (jnct) {};
		\node at (-0.75,3)  [dot] (left1) {};
		\node at (-2,5)  [dot] (left2) {};
		\node at (0.5,3) [var] (variable2) {};
		
		\draw[testfcn] (left) to  (root);
		
		\draw[kernel1] (left1) to (left);
		\draw[kernel1] (left2) to (left1);
		\draw[rho] (jnct) to (left2); 
		\draw[rho] (variable2) to (left1); 
		\draw[rho] (jnct) to (left); 
	\end{tikzpicture}
	\; - \;
	\begin{tikzpicture}[scale=0.35,baseline=0.3cm]
		\node at (0,-1)  [root] (root) {};
		\node at (-1,1)  [dot] (left) {};
		\node at (-0.5,2)  [dot] (topl) {};
		\node at (1,1) [dot] (right) {};
		\node at (2,3)  [var] (variable) {};
		\node at (0,3)  [dot] (topr) {};
		
		\draw[testfcn] (right) to  (root);
		
		\draw[kernel] (left) to (root);
		\draw[rho] (topr) to (topl); 
		\draw[rho] (topl) to (left); 
		\draw[kernel1] (topr) to (right); 
		\draw[rho] (right) to (variable); 
	\end{tikzpicture}\;.
\end{equation*}
Similarly,
\begin{equation*}
	\big((\hat{\Pi}^{(\eps)}_{(0,e)}\<IXi2>)\diamond(\hat{\Pi}^{(\eps)}_{(0,e)}\<Xi> )\big)(\varphi^\lambda) =    
	\;
	\begin{tikzpicture}[scale=0.35,baseline=0.5cm]
		\node at (0,-1)  [root] (root) {};
		\node at (-2,1)  [dot] (left) {};
		\node at (-2,3)  [dot] (left1) {};
		\node at (-2,5)  [dot] (left2) {};
		\node at (0,1) [var] (variable1) {};
		\node at (0,3) [var] (variable2) {};
		\node at (0,5) [var] (variable3) {};
		
		\draw[testfcn] (left) to  (root);
		
		\draw[kernel1] (left1) to (left);
		\draw[kernel1] (left2) to (left1);
		\draw[rho] (variable3) to (left2); 
		\draw[rho] (variable2) to (left1); 
		\draw[rho] (variable1) to (left); 
	\end{tikzpicture}
	\; 
	- \;
	\begin{tikzpicture}[scale=0.35,baseline=0.3cm]
		\node at (0,-1)  [root] (root) {};
		\node at (-1,1)  [dot] (left) {};
		\node at (-0.5,2)  [dot] (topl) {};
		\node at (1,1) [dot] (right) {};
		\node at (2,3)  [var] (variable) {};
		\node at (0,3)  [dot] (topr) {};
		
		\draw[testfcn] (right) to  (root);
		
		\draw[kernel] (left) to (root);
		\draw[rho] (topr) to (topl); 
		\draw[rho] (topl) to (left); 
		\draw[kernel1] (topr) to (right); 
		\draw[rho] (right) to (variable); 
	\end{tikzpicture}\;
\end{equation*}
and thus it only remains the bound the covariance of the diagrams
\begin{equation}
	\begin{tikzpicture}[scale=0.35,baseline=0.5cm]
		\node at (0,-1)  [root] (root) {};
		\node at (-2,1)  [dot] (left) {};
		\node at (-2,3)  [dot] (left1) {};
		\node at (-2,5)  [dot] (left2) {};
		\node at (0,5) [var] (variable3) {};
		
		\draw[testfcn] (left) to  (root);
		
		\draw[kernelBig] (left1) to (left);
		\draw[kernel1] (left2) to (left1);
		\draw[rho] (variable3) to (left2); 
	\end{tikzpicture}
	\; , \qquad \;
	\begin{tikzpicture}[scale=0.35,baseline=0.5cm]
		\node at (0,-1)  [root] (root) {};
		\node at (-2,1)  [dot] (left) {};
		\node at (-2,3)  [dot] (left1) {};
		\node at (-2,5)  [dot] (left2) {};
		\node at (0,3) [dot] (variable2) {};
		\node at (0,5) [var] (variable3) {};
		
		\draw[testfcn] (left) to  (root);
		
		\draw[kernel1] (left2) to (variable2);
		\draw[rho] (variable3) to (left2); 
		\draw[rho] (left) to (left1); 
		\draw[rho] (variable2) to (left1); 
		\draw[kernel] (variable2) to (root); 
	\end{tikzpicture}
	\; ,\qquad \;
	\begin{tikzpicture}[scale=0.35,baseline=0.5cm]
		\node at (0,-1)  [root] (root) {};
		\node at (-2,1)  [dot] (left) {};
		\node at (-2,3)  [dot] (jnct) {};
		\node at (-0.75,3)  [dot] (left1) {};
		\node at (-2,5)  [dot] (left2) {};
		\node at (0.5,3) [var] (variable2) {};
		
		\draw[testfcn] (left) to  (root);
		
		\draw[kernel1] (left1) to (left);
		\draw[kernel1] (left2) to (left1);
		\draw[rho] (jnct) to (left2); 
		\draw[rho] (variable2) to (left1); 
		\draw[rho] (jnct) to (left); 
	\end{tikzpicture}
	\; , \;
\end{equation}
The latter two can be bounded by  repeatedly using the analogue of \cite[Lem.~10.14]{hairer_14_RegStruct}, while for the first diagram we use additionally the natural analogue of \cite[Lem.~10.16]{hairer_14_RegStruct}. This concludes the proofs of \ref{it:Xi3_eps_bound} and therefore the first column. 
The bounds on the random variables in \eqref{eq:renormalised_model} are obtained analogously by replacing $\mfc_{\bar{\alpha}}* \rho_\eps$ by just $\mfc_{\bar{\alpha}}$ throughout (and appropriately interpreting the renormalised kernel, $Q_\eps$, for $\varepsilon=0$). 
The approximation bounds \ref{it:Xi2_diff_bound} and \ref{it:Xi3_diff_bound} are obtained by standard arguments, replacing each diagram with a finite telescopic sum of diagrams, where all dotted lines except one either encode convolution with $\mfc_{\bar{\alpha}}* \rho_\eps$ or convolution with $\mfc_{\bar{\alpha}}$ and the one remaining dotted line is replaced by 
$ \mfc_{\bar{\alpha}}- \mfc_{\bar{\alpha}}* \rho_\eps$ and then applying the analogue of \cite[Lem.~10.17]{hairer_14_RegStruct}.
\end{proof}

\begin{remark}
An alternative method to obtain the above estimates on the model would have been to leverage that our noise satisfies a spectral gap inequality, see \cite{hairer_steele_23_spectral}.
We believe it would be interesting to study how the Feynman diagrammatic arguments in \cite{chandra_hairer_16}
as well as the more functional analytic arguments in \cite{hairer_steele_23_spectral}
extend these non-commutative spaces.
\end{remark}

		\bibliographystyle{alpha}
		\bibliography{./HomGroups.bib}

	\end{document}